\newtheorem{thmx}{Theorem}
\numberwithin{equation}{section}
\numberwithin{equation}{section}
\newtheorem{thm}{Theorem}[section]
\newtheorem{prop}[thm]{Proposition}
\newtheorem{lem}[thm]{Lemma}
\newtheorem{cor}[thm]{Corollary}
\newtheorem{ex}[thm]{Example}
\newtheorem{rem}[thm]{Remark}
\newtheorem{conj}[thm]{{\bf Conjecture}}
\newtheorem*{cor*}{Corollary}
\newcommand{\nc}{\newcommand}
\nc{\bA}{\mathbb A}
\nc{\bC}{\mathbb C}
\nc{\bc}{{\bf c}}
\nc{\bD}{\mathbb D}
\nc{\bd}{\mathbb d}
\nc{\bG}{\mathbb G}
\nc{\bi}{\bold i}
\nc{\bL}{\mathbb L}
\nc{\bN}{\mathbb N}
\nc{\bP}{\mathbb P}
\nc{\bQ}{\mathbb Q}
\nc{\bR}{\mathbb R}
\nc{\bu}{\mathbb u}
\nc{\bv}{\bold v}
\nc{\bw}{\bold w}
\nc{\bW}{\mathbb W}
\nc{\bZ}{\mathbb Z}
\nc{\cA}{\mathcal A}
\nc{\cB}{\mathcal B}
\nc{\cC}{\mathcal C}
\nc{\cD}{\mathcal D}
\nc{\cE}{\mathcal E}
\nc{\cF}{\mathcal F}
\nc{\cG}{\mathcal G}
\nc{\cH}{\mathcal H}
\nc{\cI}{\mathcal I}
\nc{\cK}{\mathcal K}
\nc{\cL}{\mathcal L}
\nc{\cM}{\mathcal M}
\nc{\cN}{\mathcal N}
\nc{\cO}{\mathcal O}
\nc{\cP}{\mathcal P}
\nc{\cR}{\mathcal R}
\nc{\cT}{\mathcal T}
\nc{\cU}{\mathcal U}
\nc{\cV}{\mathcal V}
\nc{\cW}{\mathcal W}
\nc{\cY}{\mathcal Y}
\nc{\cX}{\mathcal X}
\nc{\al}{\alpha}
\nc{\be}{\beta}
\nc{\la}{\lambda}
\nc{\La}{\Lambda}
\nc{\ve}{\varepsilon}
\nc{\om}{\omega}
\nc{\bom}{\boldsymbol{\om}}
\nc{\gl}{\mathfrak{gl}}
\nc{\fsl}{\mathfrak{sl}}
\nc{\ff}{\mathfrak{f}}
\nc{\g}{\mathfrak{g}}
\nc{\gh}{\widehat\g}
\nc{\h}{\mathfrak{h}}
\nc{\fb}{{\mathfrak b}}
\nc{\fg}{{\mathfrak g}}
\nc{\fgh}{{\widehat{\mathfrak g}}}
\nc{\fh}{{\mathfrak h}}
\nc{\fl}{\mathfrak{l}}
\nc{\fL}{\mathfrak{L}}
\nc{\fm}{{\mathfrak m}}
\nc{\fM}{{\mathfrak M}}
\nc{\fp}{{\mathfrak p}}
\nc{\ft}{\mathfrak{t}}
\nc{\fn}{{\mathfrak n}}
\nc{\fQ}{\mathfrak{Q}}
\nc{\Aut}{\mathrm{Aut}}
\nc{\ch}{{\mathop {\rm ch}}}
\nc{\tr}{{\mathop {\rm tr}\,}}
\nc{\im}{\operatorname{im}}
\nc{\id}{{\mathop {\rm id}}}
\nc{\ad}{{\mathop {\rm ad}}}
\nc{\gr}{\mathrm{gr}}
\nc{\ord}{\mathrm{ord}}
\nc{\red}{\mathrm{red}}
\nc{\End}{\operatorname{End}}
\nc{\Spec}{\operatorname{Spec}}
\nc{\Spf}{\operatorname{Spf}}
\nc{\Proj}{\operatorname{Proj}}
\nc{\Pic}{\operatorname{Pic}}
\nc{\Lie}{\operatorname{Lie}}
\nc{\Der}{\operatorname{Der}}
\nc{\Coh}{\mathrm{Coh}}
\nc{\coh}{\mathrm{coh}}
\nc{\qcoh}{\mathrm{Qcoh }}
\nc{\Gal}{\operatorname{Gal}}
\nc{\Hom}{\mathrm{Hom}}
\nc{\Rhom}{\mathrm{RHom}}
\nc{\cHom}{\mathcal{Hom}}
\nc{\Ann}{\mathrm{Ann}}
\nc{\Vect}{\mathrm{Vect}}
\nc{\wt}{\mathrm{wt}}
\nc{\hw}{\mathrm{hw}}
\nc{\rk}{\operatorname{rank}}
\nc{\Gr}{{\mathrm {Gr}}}
\nc{\Fl}{\mathrm{Fl}}
\nc{\spn}{\mathrm{span}}
\nc{\Rep}{\operatorname{Rep}}
\nc{\Irrep}{\mathrm{Irrep }}
\nc{\supp}{\operatorname{supp}}
\nc{\tp}{\mathrm{top}}
\nc{\codim}{\mathrm{codim}}
\nc{\IC}{\operatorname{IC}}
\nc{\Res}{\mathrm{Res}}
\nc{\modules}{\mathrm{-mod}}
\nc{\Perv}{\mathrm{Perv}}
\nc{\Forg}{\operatorname{Forg}}
\nc{\Maps}{\mathrm{Maps}}
\nc{\Frac}{\operatorname{Frac}}
\nc{\GfL}{{(G \times \bC^\times, \fl \oplus \bC D)}}
\nc{\Gfl}{{(G \times \bC^\times, \fl \oplus \bC D)}}
\nc{\eO}{\EuScript{O}}
\nc{\bra}{\langle}
\nc{\ket}{\rangle}
\nc{\pa}{\partial}
\nc{\ld}{\ldots}
\nc{\cd}{\cdots}
\nc{\hk}{\hookrightarrow}
\nc{\T}{\otimes}
\nc{\ov}{\overline}
\nc{\wh}{\widehat}
\nc{\wti}{\widetilde}
\nc{\svee}{{\!\scriptscriptstyle\vee}}
\nc{\ula}{{\underline{\la}}}
\nc{\umu}{{\underline{\mu}}}
\nc{\conv}{{\widetilde \times}}
\nc{\lach}{{\la^\svee}}
\nc{\alch}{{\al^\svee}}
\nc{\omch}{{\omega^\svee}}
\nc{\much}{{\mu^\svee}}
\nc{\md}{\text {--mod}}
\nc{\pt}{\mathrm{pt}}
\nc{\torus}{\bC^\times}
\nc{\CF}{\mathcal{F}}
\nc{\GL}{\mathfrak{GL}}
\nc{\Tr}{{\mathop {\rm Tr}\,}}
\nc{\Id}{{\mathop {\rm Id}}}
\nc{\msl}{\mathfrak{sl}}
\nc{\mgl}{\mathfrak{gl}}
\nc{\U}{\mathrm U}
\nc{\Q}{\mathfrak Q}
\nc{\on}{\operatorname} \nc\ol{\overline} \nc\ul{\underline}
\nc{\BA}{{\mathbb{A}}} \nc{\BC}{{\mathbb{C}}} \nc{\BF}{{\mathbb{F}}}
\nc{\BD}{{\mathbb{D}}} \nc{\BG}{{\mathbb{G}}} \nc{\BQ}{{\mathbb{Q}}}
\nc{\BM}{{\mathbb{M}}} \nc{\BN}{{\mathbb{N}}} \nc{\BO}{{\mathbb{O}}}
\nc{\BP}{{\mathbb{P}}} \nc{\BR}{{\mathbb{R}}}
\nc{\BZ}{{\mathbb{Z}}} \nc{\BS}{{\mathbb{S}}} \nc{\BW}{{\mathbb{W}}}
\nc{\CA}{{\mathcal{A}}} \nc{\CL}{{\mathcal{L}}} \nc{\CV}{{\mathcal{V}}} \nc{\CW}{{\mathcal{W}}}
\nc{\CalD}{{\mathcal{D}}}
\nc{\sic}{{\on{sc}}}
\nc{\add}{{\on{add}}}
\newcommand\iso{\,\vphantom{j^{X^2}}\smash{\overset{\sim}{\vphantom{\rule{0pt}{0.20em}}\smash{\longrightarrow}}}\,}
\title{K-theoretic Hikita conjecture for quiver gauge theories}
\author{Ilya Dumanski}
\address{Ilya Dumanski:\newline
		Department of Mathematics, MIT, Cambridge, MA 02139, USA.
		%{\it and }\newline
		%Department of Mathematics, National Research University Higher School of Economics, Russian Federation,
		%Usacheva str. 6, 119048, Moscow.
	}
	\email{ilyadumnsk@gmail.com}
\author{Vasily Krylov}
\address{Vasily Krylov: \newline
Department of Mathematics
Harvard University and CMSA
\newline
1 Oxford Street,
Cambridge, MA 02138,
USA}
\email{vkrylov@math.harvard.edu, krylovasya@gmail.com}
\begin{document}

\begin{abstract}
    We study variants of Hikita conjecture for Nakajima quiver varieties and corresponding Coulomb branches. First, we derive the equivariant version of the conjecture from the non-equivariant one for a set of gauge theories. Second, we suggest a variant of the conjecture, with K-theoretic Coulomb branches involved. We show that this version follows from the usual (homological) one for a set of theories. We apply this result to prove the conjecture in finite ADE types. In the course of the proof, we show that appropriate completions of K-theoretic and homological (quantized) Coulomb branches are isomorphic.
\end{abstract}

\maketitle

\tableofcontents

\section{Introduction}

This paper concerns the phenomenon of $3d$ mirror symmetry, also known as symplectic duality \cite{BLPW14, Kam22, WY23}. From mathematics perspective, it includes a set of examples of \textit{dual} pairs of symplectic singularities, with various expected relations between them. In this paper, we concentrate on pairs of varieties, associated with quiver gauge theories (Higgs  and Coulomb branches). %Note that we are not assuming that our symplectic singularities are conical. 

%{\color{red}{$N$ vs $\bf{N}$, $\leqslant$, $G(\mathcal{K})$, $\on{exp}^{\mathrm{alg}}(f)$}}

\subsection{Homological Hikita conjecture}
Given a quiver $Q$, the dimension vector $\bv$ and framing vector $\bw$, one can build the Nakajima quiver varieties $\wti \fM_Q(\bv, \bw) \rightarrow \fM_Q(\bv, \bw)$ (Higgs branch of the corresponding quiver gauge theory) and Braverman--Finkelberg--Nakajima Coulomb branch $\cM_Q(\bv, \bw)$. These varieties are symplectic dual, with deep connections established between them, including Koszul duality for categories $\cO$ \cite{Web16}. We concentrate on a conjectural algebraic relation between these varieties, called the Hikita conjecture, originated in \cite{Hik17} (for other pairs of dual varieties). In its simplest form, it states that the following isomorphism of algebras should hold\footnote{Note that we do \underline{not} assume  that the natural $\mathbb{C}^\times$-action on $\cM_Q(\bv,\bw)$ is conical, i.e., that the corresponding quiver theory is  {{``good or ugly''}}. We believe (and prove in some cases) that the isomorphism~\eqref{eq: intro homo hikita} should hold without this additional assumption.}:
\begin{equation} \label{eq: intro homo hikita}
H^*(\wti \fM_Q(\bv, \bw),\mathbb{C}) \simeq \bC[\cM_Q(\bv, \bw)^{\nu(\bC^\times)}].
\end{equation}
Here $\bC^\times$ acts on $\cM_Q(\bv, \bw)$ through a generic cocharacter $\nu$ of the Hamiltonian torus action, and $\cM_Q(\bv, \bw)^{\nu(\bC^\times)}$ stands for schematic fixed points. %(see \cite[Section 1.3]{dg}). 
In fact, on both sides of \eqref{eq: intro homo hikita}, there is a natural action of the polynomial algebra $\bC[\ft_\bv / S_\bv]$ (see next paragraph for definitions of $\ft_\bv, S_\bv$), and we require that \eqref{eq: intro homo hikita} is an isomorphism of $\bC[\ft_\bv / S_\bv]$-algebras (this condition actually determines the isomorphism uniquely).

Further, Nakajima suggested a deformation of the isomorphism (\ref{eq: intro homo hikita}). 
Let $F \curvearrowright \wti \fM_Q$ be an action of a torus $F$ (called the flavor torus) by Hamiltonian automorphisms, commuting with the  $\bC^\times$-action on $\wti \fM_Q$, and let  $\ff = \Lie F$. 
For example, one can take $F = T_\bw \subset GL_\bw$ (the maximal torus of the framing group).
Denote also by $T_\bv \subset GL_\bv$ the gauge group and its maximal torus, set $S_\bv$ to be its Weyl group, and $\ft_\bv = \Lie T_\bv$.
Then $\cM_Q(\bv, \bw)$ admits a natural Poisson deformation over $\ff$, denoted $\cM_Q(\bv, \bw)_{\ff}$ (see Section~\ref{subsection: homological Hikita} for details). The equivariant Hikita conjecture (a.k.a. Hikita--Nakajima conjecture) is the following isomorphism of $\bC[(\ft_\bv / S_\bv) \times \ff]$-algebras:
\begin{equation} \label{eq: intro equi homo hikita}
H^*_{F}(\wti \fM_Q(\bv, \bw)) \simeq \bC[\cM_Q(\bv, \bw)_{\ff}^{\nu(\bC^\times)}].
\end{equation}

The first result of this paper concerns a method to deduce the equivariant version \eqref{eq: intro equi homo hikita} from the non-equivariant one \eqref{eq: intro homo hikita}. %Namely, following \cite{KTWWY19}, authors of
%\cite{KS25} proposed commutativity of the following diagram, (as stated, it works for the case of finite number of fixed points only but can be modified to the general case):
For simplicity, in the Introduction we state this theorem for the case when $F = T_\bw$ is the framing torus. %(if $Q$ has no loops or multiple edges, it is the maximal torus one can take). 
In the main body of the text, we drop this assumption.

\begin{thmx}[See Theorem~\ref{equivariant hikita from non-equivariant} for the full statement]\label{thm: intro equi-homo-hikita}
Fix a quiver $Q$ and let $F = T_\bw$. Suppose the homological Hikita conjecture~\eqref{eq: intro homo hikita} holds for any $(\bv, \bw)$. Then the homological equivariant Hikita conjecture \eqref{eq: intro equi homo hikita}  holds for any $(\bv, \bw)$.
\end{thmx}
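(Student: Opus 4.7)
The plan is to deform both sides of \eqref{eq: intro homo hikita} over $\bC[\ff]$ and upgrade the non-equivariant isomorphism to an equivariant one. Both sides of \eqref{eq: intro equi homo hikita} are $\bC[\ft_\bv/S_\bv] \otimes \bC[\ff]$-algebras that reduce, after specialization at the origin $0 \in \ff^*$, to the corresponding objects in \eqref{eq: intro homo hikita}. I would show $\bC[\ff]$-flatness on both sides, construct a lift $\varphi$ of the non-equivariant Hikita isomorphism $\varphi_0$, and conclude by Nakayama.

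First, I would verify $\bC[\ff]$-flatness. On the Higgs side, Nakajima quiver varieties are equivariantly formal for any torus action --- their cohomology is concentrated in even degrees, with a Białynicki--Birula decomposition available --- so $H^*_F(\wti\fM_Q(\bv,\bw)) \simeq H^*(\wti\fM_Q(\bv,\bw)) \otimes \bC[\ff]$ as $\bC[\ff]$-modules. On the Coulomb side, the BFN deformation $\cM_Q(\bv,\bw)_\ff \to \ff^*$ is flat by construction, and taking the schematic $\nu(\bC^\times)$-fixed locus preserves flatness because $\nu(\bC^\times)$ commutes with $F$ and acts by grading, so the fixed-point coordinate ring appears as the degree-zero piece of a flat $\bC[\ff]$-algebra.

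The heart of the argument is to lift $\varphi_0$ to a $\bC[\ft_\bv/S_\bv] \otimes \bC[\ff]$-algebra map $\varphi$. The plan is to do this via matched presentations: Kirwan surjectivity, available in equivariant form for Nakajima quiver varieties, realizes $H^*_F(\wti\fM_Q(\bv,\bw))$ as a quotient of a polynomial algebra in equivariant tautological Chern classes by an ideal $I_{\mathrm{Higgs}}$, while the BFN abelianization together with a description of $\cM_Q(\bv,\bw)_\ff^{\nu(\bC^\times)}$ yields a parallel presentation with ideal $I_{\mathrm{Coulomb}}$. Both ideals are defined over $\bC[\ff]$ and reduce modulo the maximal ideal at the origin to the same ideal in a common polynomial ring (by the assumed non-equivariant Hikita isomorphism); a flat-deformation argument --- using that the flavor parameters enter symmetrically into the equivariant tautological classes on one side and into the deformed monopole formulas on the other --- then identifies $I_{\mathrm{Higgs}}$ with $I_{\mathrm{Coulomb}}$ over $\bC[\ff]$.

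Once $\varphi$ is constructed, it is an isomorphism by Nakayama: both sides are finitely generated flat $\bC[\ff]$-modules (their reductions mod the origin are finite-dimensional over $\bC$) and $\varphi \otimes_{\bC[\ff]} \bC = \varphi_0$ is an iso. The main obstacle will be the matching of presentations described above: one must identify the Coulomb-side relations explicitly over $\bC[\ff]$ and show they are the $\bC[\ff]$-deformation of the Higgs-side relations. A useful simplification is to invoke uniqueness: the $\bC[\ft_\bv/S_\bv] \otimes \bC[\ff]$-algebra map with prescribed reduction mod the origin should be unique (mirroring the uniqueness in the non-equivariant case), which would reduce the construction of $\varphi$ to checking that a single canonical candidate, built from equivariant Chern classes on one side and monopole generators on the other, does lift $\varphi_0$.
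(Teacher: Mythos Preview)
There is a genuine gap. Your plan hinges on two claims that do not go through as stated, and it misses the key mechanism of the paper's proof.

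First, the flatness of the Coulomb side over $\bC[\ff]$ is not immediate. The schematic fixed locus $\cM_{Q,\ff}^{\nu}$ is \emph{not} the degree-zero piece of a graded algebra: for a commutative $\bZ$-graded algebra $A$, the coordinate ring of the schematic $\nu(\bC^\times)$-fixed points is the quotient $A/(A_{>0}+A_{<0})$, i.e.\ $A_0$ modulo the image of $A_{>0}\cdot A_{<0}$. The total Coulomb branch algebra is free over $\bC[\ff]$, but quotienting by this ideal need not preserve flatness; there is no general reason it should. In the paper, flatness of $\bC[\cM_{Q,\ff}^{\nu}]$ over $\bC[\ff]$ is \emph{deduced}, not assumed, by computing the dimension of every fiber and checking they are all equal.

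Second, and more seriously, your construction of the lift $\varphi$ is circular. You propose to match the Kirwan ideal $I_{\mathrm{Higgs}}$ with the Coulomb ideal $I_{\mathrm{Coulomb}}$ over $\bC[\ff]$ by ``a flat-deformation argument'' and some unspecified symmetry of flavor parameters. But matching these ideals over $\bC[\ff]$ \emph{is} the equivariant Hikita conjecture; nothing you have written explains why knowing they agree modulo the origin forces them to agree globally. Graded Nakayama or a uniqueness argument would work only once a map is already in hand---and you have not constructed one.

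The paper's actual mechanism is localization/factorization: for any $f\in\ff$, the fiber of each side over $f$ breaks up as a direct sum, over $t_\bv\in\ft_\bv/S_\bv$, of the \emph{non-equivariant} Hikita objects for the smaller theory $(Z_{G_\bv}(t_\bv),{\bf N}^{(t_\bv,f)})$. These are again quiver theories (with the same $Q$ but smaller $\bv,\bw$), so the hypothesis ``non-equivariant Hikita for all $(\bv,\bw)$'' is precisely what is needed to identify all fibers. This both yields flatness on the Coulomb side (equal fiber dimensions) and, over the generic point of $\ff$, identifies the two quotients of $H_{G_\bv\times F}(\pt)$---from which the global identification follows. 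Note in particular that the input is genuinely stronger than non-equivariant Hikita for the single pair $(\bv,\bw)$ you are trying to prove; the paper explicitly remarks that one must use the conjecture for all smaller parameters, and your outline does not invoke any theories other than the original one.
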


Note that to deduce the equivariant conjecture (\ref{eq: intro equi homo hikita}) for a fixed $(\bv, \bw)$, it is not sufficient to know the non-equivariant conjecture (\ref{eq: intro homo hikita}) for the same $(\bv, \bw)$. Our argument is inductive, and uses all values of $v_i, w_i$, less than or equal to the required.

In particular, Theorem \ref{thm: intro equi-homo-hikita} allows us to establish previously not proven equivariant version of conjecture for ADE quivers under mild assumptions, see Corollary \ref{homo-hikita for ADE quivers}.  Recall that the Coulomb branch in this case is isomorphic to a generalized slice in the affine Grassmannian, see \cite{bfn_slices}.
Note that the non-equivariant version of the conjecture was proved in \cite[Theorem~8.1]{KTWWY19} for the case when the corresponding slice in affine Grassmannian is {\underline{non}}-generalized (and under the same assumption as in Corollary \ref{homo-hikita for ADE quivers}). As we pointed out above, our method requires knowing the validity of non-equivariant version for all $(\bv, \bw)$. So, we need the non-equivariant Hikita conjecture for generalized slices, and we prove it in Appendix, see Theorem~\ref{thm: hikita for generalized slices}. Also, in Appendix we provide a direct geometric argument for the equivariant conjecture, generalizing the method of \cite{KTWWY19} (and giving an alternative proof of Corollary~\ref{homo-hikita for ADE quivers}). We think that this proof is conceptually interesting. The connection between quiver and Coulomb sides there comes from an isomorphism of global Demazure and global Weyl modules in types ADE.

We should mention that the equivariant version has been proved previously in type A in \cite[Theorem 8.3.7]{Wee16}  (see also \cite[Remark 8.13]{KTWWY19}) as well as its weak form in DE types  (see \cite[Proposition 8.11]{KTWWY19} and \cite[Theorem 1.5]{KTWWY19b}).

\subsection{K-theoretic Hikita conjecture}
The main goal of the paper, however, is to study another version of the conjecture; we call this version the \textit{K-theoretic Hikita conjecture} (see \cite[Appendix B]{Zho23} where the hypertoric case is studied). The idea is as follows: in \eqref{eq: intro equi homo hikita}, on the quiver side, one needs to replace equivariant cohomology by equivariant K-theory, and on the Coulomb side, one needs to replace the Coulomb branch $\cM_Q(\bv, \bw)$ by the K-theoretic Coulomb branch $\cM^\times_Q(\bv, \bw)$. We identify $K_{G_{\bf{v}} \times F}(\operatorname{pt}) = \bC[(T_{\bf{v}}/S_{\bf{v}}) \times F]$.
\begin{conj}[See Conjecture \ref{k-hikita conjecture} for definitions and the full statement] \label{conj: intro K-hikita}
There is an isomorphism of $\bC[(T_\bv / S_\bv) \times F]$-algebras:
\begin{equation} \label{eq: intro k-hikita}
K^{F}(\wti \fM_Q(\bv, \bw)) \simeq \bC[\cM^\times_Q(\bv, \bw)_{F}^{\nu(\bC^\times)}].
\end{equation}
\end{conj}

We also suggest a quantized version of the conjecture, with added $\bC^\times$-action on the quiver side, and replacing $\cM^\times_Q(\bv, \bw)$ by its quantization on the Coulomb side, see Conjecture~\ref{k-hikita conjecture} (again, compare with  \cite[Appendix B]{Zho23}).
The present paper, however, deals with the non-quantized version.

Let us state our main result concerning the K-theoretic Hikita conjecture, and then explain our method (both for Theorems \ref{thm: intro equi-homo-hikita} and \ref{thm: intro k-hikita}). As above, for simplicity we assume that $F = T_\bw$ to state the result here, but in the main body of the text we do not have this assumption. %(let us note again that if $Q$ has no loops or multiple edges, $T_\bw$ is the maximal possible flavor torus, so our assumption for Introduction is not restrictive, see \cite[9.5.(i)]{BLPW14}). %{\color{red}{Vasya: again, I think that's not about loops, that's about $F$ vs $T_{\bf{w}}$}}

\begin{thmx} [See Theorem \ref{k-hikita from homo-hikita}
for the full statement, allowing arbitrary $F$] \label{thm: intro k-hikita}
Fix a quiver $Q$, let $F = T_\bw$. Suppose the equivariant homological Hikita conjecture \eqref{eq: intro equi homo hikita} holds for any $(\bv, \bw)$. Then the equivariant K-theoretic Hikita conjecture (\ref{eq: intro k-hikita}) holds for any $(\bv, \bw)$.
\end{thmx}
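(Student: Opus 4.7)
The plan is to deduce the K-theoretic Hikita conjecture from the equivariant homological one by a Chern-character/completion argument on both the Higgs and Coulomb sides, matched via the formal exponential, and then invoking the assumed homological statement. On the Higgs side, the equivariant Chern character for the Nakajima quiver variety $\wti\fM_Q(\bv,\bw)$ produces a canonical isomorphism
\[
K^F(\wti\fM_Q(\bv,\bw))^{\wedge}_e \,\iso\, H^*_F(\wti\fM_Q(\bv,\bw))^{\wedge}_0,
\]
where the left-hand side is completed at the augmentation ideal of $R(F) = \bC[F]$ at $e \in F$ and the right-hand side at the maximal ideal at $0 \in \ff$. This follows from equivariant formality for Nakajima varieties together with an Atiyah--Segal style comparison, and is compatible with the exponential identifications $\bC[F]^{\wedge}_e \iso \bC[\ff]^{\wedge}_0$ and $\bC[T_\bv]^{\wedge}_e \iso \bC[\ft_\bv]^{\wedge}_0$ of the coefficient rings.

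Second, on the Coulomb side, one establishes the analogous claim: suitable completions of $\bC[\cM^\times_Q(\bv,\bw)_F]$ at $(e,e) \in T_\bv \times F$ and of $\bC[\cM_Q(\bv,\bw)_\ff]$ at $(0,0) \in \ft_\bv \times \ff$ are identified via the same exponential. This is the central technical input flagged in the abstract. At the BFN level the two convolution algebras are assembled from the same variety of triples $\cR_{G,N}$ and differ only in the coefficient replacement $K^G(\pt) = \bC[T/W]$ versus $H^*_G(\pt) = \bC[\ft/W]$; these agree after formal completion at the unit, respectively at the origin.

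Third, one must verify that taking schematic $\nu(\bC^\times)$-fixed points commutes with these completions. Genericity of $\nu$ is expected to make the fixed-point subschemes quasi-finite over the base, so that their coordinate rings are finitely generated modules over $\bC[(T_\bv/S_\bv)\times F]$ (respectively $\bC[(\ft_\bv/S_\bv)\times \ff]$); in particular the completed fixed-point algebras coincide with the uncompleted ones in the formal neighborhood of the identity. Chaining the three isomorphisms with the assumed equivariant homological Hikita \eqref{eq: intro equi homo hikita} then delivers \eqref{eq: intro k-hikita} as an isomorphism of $\bC[(T_\bv/S_\bv) \times F]$-algebras.

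The principal obstacle is step two: producing a canonical identification of the two completed BFN Coulomb branch algebras. This requires a careful matching of the K-theoretic and homological convolution products and a formality-type statement for the equivariant (co)homology of $\cR_{G,N}$. Step three also merits care—one must verify that the support of the schematic fixed-point scheme is concentrated in the formal neighborhood where the completion of step two applies, which is what turns a formal comparison into an honest isomorphism of algebras over the global parameter ring $\bC[(T_\bv/S_\bv)\times F]$.
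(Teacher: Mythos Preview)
Your outline captures the right ingredients---Chern character on the Higgs side, the Riemann--Roch isomorphism of completed Coulomb branches (Theorem~\ref{thm: intro isomorphism of completions}) on the Coulomb side---but your step three contains a genuine gap that cannot be filled as stated. The schematic fixed-point scheme $(\cM^\times_{Q,F})^\nu$ is \emph{not} supported in the formal neighborhood of the identity $(1,1)\in (T_\bv/S_\bv)\times F$; likewise $K^F(\wti\fM_Q)$, viewed as a $\bC[(T_\bv/S_\bv)\times F]$-module, has support spread over the torus (indeed it is free over $\bC[F]$ by equivariant formality). So completing only at the identity loses information and cannot upgrade a formal isomorphism to a global one.

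What the paper does instead is complete at \emph{every} closed point $(t_\bv,f)\in (T_\bv/S_\bv)\times F$. At such a point, the localization theorem in equivariant $K$-theory (combined with Lemma~\ref{fixed points on BFN space}) identifies the completion of each side with the completion at the identity of the corresponding object for the \emph{different} gauge theory $(Z_{G_\bv}(t_\bv),\,{\bf N}^{(t_\bv,f)})$; only then does the Chern/Riemann--Roch comparison with the homological side apply (Propositions~\ref{completion of K-theory of quiver variety at any point},~\ref{completion of K-coulomb branch}). This is precisely why the hypothesis must hold for \emph{all} $(\bv',\bw')$, not just the fixed one: each point $(t_\bv,f)$ produces a smaller quiver theory whose homological Hikita conjecture is invoked. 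The argument is then closed by two further facts you do not mention: both sides are quotients of $K^{G_\bv\times F}(\pt)$ via surjections $\phi_1^\times,\phi_2^\times$ (Kirwan surjectivity and Proposition~\ref{monopoles generate K-Coulomb branch}), and two ideals of a Noetherian domain coincide iff their completions at every maximal ideal coincide (Lemma~\ref{if ideals coincide at each completion, they coincide}). Your proposal would go through only for the single point $(1,1)$, which recovers merely the completed statement, not the conjecture itself.
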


This allows us to establish the equivariant K-theoretic Hikita conjecture for ADE quivers under a mild assumption (Corollary \ref{cor: k-hikita for ade}) and a weaker form of the conjecture for the Jordan quiver (Corollary \ref{cor: k-hikita jordan quiver}). 
Summarizing, we obtain the following result as a corollary of the above theorems. 
\begin{thmx}
Both K-theoretic and cohomological equivariant Hikita conjecture holds for arbitrary quiver $Q$ of type ADE and ${\bf{v}}, {\bf{w}}$ satisfying conditions as in Corollary \ref{homo-hikita for ADE quivers}. 
\end{thmx}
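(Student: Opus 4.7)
The plan is to bootstrap the statement from the two deduction theorems already established in the paper (Theorems A and B), feeding them a base case at the level of the non-equivariant homological conjecture \eqref{eq: intro homo hikita}. Concretely, I would argue in three steps: first establish \eqref{eq: intro homo hikita} for all $(\bv,\bw)$ with $Q$ of type ADE satisfying the hypotheses of Corollary \ref{homo-hikita for ADE quivers}; second, invoke Theorem A to upgrade to the equivariant homological statement \eqref{eq: intro equi homo hikita}; and third, invoke Theorem B to further upgrade to the equivariant K-theoretic statement \eqref{eq: intro k-hikita}. This is the only shape the argument can take, because Theorem B takes \eqref{eq: intro equi homo hikita} as its hypothesis, and Theorem A is what bridges the non-equivariant and equivariant homological forms.

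For the base case, I would appeal to \cite[Theorem 8.1]{KTWWY19}, which gives \eqref{eq: intro homo hikita} when the Coulomb branch $\cM_Q(\bv,\bw)$ is identified with a (non-generalized) slice $\overline{\cW}^{\lambda}_\mu$ in the affine Grassmannian. The subtle point, as noted in the introduction, is that Theorem A's inductive machinery requires \eqref{eq: intro homo hikita} for \emph{all} dimension vectors $(\bv', \bw')$ appearing in the induction, including those corresponding to \emph{generalized} slices $\overline{\cW}^{\bom}_\mu$. I would therefore invoke Theorem~\ref{thm: hikita for generalized slices} from the Appendix, which promotes \eqref{eq: intro homo hikita} to all generalized slices under the assumption of Corollary \ref{homo-hikita for ADE quivers}.

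With the non-equivariant conjecture in hand for all $(\bv,\bw)$ in our ADE class, Theorem A (more precisely Theorem \ref{equivariant hikita from non-equivariant}, which handles general flavor tori $F$, not only $F = T_\bw$) delivers \eqref{eq: intro equi homo hikita} for the same class. The ADE class is closed under the dimension vectors that appear in the inductive step, so the hypothesis of Theorem A is satisfied. Then Theorem B (Theorem \ref{k-hikita from homo-hikita}) applies verbatim with this as input to yield \eqref{eq: intro k-hikita}. Both conclusions are recorded as Corollaries \ref{homo-hikita for ADE quivers} and \ref{cor: k-hikita for ade}, and the theorem is just the conjunction.

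The main obstacle in this chain is not any of the deductions per se, but verifying that the hypotheses of the inductive Theorem A actually hold for \emph{every} $(\bv', \bw')$ arising in the recursion; in particular one must check that staying inside type ADE and under the mild constraints of Corollary \ref{homo-hikita for ADE quivers} is preserved when passing to the smaller pairs used in the induction. Once this compatibility of the ADE hypotheses with the induction is verified, the theorem is a formal consequence of Theorems A and B together with the Appendix extension to generalized slices; no further geometric input on the Coulomb side is needed.
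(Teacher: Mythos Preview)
Your proposal is correct and matches the paper's argument exactly: the theorem is stated as a summary corollary of Theorems A and B, and the paper proves it by the same three-step chain you describe (non-equivariant base case via \cite{KTWWY19} plus the Appendix extension to generalized slices, then Theorem~\ref{equivariant hikita from non-equivariant} for the equivariant homological version, then Theorem~\ref{k-hikita from homo-hikita} for the K-theoretic version). The closure of the ADE class under the induction, which you flag as the main point to check, is handled in the paper by Proposition~\ref{fixed points of quiver theory is quiver theory}.
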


As one immediate corollary, we obtain a parametrization of $\nu(\mathbb{C}^\times)$-fixed points on (deformed) K-theoretic Coulomb branches for ADE quiver theories. In particular, we see that $(\mathcal{M}_Q^\times)^{\nu(\mathbb{C}^\times)}$ consits of one point if $V(\lambda)_\mu \neq 0$, and is empty otherwise (compare with \cite[Conjecture~3.25(1)]{bfn_slices}). Here $V(\la)$ is the irreducible representation of $\mathfrak{g}_{Q}$ with highest weight $\lambda=\sum_{i}w_i\omega_i$ and $\mu=\lambda-\sum_{i}v_i\alpha_i$.  We also conclude that the algebra of functions on schematic fixed points $\bC[(\mathcal{M}_Q^\times)^{\nu}_F]$ is flat over $F$.
%Comparing with with \cite[Conjecture~3.25(1)]{bfn_slices}, one may expect that there should be a version of geometric Satake, involving K-theoretical Coulomb branches and the quantum group $U_q(\g_Q)$ (see also Subsection~\ref{subsec: corollaries of K-Hikita}). 

%Finaly, let's mention that Kifung Chan informed us that together with Conan Leung they have made progress in proving homological Hikita-Nakajima conjecture for arbitrary quiver varieties (assuming ``flatness conjecture'' \cite[]{} they can reduce the Hikita conjecture to the cases when the quiver varieties is of 0 or negative dimension for general case) %может не надо.....не ну как

Our method of dealing with  both homological and K-theoretic conjecture is by study of what one can call the {\it factorization property} of both sides of equivariant Hikita conjecture. Let us explain it in more detail.

\subsection{Outlines of proofs}
\begin{comment}
% https://q.uiver.app/#q=WzAsNixbMCwxLCJLXntUX3tcXGJvbGQgd31cXHRpbWVzIEdMX3tcXGJvbGQgdn19IChcXG1hdGhybXtwdH0pIl0sWzEsMCwiS157VF97XFxib2xkIHd9fSAoXFx3aWRldGlsZGV7XFxtYXRoZnJha3tNfX1fUSkiXSxbMiwwLCJLXntUX3tcXGJvbGQgd319IChcXHdpZGV0aWxkZXtcXG1hdGhmcmFre019fV9RXntUX3tcXGJvbGQgd319KSJdLFszLDEsIlxcbWF0aGJiIENbVF97XFxib2xkIHd9XV57XFxvcGx1cyB8XFx3aWRldGlsZGV7XFxtYXRoZnJha3tNfX1fUV57VF97XFxib2xkIHd9fXx9Il0sWzEsMiwiXFxtYXRoYmIgQ1tcXG1hdGhjYWwgTV5UX3tRLCBUX3tcXGJvbGQgd319XSJdLFsyLDIsIlxcbWF0aGJiIENbXFx3aWRldGlsZGV7XFxtYXRoY2FsIE19XlRfe1EsIFRfe1xcYm9sZCB3fX1dIl0sWzAsMV0sWzEsMl0sWzAsNF0sWzIsM10sWzQsNV0sWzUsM11d
\[\begin{tikzcd}
	& {H^*_{F}(\widetilde{\mathfrak{M}}_Q,\bC)} & {H^*_{F}(\widetilde{\mathfrak{M}}_Q^{F},\bC)} \\
	{H^*_{F\times G_\bv}(\mathrm{pt},\bC)} &&& {\mathbb C[\mathfrak{t}_{f}]^{\oplus |\widetilde{\mathfrak{M}}_Q^{F}|}} \\
	& {\mathbb C[\mathcal M^T_{Q, F}]} & {\mathbb C[\widetilde{\mathcal M}^T_{Q, F}]}
	\arrow[from=1-2, to=1-3]
	\arrow[from=1-3, to=2-4]
	\arrow[from=2-1, to=1-2]
	\arrow[from=2-1, to=3-2]
	\arrow[from=3-2, to=3-3]
	\arrow[from=3-3, to=2-4]
\end{tikzcd}\]
\end{comment}

\subsubsection{Proof outline of Theorem \ref{thm: intro equi-homo-hikita}} \label{subsubseq: intro method for homo-hikita}

%First, we explain what we do to prove Theorem \ref{thm: intro equi-homo-hikita}.

Note that \eqref{eq: intro equi homo hikita} is a deformation of \eqref{eq: intro homo hikita} over $\bC[\ff]$, meaning that the fiber over $0 \in \ff$ of \eqref{eq: intro equi homo hikita} is \eqref{eq: intro homo hikita}. Our idea is to look at fiber over an arbitrary point $t \in \ff$.

For quiver side, the localization theorem in equivariant cohomology reduces the computation of this fiber to the computation of $t$-fixed points of $\wti \fM_Q$; this technique goes back at least to \cite{Nak01b}. We suggest a variant of this result in Proposition~\ref{localization of quiver side of homological hikita}, Corollary~\ref{localization of quiver side of homological hikita over T_w}.

For the Coulomb side, the idea is similar. Localization technics for Coulomb branches appear already in \cite[5(i)]{BFN18}. 
%Our situation is somewhat different, though:
We are interested in the fiber of $H^{F \times G_\bv}(\cR_{G_\bv, {\bf{N}}})$ at $t \in \ff$ as a module over $H_{F}(\pt)$, so the localization theorem should be applied non-directly.
Under certain assumption, we compute the fiber of the algebra of schematic fixed points, appearing in~\eqref{eq: intro equi homo hikita}, see Corollary~\ref{cor: fiber of b-algebra homo}.

%We tautologically rewrite it as $H^{F}(\cR_{G_\bv, {\bf{N}}} / G_\bv)$, and apply the localization theorem for equivariant Borel--Moore homology to the quotient stack $[\cR_{G_\bv, {\bf{N}}} / G_\bv]$. 
%This reduces the question to computation of the fixed points stack $[\cR_{G_\bv, {\bf{N}}} / G_\bv]^t$. For this, we prove a very general result, which describes the fixed points substack of a quotient stack, Theorem~\ref{fixed points on quotient} (this result should be known to experts, but we could not find a reference). 
%{\color{red}{thanks to Justin Hilburn and Gammage who communicated the answer to us as well as references}} 
%Applying it, we express the fiber of the Coulomb branch deformation over an arbitrary point $t \in \mathfrak{f}$ in Proposition~\ref{localization of homological coulomb branch over deformation}. This then allows us to describe the fiber over $t$ of the algebra of functions on $\nu$-fixed points of $\mathcal{M}_Q({\bf{v}},{\bf{w}})_{\mathfrak{f}}$ in Corollary~\ref{localization of B-algebra}.

Combining the results of two previous paragraphs together, one sees that taking fiber of the equivariant Hikita conjecture \eqref{eq: intro equi homo hikita} over the deformation, yields the (non-equivariant) Hikita conjecture \eqref{eq: intro homo hikita}, but for a different gauge theory (if $F = T_\bw$, this is a theory corresponding to the same quiver, but to different framing and dimension vectors). So, if we assume we know the non-equivariant version for enough cases, we obtain that all fibers of \eqref{eq: intro equi homo hikita} over $\bC[\ff]$ are isomorphic. Using also compatibility with ``Kirwan-type'' maps, this allows us to establish the required result, see proof of Theorem~\ref{equivariant hikita from non-equivariant}.

\subsubsection{Proof outline of Theorem \ref{thm: intro k-hikita}}

For our main result on the K-theoretic case, we argue similarly, but consider both sides of \eqref{eq: intro k-hikita} as modules over $\bC[(T_\bv / S_\bv) \times F]$ (not over $\bC[F]$, which would have been more similar to what we did in the homological case). 
We first show that both sides of \eqref{eq: intro k-hikita} are naturally quotients of $\bC[(T_\bv / S_\bv) \times F]$, see Corollary~\ref{corollary: surjections for K-hikita}. 
This follows from the fact that K-theoretic Coulomb branches are generated by dressed minuscule monopole operators, see Proposition~\ref{monopoles generate K-Coulomb branch}. 
%For that, we prove that K-theoretic Coulomb branches are generated by dressed minuscule monopole operators, see Proposition~\ref{monopoles generate K-Coulomb branch} %(this result is known to experts, the proof is identical to the homological case of \cite{Wee19}, and the idea goes back at least to \cite{FT19b}). 
We have the following surjective morphisms, and we want to show that their kernels coincide: %(the morphism $\phi^{\times}_1$ is the Kirwan map \cite{MN18}, and the  morphism $\phi^\times_2$ is induced from the inclusion of the Cartan subalgebra in the Coulomb branch \cite[3(vi)]{BFN18}): %{\color{red}{Vasya: maybe say more about the definitions of these morphisms}}
\begin{equation} \label{eq: intro surjections from K-theory of point to both sides}
\begin{tikzcd}
	& {K^{G_{\bold v} \times F} (\mathrm{pt})} \\
	{K^{F}(\widetilde{\mathfrak M}(G_{\bold v}, {\bf{N}}))} && {\mathbb C[(\mathcal M({G_{\bold v}}, {\bf{N}})^\times_{F})^{\nu}]}
	\arrow[two heads, "{\phi^\times_1}"', from=1-2, to=2-1]
	\arrow[two heads, "{\phi^\times_2}", from=1-2, to=2-3].
\end{tikzcd}
\end{equation}

We take a point $(t_\bv, f) \in (T_\bv / S_\bv) \times F$ and compute formal completions of morphisms $\phi^\times_1, \phi^\times_2$ at $(t_\bv, f)$.
%of \eqref{eq: intro surjections from K-theory of point to both sides}. 
This is done by localization theorem in equivariant K-theory, similarly to what we described in Section~\ref{subsubseq: intro method for homo-hikita}. We again get (completed) K-theoretic Hikita conjecture for a different gauge theory.% for different $(f, t_\bv)$.

But now the following slogan comes to help: ``for nice spaces, completion of equivariant K-theory is isomorphic to completion of equivariant homology''.

For the quiver side, this isomorphism is given by the (equivariant) Chern character, we check it in Lemma \ref{chern induces isomorphism in completion for quiver varieties}. For the Coulomb side, the situation is more subtle, and we dedicate the following Section \ref{subseq: intro isomorphism of completions} for its explanation.

Combining all of the above, this reduces K-theoretic Hikita conjecture to the homological one for a larger set of gauge theories.

\subsection{Isomorphism of completed homological and K-theoretic Coulomb branches} \label{subseq: intro isomorphism of completions}
\subsubsection{The result}
As we pointed out above, it is a general phenomenon that completions of equivariant K-theory and equivariant Borel--Moore homology are isomorphic. One result of geometric nature, related to this fact for Coulomb branches, dates back to \cite{BFM05}: there, Coulomb branches for pure gauge theory (meaning ${\bf{N}} = 0$) are identified with variants of universal centralizers, and one can see that formal neighborhoods of identities in group-group and group-algebra universal centralizers are indeed isomorphic. For more algebraic evidence, there is an isomorphism of completions of Yangian and quantum loop group \cite{GTL13} (we explain the relation of these results to Coulomb branches in Section \ref{subsubseq: ade quivers and completions}), see also DAHA-type examples in \cite[Section~4]{BEF20}.

We prove a general result of this sort:
\begin{thmx}[See Theorem~\ref{thm_iso_after_completions}] \label{thm: intro isomorphism of completions}
For any $(G, {\bf{N}})$ there is an isomorphism of completions of K-theoretic and homological Coulomb branches, as algebras over $K^{G}(\pt)^{\wedge 1} \simeq H^{G}(\pt)^{\wedge 0}$:
\begin{equation} \label{eq: intro isomorphism of completions}
\Upsilon\colon K^{G(\cO)}(\cR_{G, {\bf{N}}})^{\wedge 1} \xrightarrow{\sim} H^{G(\cO)}(\cR_{G, {\bf{N}}})^{\wedge 0}.
\end{equation}
Here completions are taken over $1 \in \Spec K_{G}(\pt)$ and $0 \in \Spec H_{G}(\pt)$.

The same holds for quantized Coulomb branches and for their deformations over a flavor torus $F$.
\end{thmx}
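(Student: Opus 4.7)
The plan is to construct $\Upsilon$ as an equivariant Chern character (or the Baum--Fulton--MacPherson Riemann--Roch transformation) and then verify it intertwines the BFN convolution products after completion. Write $\cR_{G,{\bf N}} = \varinjlim_\la \cR^{\le\la}$ as an increasing union of $G(\cO)$-stable, finite-type closed subvarieties on which the $G(\cO)$-action factors through a finite-dimensional quotient $G_n$. On each truncation, the standard equivariant Chern character, combined with the Atiyah--Segal completion theorem applied to $G_n$, yields an isomorphism
\[
\ch \colon K^{G(\cO)}(\cR^{\le\la})^{\wedge 1} \iso H^{G(\cO)}(\cR^{\le\la})^{\wedge 0}
\]
compatible with the closed embeddings $\cR^{\le\la} \hookrightarrow \cR^{\le\mu}$. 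Passing to the filtered colimit over $\la$ (which commutes with completion here because the completion ideal is fixed and pulled back from $K^{G(\cO)}(\pt)$) produces a $K^{G}(\pt)^{\wedge 1}$-linear isomorphism $\Upsilon$.

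To promote $\Upsilon$ to an isomorphism of algebras, I would verify compatibility with convolution. The BFN convolution product is built from a smooth pullback, a closed embedding and a pushforward that is proper after restriction to each $\cR^{\le\la_1}\,\conv\,\cR^{\le\la_2}$. The Chern character commutes strictly with smooth pullback and closed embeddings; for the proper pushforward one invokes Grothendieck--Riemann--Roch. The resulting Todd-class correction is a formal power series in Chern classes with unit leading term, so it is invertible after completion and enters symmetrically in the two factors of the convolution, and hence cancels. This shows $\Upsilon$ preserves the product.

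The quantized version follows by repeating the argument with $G(\cO)$ replaced by $G(\cO) \rtimes \torus$, and the flavor-deformed version by further enlarging to $(G(\cO) \times F) \rtimes \torus$; the completion ideal is still pulled back from equivariant $K$-theory of a point for the larger group, so the entire formalism transfers without change.

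The main obstacle is controlling the convolution compatibility: the BFN pushforward is only \emph{ind-proper} through the truncations $\cR^{\le\la}$, and one must check uniformly in $\la$ that the Todd correction in Grothendieck--Riemann--Roch trivializes after completing at the augmentation. A secondary technical point is that $\cR_{G,{\bf N}}$ is infinite-dimensional, so some care is needed to justify that completion commutes with the colimit over $\la$; this reduces to observing that the completion ideal is independent of $\la$ and originates from $K^{G(\cO)}(\pt)$, not from the space itself, so the colimit is a filtered colimit of isomorphisms of completed modules and thus remains an isomorphism.
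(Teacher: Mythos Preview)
Your outline has the right shape at a coarse level, but the heart of the argument—multiplicativity—has a genuine gap.

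First, a point about which transformation you are using. The truncations $\cR^{\le\la}$ are singular in general, so you need the Baum--Fulton--MacPherson/Edidin--Graham map $\tau$ on equivariant $K$-homology, not the Chern character $\ch$ on $K$-cohomology. You mention $\tau$ in passing, but then your multiplicativity argument speaks of $\ch$ commuting with closed embeddings; pushforward along closed embeddings already involves a Todd defect, so the bookkeeping is not as simple as stated.

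The real issue is the sentence ``the resulting Todd-class correction \ldots enters symmetrically in the two factors of the convolution, and hence cancels.'' This is not what happens. In the paper, the unmodified map $\tau$ is an algebra homomorphism only when ${\bf N}=0$; for ${\bf N}\neq 0$ one must twist $\tau$ by the inverse Todd class of the specific bundle $\cT^d_{\le\la}/\cR^d_{\le\la}$ (pulled back from $\Gr_G$), and that twist is precisely what is needed to make the diagram with $(\pi^*)^{-1}i_*\colon K(\cR)\hookrightarrow K(\Gr_G)$ commute. There is no a priori symmetry that makes the Todd corrections in the convolution diagram cancel; the BFN convolution involves an asymmetric mix of pullback along $p$, descent along $q$, and pushforward along $m$ between spaces of different (and infinite) dimensions, and a direct GRR bookkeeping on that diagram is exactly the thing the paper avoids.

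Instead, the paper's strategy for multiplicativity is a reduction: first check by hand that $\tau$ is an algebra map for $(T,0)$ (a short explicit computation with the classes $r_\la$); then deduce it for $(G,0)$ because $\iota_*\colon K^{T_\cO}(\Gr_T)^W \hookrightarrow K^{G_\cO}(\Gr_G)$ is an injective algebra map that becomes an isomorphism after localization; finally, for general $(G,{\bf N})$, use the injective algebra map $(\pi^*)^{-1}i_*\colon K^{G_\cO}(\cR)\hookrightarrow K^{G_\cO}(\Gr_G)$, and observe that the Todd twist is exactly what is required for $\Upsilon$ and $\tau$ on $\Gr_G$ to be intertwined by this embedding. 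This abelianization argument replaces the direct GRR analysis you sketched; without either that reduction or a correct identification of the Todd twist, your proof does not close.
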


Let's elaborate on the definition of $\Upsilon$. Note that the usual Chern character does not commute with direct image, while $\cR_{G,{\bf{N}}}$ is defined as a direct image of schemes of infinite type. 
%Moreover, the multiplication structure is defined using direct images, so the Chern character can not give an isomorphism of algebras in this case.
So, a modification in style of Riemann--Roch theorem is needed. Since $\cR_{G, {\bf{N}}}$ is defined as an inductive limit of projective limits of singular varieties, work is required to make the construction work. This is done in Section \ref{sec: Riemann-Roch}. The main technical tools for this were developed by Edidin--Graham \cite{eg0, eg1, eg2}. 

We also write an explicit formula for the value of $\Upsilon$ on dressed minuscule monopole operators (Section~\ref{subsubsec: formulas for upsilon on monopoles}). This gives a full description of the isomorphism~(\ref{eq: intro isomorphism of completions}) in case when these elements generate the Coulomb branch algebra. This is the case when $G$ is a torus (Section~\ref{subsubsec: formulas for upsilon in abelian case}), and also when $(G, {\bf{N}})$ is a quiver gauge theory (Proposition~\ref{monopoles generate K-Coulomb branch}). It is worth emphasizing that the geometric origin of the map $\Upsilon$ implies its compatibility with the ``abelianization'' map relating Coulomb branch for $(G,{\bf{N}})$ with the Coulomb branch for $(T,{\bf{N}})$, where $T \subset G$ is a maximal torus as well as with the map relating the Coulomb branch for $(G,{\bf{N}})$ with the Coulomb branch for the pure gauge theory $(G, 0)$.  %In the last case we write a formula for value of $\Upsilon$ on dressed minuscule monopole operators, which are generators by Proposition~\ref{monopoles generate K-Coulomb branch}.

Theorem \ref{thm: intro isomorphism of completions} may have  algebraic applications, since many interesting algebras appear as quantizations of Coulomb branches, see \cite{bfn_slices, FT19a, BEF20}.
%In Section~\ref{subsubseq: ade quivers and completions} we speculate on possible applications of Theorem~\ref{thm: intro isomorphism of completions} to shifted quantum loop groups.

We believe that this theorem has a straightforward generalization for Coulomb branches with symmetrizers \cite{NW23} and for parabolic Coulomb branches \cite[Definition~2.2]{KWWY24}, which may give more algebraic applications.

\subsubsection{Relation of Theorem \ref{thm: intro isomorphism of completions} to previous results} \label{them D vs other results}
Theorem~\ref{thm: intro isomorphism of completions} goes back to \cite[Theorems~5.11.11,~6.2.4]{CG97}, where the finite-dimensional non-equivariant case is considered. Namely, under certain assumptions, the following isomorphism of algebras is constructed in \cite{CG97}: 
\begin{equation}\label{cg result rr}
K(X \times_Y X) \iso H_*(X \times_Y X),
\end{equation}
where $X$ is a smooth variety mapping to some (singular) variety $Y$. The convolution product on these algebras is defined via the closed embedding $X \times_Y X \hookrightarrow X \times X$, using that $X$ is smooth.
In \cite[Sketch of proof of Theorem 12.7]{Gi98} %\footnote{We are grateful to Hiraku Nakajima for pointing out this reference.} 
Ginzburg considers an equivariant version of the map (\ref{cg result rr}) for  $X=\widetilde{\mathcal{N}}$ and $Y=\mathcal{N}$ and it seems clear that he had an equivariant version of \cite[Theorem~5.11.11]{CG97} in mind (see also~\cite{Lus89}). %(claiming that the map (\ref{cg result rr}) is a homomorphism of algebras for $X \rightarrow Y$ as above). 
Let us point out that both definition of the convolution products and the map in \eqref{cg result rr} depend on the embedding of $X \times_Y X$ in $X \times X$ and rely on the fact that $X$ is smooth. The map (\ref{cg result rr}) is ``in between'' the Chern character $\on{ch}_{X \times X}$ and the Riemann--Roch map $\tau_{X \times X}$ for $X \times X$ (see \cite{BFM75}). Namely, the map (\ref{cg result rr}) is given by $(1 \boxtimes \on{Td}_X) \on{ch}_{X \times X} = (\on{Td}_X^{-1} \boxtimes 1)\tau $ (an alternative candidate is $(\sqrt{\on{Td}_X} \boxtimes \sqrt{\on{Td}_X}) \on{ch}_{X \times X}$); here $\on{Td}_X$ is the Todd class of the tangent bundle to $X$.

%Analogs of results above in the affine flag variety setting are considered in \cite{VarVas10}.

%In \cite{VarVas10} the affine version of \cite{CG97} is discussed and different technics of \cite{CG97} are extended to the... 

In the Coulomb branch setting, we are dealing with the $G_{\mathcal{O}} \rtimes \bC^\times$-equivariant $K$-theory/ homology of the space $\mathcal{R}_{G,{\bf{N}}}$. Informally, this should be considered as $G_{\mathcal{K}} \rtimes \bC^\times$-equivariant $K$-theory/homology of $X \times_Y X$ for $X=\mathcal{T}$ and $Y={\bf{N}}_{\mathcal{K}}$ (see \cite[Remarks 3.9]{BFN18}). The later spaces are ``too infinite dimensional'' to make sense of their  $K$-theory/homology (see \cite[Section 5.2]{CW23} for an alternative approach). So, the presentation of algebras we are dealing with is not as in \cite{CG97}. 
%in particular, the convolution product is purely in terms of $\mathcal{R}_{G,{\bf{N}}}$ and does not require its embedding inside . 
Because of this, literally \cite[Theorem 5.11.11]{CG97} and its equivariant analogs are not applicable in our situation.

Let's point out that from the perspective of our proof of Theorem \ref{thm: intro isomorphism of completions}, the realization via $\mathcal{R}_{G,{\bf{N}}}$ actually {\emph{simplifies}} the construction of $\Upsilon$ (the analog of (\ref{cg result rr}) above). Namely, for ${\bf{N}}=0$, the isomorphism $\Upsilon$ is just the equivariant version of the morphism $\tau$ (constructed in \cite{eg1}) without any additional Todd class of tangent bundle corrections. In general, the morphism $\Upsilon$ is given by $\tau$ times the Todd class of $\mathcal{T} \rightarrow \operatorname{Gr}_G$ pulled back to $\mathcal{R}_{G,{\bf{N}}}$.

To summarize, we define $\Upsilon$ using the results of Edidin--Graham and then check that it is a homomorphism of algebras via ``abelianization'' of Coulomb branches (\cite[Section~5]{BFN18}) by reducing all the computations to the case $G=T$, ${\bf{N}}=0$.
Along the way, we relate maps $\Upsilon$ for Coulomb branches for different gauge theories. 
%An alternative approach could be by adapting the proof of \cite[Theorem~5.11.11]{CG97} to the Coulomb setting. %We are not doing this in the paper. 
It would be interesting to use the Coulomb branch definition of \cite[Section 5.2]{CW23} and adapt the proof of \cite[Theorem~5.11.11]{CG97} to this setting. 

Completions of $K$-theoretic Coulomb branches are also studied in \cite{VV25} (see Theorem 4.7 in \textit{loc. cit.}), the main difference between Theorem \ref{thm: intro isomorphism of completions} and their result is that the completion is taken w.r.t. {\emph{different}} ideals. In the context of \cite{VV25}, the fixed points of $\mathcal{R}, \mathcal{T}$ w.r.t. the corresponding semi-simple element are finite dimensional, so \cite[Theorem~5.11.11]{CG97} is applicable as stated. In Theorem \ref{thm: intro isomorphism of completions} one can specialize $q=1, \hbar=0$ while \cite[Theorem~4.7]{VV25} is a purely quantum statement.

 Finally, let us mention that the abelian case of Theorem~\ref{thm: intro isomorphism of completions} (for $\hbar=0$) is \cite[Theorem~5.3]{GaMcWe19} (we are grateful to Ben Webster for pointing this out to us).

%{\color{red}{explain relation with Varagnolo-Vasserot, refer to their Lemma 4.2.16}}

%{\color{red}{starting with $\nu$ better to denote cocharacter acting on the Coulomb branch by $\nu^\vee$}}

%(so no additional modifications are needed). 
%Note that, from some perspective, the realization via $\mathcal{R}_{G,{\bf{N}}}$ is even simpler as in this case no modification...

%The main difference of Ginzburg's setting and our's is and they define convolution via embedding into smooth... 

%Another is \cite{VarVas10}, where the authors consider... They also do not formulate isomorphism of completions, although they definitely had in mind. 

%An alternative approach to proving Theorem \ref{thm: intro isomorphism of completions} would be by reformulating the proof of \cite[Theorem 5.11.11]{CG97} using the realization via $\mathcal{R}_{G,{\bf{N}}}$. We are not doing this in the paper. 

%Note also that we use a slightly different approach to prove that $\Upsilon$ is an algebra homomorphism. Namely, we first define $\Upsilon$ using the results of Edidin--Graham and then check that it is a homomorphism of algebras via ``abelianization'' of Coulomb branches (\cite[Section 5]{BFN18}) by reducing all the computations to the case $G=T$, ${\bf{N}}=0$.

%We believe that directly , we are not doing it here. 

%to Webster and co

\subsection{Further directions and generalizations}
Most of methods of this paper are applicable for Higgs and Coulomb branches associated to an arbitrary gauge theory $(G, {\bf{N}})$ (not necessarily of quiver type). %see Remark~\ref{rem: non-quiver type}. 
One does not expect that Hikita conjecture literally holds in this generality (see \cite[Section 6]{HKM24}), however it is an interesting question to investigate the extents of its validity (or to invent suitable modifications).

It would be also interesting to investigate multiplicative version of Hikita conjecture for the nilpotent cone (a multiplicative version of the nilpotent cone is the variety of unipotent elements in $G$), and, more generally, affinizations of coverings of nilpotent orbits (see \cite{HKM24} where the usual additive case is discussed).

We also expect an elliptic version of Hikita conjecture to exist (see \cite{LZ22}), involving the $B$-algebra of elliptic Coulomb branches. Elliptic BFN Coulomb branches are expected to be the Coulomb branches of $5d$ $\cN = 1$ gauge theories, but are very poorly studied at the moment, see \cite[Section~4]{FMP20}.  % {\color{red}{can also say about char. $p$ version}}

Finally, there is even more deep variant of the Hikita conjecture, the so called \textit{quantum Hikita conjecture}, proposed in \cite{KMP21}. It involves quantum cohomology of a quiver variety (in the guise of \textit{specialized quantum D-module}) and the \textit{D-module of graded traces} for the quantized Coulomb branch.

Note that in \cite[Remark~1.4]{KMP21}, the authors suggest that one should be able to adapt this conjecture, replacing quantum cohomology by the quantum K-theory, and suggest that it ``in many respects proved to be an even richer object''. Below we mention a conjectural statement of such adaptation (joint with H.~Dinkins and I.~Karpov).

Similarly to the non-quantum form above, one should replace (quantum) cohomology by the (quantum) K-theory on the quiver side (defined as in \cite{KPSZ21}), and replace homological Coulomb branch by the K-theoretic one on the Coulomb side.

More formally, one should consider the $(\hbar=q)$-specialization of the quantum K-theoretic $D$-module. Conjecture claims that this specialization should be equal to the D-module of graded traces for the quantized K-theoretic Coulomb branch. Moreover, the analog of the diagram (\ref{eq: intro surjections from K-theory of point to both sides}) still exists and our D-modules should be equal as quotients of the ``master'' $D$-module $K_{G_{\bf{v}} \times F \times \bC^\times}(\operatorname{pt})[[z]]$ (see also \cite[Remark 1.11]{BL25}). The $(\hbar=q)$-specializations of Okounkov's vertex functions with descendants to torus fixed points of $\widetilde{\mathfrak{M}}_Q$ should recover (normalized) graded traces of Verma modules over quantized K-theoretic Coulomb branches.

We do not know if our method for the non-quantum conjecture can be extended to the quantum case.

\subsection{The paper is organized as follows} 
%In Section~\ref{sec: fixed points on quotient stack}, we prove Theorem~\ref{fixed points on quotient} about fixed points stack of a quotient stack. 

In Section~\ref{section: homo hikita}, we study the homological Hikita conjecture. The main general result is Theorem~\ref{equivariant hikita from non-equivariant}. Particular cases of the conjecture are obtained in Corollaries~\ref{cor: hiktia for A-quivers},~\ref{homo-hikita for ADE quivers}. 

In Section~\ref{sec: Riemann-Roch}, we study the equivariant Riemann--Roch theorem in the context of Coulomb branches, and prove the main isomorphism of completions result, Theorem~\ref{thm_iso_after_completions}. Explicit formulae for this isomorphism are given in Section~\ref{subsec: formulas for upsilon}.

In Section~\ref{sec: K-hikita}, we introduce and study the K-theoretic version of Hikita conjecture. In Section~\ref{subsec: monopoles generate k-branches}, we discuss generators of K-theoretic Coulomb branches. The main general result on K-theoretic conjecture is Theorem~\ref{k-hikita from homo-hikita}. It is derived for some particular cases in Corollaries~\ref{cor: k-hikita for ade},~\ref{cor: k-hikita jordan quiver}. Some corollaries of this conjecture are discussed in Section~\ref{subsec: corollaries of K-Hikita}.

In Appendix~\ref{appendix: ade}, we study the homological conjecture in types ADE by direct geometric analysis. The non-equivariant conjecture for generalized slices is proved in Theorem~\ref{thm: hikita for generalized slices}. The equivariant version is proved in Theorem~\ref{Equivariant Hikita conjecture for ADE quivers}.

%In Section \ref{section: homo hikita} we deal with the homological version of the conjecture. We recall all required notions in Subsection \ref{subsection: homological Hikita}; we prove a crucial technical fact about fixed points on a quotient stacks (Theorem \ref{fixed points on quotient}) in Subsection \ref{subsec: fixed points on quotient stack}. We analyze 

\subsection*{Acknowledgments}
We are indebted to Alexander Braverman, who first explained to us that K-theoretic version of Hikita conjecture in this context should exist. We thank Michael Finkelberg for useful discussions, %Justin Hilburn, Benjamin Gammage, and Alexander Petrov for telling us the answer in Theorem~\ref{fixed points on quotient} and help with the proof, 
Joel Kamnitzer and Alex Weekes for valuable comments and for sharing their unpublished note, which helped us with the proof of Corollary~\ref{cor: fiber of b-algebra homo},
and Dinakar Muthiah for valuable discussions on Section~\ref{subsec: monopoles generate k-branches}. We are grateful to Hiraku Nakajima for valuable comments on an earlier version of this text and for suggesting relevant references. The second named author is supported by the Simons
Foundation Award 888988 as part of the Simons Collaboration on Global Categorical Symmetries.
%{\color{red}{thanks to Kamnitzer and Weekes for sharing their unpublished note}} 

\section{Homological Coulomb branches and homological Hikita conjecture} \label{section: homo hikita}

Throughout the paper, for a scheme $X$ with $G$-action, we use the notations $H_G(X) = H_G^*(X)$ for equivariant cohomology, $H^G(X) = H^G_*(X)$ for equivariant Borel--Moore homology, $K_G(X) = K_0(\Vect^G(X))$ for equivariant K-cohomology (K-theory), and $K^G(X) = K_0(\Coh^G(X))$ for equivariant K-homology (G-theory, sometimes voluntarily also called equivariant K-theory).

\subsection{Homological Hikita conjecture} \label{subsection: homological Hikita}

Let $Q$ be a finite oriented quiver, and $Q_0$ its set of vertices. %For an edge $e \in Q_1$ denote its source and target by $s(e)$ and $t(e)$. 
Let $\bv = \{v_i\}_{i \in Q_0}$ be  dimension vector, $\bw = \{w_i \}_{i \in Q_0}$ be the framing vector.
We associate to each vertex $i \in Q_0$ the vector space $V_i$, $\dim V_i = v_i$, and the framing space $W_i$, $\dim W_i = w_i$.
We denote 
\[
{\bf{N}} = \bigoplus_{i \rightarrow j} \Hom (V_{i}, {V_{j}}) \oplus \bigoplus_{i \in Q_0} \Hom( {V_i}, {W_i}),
\]
$G_\bv = \prod_{i \in Q_0} GL_{v_i}$, $G_\bw = \prod_{i \in Q_0} GL_{w_i}$, let $T_\bv \subset G_\bv$, $T_\bw \subset G_\bw$ be   maximal tori, and let $S_\bv$, $S_\bw$ be the Weyl groups of $G_\bv$ and $G_\bw$ respectively. 
Let $\ft_\bv$, $\ft_\bw$ be the Lie algebras of $T_\bv$ and $T_\bw$ respectively.
Let a torus $F$ act on ${\bf{N}}$, such that its action commutes with $G_\bv$. We call $F$ the flavor torus. For example, one can take $F = T_\bw$, but if $Q$ has loops or multiple edges, $F$ may be chosen larger, see \cite[9.5.(i)]{BLPW14}. Denote $\ff = \Lie F$.

We can associate a pair of symplectic singularities to this data -- the Nakajima quiver variety $\fM(G_\bv, {\bf{N}})$ and the BFN Coulomb branch $\cM(G_\bv, {\bf{N}})$.

In many cases, affine quiver variety $\fM(G_\bv, {\bf{N}})$ can be resolved by the smooth quiver variety $\wti \fM(G_\bv, {\bf{N}})$. Variety $\wti \fM(G_\bv, {\bf{N}})$ depends on the choice of a regular character $\nu$ of $G_\bv$, and is defined the Hamiltonian GIT-reduction $({\bf{N}} \oplus {\bf{N}}^*) /\!\!/\!\!/^\nu G_\bv$. 
We fix $\nu$ and hence $\wti \fM(G_\bv, {\bf{N}})$. Variety $\wti \fM(G_\bv, {\bf{N}})$ admits the action of $F$, as well as the contracting action of torus, which we denote $\torus_\hbar$ (see \cite[Section~2.7]{Nak01a}). When the context is clear, we may denote $\wti \fM(G_\bv, {\bf{N}})$ just as $\wti \fM_Q$.

Let $\cK = \bC((t))$, $\cO = \bC[[t]]$. For a group $G$ we denote its affine Grassmannian $\Gr_G = G_\cK/ G_\cO$. The space $\Gr_G$ is stratified by smooth $G_{\cO}$-orbits $\Gr^\la$, parametrized by dominant coweights~$\la$; their closures $\ol \Gr^\la$ are called affine Schubert varieties.
Recall the BFN space of triples $\cR_{G_\bv, {\bf{N}}}$, associated with the group $G_\bv$ and its representation ${\bf{N}}$. It is defined by the Cartesian diagram
% https://q.uiver.app/#q=WzAsNCxbMCwxLCJHTF97XFxib2xkIHZ9KFxcbWF0aGNhbCBLKSBcXHRpbWVzXntHKFxcbWF0aGNhbCBPKX0gTihcXG1hdGhjYWwgTykiXSxbMiwxLCJOKFxcbWF0aGNhbCBLKSJdLFsyLDAsIk4oXFxtYXRoY2FsIE8pIl0sWzAsMCwiXFxtYXRoY2FsIFJfe0dMX3tcXGJvbGQgdn0sIE59Il0sWzAsMV0sWzIsMSwiIiwyLHsic3R5bGUiOnsidGFpbCI6eyJuYW1lIjoiaG9vayIsInNpZGUiOiJ0b3AifX19XSxbMywyXSxbMywwLCIiLDAseyJzdHlsZSI6eyJ0YWlsIjp7Im5hbWUiOiJob29rIiwic2lkZSI6InRvcCJ9fX1dXQ==
\begin{equation} \label{definition of bfn space of triples}
\begin{tikzcd}
	{\mathcal R_{G_{\bold v}, {\bf{N}}}} && {{\bf{N}}_{\mathcal O}} \\
	{G_{\bold v,\mathcal{K}} \times^{G_{\bv,\mathcal O}} {\bf{N}}_\mathcal O} && {{\bf{N}}_\mathcal K},
	\arrow[from=1-1, to=1-3]
	\arrow[hook, from=1-1, to=2-1]
	\arrow[hook, from=1-3, to=2-3]
	\arrow[from=2-1, to=2-3]
\end{tikzcd}
\end{equation}
see \cite{BFN18} for details. Its equivariant Borel--Moore homology and equivariant K-theory possess a natural multiplication structure. Homological Coulomb branch $\cM(G_\bv, {\bf{N}})$ is defined as $\Spec H^{G_\bv} (\cR_{G_\bv, {\bf{N}}})$. The K-theoretic Coulomb branch is defined as $\Spec K^{G_\bv}(\cR_{G_\bv, {\bf{N}}})$. In this Section, we deal with homological version, and return to K-theoretic one later. %in the paper.

There is a Poisson deformation $\cM(G_\bv, {\bf{N}})_{\ff}$ of $\cM(G_\bv, {\bf{N}})$ over $\ff$, which is defined as $\Spec H^{G_\bv \times F} (\cR_{G_\bv, {\bf{N}}})$. 
%(the so-called {\it universal Poisson deformation} is the deformation over $\ft_\bw / S_\bw$ {\color{red}{Vasya: it may not be universal in general}}; $\cM(G_\bv, {\bf{N}})_{\ft_\bw}$ is obtained from it by base changing to $\ft_\bw$). 
Algebra $H^{G_\bv \times F} (\cR_{G_\bv, {\bf{N}}})$ is graded by $\pi_0(\cR_{G_\bv}) = \pi_1(G_\bv) = \bZ^{|Q_0|}$, we denote the corresponding (Hamiltonian) torus, acting on $\cM_{Q, F}$ by $H \simeq (\bC^\times)^{|Q_0|}$.

There is also an action of the multiplicative group on $\cM(G_\bv, {\bf{N}})_{\ff}$, coming from the homological grading of $H^{G_\bv \times F} (\cR_{G_\bv, {\bf{N}}})$. Sometimes, it is conical (then the corresponding gauge theory is called ``good or ugly''), but we do {\underline{not}} assume it here. We denote this torus by $\bC^\times_{\textrm{hom}}$, see \cite[3(v)]{BFN18}.

There is a quantization of $\cM(G_\bv, {\bf{N}})_{ \ff}$, which we denote $\cA(G_\bv, {\bf{N}})_{\ff}$. It is defined as $\cA(G_\bv, {\bf{N}})_{\ff} = H^{G_\bv \times F \times \bC^\times_{\hbar}}_* (\cR_{G_\bv, {\bf{N}}})$, where $\bC^\times_\hbar$ acts by the loop rotation.

When no confusion arise, we denote $\cM(G_\bv, {\bf{N}})$ by $\cM_Q$ and similarly to other varieties and algebras.

Recall that we picked a character $\nu$ of $G_\bv$, or equivalently a cocharacter of $H$ to be denoted by the same symbol.
For an algebra $A$, acted by a torus through a character $\nu$, its \textit{B-algebra} (also known as {\emph{Cartan subquotient}}) is defined as
\begin{equation*}
B^\nu(A) = A_0 \left/ \sum_{n \in \bZ_{<0}} A_n \cdot A_{-n} \right.,
\end{equation*}
where $A_n$ denotes the $n$-weight subspace of $\nu$-action. Note that when $A$ is commutative, this is equivalent to $B^\nu(A) = A \left/ ( A_{> 0} + A_{< 0} ) \right.$, so $B$-algebra is a quotient of $A$ (not just a subquotient), and it is nothing else but the algebra of functions on schematic fixed points under the $\nu$-action on $\Spec A$.

We now recall: %the \textit{equivariant Hikita conjecture} (also known as the \textit{Hikita--Nakajima conjecture}):

\begin{conj}[Homological Hikita conjecture] \label{hikita conjecture}
There is an isomorphism of graded $\bC[(\ft_\bv / S_\bv) ~\times~\ff] \T \bC[\hbar]$-algebras
\begin{equation} \label{quantized hikita conjecture}
H^*_{F \times \torus_\hbar} (\wti \fM_Q) \simeq B^\nu(\cA_{Q, \ff}).
\end{equation} 
In particular, specializing at $\hbar = 0$, there is an isomorphism of graded $\bC[(\ft_\bv/S_\bv) \times \ff]$-algebras
\begin{equation} \label{deformed hikita conjecture}
H^*_{F} (\wti \fM_Q) \simeq \bC[\cM_{Q, \ff}^{\nu}].
\end{equation}
Further specializing at $0 \in \ff$, there is an isomorphism of graded $\bC[\ft_\bv / S_\bv]$-algebras
\begin{equation} \label{nonquantized nondeformed hikita}
H^* (\wti \fM_Q) \simeq \bC[\cM_{Q}^{\nu}].
\end{equation}
\end{conj}
Parameter $\hbar$ here should be thought of as a coordinate on $\Lie \torus_\hbar$, $\cM_{Q, \ff}^{\nu}$ stands for the schematics fixed points of $\cM_{Q, \ff}$ under the action of $\nu$.
The grading on the LHS is the cohomological grading, and the grading on the RHS comes from the $\nu$-commuting $\bC^\times_{\textrm{hom}}$-action on $\cM_{Q, \ff}$.

Historically, \eqref{nonquantized nondeformed hikita} is the version originally proposed by Hikita \cite{Hik17}, and \eqref{quantized hikita conjecture} is the strengthening, proposed by Nakajima. 
We refer to \eqref{quantized hikita conjecture} as to {\it quantized Hikita conjecture} (this should not be confused with \cite{KMP21}, where quantum cohomology are considered),  to \eqref{deformed hikita conjecture} as to {\it equivariant Hikita conjecture}, and to \eqref{nonquantized nondeformed hikita} as to {\it Hikita conjecture}.
In this paper, we do not consider the quantized version, and deal with \eqref{deformed hikita conjecture}, \eqref{nonquantized nondeformed hikita}.

Let us now comment on where does the $\bC[(\ft_\bv/S_\bv) \times \ff]$-action, mentioned in the statement of \eqref{deformed hikita conjecture}, come from on both sides. For the LHS, we recall that  $\wti \fM_Q$ is the GIT-quotient of $\mu_{G_\bv, {\bf{N}}}^{-1}(0)$ by the group $G_\bv$, where $\mu_{G_\bv, {\bf{N}}}\colon {\bf{N}} \oplus {\bf{N}}^* \rightarrow \g_\bv$ is the moment map. It is isomorphic to the geometric quotient of the stable locus $\mu_{G_\bv, {\bf{N}}}^{-1}(0)^{s}$ by the free action of $G_\bv$. Thus, the LHS of \eqref{deformed hikita conjecture} can be rewritten as $H^*_{F \times G_\bv} (\mu_{G_\bv, {\bf{N}}}^{-1}(0)^{s})$,
which is clearly a module over $H^*_{F \times G_\bv}(\pt) = \bC[(\ft_\bv / S_\bv) \times \ff]$. 

The RHS of \eqref{quantized hikita conjecture} $H^{G_\bv \times F }_* (\cR_{G_\bv, {\bf{N}}})$ is also clearly a module over $H_{G_\bv \times F}^* (\pt) = \bC[(\ft_\bv / S_\bv) \times \ff]$.

Note that the homomorphism 
\begin{equation} \label{surjection of homology of point to quiver}
\phi_1\colon \bC[(\ft_\bv / S_\bv) \times \ff] \twoheadrightarrow H^*_{F} (\wti \fM_Q)
\end{equation}
is surjective by \cite{MN18} (this is the so-called {\it Kirwan surjectivity}). 
The homomorphism 
\begin{equation} \label{surjection from homology of point to b-algebra}
\phi_2\colon \bC[(\ft_\bv / S_\bv) \times \ff] \twoheadrightarrow \bC[\cM_{Q, \ff}^H]
\end{equation}
is also surjective (see \cite[Proposition 8.7]{KS25} and references therein).

%Indeed, by \cite{Wee19}, $\cA_{Q, \ff}$ is generated by $\bC[\ft_\bv / S_\bv \times \ff]\T \bC[\hbar]$ and the dressed minuscule monopole operators. 
%These last operators have nonzero $\nu$-weight, hence any expression, containing them, has a zero image in the B-algebra.

%are surjective, and morphisms $\bC[\ft_\bv / S_\bv] \twoheadrightarrow H^* (\wti \fM_Q)$ and $\bC[\ft_\bv / S_\bv] \twoheadrightarrow \bC[\cM_{Q}^H]$ are surjective.

Thus, Conjecture~\eqref{deformed hikita conjecture} is equivalent to the claim $\ker \phi_1 = \ker \phi_2$.

\begin{rem} \label{rem: hikita implies fixed point is unique}
Note that $H^*(\widetilde{\mathfrak{M}}_Q)=H^*(\mu_{G_\bv, {\bf{N}}}^{-1}(0)^{s}/G_{\bf{v}})$ considered as a module over $\bC[\mathfrak{t}_{\bf{v}}/S_{\bf{v}}]$ is either zero (if $\widetilde{\mathfrak{M}}_Q = \varnothing$) or supported at the point $\{0\}$ (this follows from the fact that the action $G_{\bf{v}} \curvearrowright \mu_{G_\bv, {\bf{N}}}^{-1}(0)^{s}$ is free). So, assuming that isomorphism (\ref{nonquantized nondeformed hikita}) holds, we conclude that $\mathcal{M}_Q^{\nu}$ as a set is empty if    $\widetilde{\mathfrak{M}}_Q = \varnothing$ and is a single point otherwise. For quivers without loops this is precisely \cite[Conjecture 3.25(1)]{bfn_slices} so the Hikita conjecture should be considered as an ``upgraded'' version of this conjecture. 
%(part of conjectural geometric Satake correspondence for Kac--Moody Lie algebras).
\end{rem}

%\begin{equation*}
%\bC[\ft_\bv / S_\bv \times \ff]\T \bC[\hbar] \twoheadrightarrow H^*_{F \times \torus_\hbar} (\wti \fM_Q),~\bC[\ft_\bv / S_\bv \times \ff]\T \bC[\hbar] \twoheadrightarrow B^H(\cA_{Q, \ff})
%\end{equation*}
%coincide as ideals in $\bC[\ft_\bv / S_\bv \times \ff]\T \bC[\hbar]$.

%Taking specialization of \eqref{hikita conjecture} at $0$ over $\ff$, we get the original version of Hikita conjecture \cite{Hik17}:

\subsection{Localization of  Hikita conjecture}\label{subsec: localization of homo}
%We now assume a more general setup. Let $N$ be any vector space, $G_\bv$, $T_\bw$ be reductive group, both acting on $N$, their actions commute, and $T_\bw$ is a torus. Then we define $\fM(G_\bv, {\bf{N}}) = (N \oplus {\bf{N}}^*)//G_\bv$ (the GIT quotient with a particular choice of character), and 

%We note that both sides of \eqref{k-hikita conjecture} are algebras over $\bC[T_\bv / S_\bv \times T_\bw]$. To see this for the LHS, we realize it as follows. By definition, $\fM_Q$ is the GIT-quotient of $\mu_{G_\bv, {\bf{N}}}^{-1}(0)$ by the group $G_\bv$, where $\mu_{G_\bv, {\bf{N}}}: N \oplus {\bf{N}}^* \rightarrow \g_\bv$ is the moment map. It is isomorphic to the categorical quotient of the stable locus $\mu_{G_\bv, {\bf{N}}}^{-1}(0)^{s}$ by the free action of $G_\bv$. Thus, the LHS of \eqref{k-hikita conjecture} can be rewritten as
%\[
%K^{T_\bw \times G_\bv} (\mu_{G_\bv, {\bf{N}}}^{-1}(0)^{s}),
%\]
%which is clearly a module over $\bC[T_\bv / S_\bv \times T_\bw]$. We now state the proposition.

For a maximal ideal $\fm \subset A$ and an $A$-module $M$, we denote by $M_{\fm}$ the localization of $M$ at $\fm$. $M^{\wedge \fm}$ denotes the completion of $M$ at $\fm$.

We restrict our attention to the non-quantum case of the homological Hikita conjecture~\eqref{deformed hikita conjecture}.
As explained in Section \ref{subsection: homological Hikita}, both sides of \eqref{deformed hikita conjecture} are modules over $H_{G_\bv \times F}(\pt) = \bC[(\ft_\bv / S_\bv) \times \ff]$. In this Section, we study the localizations of both sides of \eqref{deformed hikita conjecture} at a point of $H_{G_\bv \times F}(\pt)$, as well as over its subalgebra $H_{F}(\pt)$.

We begin with the quiver variety side.

\begin{prop} \label{localization of quiver side of homological hikita}
For any $(t_\bv, f) \in (\ft_\bv / S_\bv) \times \ff$, there is an isomorphism of algebras
\begin{equation*}
H_{F} (\wti \fM(G_\bv, {\bf{N}}))_{(t_\bv, f)} \simeq H_{F} (\wti \fM({Z_{G_\bv}(t_\bv), {\bf{N}}^{(t_\bv, f)}}))_{(0, 0)}.
\end{equation*}
%$K^{F} (\fM(G_\bv, {\bf{N}}))$ over the point $(t_\bv, f) \in T_\bv / S_\bv \times F$ is isomorphic to the completion at $(0, 0)$ of $H^{F} (\fM({Z_{G_\bv}(t_\bv), {\bf{N}}^{(t_\bv, f)}}))$.

\end{prop}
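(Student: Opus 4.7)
The plan is to apply the Atiyah--Bott equivariant localization theorem to the torus $T_\bv\times F$ acting on the stable locus of the moment map, and then to identify the $(t_\bv,f)$-fixed locus with the stable locus for the reduced gauge theory $(Z,{\bf{N}}')$, where $Z:=Z_{G_\bv}(t_\bv)$ and ${\bf{N}}':={\bf{N}}^{(t_\bv,f)}$. First, since $G_\bv$ acts freely on $\mu_{G_\bv,{\bf{N}}}^{-1}(0)^{s}$, I rewrite both sides in terms of group-equivariant cohomology of stable loci,
\[
H_F(\wti\fM(G_\bv,{\bf{N}}))_{(t_\bv,f)} \;=\; H_{G_\bv\times F}\bigl(\mu_{G_\bv,{\bf{N}}}^{-1}(0)^{s}\bigr)_{(t_\bv,f)},
\]
and analogously on the right (noting that $T_\bv$ is also a maximal torus of $Z$, and $\nu$ restricts to a character of $Z$). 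I then pass to the common maximal torus $T_\bv\subset G_\bv,Z$ via $H_{G_\bv\times F}(-)=H_{T_\bv\times F}(-)^{S_\bv}$ and $H_{Z\times F}(-)=H_{T_\bv\times F}(-)^{W_Z}$, where $W_Z=\operatorname{Stab}_{S_\bv}(t_\bv)$. In characteristic zero localization commutes with finite group invariants, so after isolating the component at $(t_\bv,f)$ of the $S_\bv$-orbit, the task reduces to producing a $W_Z$-equivariant algebra isomorphism of appropriate $T_\bv\times F$-equivariant localizations.

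The core step is the Atiyah--Bott equivariant localization theorem,
\[
H_{T_\bv\times F}\bigl(\mu_{G_\bv,{\bf{N}}}^{-1}(0)^{s}\bigr)_{(t_\bv,f)} \;\xrightarrow{\sim}\; H_{T_\bv\times F}\!\left(\bigl(\mu_{G_\bv,{\bf{N}}}^{-1}(0)^{s}\bigr)^{(t_\bv,f)}\right)_{(t_\bv,f)},
\]
where the fixed locus is taken with respect to the closure $S_0:=\overline{\exp(\bC\cdot(t_\bv,f))}$ of the $1$-parameter subgroup generated by $(t_\bv,f)\in\Lie(T_\bv\times F)$. Next I would show that $(\mu_{G_\bv,{\bf{N}}}^{-1}(0)^{s})^{(t_\bv,f)}=\mu_{Z,{\bf{N}}'}^{-1}(0)^{s}$. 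The inputs are: the $(t_\bv,f)$-fixed subspace of the ambient symplectic vector space equals ${\bf{N}}'\oplus({\bf{N}}')^*$; the moment map $\mu_{Z,{\bf{N}}'}$ is the restriction of $\mu_{G_\bv,{\bf{N}}}$ composed with the projection $\g_\bv\twoheadrightarrow\mathfrak{z}$; and the King stability conditions match, i.e., a $(t_\bv,f)$-fixed point is $G_\bv$-semistable with respect to $\nu$ iff it is $Z$-semistable with respect to $\nu|_Z$. The latter follows from the Hilbert--Mumford criterion by averaging destabilizing $1$-parameter subgroups over $S_0$ to produce one inside $Z$.

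To move the localization base-point from $(t_\bv,f)$ to $(0,0)$, observe that $S_0$ acts trivially on $\mu_{Z,{\bf{N}}'}^{-1}(0)^{s}$, so choosing a complementary subtorus $T'\subset T_\bv\times F$ yields $H_{T_\bv\times F}(\mu_{Z,{\bf{N}}'}^{-1}(0)^{s})\simeq H_{T'}(\mu_{Z,{\bf{N}}'}^{-1}(0)^{s})\otimes\bC[\Lie S_0]$. The translation $x\mapsto x-(t_\bv,f)$ on $\bC[\ft_\bv\oplus\ff]$ then induces an algebra isomorphism between the localizations at $(t_\bv,f)$ and $(0,0)$; this shift is $W_Z$-equivariant since $W_Z$ fixes $t_\bv$ and acts trivially on $\ff$, so taking $W_Z$-invariants recovers the right-hand side of the proposition. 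The main obstacle is the geometric identification of the fixed locus, particularly the matching of King-stability conditions; I expect this to reduce to the well-known behavior of fixed loci of flavor actions on Nakajima quiver varieties (going back to the techniques in \cite{Nak01b}), and to a careful Hilbert--Mumford averaging argument over $S_0$.
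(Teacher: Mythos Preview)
Your proposal is essentially the same strategy as the paper's: apply equivariant localization to $\mu_{G_\bv,{\bf{N}}}^{-1}(0)^{s}$ and identify the $(t_\bv,f)$-fixed locus with $\mu_{Z,{\bf{N}}'}^{-1}(0)^{s}$. The differences are organizational. The paper invokes the non-abelian form of the localization theorem in one line,
\[
H_{F\times G_\bv}\bigl(\mu^{-1}(0)^s\bigr)_{(t_\bv,f)}\;\simeq\;H_{F\times Z_{G_\bv}(t_\bv)}\bigl((\mu^{-1}(0)^s)^{(t_\bv,f)}\bigr)_{(0,0)},
\]
whereas you unpack this through $H_{T_\bv\times F}(-)^{S_\bv}$, Atiyah--Bott, a shift by the central element, and then $W_Z$-invariants. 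Both are valid; the paper's packaging is shorter, and your ``isolating the component at $(t_\bv,f)$ of the $S_\bv$-orbit'' step is strictly correct only after completion (for mere localization the semi-local ring over the orbit does not split as a product), though this is easily repaired, e.g.\ by noting that $\ft_\bv/W_Z\to\ft_\bv/S_\bv$ is \'etale at $[t_\bv]$.

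For the stability matching, the paper uses Nakajima's combinatorial (King) criterion \cite[Definition~2.7, Lemma~3.2]{Nak01b}, which decomposes directly along eigenspaces and is cleaner than your Hilbert--Mumford route. Your phrase ``averaging destabilizing $1$-parameter subgroups over $S_0$'' is not literally meaningful (a convex combination of cocharacters is not a cocharacter); the correct version of this idea uses Kempf's uniqueness of the optimal destabilizing one-parameter subgroup, which is then forced by $S_0^\bv$-invariance to lie in $Z$. Either argument works, but the combinatorial one is more direct here.
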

It is clear that $Z_{G_\bv}(t_\bv)$ is a product of general linear groups, but it is not immediately clear that ${\bf{N}}^{(t_\bv, f)}$ is its representation, coming from some (framed) quiver. This is indeed the case, as shown in Proposition ~\ref{prop: fixed points fo quiver for arbitrary F} below. For the moment, we simply define
\begin{equation*}
\wti \fM({Z_{G_\bv}(t_\bv), {\bf{N}}^{(t_\bv, f)}}) = ({\bf{N}}^{(t_\bv, f)} \oplus ({\bf{N}}^{(t_\bv, f)})^*) /\!\!/\!\!/^\nu Z_{G_\bv}(t_\bv).
\end{equation*}

\begin{proof}[Proof of Proposition \ref{localization of quiver side of homological hikita}]
By the localization theorem for equivariant cohomology, we get
\begin{equation} \label{localization theorem for quiver variety}
H_{F} (\fM(G_\bv, {\bf{N}}))_{(t_\bv, f)} = H_{F \times G_\bv} (\mu_{G_\bv, {\bf{N}}}^{-1}(0)^{s})_{(t_\bv, f)} = H_{F \times Z_{G_\bv}(t_\bv)} ((\mu_{G_\bv, {\bf{N}}}^{-1}(0)^{s})^{(t_\bv, f)})_{ (0,0)}.
%H^{F \times Z_{G_\bv}(t_\bv)} ((\mu_{G_\bv, {\bf{N}}}^{-1}(0)^{s})^{(t_\bv, f)})^{\wedge (0,0)},
\end{equation}
%where the last isomorphism is given by the Chern character map from completed $K$-theory to completed cohomology ring.

The rest of the proof is the computation of the torus-fixed points, similar to \cite[Lemma~3.2]{Nak01b}.

First, let us analyze $(\mu_{G_\bv, {\bf{N}}}^{-1}(0))^{(t_\bv, f)}$ (without taking the stable locus). It is the (scheme-theoretic) intersection of 
\[
({\bf{N}} \oplus {\bf{N}}^*)^{(t_\bv, f)} = ({\bf{N}}^{(t_\bv, f)} \oplus ({\bf{N}}^{(t_\bv, f)})^*)\]
with $\mu_{G_\bv, {\bf{N}}}^{-1}(0)$.  Note that ${\bf{N}}^{(t_\bv, f)} \oplus ({\bf{N}}^{(t_\bv, f)})^*$ carries a Hamiltonian action of the group $Z_{G_\bv}(t_\bv)$, and the diagram
% https://q.uiver.app/#q=WzAsNCxbMCwwLCJOIFxcb3BsdXMgTl4qIl0sWzAsMSwiTl57KHRfe1xcYm9sZCB2fSwgdF97XFxib2xkIHd9KX0gXFxvcGx1cyAoTl57KHRfe1xcYm9sZCB2fSwgdF97XFxib2xkIHd9KX0pXioiXSxbMywwLCJcXG1hdGhmcmFrIGdfe1xcYm9sZCB2fSJdLFszLDEsIlxcb3BlcmF0b3JuYW1le0xpZX0gWl97R197XFxib2xkIHZ9fSh0X3tcXGJvbGQgdn0pIl0sWzEsMCwiIiwwLHsic3R5bGUiOnsidGFpbCI6eyJuYW1lIjoiaG9vayIsInNpZGUiOiJ0b3AifX19XSxbMSwzLCJcXG11X3taX3tHX3tcXGJvbGQgdn19KHRfe1xcYm9sZCB2fSksIE5eeyh0X3tcXGJvbGQgdn0sIHRfe1xcYm9sZCB3fSl9fSJdLFswLDIsIlxcbXVfe0dfe1xcYm9sZCB2fSwgTn0iXSxbMywyLCIiLDEseyJzdHlsZSI6eyJ0YWlsIjp7Im5hbWUiOiJob29rIiwic2lkZSI6InRvcCJ9fX1dXQ==
\[\begin{tikzcd}
	{{\bf{N}} \oplus {\bf{N}}^*} &&& {\mathfrak g_{\bold v}} \\
	{{\bf{N}}^{(t_{\bold v}, t_{f})} \oplus ({\bf{N}}^{(t_{\bold v}, t_{f})})^*} &&& {\operatorname{Lie} Z_{G_{\bold v}}(t_{\bold v})}
	\arrow["{\mu_{G_{\bold v}, {\bf{N}}}}", from=1-1, to=1-4]
	\arrow[hook, from=2-1, to=1-1]
	\arrow["{\mu_{Z_{G_{\bold v}}(t_{\bold v}), {\bf{N}}^{(t_{\bold v}, t_{f})}}}", from=2-1, to=2-4]
	\arrow[hook, from=2-4, to=1-4]
\end{tikzcd}\]
is commutative (here horizontal morphisms are the moment maps, and the bottom part is obtained from the top part by taking $(t_\bv, f)$-invariants). It follows that 
\[
(\mu_{G_\bv, {\bf{N}}}^{-1}(0))^{(t_\bv, f)} = \mu_{Z_{G_{\bold v}}(t_{\bold v}), {\bf{N}}^{(t_{\bold v}, t_{f})}}^{-1}(0).
\]
We claim that this isomorphism restricts to the isomorphism of stable loci:
\begin{equation} \label{isomorphism of stable loci}
(\mu_{G_\bv, {\bf{N}}}^{-1}(0)^s)^{(t_\bv, f)} = \mu_{Z_{G_{\bold v}}(t_{\bold v}), {\bf{N}}^{(t_{\bold v}, t_{f})}}^{-1}(0)^s,
\end{equation}
where on the left-hand side we mean the stability condition corresponding to  $\nu\colon G_\bv \rightarrow \bC^\times$, and on the right-hand side we mean the stability condition for  the restriction $\nu|_{Z_{G_\bv}(t_\bv)}$.
The equality (\ref{isomorphism of stable loci}) follows directly from the combinatorial description of stability \cite[Definition~2.7]{Nak01b}, and is implicit in the proof of \cite[Lemma~3.2]{Nak01b}.

Combining \eqref{localization theorem for quiver variety} and \eqref{isomorphism of stable loci}, we get:
\begin{equation*}
H_{F} (\fM(G_\bv, {\bf{N}}))_{ (t_\bv, f)} \simeq H_{F \times Z_{G_\bv}(t_\bv)} (\mu_{Z_{G_{\bold v}}(t_{\bold v}), {\bf{N}}^{(t_{\bold v}, f)}}^{-1}(0)^s)_{ (0,0)} \simeq H_{F} (\fM({Z_{G_\bv}(t_\bv), {\bf{N}}^{(t_\bv, f)}}))_{(0, 0)},
\end{equation*}
which finishes the proof.
\end{proof}

\begin{rem}\label{rem:twist iso fibers}
The RHS of Proposition \ref{localization of quiver side of homological hikita} is the localization at $(0,0)$. In the course of the proof, we use that this localization naturally identifies with the localization of the same space at $(t_{\mathbf{v}},f)$. This identification is standard (see, for example, \cite[Section 5.2]{eg2} in the $K$-theoretic setting), and we will occasionally refer to it as a twisting. 
\end{rem}

We now describe fibers of the same space, but as a module over $H_{F}(\pt)$, instead of $H_{F \times G_\bv}(\pt)$. For $f \in \ff$, denote by $\bC_f$ the one-dimensional module over $\bC[\ff]$ --- quotient by the maximal ideal, corresponding to $f$.

\begin{cor} \label{localization of quiver side of homological hikita over T_w}
For any $f \in \ff$, there is an isomorphism of algebras
\begin{equation*}
H_{F} (\wti \fM(G_\bv, {\bf{N}})) \T_{H_F(\pt)} \bC_{f} \simeq \bigoplus_{t_\bv \in \ft_\bv / S_\bv} H^* (\wti \fM({Z_{G_\bv}(t_\bv), {\bf{N}}^{(t_\bv, f)}})),
\end{equation*}
and only finite number of summands is nonzero.
\end{cor}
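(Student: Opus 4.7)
Plan. The aim is to assemble the pointwise localizations of Proposition~\ref{localization of quiver side of homological hikita} into a global direct-sum decomposition over the slice $\{f\} \times \ft_\bv/S_\bv$. Set $M := H_F(\wti \fM(G_\bv, {\bf{N}}))$, considered as a module over $H^*_{F \times G_\bv}(\pt) = \bC[\ff \times \ft_\bv/S_\bv]$; the goal is to describe $M_f := M \otimes_{\bC[\ff]} \bC_f$ as a $\bC[\ft_\bv/S_\bv]$-algebra.

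First I would verify that the support of $M_f$ over $\Spec\bC[\ft_\bv/S_\bv]$ consists of a finite set of closed points $\{t_\bv^{(1)}, \ldots, t_\bv^{(k)}\}$. By Proposition~\ref{localization of quiver side of homological hikita}, $M_{(t_\bv, f)}$ is nonzero precisely when $\wti \fM(Z_{G_\bv}(t_\bv), {\bf{N}}^{(t_\bv, f)})$ is nonempty, which via the identification of stable loci in the proof of that proposition corresponds to the existence of a connected component of $\wti \fM^{f}$ with associated Cartan element $S_\bv$-conjugate to $t_\bv$. Finite support then follows from the finiteness of connected components in the torus-fixed locus of the symplectic resolution $\wti \fM$, a standard feature of Nakajima quiver varieties that can be read off from their combinatorial fixed-point description.

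With finite support established, the quasi-coherent sheaf on $\Spec\bC[\ft_\bv/S_\bv]$ associated to $M_f$ breaks up as a disjoint union of pieces supported at each $t_\bv^{(i)}$, yielding the direct-sum decomposition of algebras
\[
M_f \simeq \bigoplus_{i=1}^{k} (M_f)_{(t_\bv^{(i)})}.
\]
To identify each summand, I apply Proposition~\ref{localization of quiver side of homological hikita} at $(t_\bv^{(i)}, f)$ and take the fiber over $f \in \ff$; because the coordinate shift implicit in the localization theorem for equivariant cohomology sends $f \mapsto 0$ on the right-hand side, this produces
\[
(M_f)_{(t_\bv^{(i)})} \simeq H_F(\wti \fM') \otimes_{\bC[\ff]} \bC_0 \simeq H^*(\wti \fM'),
\]
where $\wti \fM' := \wti \fM(Z_{G_\bv}(t_\bv^{(i)}), {\bf{N}}^{(t_\bv^{(i)}, f)})$. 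The last isomorphism uses equivariant formality of the symplectic resolution $\wti \fM'$, together with the fact that $H^*(\wti \fM')$, as a graded quotient via Kirwan surjectivity, is concentrated at the origin of $\Spec H^*_{Z_{G_\bv}(t_\bv^{(i)})}(\pt)$. Extending the sum by zero over the remaining $t_\bv \in \ft_\bv/S_\bv$ yields the stated formula.

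The main obstacle is the finiteness of support in the first step: while geometrically natural, it requires careful justification via the combinatorics of fixed points on Nakajima quiver varieties (or, equivalently, via general properties of symplectic resolutions with contracting $\bC^\times$-action). The identification step is essentially mechanical once one has correctly tracked the coordinate shift in Proposition~\ref{localization of quiver side of homological hikita} and invoked equivariant formality for the smaller quiver variety.
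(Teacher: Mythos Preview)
Your argument is correct and follows the same strategy as the paper: establish finite support over $\ft_\bv/S_\bv$, decompose $M_f$ as a direct sum over support points, and identify each summand via Proposition~\ref{localization of quiver side of homological hikita}. The one substantive difference is in the finiteness step. You argue geometrically through finiteness of connected components of $\wti\fM^f$, which is correct but (as you note) requires unpacking the correspondence between components and $S_\bv$-orbits of $t_\bv$'s. The paper sidesteps this entirely: it cites Nakajima's result that $H_F(\wti\fM)$ is free of finite rank over $H_F(\pt)$, so the fiber $M_f$ is finite-dimensional over~$\bC$ and hence automatically supported at finitely many points of $\ft_\bv/S_\bv$. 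This one-line argument dissolves what you identify as the main obstacle. A minor stylistic difference is that the paper phrases the decomposition via formal completions rather than localizations; once $M_f$ is known to be finite-dimensional, the two agree.
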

%Note that despite the sum above is infinite, it may be seen from Proposition~\ref{prop: fixed points fo quiver for arbitrary F} below that only finite number of summands is nonzero; this agrees with the fact that $H_{F} (\wti \fM(G_\bv, {\bf{N}}))$ is of finite rank over $H_{F} (\pt)$.

\begin{proof}%[Proof of Corollary~\ref{localization of quiver side of homological hikita over T_w}]

We have $\bC[\ff] \subset \bC[\ff \times (\ft_\bv / S_\bv)]$, and the desired fiber over $f$ is a module over $\bC[\ft_\bv / S_\bv]$. Note that $H_{F} (\wti \fM(G_\bv, {\bf{N}}))$ is finitely generated over $H_F(\pt)$, %by \cite[Theorem~7.5.3]{Nak01a},
hence its fiber at $f$ is supported at a finite number of points of $\ft_\bv / S_\bv$.

Thus, the required fiber is isomorphic to the direct sum over all points of $\ft_\bv / S_\bv$ of its formal completions at these points. Hence, we obtain 
\begin{multline*}
H_{F} (\wti \fM(G_\bv, {\bf{N}})) \T_{H_F(\pt)} \bC_{f} \simeq \bigoplus_{t_\bv \in \ft_\bv / S_\bv} \big(H_{F} (\wti \fM(G_\bv, {\bf{N}})) \T_{H_F(\pt)} \bC_{f} \big)^{\wedge (t_\bv, f)} \\
\simeq \bigoplus_{t_\bv \in \ft_\bv / S_\bv} H^* (\wti \fM({Z_{G_\bv}(t_\bv), {\bf{N}}^{(t_\bv, f)}}))^{\wedge t_\bv} \simeq \bigoplus_{t_\bv \in \ft_\bv / S_\bv} H^* (\wti \fM({Z_{G_\bv}(t_\bv), {\bf{N}}^{(t_\bv, f)}})),
\end{multline*}
where the second isomorphism follows by Proposition~\ref{localization of quiver side of homological hikita} (namely its {\emph{untwisted}} version when we do not pass from the fiber over $(t_{\bf{v}},f)$ to the fiber over $(0,0)$, see Remark \ref{rem:twist iso fibers}), and the last one is justified as follows: a finite-dimensional algebra over $\bC[\ft_\bv/S_\bv]$, supported at $t_\bv$, is isomorphic to its completion at this point.
\end{proof}

We now turn to the Coulomb branch side. We first prove the following fact about the affine Grassmannian.

%{\color{red}{Vasya: do using $S_\mu$ (very explicit)}}

\begin{lem} \label{fixed points on affine grassmannian}
Let $G$ be a connected reductive group. 
For any semi-simple $t \in G$, one has an isomorphism of reduced ind-schemes:
\begin{equation*}
((\Gr_G)^t)_\red = (\Gr_{Z_G(t)})_\red.
\end{equation*}
\end{lem}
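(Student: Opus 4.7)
The inclusion $Z_G(t) \hookrightarrow G$ induces a morphism of ind-schemes $\iota: \Gr_{Z_G(t)} \to \Gr_G$. For $g \in Z_G(t)_\cK$ one has $t \cdot g G_\cO = g t G_\cO = g G_\cO$ (since $t \in G \subset G_\cO$ and $g$ commutes with $t$), so $\iota$ factors through the fixed point subscheme $(\Gr_G)^t$. Both $(\Gr_{Z_G(t)})_\red$ and $((\Gr_G)^t)_\red$ are reduced ind-schemes of ind-finite type over $\bC$, so it suffices to verify that the induced map is a bijection on $\bC$-points.

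Injectivity is immediate: if $g_1, g_2 \in Z_G(t)_\cK$ satisfy $g_1 G_\cO = g_2 G_\cO$, then $g_2^{-1} g_1 \in Z_G(t)_\cK \cap G_\cO = Z_G(t)_\cO$, so $g_1$ and $g_2$ already represent the same point of $\Gr_{Z_G(t)}$. For surjectivity, fix $gG_\cO \in (\Gr_G)^t(\bC)$, equivalently $t' := g^{-1}tg \in G_\cO$. The goal is to find $h \in G_\cO$ with $h^{-1}t'h = t$; then $gh \in Z_G(t)_\cK$ represents the same point of $\Gr_G$ as $g$. In other words, the crux is to show that $t'$ is $G_\cO$-conjugate to $t$.

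To prove this, observe that $t'$ is $G_\cK$-conjugate to $t$ by construction. Since $t$ is semisimple, its $G$-conjugacy class $\cC_t \subset G$ is closed in $G$, so $t' \in G(\cO) \cap \cC_t(\cK) = \cC_t(\cO)$. Identifying $\cC_t \simeq G/Z_G(t)$ via the orbit map, the required surjectivity of $G(\cO) \twoheadrightarrow (G/Z_G(t))(\cO)$ follows from the vanishing of $H^1_{\text{\'et}}(\Spec \cO, Z_G(t))$; this in turn holds because $\cO = \bC[[\epsilon]]$ is strictly Henselian with algebraically closed residue field $\bC$. Hence $t' = h^{-1}th$ for some $h \in G(\cO)$, completing the proof.

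The two non-routine ingredients are the closedness of the semisimple conjugacy class $\cC_t \subset G$ and the \'etale cohomological vanishing above. Both are standard, so no substantive obstacle is anticipated; the main point of the argument is simply to isolate these two inputs and assemble them correctly.
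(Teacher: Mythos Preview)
Your argument is correct in substance and takes a genuinely different route from the paper.

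One small point: the sentence ``Both \ldots are reduced ind-schemes of ind-finite type over $\bC$, so it suffices to verify that the induced map is a bijection on $\bC$-points'' is not valid as a general principle (e.g.\ the normalization of a cuspidal cubic). What makes it work here is that $\iota$ is in fact a \emph{closed embedding} of reduced ind-schemes---the paper simply asserts this as ``clear''---after which surjectivity on $\bC$-points does suffice. Your injectivity computation actually proves this once upgraded to arbitrary test rings (it shows $Z_G(t)(R((z))) \cap G(R[[z]]) = Z_G(t)(R[[z]])$ for any $R$, hence $\iota$ is a monomorphism; combined with ind-properness this gives a closed embedding). So the gap is purely expository.

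The paper argues differently: it works stratum by stratum, using that each $\Gr_G^\lambda$ is an affine bundle over $G/P_\lambda$ contracted by loop rotation, and invokes a preparatory lemma identifying $(G/P_\lambda)^t$ with $\bigsqcup_{[w] \in W_L \backslash W / W_{P_\lambda}} L/(L \cap wP_\lambda w^{-1})$ (the disjointness there is established by a $K$-theory dimension count). This yields the explicit decomposition $(\Gr_G^\lambda)^t = \bigsqcup_w \Gr_L^{w\lambda}$, which is strictly more information than the bare isomorphism of the lemma.

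Your approach bypasses all of this: reduce to showing $t' = g^{-1}tg \in G_\cO$ is $G_\cO$-conjugate to $t$; use closedness of the semisimple conjugacy class to get $t' \in \cC_t(\cO) \cong (G/Z_G(t))(\cO)$; lift via $H^1_{\text{\'et}}(\Spec \cO, Z_G(t)) = 0$, valid since $\cO$ is strictly Henselian and $Z_G(t)$ is smooth in characteristic zero. This is cleaner and more conceptual, at the cost of not seeing how the fixed locus meets each Schubert cell.
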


%\begin{rem}
Note that for $t$ being a generic element of a cocharacter of $G$, this is well-known, even without taking reduced parts (see, e.g., \cite[Proposition~3.4]{HR21}).
We need it further for any semi-simple element of $G$ (for example, of finite order), so we include a proof.    
%\end{rem}

\begin{proof}
Clearly, we have a closed embedding  $(\Gr_{Z_G(t)})_\red \hookrightarrow ((\Gr_G)^t)_\red$. To prove that it is an isomorphism it is enough to prove that it is bijective on $\bC$-points.  

Recall the decomposition of $\operatorname{Gr}_G(\mathbb{C})$ into the disjoint union:
\begin{equation*}
\operatorname{Gr}_G(\mathbb{C}) = \bigsqcup_{\eta \in \Lambda} (U((z)) \cdot z^\eta)(\mathbb{C}),
\end{equation*}
where $U \subset G$ is the unipotent radical of a fixed Borel $B \subset G$. Orbits $U((z)) \cdot z^\eta$ are called {\emph{semi-infinite}} orbits and we have a canonical identification:
\begin{equation*}
 U((z)) \cdot z^\eta = z^\eta U((z))/U[[z]] = z^\eta U[z^{-1}]_1,
\end{equation*}
where $U[z^{-1}]_1 \subset U[z^{-1}]$ is the kernel of the ``evaluation at infinity'' homomorphism $U[z^{-1}] \rightarrow U$.

So, it remains to compute $t$-fixed points on each $z^\nu U[z^{-1}]_1$. The action of $t$ on $z^\eta U[z^{-1}]_1$ is given by $t \cdot (z^\eta u) = z^\eta \operatorname{Ad}_t(u)$. Set $\mathfrak{u}=\operatorname{Lie}U$. We have an exponent isomorphism $\operatorname{exp}\colon z^{-1}\mathfrak{u}[z^{-1}] \iso U[z^{-1}]_1$. The adjoint action of $t$ on $U[z^{-1}]_1$ corresponds to the adjoint $t$-action on $z^{-1}\mathfrak{u}[z^{-1}]$. Clearly:
\begin{equation*}
(z^{-1}\mathfrak{u}[z^{-1}])^{\operatorname{ad}(t)} = \bigoplus_{\alpha(t)=1}z^{-1}\mathfrak{u}_\alpha[z^{-1}]=z^{-1}\operatorname{Lie}(U \cap Z_G(t))[z^{-1}],
\end{equation*}
where $\al$ runs over roots of $\Lie G$.
Exponentiating we conclude that 
\begin{equation*}
(U[z^{-1}]_1)^t \subset (U \cap Z_G(t))[z^{-1}]_1,
\end{equation*}
so $z^\eta (U[z^{-1}]_1)^t \subset z^\eta (U \cap Z_G(t))[z^{-1}]_1$, hence, $((U((z)) \cdot z^\eta)(\mathbb{C}))^t \subset \operatorname{Gr}_{Z_G(t)}(\mathbb{C})$ as desired.
\end{proof}

We now turn to the BFN space of triples.

\begin{lem} \label{fixed points on BFN space}
For any $(t_\bv, f) \in T_\bv \times F$, one has an isomorphism of reduced ind-schemes
\begin{equation*}
((\mathcal R_{G_{\bold v}, {\bf{N}}})^{(t_{\bold v}, {f})})_\red = (\cR_{Z_{G_\bv}(t_\bv), {\bf{N}}^{(t_\bv, f)}})_\red
\end{equation*}
%up to taking reduced parts.
\end{lem}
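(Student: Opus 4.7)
The plan is to build a natural closed embedding of $\cR_{Z_{G_\bv}(t_\bv), {\bf{N}}^{(t_\bv, f)}}$ into $\cR_{G_\bv, {\bf{N}}}$, verify that it factors through the $(t_\bv, f)$-fixed locus, and then check surjectivity onto reduced $\bC$-points using Lemma~\ref{fixed points on affine grassmannian}. The first observation is that since the $G_\bv$- and $F$-actions on ${\bf{N}}$ commute, the Levi $Z_{G_\bv}(t_\bv)$ commutes with both $t_\bv$ and $f$, and hence preserves each joint eigenspace of $(t_\bv, f)$ on ${\bf{N}}$; in particular it preserves ${\bf{N}}^{(t_\bv, f)} \subset {\bf{N}}$. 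Thus the reduced BFN space $\cR_{Z_{G_\bv}(t_\bv), {\bf{N}}^{(t_\bv, f)}}$ is defined, and the pair of inclusions $Z_{G_\bv}(t_\bv) \hookrightarrow G_\bv$, ${\bf{N}}^{(t_\bv, f)} \hookrightarrow {\bf{N}}$ induces a map of the Cartesian squares of the form \eqref{definition of bfn space of triples}, yielding a closed embedding $\iota\colon \cR_{Z_{G_\bv}(t_\bv), {\bf{N}}^{(t_\bv, f)}} \hookrightarrow \cR_{G_\bv, {\bf{N}}}$ which visibly lands in $(\cR_{G_\bv, {\bf{N}}})^{(t_\bv, f)}$.

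For surjectivity on reduced $\bC$-points I would represent a point of $\cR_{G_\bv, {\bf{N}}}$ by a pair $(g, s) \in G_\bv(\cK) \times {\bf{N}}_\cO$ with $g s \in {\bf{N}}_\cO$, taken modulo $G_\bv(\cO)$-action $(g,s) \sim (gh^{-1}, hs)$. The $(t_\bv, f)$-action sends the class of $(g,s)$ to that of $(t_\bv g, f s)$, so the class is fixed precisely when there exists $h \in G_\bv(\cO)$ with $t_\bv g = g h^{-1}$ and $f s = h s$. Lemma~\ref{fixed points on affine grassmannian} identifies $((\Gr_{G_\bv})^{t_\bv})_\red$ with $(\Gr_{Z_{G_\bv}(t_\bv)})_\red$, so on reduced points we may normalize the representative $g$ to lie in $Z_{G_\bv}(t_\bv)(\cK)$; then $g^{-1} t_\bv g = t_\bv \in G_\bv(\cO)$ forces $h^{-1} = t_\bv$, and the section condition collapses to $(t_\bv f) s = s$, i.e. $s \in ({\bf{N}}^{(t_\bv, f)})_\cO$. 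The partner section $g s$ automatically also lies in $({\bf{N}}^{(t_\bv, f)})_\cO$, since $Z_{G_\bv}(t_\bv)(\cK)$ preserves ${\bf{N}}^{(t_\bv, f)}_\cK$. Thus the class is in the image of $\iota$.

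To upgrade the $\bC$-point bijection to an isomorphism of reduced ind-schemes, I would work stratum by stratum in the affine Schubert filtration: the decomposition of $(\Gr_{G_\bv}^{\lach})^{t_\bv}$ as $\bigsqcup_w \Gr_{Z_{G_\bv}(t_\bv)}^{w \lach}$ from Lemma~\ref{fixed points on affine grassmannian} pulls back to a decomposition of the corresponding stratum of $(\cR_{G_\bv, {\bf{N}}})^{(t_\bv, f)}$, and over each piece the fibers are finite-type affine schemes of sections, for which the $\bC$-point bijection already forces the map of reduced structures to be an isomorphism. Since the Schubert-type filtrations exhaust the ind-scheme structures on both sides, $\iota$ induces the claimed isomorphism after reduction.

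The main point to handle carefully --- and the reason Lemma~\ref{fixed points on affine grassmannian} is essential --- is the normalization step for $g$. A priori fixedness only supplies some $h \in G_\bv(\cO)$ twisting the equivariance, and without first replacing $g$ by a representative in $Z_{G_\bv}(t_\bv)(\cK)$ one cannot disentangle the $G_\bv(\cO)$-quotient from the genuine constraint $(t_\bv f) s = s$ on sections. Once Lemma~\ref{fixed points on affine grassmannian} is invoked, this becomes automatic and the rest of the argument is essentially formal.
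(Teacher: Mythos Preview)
Your proof is correct and follows essentially the same approach as the paper. The paper packages the argument slightly more economically: it uses the $(G_\bv(\cO)\times F)$-equivariant embedding $\cT_{G_\bv,{\bf N}} \hookrightarrow \Gr_{G_\bv} \times {\bf N}_\cK$, $[g,n]\mapsto ([g],gn)$, computes the $(t_\bv,f)$-fixed points of the target as $\Gr_{Z_{G_\bv}(t_\bv)} \times {\bf N}^{(t_\bv,f)}_\cK$ via Lemma~\ref{fixed points on affine grassmannian}, and then reads off the fixed points of each corner of the Cartesian square~\eqref{definition of bfn space of triples}; the conclusion then drops out of the universal property without a separate $\bC$-point check and upgrade step. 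Your explicit normalization of $g$ and your stratum-by-stratum upgrade amount to the same content unwound by hand.
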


For $t_\bv$ being a generic element of a cocharacter of $T_\bv$ and $F = \id$ this is \cite[Lemma~5.1]{BFN18}. The proof of general case is the same, using Lemma \ref{fixed points on affine grassmannian}.

\begin{proof}

As before, we have a closed embedding and verify the required isomorphism on $\mathbb{C}$-points.
In diagram \eqref{definition of bfn space of triples}, all maps are naturally $(G_\bv(\cO) \times F)$-equivariant. Let us investigate what are the fixed points:
\[
(G_{\bold v}(\mathcal K) \times^{G_\bv(\mathcal O)} {\bf{N}}(\mathcal O))^{(t_\bv, f)} \hookrightarrow (\Gr_{G_\bv} \times {\bf{N}}(\cK))^{(t_\bv, f)}
\]
(the embedding is given by $(g, n) \mapsto ([g], gn)$ and is  $G_\bv(\cO) \times F$-equivariant).
Note that in the last product, $\Gr_{G_\bv}$ is acted by $G_\bv(\cO)$ only, while ${\bf{N}}(\cK)$ is acted by both factors of $G_\bv(\cO) \times F$. Using Lemma \ref{fixed points on affine grassmannian}, we have: %We have $\Gr_{G_\bv}^{t_\bv} = \Gr_{Z_{G_\bv}(t_\bv)}$ (this is proved in \cite[Lemma~5.1]{BFN18} for invariants under the lie algebra element action; the proof for the Lie group element $t_\bv$ is the same). Thus, we have
\[
(\Gr_{G_\bv} \times {\bf{N}}(\cK))^{(t_\bv, f)} = \Gr_{Z_{G_\bv}(t_\bv)} \times {\bf{N}}^{(t_\bv, f)}(\cK) %= Z_{G_\bv}(t_\bv)(\cK) \times^{Z_{G_\bv}(t_\bv)(\cO)} {\bf{N}}^{(t_\bv, f)}(\cK)
\]
up to taking reduced parts. 

From here, one easily sees, that there is a commutative diagram of $(t_\bv, f)$-invariants of~\eqref{definition of bfn space of triples}:
% https://q.uiver.app/#q=WzAsNCxbMCwxLCIoWl97R0xfe1xcYm9sZCB2fX0odF97XFxib2xkIHZ9KShcXG1hdGhjYWwgSykgXFx0aW1lc157Wl97R0xfe1xcYm9sZCB2fX0odF97XFxib2xkIHZ9KShcXG1hdGhjYWwgTyl9IE5eeyh0X3tcXGJvbGQgdn0sIHRfe1xcYm9sZCB3fSl9KFxcbWF0aGNhbCBPKSJdLFsyLDEsIk5eeyh0X3tcXGJvbGQgdn0sIHRfe1xcYm9sZCB3fSl9KFxcbWF0aGNhbCBLKSJdLFsyLDAsIk5eeyh0X3tcXGJvbGQgdn0sIHRfe1xcYm9sZCB3fSl9KFxcbWF0aGNhbCBPKSJdLFswLDAsIihcXG1hdGhjYWwgUl97R0xfe1xcYm9sZCB2fSwgTn0pXnsodF97XFxib2xkIHZ9LCB0X3tcXGJvbGQgd30pfSJdLFsyLDEsIiIsMix7InN0eWxlIjp7InRhaWwiOnsibmFtZSI6Imhvb2siLCJzaWRlIjoidG9wIn19fV0sWzMsMl0sWzMsMCwiIiwwLHsic3R5bGUiOnsidGFpbCI6eyJuYW1lIjoiaG9vayIsInNpZGUiOiJ0b3AifX19XSxbMCwxXV0=
\[\begin{tikzcd}
	{(\mathcal R_{G_\bv, {\bf{N}}})^{(t_{\bold v}, {f})}} && {{\bf{N}}^{(t_{\bold v}, {f})}(\mathcal O)} \\
	{(Z_{G_\bv}(t_{\bold v})(\mathcal K) \times^{Z_{G_\bv}(t_{\bold v})(\mathcal O)} {\bf{N}}^{(t_{\bold v}, {f})}(\mathcal O)} && {{\bf{N}}^{(t_{\bold v}, {f})}(\mathcal K)}
	\arrow[from=1-1, to=1-3]
	\arrow[hook, from=1-1, to=2-1]
	\arrow[hook, from=1-3, to=2-3]
	\arrow[from=2-1, to=2-3].
\end{tikzcd}\]
It follows that $(\mathcal R_{G_\bv, {\bf{N}}})^{(t_{\bold v}, {f})} = \cR_{Z_{G_\bv}(t_\bv), {\bf{N}}^{(t_\bv, f)}}$ (up to taking reduced parts) by definition of the latter.
\end{proof}

%In what follows, when we take homology or K-theory of an ind-scheme, we care about it only up to taking the reduced part. So, we cite Lemmata \ref{fixed points on affine grassmannian} and \ref{fixed points on BFN space} without the $(-)_\red$ subscripts.

\begin{prop} \label{localization of homological coloumb branch}
For any $(t_\bv, f) \in (\ft_\bv / S_\bv) \times \ff$, there is an isomorphism of algebras, localized over $\bC[(\ft_\bv / S_\bv) \times \ff]$:
\begin{equation*}
\bC[\cM(G_\bv, {\bf{N}})_{\ff}]_{(t_\bv, f)} \simeq \bC[\cM(Z_{G_\bv}(t_\bv), {\bf{N}}^{(t_\bv, f)})_{\ff}]_{(0, 0)}
%H^{G_\bv \times F}(\cR_{{G_\bv}, {\bf{N}}})_{(t_\bv, f)} \simeq H^{Z_{G_\bv}(t_\bv) \times F}(\cR_{Z_{G_\bv}(t_\bv), {\bf{N}}^{(t_\bv, f)}})_{(0, 0)}.
\end{equation*}
\end{prop}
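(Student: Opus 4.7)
The argument parallels the proof of Proposition~\ref{localization of quiver side of homological hikita}, but now applied to the BFN space of triples instead of $\mu_{G_\bv, {\bf{N}}}^{-1}(0)^s$. Recall that by definition $\bC[\cM(G_\bv, {\bf{N}})_\ff] = H^{G_\bv \times F}(\cR_{G_\bv, {\bf{N}}})$, with the convolution algebra structure of \cite[Section~3]{BFN18}, viewed as a module over $H^{G_\bv \times F}(\pt) = \bC[(\ft_\bv/S_\bv) \times \ff]$.

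First, I would apply the equivariant localization theorem in Borel--Moore homology. Passing through the $T_\bv \times F$-equivariant homology and then taking $S_\bv$-invariants yields an isomorphism
\begin{equation*}
H^{G_\bv \times F}(\cR_{G_\bv, {\bf{N}}})_{(t_\bv, f)} \simeq H^{Z_{G_\bv}(t_\bv) \times F}\bigl((\cR_{G_\bv, {\bf{N}}})^{(t_\bv, f)}\bigr)_{(0,0)}
\end{equation*}
of localized modules. I would then invoke Lemma~\ref{fixed points on BFN space} to identify $(\cR_{G_\bv, {\bf{N}}})^{(t_\bv, f)}$ with $\cR_{Z_{G_\bv}(t_\bv), {\bf{N}}^{(t_\bv, f)}}$ up to reduced structure (which is harmless for Borel--Moore homology). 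This produces the desired isomorphism at the level of modules.

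To upgrade to an isomorphism of algebras, I would verify compatibility with the convolution product. Every space appearing in the BFN convolution diagram carries a natural $G_\bv(\cO) \times F$-action, and every structure map is equivariant; taking $(t_\bv, f)$-fixed points therefore yields a convolution diagram on the fixed loci. Applying Lemma~\ref{fixed points on BFN space} (and its analogues for the related ind-schemes $\cT_{G_\bv, {\bf{N}}}$ and the convolution of two copies of $\cR_{G_\bv, {\bf{N}}}$) identifies this fixed-point convolution diagram with the BFN convolution diagram for $(Z_{G_\bv}(t_\bv), {\bf{N}}^{(t_\bv, f)})$, which is exactly what is needed.

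The main obstacle is that $\cR_{G_\bv, {\bf{N}}}$ is an ind-pro-scheme of infinite type, so classical equivariant localization does not apply to it directly. I would handle this by working with the standard exhaustion by the finite-type approximations $\cR_{G_\bv, {\bf{N}}}^{\leq \lach}$ used in \cite{BFN18}, applying localization on each piece, and passing to the limit. Since on each piece the $T_\bv \times F$-fixed locus is quasi-compact and is contained in the corresponding approximation of $\cR_{Z_{G_\bv}(t_\bv), {\bf{N}}^{(t_\bv, f)}}$, the passage is routine, but some care is needed to ensure compatibility of localization simultaneously with the inverse and direct limits defining $\cR_{G_\bv, {\bf{N}}}$.
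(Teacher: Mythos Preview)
Your proposal is correct and follows essentially the same approach as the paper: apply equivariant localization together with Lemma~\ref{fixed points on BFN space} to obtain the module isomorphism, then check compatibility with the convolution product. The only difference is that where you sketch the verification that localization commutes with convolution (via the fixed-point convolution diagram and finite-type approximations), the paper simply cites \cite[Section~5]{BFN18}, where this compatibility is already established.
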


\begin{proof}
By the localization theorem for equivariant homology and Lemma \ref{fixed points on BFN space}, we have:
\begin{multline*}
\bC[\cM(G_\bv, {\bf{N}})_{\ff}]_{(t_\bv, f)} =
H^{G_\bv \times F} (\cR_{G_\bv, {\bf{N}}})_{ (t_\bv, f)} 
= H^{Z_{G_\bv}(t_\bv) \times F} ((\cR_{G_\bv, {\bf{N}}})^{(t_\bv, f)})_{(0, 0)}=\\
= H^{Z_{G_\bv}(t_\bv) \times F} (\cR_{Z_{G_\bv}(t_\bv), {\bf{N}}^{(t_\bv, f)}})_{(0, 0)}
= \bC[\cM(Z_{G_\bv}(t_\bv), {\bf{N}}^{(t_\bv, f)})_{\ff}]_{(0, 0)}.
\end{multline*}
As it is shown in \cite[Section~5]{BFN18}, localization commutes with multiplication in Coulomb branches, hence this is indeed an isomorphism of algebras.
\end{proof}

From this, the localization of schematic fixed points algebra can be described
\begin{cor} \label{cor: localization of b-algebra}
For any $(t_\bv, f) \in (\ft_\bv / S_\bv) \times \ff$ there is an isomorphism of algebras, localized over $\bC[(\ft_\bv / S_\bv) \times \ff]$:
\begin{equation*}
\bC[\cM(G_\bv, {\bf{N}})_{\ff}^\nu]_{(t_\bv, f)} \simeq \bC[\cM(Z_{G_\bv}(t_\bv), {\bf{N}}^{(t_\bv, f)})_{\ff}^\nu]_{(0, 0)}.
\end{equation*}
\end{cor}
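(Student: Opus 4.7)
The plan is to deduce the corollary from Proposition~\ref{localization of homological coloumb branch} by showing two compatibility statements: first, that the formation of the B-algebra $B^\nu(-)$ commutes with localization at the maximal ideal $\fm_{(t_\bv,f)} \subset R := \bC[(\ft_\bv / S_\bv) \times \ff]$; and second, that the isomorphism of Proposition~\ref{localization of homological coloumb branch} respects the relevant $\nu$-gradings. Writing $A := \bC[\cM(G_\bv,{\bf{N}})_\ff] = H^{G_\bv \times F}(\cR_{G_\bv, {\bf{N}}})$ and $A' := \bC[\cM(L, {\bf{N}}^{(t_\bv,f)})_\ff]$ for $L := Z_{G_\bv}(t_\bv)$, and recalling $\bC[\cM^\nu_\ff] = B^\nu(\bC[\cM_\ff])$, the goal becomes exactly $B^\nu(A)_{\fm_{(t_\bv,f)}} \simeq B^\nu(A')_{\fm_{(0,0)}}$.

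For the commutation of $B^\nu$ with localization, I would argue that the structure map $R \to A$ factors through the $\nu$-degree-zero summand $A_0$. Indeed, this map sends $r \in H^*_{G_\bv \times F}(\pt)$ to $r \cap [\cR_{G_\bv, {\bf{N}}}^0]$, and the connected component $\cR_{G_\bv, {\bf{N}}}^0$ of the identity lies in the $0$-summand of the $\pi_1(G_\bv)$-decomposition of $\cR_{G_\bv, {\bf{N}}}$. Since $A$ is commutative, multiplication by the image of $R$ then preserves the $\pi_1(G_\bv)$-grading and hence the induced $\nu$-grading given by pairing with $\nu$. This $R$-linearity of the grading implies formally that $B^\nu(A_{\fm_{(t_\bv,f)}}) = B^\nu(A)_{\fm_{(t_\bv,f)}}$, and similarly for $A'$.

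For the second compatibility, I would invoke Lemma~\ref{fixed points on BFN space}: under the identification $(\cR_{G_\bv,{\bf{N}}})^{(t_\bv,f)}_{\red} \simeq (\cR_{L, {\bf{N}}^{(t_\bv,f)}})_{\red}$, the connected component of $\cR_{L, {\bf{N}}^{(t_\bv,f)}}$ indexed by $\mu \in \pi_1(L)$ is sent into the connected component of $\cR_{G_\bv, {\bf{N}}}$ indexed by its image $[\mu] \in \pi_1(G_\bv)$ under the canonical surjection $\pi_1(L) \twoheadrightarrow \pi_1(G_\bv)$ (induced by the inclusion of the common cocharacter lattice $X_*(T_\bv)$ modulo the respective spans of coroots). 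The cocharacter $\nu$ on both sides is induced by the same character of $G_\bv$, which restricts to a character of $L$, so the $\nu$-weights satisfy $\langle \nu, \mu \rangle = \langle \nu, [\mu] \rangle$. Consequently, the isomorphism of Proposition~\ref{localization of homological coloumb branch} is $\nu$-graded, and applying $B^\nu$ to both sides produces the asserted isomorphism.

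I expect the main obstacle to be mostly bookkeeping: verifying that the identification of Lemma~\ref{fixed points on BFN space} is indeed $\pi_1$-equivariant with respect to $\pi_1(L) \twoheadrightarrow \pi_1(G_\bv)$, and confirming that the $\nu$-grading used on the Levi side in Conjecture~\ref{hikita conjecture} matches the restriction of the grading on the $G_\bv$-side. Once these are settled, the rest of the argument is formal.
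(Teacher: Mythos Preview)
Your proposal is correct and follows essentially the same approach as the paper: both argue that forming the B-algebra commutes with localization (because the $R$-action lands in $\nu$-degree zero, equivalently $\nu$ acts trivially on $R$) and then apply Proposition~\ref{localization of homological coloumb branch}. You are in fact more careful than the paper on one point: you explicitly verify that the isomorphism of Proposition~\ref{localization of homological coloumb branch} is $\nu$-graded via the compatibility $\pi_1(L) \twoheadrightarrow \pi_1(G_\bv)$ and the fact that $\nu$ is restricted from a character of $G_\bv$, whereas the paper leaves this implicit in the phrase ``Now the results follow from Proposition~\ref{localization of homological coloumb branch}.''
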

\begin{proof}
Recall that the action of $\nu$ on $\cM(G_\bv, {\bf{N}})_{\ff}$ comes from the connected components decomposition of $\cR_{G_\bv, {\bf{N}}}$. Hence, $\nu$ acts trivially on $\bC[(\ft_\bv/S_\bv) \times \ff]$, and taking $\nu$-fixed points commutes with taking localization %at $(t_\bv, f)$ %or at $f$ 
(note also that the localization functor is exact). Similarly for $\cM(Z_{G_\bv}(t_\bv), {\bf{N}}^{(t_\bv, f)})_{\ff}$. Now the results follow from Proposition~\ref{localization of homological coloumb branch}.
\end{proof}

Now we describe the fiber of the schematic fixed points algebra over a point of $\bC[\ff]$. %under certain assumption.

\begin{cor} \label{cor: fiber of b-algebra homo}
%For any $(t_\bv, f) \in (\ft_\bv / S_\bv) \times \ff$ there is an isomorphism of algebras, localized over $\bC[(\ft_\bv / S_\bv) \times \ff]$:
%\begin{equation*}
%\bC[\cM(G_\bv, {\bf{N}})_{\ff}^\nu]_{(t_\bv, f)} \simeq \bC[\cM(Z_{G_\bv}(t_\bv), {\bf{N}}^{(t_\bv, f)})_{\ff}^\nu]_{(0, 0)}.
%\end{equation*}

Suppose $\cM(G_\bv, {\bf{N}})^\nu(\bC)$ is finite\footnote{In fact, this assumption is always satisfied, as follows from an unpublished work of Kamnitzer--Weekes. The argument is not very complicated.}. 
%(for example, this follows from non-equivariant Hikita conjecture for $(G_\bv, N)$, see Remark~\ref{rem: hikita implies fixed point is unique}) 
Then for any $f \in \ff$, there is an isomorphism of algebras
\begin{equation*}
\bC[\cM(G_\bv, {\bf{N}})_{\ff}^\nu] \T_{\bC[\ff]} \bC_{f} \simeq \bigoplus_{t_\bv \in \ft_\bv / S_\bv} \bC[\cM(Z_{G_\bv}(t_\bv), {\bf{N}}^{(t_\bv, f)})_{\ff}^\nu],
\end{equation*}
and only finite number of summands is nonzero.
\end{cor}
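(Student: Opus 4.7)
The argument parallels the proof of Corollary~\ref{localization of quiver side of homological hikita over T_w}, with Corollary~\ref{cor: localization of b-algebra} replacing Proposition~\ref{localization of quiver side of homological hikita} as the localization input. The three ingredients are: (i) a finiteness statement promoting the single-point hypothesis to finite generation of $\bC[\cM_{Q,\ff}^\nu]$ over $\bC[\ff]$; (ii) the Artinian decomposition of a finite $\bC[\ft_\bv/S_\bv]$-algebra into its local factors; (iii) an identification of each local factor with a non-completed algebra.

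For (i), the hypothesis that $\cM_Q^\nu(\bC)$ is a single point together with Hilbert's Nullstellensatz forces $\bC[\cM_Q^\nu]$ to be a finite-dimensional local $\bC$-algebra, as a quotient of $\bC[\ft_\bv/S_\bv]$ set-theoretically supported on a single closed point. I would then upgrade this to finite generation of $\bC[\cM_{Q,\ff}^\nu]$ as a $\bC[\ff]$-module. Since the flavor deformation $\cM_{Q,\ff}\to \ff$ is flat and $\bC[\cM_{Q,\ff}^\nu]$ is obtained by passing to the $\nu$-fixed subscheme (a graded quotient for the weight grading induced by $\nu$), a graded Nakayama argument propagates finite-dimensionality of the fiber at $0\in\ff$ to finite generation of the full family; this is the unpublished Kamnitzer--Weekes input acknowledged in the introduction.

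For (ii), set
\[
M := \bC[\cM_{Q,\ff}^\nu] \T_{\bC[\ff]} \bC_f,
\]
which by Step~(i) is a finite-dimensional commutative $\bC[\ft_\bv/S_\bv]$-algebra and hence splits over its finite support in $\ft_\bv/S_\bv$ as $M \iso \bigoplus_{t_\bv} M_{t_\bv}$. Since localization is exact and commutes with base change along $\bC[\ff]\to\bC_f$, each summand equals $\bC[\cM_{Q,\ff}^\nu]_{(t_\bv,f)} \T_{\bC[\ff]} \bC_f$, and Corollary~\ref{cor: localization of b-algebra} rewrites this as
\[
\bC[\cM(Z_{G_\bv}(t_\bv), {\bf{N}}^{(t_\bv,f)})_\ff^\nu]_{(0,0)} \T_{\bC[\ff]} \bC_f.
\]
Step~(iii) then identifies this with $\bC[\cM(Z_{G_\bv}(t_\bv), {\bf{N}}^{(t_\bv,f)})_\ff^\nu]$ by observing that $f$ acts trivially on the restricted matter ${\bf{N}}^{(t_\bv,f)}$ by definition of fixed points, so that the $\bC[\ff]$-structure on the smaller B-algebra is concentrated at $f\in\ff$ in a way compatible with the localization at $(0,0)\in\ft_\bv/S_\bv\times\ff$ and with the $\bC_f$-base change; assembling the summands yields the claim.

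\textbf{Main obstacle.} Step~(i) is the crux: upgrading a set-theoretic single-point condition on the central $\nu$-fixed locus to a genuine finite-module statement for the entire $\ff$-deformation requires controlling the $\nu$-fixed scheme uniformly in the flavor parameter, and this is precisely where the Kamnitzer--Weekes argument enters. A secondary delicate point is the last identification in Step~(iii), where one must verify that the $f$-direction of $\ff$ contributes trivially enough to the deformation for $(Z_{G_\bv}(t_\bv), {\bf{N}}^{(t_\bv,f)})$ that localization at $(0,0)$ and the $\bC_f$-tensor product together return the full algebra.
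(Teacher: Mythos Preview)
Your plan is essentially the paper's proof: graded Nakayama to propagate finiteness from the central fiber, then Artinian decomposition of the fiber over $\bC[\ft_\bv/S_\bv]$, then the localization result, then de-localize. The paper phrases step~(ii) via completions rather than localizations, but for finite-dimensional algebras these coincide.

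Two small clarifications. First, in step~(i) the grading that makes Nakayama work is the \emph{cohomological} grading coming from $\bC^\times_{\mathrm{hom}}$ (under which the generators of $\bC[\ff]$ sit in positive degree), not the $\nu$-weight grading, which is already trivial on the B-algebra. Once you observe that $\bC[\cM_{Q,\ff}^\nu]$ is a cohomologically graded quotient of $\bC[(\ft_\bv/S_\bv)\times\ff]$ via the surjection $\phi_2$, graded Nakayama is immediate; you do not need to invoke anything deeper here.

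Second, your justification of step~(iii) via ``$f$ acts trivially on ${\bf{N}}^{(t_\bv,f)}$'' is not quite the mechanism. The isomorphism in Corollary~\ref{cor: localization of b-algebra} shifts the base point from $(t_\bv,f)$ to $(0,0)$, so the $\bC[\ff]$-structures are identified via translation by $f$; thus your $\T_{\bC[\ff]}\bC_f$ becomes $\T_{\bC[\ff]}\bC_0$ on the small side, which simply picks out the non-deformed B-algebra $\bC[\cM(Z_{G_\bv}(t_\bv),{\bf{N}}^{(t_\bv,f)})^\nu]$, localized at $0$. The de-localization then uses that this is a non-negatively graded connected $\bC$-algebra embedding into its (finite-dimensional) localization at the augmentation ideal, hence is itself finite-dimensional and local.
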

\begin{proof}
We argue as in the proof of Corollary~\ref{localization of quiver side of homological hikita over T_w}, using Proposition~\ref{localization of homological coloumb branch}.

Our assumption implies that fiber at $0 \in \ff$ of $\bC[\cM(G_\bv, {\bf{N}})_{\ff}^\nu]$ is finite-dimensional, hence the graded Nakayama lemma implies that fiber over any point $f$ is finite-dimensional. Note that fiber over $f$ is a module over $\bC[\ft_\bv]$, hence it is supported at a finite number of points. Thus, it is isomorphic to the direct sum of formal completions over all points, ad we get:
\begin{multline*}
\bC[\cM(G_\bv, {\bf{N}})_{\ff}^\nu] \T_{\bC[\ff]} \bC_{f} \simeq \bigoplus_{t_\bv \in \ft_\bv / S_\bv} \big( \bC[\cM(G_\bv, {\bf{N}})_{\ff}^\nu] \T_{\bC[\ff]} \bC_{f} \big)^{\wedge (t_\bv, f)} \\ 
\simeq \bigoplus_{t_\bv \in \ft_\bv / S_\bv} \bC[\cM(Z_{G_\bv}(t_\bv), {\bf{N}}^{(t_\bv, f)})_{\ff}^\nu]^{\wedge t_\bv} \simeq \bigoplus_{t_\bv \in \ft_\bv / S_\bv} \bC[\cM(Z_{G_\bv}(t_\bv), {\bf{N}}^{(t_\bv, f)})_{\ff}^\nu],
\end{multline*}
where the second isomorphism follows from Proposition~\ref{localization of homological coloumb branch}, and the last is justified in the same way as in the proof of Corollary~\ref{localization of quiver side of homological hikita over T_w}.
\end{proof}

\begin{rem}
As we defined above, the cocharacter of the Hamiltonian torus $\nu$ comes from a character of $G_\bv$ corresponding to generic stability condition. At a fiber over some $(t_\bv, f) \in T_\bv \times F$, it descends to a character of $Z_{G_\bv}(t_\bv)$. 
%(as a character of the quotient of $Z_{G_\bv}(t_\bv)$ by the commutator), 
We expect that the corresponding cocharacter of the Hamiltonian torus is generic. One distinguished choice for $\nu$ is the one corresponding to the character of $G_{\bf{v}}$ given by the product of determinants (compare with \cite[Section~3(viii)]{bfn_slices} where it is denoted by $\chi$). 
This particular character $\nu$ is always generic. This follows from the fact that positive and negative dressed minuscule monopole operators generating $\mathbb{C}[\mathcal{M}(G_{\bf{v}},{\bf{N}})]$ over $H^*_{G_{\bf{v}}}(\operatorname{pt})$ have non-zero degree w.r.t $\nu$ (see Proposition~\ref{monopoles generate K-Coulomb branch} below and \cite[proof of Proposition~3.1]{Wee19}). %{\color{red}{check + reference}}. 
\end{rem}

\subsection{Fixed points of a quiver theory} \label{subsec: fixed points of quiver theory}
We see that on both sides it is the representation ${\bf{N}}^{(t_\bv, f)}$ of the group $Z_{G_\bv}(t_\bv)$ that appears in localizations of Hikita conjecture, so let us investigate this representation in terms of quiver $Q$.

We first assume that $F = T_\bw$.
Take $(t_\bv, t_\bw) \in  T_\bv \times T_\bw$.
Its action on ${\bf{N}}$ determines decompositions $V_i = \bigoplus_{\la \in \bC^\times} V_i^\la$, $W_i = \bigoplus_{\la \in \bC^\times} W_i^\la$ for each $i \in Q_0$, where $t_\bv$ acts by the scalar $\la$ on $V_i$ and $t_\bw$ acts by the scalar $\la$ on $W_i^\la$. Then
\begin{equation*}
{\bf{N}}^{(t_\bv, t_\bw)} = \bigoplus_{\la \in \bC^\times} \left( \bigoplus_{i \rightarrow j} \Hom (V_{i}^\la, {V_{j}^\la}) \oplus \bigoplus_{i \in Q_0} \Hom( {V_i^\la}, {W_i^\la}) \right).
\end{equation*}
Moreover, we clearly have 
\begin{equation*}
Z_{G_\bv}(t_\bv) = \prod_{\la \in \mathbb{C}^\times} \left( \prod_{i \in Q_0}  GL_{\dim V_i^\la} \right).
\end{equation*}
So, we have the following:

\begin{prop} \label{fixed points of quiver theory is quiver theory}
Fix a quiver $Q$ and dimension vectors $\bv = \{ v_i \}$,  $\bw = \{ w_i \}$. %, let $(G_\bv, {\bf{N}})$ be the associated gauge theory. 
For any $(t_\bv, t_\bw) \in T_\bv \times T_\bw$, the theory $(Z_{G_\bv}(t_\bv), {\bf{N}}^{(t_\bv, t_\bw)})$ is the product of quiver gauge theories for the same quiver $Q$ and dimension vectors $\bv^\la = \{ \dim V_i^\la \}$, $\bw^\la = \{ \dim W_i^\la \}$, where for each $i$, $v_i = \sum_\la \dim V_i^\la$, $w_i = \sum_\la \dim W_i^\la$. 
\end{prop}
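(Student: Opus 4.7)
The proof is mostly a verification, since the paragraph preceding the proposition has already identified the key decompositions. I would proceed as follows.

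First, I would upgrade the weight space decomposition into a compatible statement about morphism spaces. The action of $t_\bv$ on $\Hom(V_i, V_j)$ is by conjugation $\phi \mapsto t_{v_j} \circ \phi \circ t_{v_i}^{-1}$, so the fixed subspace consists of those $\phi$ that preserve the $\la$-eigenspace decomposition; hence $\Hom(V_i,V_j)^{t_\bv} = \bigoplus_\la \Hom(V_i^\la, V_j^\la)$. An analogous computation for the framing part shows $\Hom(V_i, W_i)^{(t_\bv, t_\bw)} = \bigoplus_\la \Hom(V_i^\la, W_i^\la)$, since for $(t_\bv, t_\bw)$-invariance we need $t_{w_i} \circ \psi = \psi \circ t_{v_i}$, which forces $\psi$ to respect the joint weight decomposition. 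Summing over the edges and vertices of $Q$ gives the stated description of ${\bf{N}}^{(t_\bv, t_\bw)}$.

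Next, I would identify the centralizer: $Z_{GL(V_i)}(t_{v_i}) = \prod_\la GL(V_i^\la)$ (since the centralizer of a semisimple element in $GL(V_i)$ is the product of general linear groups on its eigenspaces), and therefore
\begin{equation*}
Z_{G_\bv}(t_\bv) \;=\; \prod_{i \in Q_0} \prod_{\la \in \bC^\times} GL(V_i^\la) \;=\; \prod_{\la \in \bC^\times} \prod_{i \in Q_0} GL_{\dim V_i^\la} \;=\; \prod_{\la \in \bC^\times} G_{\bv^\la}.
\end{equation*}

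Finally, I would check compatibility of the action. The restriction of the $G_\bv$-action on $\Hom(V_i,V_j)$ to the summand $\Hom(V_i^\la, V_j^\la)$ factors through the projection $Z_{G_\bv}(t_\bv) \twoheadrightarrow G_{\bv^\la}$ and coincides, tautologically, with the natural action associated to the same quiver $Q$ equipped with new dimension vectors $\bv^\la, \bw^\la$; the $\la \ne \la'$ summands are acted on trivially by $G_{\bv^\la}$. Hence the pair $(Z_{G_\bv}(t_\bv), {\bf{N}}^{(t_\bv, t_\bw)})$ decomposes as the external product of quiver gauge theories for $Q$ with dimension data $(\bv^\la, \bw^\la)$, as claimed.

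There is no serious obstacle here; the only subtlety worth spelling out is that the joint $(t_\bv, t_\bw)$-fixed locus of the framing maps couples the two tori via the same eigenvalue $\la$, which is precisely what makes the product decomposition indexed by $\la \in \bC^\times$ (rather than by pairs of eigenvalues) come out correctly.
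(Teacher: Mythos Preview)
Your proof is correct and follows essentially the same approach as the paper: the paper's argument is contained in the paragraph immediately preceding the proposition, which computes ${\bf{N}}^{(t_\bv, t_\bw)}$ and $Z_{G_\bv}(t_\bv)$ exactly as you do, and then states the proposition as an immediate consequence. Your write-up is simply a more detailed version of that paragraph, spelling out the eigenspace-preservation reasoning and the compatibility of the action, which the paper leaves implicit.
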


%The action of $\ol{\{ t_\bv^\bZ \}}$ on $V_i$ determines a grading by characters $\ol{\{ t_\bv^\bZ \}} \rightarrow \torus$. Similarly, the action of $\ol{\{ F^\bZ \}}$ on $W_i$ determines a grading by characters $\ol{\{ F^\bZ \}} \rightarrow \torus$. So, every space $V_i$ and $W_i$ are graded by characters $\chi: \ol{\{(t_\bv, F )^\bZ\}} \rightarrow \torus$. Taking invariant corresponds 

Now we consider the case of arbitrary $F$. Then the fixed points are still a (product of) quiver theories, but possibly for different quivers:
\begin{prop} \label{prop: fixed points fo quiver for arbitrary F}
Let $(G_\bv, {\bf{N}})$ be a quiver theory, and $F$ is a torus, acting on $N$, commuting with $G_\bv$. Take $(t_\bv, f) \in T_\bv \times F$. Then $(Z_{G_\bv}(t_\bv), {\bf{N}}^{(t_\bv, f)})$ comes from some quiver.
\end{prop}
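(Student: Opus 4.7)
My plan is to generalize the argument of Proposition~\ref{fixed points of quiver theory is quiver theory} by first ``refining'' the quiver so that $F$ acts by a single character on each arrow and on each rank-one framing line, and then decomposing the vertex spaces into $t_\bv$-eigenspaces exactly as in the case $F = T_\bw$.

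First, I would observe that since $F$ commutes with $G_\bv$, it preserves the $G_\bv$-isotypic decomposition of ${\bf N}$. The irreducible $G_\bv$-summands of ${\bf N}$ are $\Hom(V_i, V_j)$ (one for each arrow $i \to j$ of $Q$, including loops) and $V_i^*$ (appearing $w_i$ times in the framing $\Hom(V_i, W_i)$). The $F$-action on each multiplicity space (of dimension $m_{ij}$ for parallel arrows and $w_i$ for framings) is diagonalizable since $F$ is a torus. Choosing bases in which $F$ acts diagonally, we may assume (without changing the representation ${\bf N}$ up to isomorphism) that each individual arrow $a$ of $Q$ carries a single $F$-character $\chi_a\colon F \to \bC^\times$ and that each framing line $k = 1, \ldots, w_i$ at vertex $i$ carries a single $F$-character $\xi_{i,k}\colon F \to \bC^\times$.

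Next, I decompose $V_i = \bigoplus_\lambda V_i^\lambda$ into $t_\bv$-eigenspaces, so that $Z_{G_\bv}(t_\bv) = \prod_{i \in Q_0} \prod_\lambda GL(V_i^\lambda)$, and I take as new vertex set $Q'_0 = \{(i,\lambda) : V_i^\lambda \neq 0\}$. An element of $\Hom(V_i^\lambda, V_j^\mu)$ inside the $a$-th copy of $\Hom(V_i, V_j)$ is $(t_\bv, f)$-fixed iff $\mu\lambda^{-1} = \chi_a(f)$, contributing an arrow $(i,\lambda) \to (j, \chi_a(f)\lambda)$ in $Q'$; similarly, an element of $\Hom(V_i^\lambda, \bC)$ inside the $k$-th framing line at $i$ is fixed iff $\lambda = \xi_{i,k}(f)$, contributing a rank-one framing at vertex $(i, \xi_{i,k}(f))$. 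Collecting these yields dimension vectors $\bv', \bw'$ on $Q'$, and by construction the representation ${\bf N}^{(t_\bv, f)}$ of $Z_{G_\bv}(t_\bv)$ coincides with the quiver representation attached to $(Q', \bv', \bw')$.

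The main subtlety lies in the combinatorial bookkeeping for loops of $Q$: a loop at vertex $i$ carrying character $\chi$ produces, for each $\lambda$ with $V_i^\lambda \neq 0$, a new arrow $(i,\lambda) \to (i, \chi(f)\lambda)$ in $Q'$, so one gets chains $(i,\lambda) \to (i,\chi(f)\lambda) \to (i,\chi(f)^2 \lambda) \to \cdots$. Finiteness of $\dim V_i$ ensures that only finitely many eigenvalues $\lambda$ occur, so these chains are finite and $Q'$ is well-defined. This is the only place where the setting of Proposition~\ref{fixed points of quiver theory is quiver theory} ($F = T_\bw$, where loops carry trivial $F$-weight) needs to be genuinely extended; once it is handled via the explicit diagonalization of the first step, the remaining argument is parallel to the one treated there.
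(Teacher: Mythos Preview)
Your approach matches the paper's, but there is one genuine gap concerning loops. You assert that $\Hom(V_i, V_j)$ is an irreducible $G_\bv$-summand for every arrow, including loops; this fails when $i = j$ and $\dim V_i > 1$, since then $\Hom(V_i, V_i) = \fsl(V_i) \oplus \bC$. A torus $F$ commuting with $G_\bv$ may therefore act with \emph{different} characters on the $\fsl$-part and on the trace part of each loop, and your Schur-lemma diagonalization produces two separate multiplicity spaces (one for $\fsl(V_i)$, one for the trivial representation) whose diagonalizing bases need not align. So you cannot in general arrange for $F$ to act by a single character on each individual loop summand $\Hom(V_i, V_i)$.

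The paper's fix is to observe that these trace parts $\bC$ are trivial $G_\bv$-representations that can be safely ignored: they do not affect the Coulomb branch at all, and on the Higgs side they contribute at most a $T^*\bA^1$ factor. After setting them aside (equivalently, after replacing $f$ by an element of the torus $F_{\mathrm{max}}$ that scales each full arrow summand), the remaining action of $F$ does factor arrow-by-arrow, and your argument---identical to the paper's from that point on---goes through verbatim. Your closing remark about finite chains is correct but is not the actual subtlety for loops.
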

\begin{proof}
We do the Crawley-Boevey trick \cite[p.261]{CB}. Then the summands of ${\bf{N}}$ correspond to arrows in the quiver, and are of three sorts: $\Hom(V_i, V_i)$, $\Hom(V_i, V_j)$ (with $i \neq j$), and $\Hom(V_i, \bC)$ ($\bC$ being a one-dimensional framing space). The second and the third of them are irreducible as $G_\bv$-representations, while the first one decomposes as $\fsl(V_i) \oplus \bC$. By the Schur lemma, an operator on ${\bf{N}}$, commuting with $G_\bv$, acts by a scalar on each irreducible summand. It follows that action of $F$ on ${\bf{N}}$ factors through the torus $F_{\mathrm{max}}$, which scales the summand corresponding to each arrow of the quiver (we ignore the trivial summands $\bC$ here as they do not affect anything on the Coulomb side and simply multiply the Higgs side by $T^*\mathbb{A}^1$). Thus we can assume $f \in F_{\mathrm{max}}$.

Element $t_\bv \in T_{\bf{v}}$ acts diagonally on $V_i$, defining the decomposition $V_i = \bigoplus_{\la \in \bC^\times} V_i^\la$. Consider an arrow $i \rightarrow j$ (possibly $i=j$), let $f$ act on the summand $\Hom(V_i, V_j)$ corresponding to this arrow by a constant $\eta_{i \rightarrow j}$. Then $\al \in \Hom(V_i, V_j)$  lies in ${\bf{N}}^{(t_\bv, f)}$ if and only if $\al(V_i^\la) \subset V_j^{\la \cdot \eta_{i \rightarrow j}}$ for any~$\la$. Similarly, for a framing summand $\Hom(V_i, \bC)$, on which $f$ acts by a scalar $\eta$, we have $\al \in \Hom(V_i, \bC)$ lies in ${\bf{N}}^{(t_\bv, f)}$ if and only if $\al(V_i^\la) = 0$ for all $\la \neq \eta^{-1}$.

It follows that $(Z_{G_\bv}(t_\bv), {\bf{N}}^{(t_\bv, f)})$ corresponds to a quiver, whose vertices are labeled by $(i, \la)$ such that $V_i^\la \neq 0$; arrows from $(i, \la)$ to $(j, \mu)$ are parametrized by arrows from $i$ to $j$ in $Q$ such that $\mu = \la \cdot \eta_{i \rightarrow j}$ (in notations as above); framing arrows from $(i, \la)$ are parametrized by framing arrows from $i$ in $Q$, such that $\la = \eta^{-1}$ (in notations as above).
\end{proof}

\subsection{Proofs of homological Hikita conjecture in some cases} \label{subsec: proofs of homo-hikita}
We are ready to present some new results (and new proofs of old results) on homological Hikita conjecture.

Namely, we use our results in previous subsections to deduce the equivariant Hikita conjecture from non-equivariant for different set of gauge theories. Conversely, knowing the equivariant Hikita conjecture for some theory $(G_\bv, {\bf{N}})$, one can deduce it for a different theory.

We begin with the following
\begin{prop} \label{prop: hikita from one theory to its centralizer}
Suppose one has an isomorphism of $H_{F \times G_\bv}(\pt)$-algebras
\begin{equation} \label{hikita from old theory to new, equation}
H_{F}(\wti \fM(G_\bv, {\bf{N}})) \simeq \bC[\cM({G_\bv}, {\bf{N}})_{\ff}^{\nu}]
\end{equation}
(equivariant homological Hikita conjecture for $(G_\bv, {\bf{N}})$). Then for any $(t_\bv, f) \in (\ft_\bv / S_\bv) \times \ff$ there is an isomorphism of $H_{F \times Z_{G_\bv}(t_\bv)}(\pt)$ algebras:
\begin{equation*}
H_{F} (\wti \fM(Z_{G_\bv}(t_\bv), {\bf{N}}^{(t_\bv, f)})) \simeq \bC[\cM(Z_{G_\bv}(t_\bv), {\bf{N}}^{(t_\bv, f)})_{\ff}^{\nu}]
\end{equation*}
(equivariant homological Hikita conjecture for $(Z_{G_\bv}(t_\bv), {\bf{N}}^{(t_\bv, f)})$). 
\end{prop}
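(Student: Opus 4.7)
The plan is to establish the isomorphism via Kirwan-type surjections and a local-to-global argument. Both sides of the desired isomorphism for the centralizer theory carry surjections $\phi_1^{\mathrm{new}}, \phi_2^{\mathrm{new}}$ from the base ring $H_{F \times Z_{G_\bv}(t_\bv)}(\pt) = \bC[(\ft_\bv/S_{Z_{G_\bv}(t_\bv)}) \times \ff]$, exactly as in \eqref{surjection of homology of point to quiver} and \eqref{surjection from homology of point to b-algebra}. Thus the desired isomorphism amounts to the equality of ideals $\ker \phi_1^{\mathrm{new}} = \ker \phi_2^{\mathrm{new}}$, which can be verified after localizing at every closed point $(s, g) \in (\ft_\bv/S_{Z_{G_\bv}(t_\bv)}) \times \ff$.

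Given such $(s, g)$, I would fix a lift $\tilde s \in \ft_\bv$ of $s$ and set $(\tilde t, \tilde f) := a \cdot (t_\bv, f) + b \cdot (\tilde s, g) \in \ft_\bv \times \ff$ for generic $a, b \in \bC^\times$. A standard weight-space argument shows that such a generic linear combination satisfies
\[
Z_{G_\bv}(\tilde t) = Z_{G_\bv}(t_\bv) \cap Z_{G_\bv}(\tilde s) = Z_{Z_{G_\bv}(t_\bv)}(\tilde s), \qquad {\bf N}^{(\tilde t, \tilde f)} = ({\bf N}^{(t_\bv, f)})^{(\tilde s, g)}.
\]
Applying Proposition~\ref{localization of quiver side of homological hikita} and Corollary~\ref{cor: localization of b-algebra} simultaneously to the old theory $(G_\bv, {\bf N})$ at $(\tilde t, \tilde f)$ and to the new theory $(Z_{G_\bv}(t_\bv), {\bf N}^{(t_\bv, f)})$ at $(s, g)$, one then obtains canonical identifications of local algebras
\[
H_F(\wti\fM(Z_{G_\bv}(t_\bv), {\bf N}^{(t_\bv, f)}))_{(s, g)} \simeq H_F(\wti\fM(Z_{G_\bv}(\tilde t), {\bf N}^{(\tilde t, \tilde f)}))_{(0, 0)} \simeq H_F(\wti\fM(G_\bv, {\bf N}))_{(\tilde t, \tilde f)},
\]
and the analogous chain of isomorphisms for the $\nu$-fixed-point algebras of the deformed Coulomb branches.

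The assumed equivariant Hikita isomorphism for the old theory, localized at $(\tilde t, \tilde f)$, then identifies the localized quiver and Coulomb sides for the new theory at $(s, g)$. Summing this over all $(s, g)$, one deduces $\ker \phi_1^{\mathrm{new}} = \ker \phi_2^{\mathrm{new}}$, which proves the desired equivariant Hikita conjecture for the centralizer theory. The main technical obstacle is the compatibility of all four localization isomorphisms with the relevant base-ring module structures: the local algebras for the old theory sit over $\bC[\ft_\bv/S_\bv \times \ff]_{(\tilde t, \tilde f)}$, while those for the new theory sit over $\bC[\ft_\bv/S_{Z_{G_\bv}(t_\bv)} \times \ff]_{(s, g)}$, and one must verify that the identifications above intertwine the surjections $\phi_1, \phi_2$ on both sides through the finite cover $\ft_\bv/S_{Z_{G_\bv}(t_\bv)} \to \ft_\bv/S_\bv$. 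This is a (somewhat delicate but essentially formal) bookkeeping consequence of the naturality of the equivariant localization theorem and of the abelianized description of Kirwan maps and of Coulomb branch inclusions of Cartan subalgebras.
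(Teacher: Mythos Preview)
Your approach is correct in outline, but it is considerably more elaborate than necessary. The paper's proof avoids the local-to-global argument over all closed points $(s,g)$ entirely and uses only the single point $(t_\bv, f)$.

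Here is the paper's shortcut. Complete the given isomorphism \eqref{hikita from old theory to new, equation} over $H_{G_\bv \times F}(\pt)$ at $(t_\bv, f)$. The completed versions of Proposition~\ref{localization of quiver side of homological hikita} and Corollary~\ref{cor: localization of b-algebra} then identify this directly with
\[
H_F\bigl(\wti\fM(Z_{G_\bv}(t_\bv), {\bf N}^{(t_\bv, f)})\bigr)^{\wedge(0,0)} \;\simeq\; \bC\bigl[\cM(Z_{G_\bv}(t_\bv), {\bf N}^{(t_\bv, f)})_{\ff}^{\nu}\bigr]^{\wedge(0,0)}.
\]
Now the key observation you are missing: both sides of this completed isomorphism are non-negatively graded (cohomological grading on the quiver side, $\bC^\times_{\textrm{hom}}$-grading on the Coulomb side), the isomorphism is graded because it is $H_{Z_{G_\bv}(t_\bv)\times F}(\pt)^{\wedge(0,0)}$-linear, and completion at the origin of a non-negatively graded algebra is just the direct product of graded pieces. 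Passing from direct products to direct sums of graded components recovers the uncompleted algebras and yields the desired isomorphism immediately. No generic linear combinations, no iterated localization at every $(s,g)$, and no base-change compatibilities through $\ft_\bv / S_{Z_{G_\bv}(t_\bv)} \to \ft_\bv / S_\bv$ are needed.

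What your route buys: it does not rely on the grading and would in principle apply to ungraded settings. But here the grading is present and makes the argument a two-line affair, sidestepping precisely the ``delicate bookkeeping'' you flag at the end of your proposal.
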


\begin{proof}
Take the completion of \eqref{hikita from old theory to new, equation} over $H_{G_\bv \times F}(\pt)$ at the maximal ideal, corresponding to $(t_\bv, f)$. Taking completions of Proposition \ref{localization of quiver side of homological hikita}, Corollary \ref{cor: localization of b-algebra}, we obtain the identification
\begin{equation}\label{eq: hikita centralizer}
H_{F} (\wti \fM(Z_{G_\bv}(t_\bv), {\bf{N}}^{(t_\bv, f)}))^{\wedge (0, 0)} \simeq \bC[\cM(Z_{G_\bv}(t_\bv), {\bf{N}}^{(t_\bv, f)})_{\ff}^{\nu}]^{\wedge (0, 0)}.
\end{equation}
This is an isomorphism of non-negatively graded algebras (both of them are non-negatively graded as quotients of $H_{F \times Z_{G_{\bf{v}}}(t_{\bf{v}})} (\operatorname{pt})^{\wedge (0, 0)}$ and the identification (\ref{eq: hikita centralizer}) is graded as it commutes with $H_{F \times Z_{G_{\bf{v}}}(t_{\bf{v}})}(\operatorname{pt})^{\wedge (0, 0)}$-action),
%the fact that the RHS is non-negatively graded follows from \cite[Proposition~8.7]{KS25} combined with Proposition \ref{prop: fixed points fo quiver for arbitrary F})
complete with respect to the grading. Taking direct sums of graded components on both sides %(instead of direct products) 
gives the result.
\end{proof}

Proposition \ref{prop: hikita from one theory to its centralizer} may be used as follows. In \cite{KS25}, the second-named author and Shlykov proved the equivariant Hikita conjecture (\ref{eq: intro equi homo hikita}) for the Gieseker variety. They deduced it from knowing it over the generic point of $\ff$ together with flatness of both of the sides of (\ref{eq: intro equi homo hikita}) over $\ff$, checked by computing the fiber at $0 \in \ff$. It turns out that taking the fiber at an appropriate non-generic nonzero point yields the equivariant Hikita conjecture for arbitrary type A quivers. This has been already proved in \cite[Theorem~8.3.7]{Wee16}, so the result is not new, but we think that this proof is interesting on its own. Let's emphasize that the computation needed for the argument in \cite{KS25} is quite simple and only involves a representation theory of  ``classical'' cyclotomic rational Cherednik algebras (the ones with large center).  

\begin{cor} \label{cor: hiktia for A-quivers}
Equivariant homological Hikita conjecture holds for type A quivers.
\end{cor}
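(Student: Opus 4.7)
The plan is to realize every type $A$ quiver gauge theory as $(Z_{G_\bv}(t_\bv), {\bf{N}}^{(t_\bv, f)})$ for a suitable choice of parameters inside a Gieseker (Jordan quiver) theory, and then invoke Proposition~\ref{prop: hikita from one theory to its centralizer} together with the equivariant Hikita conjecture for the Gieseker variety proved in \cite{KS25}.

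Fix a type $A_n$ quiver $Q'$ with dimension vector $\bv' = (v_1, \dots, v_n)$ and framing vector $\bw' = (w_1, \dots, w_n)$. Let $v = \sum_i v_i$, $w = \sum_i w_i$, and consider the Jordan quiver theory with one-vertex dimension $v$ and framing $w$, so ${\bf{N}} = \End(V) \oplus \Hom(V, W)$. Its flavor torus contains $F = T_\bw \times \bC^\times$, where the extra $\bC^\times$ scales the loop summand $\End(V)$; by \cite{KS25}, the equivariant Hikita conjecture~\eqref{deformed hikita conjecture} holds for this Gieseker theory over $F$.

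Pick a generic $\eta \in \bC^\times$ (with $\eta^k \neq 1$ for $0 < k \le n$). Let $t_\bv \in T_\bv$ act on $V$ with eigenvalue $\eta^{i-1}$ of multiplicity $v_i$, let $t_\bw \in T_\bw$ act on $W$ with eigenvalue $\eta^{i-1}$ of multiplicity $w_i$, and set $f = (t_\bw, \eta^{-1}) \in F$. A direct weight computation in the spirit of the proof of Proposition~\ref{prop: fixed points fo quiver for arbitrary F} identifies $Z_{G_\bv}(t_\bv) = \prod_{i=1}^n GL_{v_i}$ and shows that the surviving summands of ${\bf{N}}^{(t_\bv, f)}$ are precisely $\Hom(V^{\eta^{i-1}}, V^{\eta^{i}})$ for $i = 1, \dots, n-1$ together with $\Hom(V^{\eta^{i-1}}, W^{\eta^{i-1}})$ for $i = 1, \dots, n$. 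This is exactly the representation defining the type $A_n$ theory associated to $Q'$ with dimension vector $\bv'$ and framing vector $\bw'$.

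Applying Proposition~\ref{prop: hikita from one theory to its centralizer} then yields the equivariant Hikita conjecture for this type $A$ theory with respect to the Gieseker flavor torus $F$. Under the natural identification, $T_\bw$ agrees with the framing torus $T_{\bw'}$ of $Q'$, so $F \supseteq T_{\bw'}$, and the equivariant Hikita conjecture for $(G_{\bv'}, {\bf{N}}')$ over its natural flavor torus $T_{\bw'}$ follows by specializing the extra loop-scaling parameter at $0 \in \bC$. The main point to verify is the bookkeeping of flavor tori: one must check that the $F$-action indeed realizes a Hamiltonian flavor symmetry of the derived type $A$ theory (which is automatic, since $F$ commutes with $G_\bv$ and hence with $Z_{G_\bv}(t_\bv)$) and that the isomorphism produced by Proposition~\ref{prop: hikita from one theory to its centralizer} descends to an isomorphism of $\bC[(\ft_{\bv'}/S_{\bv'}) \times \Lie T_{\bw'}]$-algebras; both are standard compatibilities of Kirwan-type maps under restriction of flavor torus. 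The only genuinely delicate step is the initial weight analysis identifying the fixed-point theory with the desired type $A_n$ data, and this is a finite explicit check.
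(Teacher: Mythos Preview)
Your proposal is correct and follows essentially the same route as the paper: realize any type $A_n$ theory as a fixed-point theory of a Gieseker (Jordan quiver) theory with its maximal flavor torus, invoke the equivariant Hikita isomorphism for the Gieseker variety from \cite{KS25}, and conclude via Proposition~\ref{prop: hikita from one theory to its centralizer}. One small mismatch: Proposition~\ref{prop: hikita from one theory to its centralizer} is stated for $(t_\bv,f)\in(\ft_\bv/S_\bv)\times\ff$, i.e., Lie-algebra elements, whereas you work with torus elements $\eta^{i-1}$; the paper accordingly chooses an element of $\Lie(T_v\times\torus_{\mathrm{loop}}\times T_w)$ whose associated one-parameter subgroup has entries $s,\dots,s^k$, so you should replace $\eta^{i-1}$ by $(i-1)\zeta$ for generic $\zeta\in\bC$ to invoke the proposition literally—after that the weight computation is identical. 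Your final paragraph on restricting the flavor torus from $F$ down to $T_{\bw'}$ is a point the paper leaves implicit; it is indeed harmless here (the statement over the larger $F$ is only stronger, and specializing the loop parameter is compatible with the Kirwan presentations on both sides).
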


\begin{proof}
Let $Q$ be the quiver with one vertex and one loop. Take the dimension and framing numbers $v, w$, consider ${\bf{N}} = \End (\bC^v) \oplus \Hom(\bC^v, \bC^w)$.
Take the flavor torus $F = \torus_{\mathrm{loop}} \times T_w$ (here $T_w = (\torus)^w \subset GL_w$), which acts on ${\bf{N}}$ by the formula
\begin{equation*}
(t, g) (X, \al) = (t X, \al \circ g^{-1}),
\end{equation*}
see \cite{KS25} for details.
The main result of \cite{KS25} is the equivariant Hikita conjecture for this quiver and this flavor torus (which is the maximal possible).

Take an element $(t_v, t_\ell, t_w) \in \Lie (T_v \times \torus_{\mathrm{loop}} \times T_w)$ such that the corresponding one-parameter subgroup of $T_v \times \torus_{\mathrm{loop}} \times T_w$ is of the form
\begin{equation*}
((\underbrace{s, \hdots, s}_{v_1}, \hdots, \underbrace{s^{k}, \hdots, s^{k}}_{v_k}), (s), (\underbrace{s, \hdots, s}_{w_1}, \hdots, \underbrace{s^{k}, \hdots, s^{k}}_{w_k})),
\end{equation*}
$s \in \torus$. Then it is straightforward to see that $(Z_{G_v}(t_v), {\bf{N}}^{(t_v, t_\ell, t_w)})$ is the quiver gauge theory for type $A_k$ quiver, with dimension vector $(v_1, \hdots, v_k)$ and framing vector $(w_1, \hdots, w_k)$. Hence, Proposition \ref{prop: hikita from one theory to its centralizer} implies the claim.
\end{proof}

Now we propose a way to deduce the equivariant Hikita conjecture from non-equivariant.
\begin{thm} \label{equivariant hikita from non-equivariant}
Let $(G_\bv, {\bf{N}})$ be a quiver gauge theory. Suppose for any $(t_\bv, f) \in (T_\bv / S_\bv) \times F$, there is an isomorphism of $H_{Z_{G_\bv}(t_\bv)}(\pt)$-algebras
\begin{equation} \label{eq: hikita on fiber}
H^* (\wti \fM(Z_{G_\bv}(t_\bv), {\bf{N}}^{(t_\bv, f)})) \simeq \bC[\cM(Z_{G_\bv}(t_\bv), {\bf{N}}^{(t_\bv, f)})^{\nu}].
\end{equation}
(homological Hikita conjecture for $(Z_{G_\bv}(t_\bv), {\bf{N}}^{(t_\bv, f)})$). 

Then, there is an isomorphism of $H_{{G_\bv} \times F}(\pt)$-algebras:
\begin{equation} \label{equivariant hikita from non-equivariant, formula for equivariant}
H^*_{F} (\wti \fM({G_\bv}, {\bf{N}})) \simeq \bC[\cM({G_\bv}, {\bf{N}})_{\ff}^{\nu}].
\end{equation}
(homological equivariant Hikita conjecture for $({G_\bv}, {\bf{N}})$).
\end{thm}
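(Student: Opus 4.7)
\emph{Strategy.} The plan is to view both sides of \eqref{equivariant hikita from non-equivariant, formula for equivariant} as quotients of $R := H_{G_\bv \times F}(\pt) = \bC[(\ft_\bv/S_\bv) \times \ff]$, via the Kirwan surjection $\phi_1$ from \eqref{surjection of homology of point to quiver} and the surjection $\phi_2$ from \eqref{surjection from homology of point to b-algebra}, and to prove $\ker \phi_1 = \ker \phi_2$. I test this equality fibrewise over the subring $\bC[\ff] \subset R$, and then use the Theorem's hypothesis to identify the resulting finite-dimensional quotient algebras summand by summand.

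\emph{Fibrewise identification.} For each $f \in \ff$, Corollary~\ref{localization of quiver side of homological hikita over T_w} yields
\[
H^*_F(\wti \fM(G_\bv, {\bf{N}})) \otimes_{\bC[\ff]} \bC_f \simeq \bigoplus_{t_\bv \in \ft_\bv/S_\bv} H^*(\wti \fM(Z_{G_\bv}(t_\bv), {\bf{N}}^{(t_\bv, f)})),
\]
and Corollary~\ref{cor: fiber of b-algebra homo} yields an analogous direct sum decomposition of $\bC[\cM(G_\bv, {\bf{N}})_\ff^\nu] \otimes_{\bC[\ff]} \bC_f$, with summands $\bC[\cM(Z_{G_\bv}(t_\bv), {\bf{N}}^{(t_\bv, f)})^\nu]$. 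The hypothesis of Corollary~\ref{cor: fiber of b-algebra homo} (that $\cM_Q^\nu(\bC)$ is a single point) is supplied by Remark~\ref{rem: hikita implies fixed point is unique} applied to the $(t_\bv, f) = (0, 0)$ instance of the Theorem's hypothesis. The Theorem's hypothesis then identifies each summand on the quiver side with the corresponding summand on the Coulomb side as $H_{Z_{G_\bv}(t_\bv)}(\pt)$-algebras, producing an isomorphism of the fibres of $A := H^*_F(\wti \fM_Q)$ and $B := \bC[\cM_{Q,\ff}^\nu]$ at $f$.

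\emph{Upgrading the fibrewise equality.} I next check that the identifications above are compatible with the surjections $\phi_1, \phi_2$ from $R$, so that the ideals $\ker \phi_1 + \fm_f R$ and $\ker \phi_2 + \fm_f R$ literally coincide inside $R$. This compatibility is built into Propositions~\ref{localization of quiver side of homological hikita} and \ref{localization of homological coloumb branch}: both are isomorphisms of modules over the completion $H_{G_\bv \times F}(\pt)^{\wedge(t_\bv, f)}$, canonically identified with $H_{Z_{G_\bv}(t_\bv) \times F}(\pt)^{\wedge(0,0)}$ by translation, and the corresponding Kirwan maps on the centralizer theories match under the Theorem's hypothesis. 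To pass from fibrewise equality to $\ker \phi_1 = \ker \phi_2$, I would use flatness of $A$ and $B$ over $\bC[\ff]$: for $A$, equivariant formality (purity, even-degree cohomology of quiver varieties) gives $A \simeq H^*(\wti \fM_Q) \otimes \bC[\ff]$; for $B$, constant fibre dimension over $\ff$ (extracted from the previous step) combined with finite generation yields flatness via the constant-Hilbert-function criterion. Flatness over the domain $\bC[\ff]$ ensures $\bigcap_f \fm_f M = 0$ for $M \in \{A, B\}$, whence $\ker \phi_i \subseteq \bigcap_f (\ker \phi_{3-i} + \fm_f R) = \ker \phi_{3-i}$, and the symmetric inclusion completes the argument.

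\emph{Main obstacle.} The principal subtlety lies in the compatibility check of the previous paragraph: a fibrewise \emph{abstract} algebra isomorphism is not enough; I must verify that the two fibres coincide as \emph{quotients of the same algebra} $R/\fm_f R$. Tracing $R$-linearity through the localization theorems on both the quiver and Coulomb sides---and matching the Kirwan surjections for the original theory with those for the centralizer theories---is the technical heart of the argument. A secondary but more routine point is establishing flatness of the schematic fixed-point algebra $B$ over $\bC[\ff]$, which the fibrewise calculation delivers almost for free.
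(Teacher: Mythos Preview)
Your proposal is correct and follows essentially the same strategy as the paper: localize both sides fibrewise over $\ff$, use the hypothesis to match the resulting summands as quotients of $H_{Z_{G_\bv}(t_\bv)}(\pt)$, establish freeness of both sides over $\bC[\ff]$ (for $A$ via Nakajima's equivariant formality, for $B$ via constant fibre dimension), and then globalize. The only difference is in the globalization step: the paper passes to the generic point $k = \Frac\bC[\ff]$ and uses the injections $A \hookrightarrow A\otimes k$, $B \hookrightarrow B\otimes k$ together with the compatibility diagram~\eqref{eq: diagram in the proof of equi-homo-hikita from non-equi}, whereas you intersect over all closed points via $\bigcap_f \fm_f M = 0$; both routes rest on the same freeness and the same Kirwan-compatibility check that you correctly flag as the technical heart.
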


\begin{proof}

Take any $f \in \ff$. By Corollary \ref{localization of quiver side of homological hikita over T_w}, the fiber of LHS of \eqref{equivariant hikita from non-equivariant, formula for equivariant} at $f$ over $H_{F}(\pt)$ is isomorphic to 
\begin{equation*} \label{eq: quiver fiber over a point}
\bigoplus_{t_\bv} H^* (\wti \fM({Z_{G_\bv}(t_\bv), {\bf{N}}^{(t_\bv, f)}})).
\end{equation*}
By our assumption, Hikita conjecture holds for $(G_\bv, \bold N)$ (let $(t_\bv, f) = (0, 0)$ in \eqref{eq: hikita on fiber}), hence Corollary~\ref{cor: fiber of b-algebra homo} is applicable, and the fiber of RHS of \eqref{equivariant hikita from non-equivariant, formula for equivariant} over $f$ is isomorphic to
\begin{equation*} \label{eq: coulomb fiber over a point}
\bigoplus_{t_\bv} \bC[\cM(Z_{G_\bv}(t_\bv), {\bf{N}}^{(t_\bv, f)})^\nu].
\end{equation*}
By our assumptions, these two algebras are isomorphic, and in particular, their dimensions coincide. Now, $H^*_{F} (\wti \fM({G_\bv}, {\bf{N}}))$ is free of finite rank over $H_{F}(\pt)$ by \cite[Theorem~7.3.5]{Nak01a}, hence fibers over all points $f \in \mathfrak{f}$ have equal dimensions. Thus, the same holds for $\bC[\cM({G_\bv}, {\bf{N}})_{\ff}^{\nu}]$ and this module is also free over $H_{F}(\pt)$.

Let $k = \Frac H_{F}(\pt)$. We have the following diagram:
% https://q.uiver.app/#q=WzAsNixbMCwxLCJIXntHX3tcXGJvbGQgdn0gXFx0aW1lcyBUX3tcXGJvbGQgd319KFxcbWF0aHJte3B0fSkiXSxbMSwwLCJIXntUX3tcXGJvbGQgd319KFxcd2lkZXRpbGRle1xcbWF0aGZyYWsgTX0oR197XFxib2xkIHZ9LCBOKSkiXSxbMSwyLCJcXG1hdGhiYiBDWyhcXG1hdGhjYWwgTSh7R197XFxib2xkIHZ9fSwgTilfe1xcbWF0aGZyYWsgdF97XFxib2xkIHd9fSlee1xccmhvfV0iXSxbMiwxLCJIXntHX3tcXGJvbGQgdn19KFxcbWF0aHJte3B0fSlcXG90aW1lcyBGIl0sWzMsMCwiSF57VF97XFxib2xkIHd9fShcXHdpZGV0aWxkZXtcXG1hdGhmcmFrIE19KEdfe1xcYm9sZCB2fSwgTikpIFxcb3RpbWVzX3tIXnsgVF97XFxib2xkIHd9fShcXG1hdGhybXtwdH0pfUYiXSxbMywyLCJcXG1hdGhiYiBDWyhcXG1hdGhjYWwgTSh7R197XFxib2xkIHZ9fSwgTilfe1xcbWF0aGZyYWsgdF97XFxib2xkIHd9fSlee1xccmhvfV1cXG90aW1lc197SF57VF97XFxib2xkIHd9fShcXG1hdGhybXtwdH0pfUYiXSxbMCwxLCIiLDIseyJzdHlsZSI6eyJoZWFkIjp7Im5hbWUiOiJlcGkifX19XSxbMCwyLCIiLDAseyJzdHlsZSI6eyJoZWFkIjp7Im5hbWUiOiJlcGkifX19XSxbMCwzLCIiLDAseyJzdHlsZSI6eyJ0YWlsIjp7Im5hbWUiOiJob29rIiwic2lkZSI6InRvcCJ9fX1dLFsxLDQsIiIsMix7InN0eWxlIjp7InRhaWwiOnsibmFtZSI6Imhvb2siLCJzaWRlIjoidG9wIn19fV0sWzMsNCwiIiwyLHsic3R5bGUiOnsidGFpbCI6eyJuYW1lIjoiaG9vayIsInNpZGUiOiJ0b3AifX19XSxbMiw1LCIiLDAseyJzdHlsZSI6eyJ0YWlsIjp7Im5hbWUiOiJob29rIiwic2lkZSI6InRvcCJ9fX1dLFszLDUsIiIsMCx7InN0eWxlIjp7InRhaWwiOnsibmFtZSI6Imhvb2siLCJzaWRlIjoidG9wIn19fV0sWzQsNSwiXFxzaW1lcSIsMix7InN0eWxlIjp7InRhaWwiOnsibmFtZSI6ImFycm93aGVhZCJ9fX1dXQ==
\begin{equation} \label{eq: diagram in the proof of equi-homo-hikita from non-equi}
\begin{tikzcd}[column sep=tiny]
	& {H^*_{F}(\widetilde{\mathfrak M}(G_{\bold v}, {\bf{N}}))} && {H^*_{F}(\widetilde{\mathfrak M}(G_{\bold v}, {\bf{N}})) \otimes_{H_{ F}(\mathrm{pt})} k} \\
	{H_{G_{\bold v} \times F}(\mathrm{pt})} && {H_{G_{\bold v} \times F}(\mathrm{pt})\otimes_{H_F(\operatorname{pt})} k} \\
	& {\mathbb C[\mathcal M({G_{\bold v}}, {\bf{N}})_{\mathfrak f}^{\nu}]} && {\mathbb C[\mathcal M({G_{\bold v}}, {\bf{N}})_{\mathfrak f}^{\nu}]\otimes_{H_{F}(\mathrm{pt})} k}
	\arrow["\psi_1", hook, from=1-2, to=1-4]
	\arrow["\theta", "\simeq"', tail reversed, from=1-4, to=3-4]
	\arrow["\phi_1", two heads, from=2-1, to=1-2]
	\arrow[hook, from=2-1, to=2-3]
	\arrow["\phi_2" ', two heads, from=2-1, to=3-2]
	\arrow[two heads, from=2-3, to=1-4]
	\arrow[two heads, from=2-3, to=3-4]
	\arrow["\psi_2" ', hook, from=3-2, to=3-4]
\end{tikzcd}
\end{equation}
Here surjections $\phi_1$ and $\phi_2$ are constructed in \eqref{surjection of homology of point to quiver}, \eqref{surjection from homology of point to b-algebra}; morphisms $\psi_1, \psi_2$ are injective because of the freeness, explained above; let us explain the construction of $\theta$.

Take a generic element $\al \in \ff$. Then canonically
\begin{multline*} \label{eq: quiver side at generic point}
H^*_{F}(\widetilde{\mathfrak M}(G_{\bold v}, {\bf{N}})) \otimes_{H_{F}(\mathrm{pt})} k \simeq H^*_{F}(\widetilde{\mathfrak M}(G_{\bold v}, {\bf{N}})^\al) \otimes_{H_{ F}(\mathrm{pt})} k \simeq \\
H^*(\wti \fM(G_\bv, {\bf{N}})^{\al}) \T k \simeq \left( \bigoplus_{t_\bv} H^*(\wti \fM(Z_{G_\bv}(t_\bv), {\bf{N}}^{(t_\bv, \al)})) \right) \T k,
\end{multline*} 
%(the last isomorphism holds since $\wti \fM(G_\bv, {\bf{N}})^{\al} \simeq \wti \fM(G_\bv, {\bf{N}})^{T_\bw}$, so the action of $T_\bw$ is trivial). 
where we used the localization theorem, the fact that $F$ acts trivially on $\wti \fM(G_\bv, {\bf{N}})^{\al}$ (since $\al$ is generic), and Corollary \ref{localization of quiver side of homological hikita over T_w}.
%Possible $\gamma$ run over the conjugation classes of group homomorphisms $F \rightarrow G_\bv$ ($\overline{\{ \al^k \}_{k \in \bZ}} = F$, since $\al$ is generic).
Similarly, for Coulomb side, using Corollary~\ref{cor: fiber of b-algebra homo}:
\begin{equation*} \label{eq: coulomb side at generic point}
\mathbb C[\mathcal M({G_{\bold v}}, {\bf{N}})_{\mathfrak f}^{\nu}]\otimes_{H_{F}(\mathrm{pt})} k \simeq \mathbb C[(\mathcal M({G_{\bold v}}, {\bf{N}})^\nu_{\mathfrak{f}})^{\al}] \otimes  k \simeq \left( \bigoplus_{t_\bv} \bC[\cM(Z_{G_\bv}(t_\bv), {\bf{N}}^{(t_\bv, \al)})^\nu] \right) \T k.
\end{equation*}
By our assumptions, these algebras are isomorphic as quotients of $\bigoplus_{t_\bv} H_{G_\bv}(\pt) \T k$, which defines $\theta$. Moreover, composing with the diagonal inclusion
%\footnote{Of course, the diagonal inclusion is not well-defined with target being the direct sum (as oppose to direct product), however its compositions to both interesting for us algebras are well-defined, since there is only finitely many nonzero summands.} 
$H_{G_\bv}(\pt) \T k \rightarrow \prod_{t_\bv} H_{G_\bv}(\pt) \T k$ also tells us that the rightmost triangle in \eqref{eq: diagram in the proof of equi-homo-hikita from non-equi} is commutative.

%$\theta$ is an isomorphism of our algebras over generic point of $H^{F}(\pt)$, which is constructed in the same way we constructed an isomorphism over any closed point of $H^{F}(\pt)$, using assumptions of the Theorem. 

The top and bottom parallelograms in \eqref{eq: diagram in the proof of equi-homo-hikita from non-equi} are also obviously commutative.
Hence, the whole diagram is commutative, and $\psi_1 \circ \phi_1$ and $\psi_2 \circ \phi_2$ both are presentations of the same morphism from ${H_{G_{\bold v} \times F}(\mathrm{pt})}$ as composition of surjection and injection. Since such presentation is unique, we get $\im \phi_1 \simeq \im \phi_2$, which is the desired isomorphism.
\end{proof}

Note that if one knows the freeness of $\bC[\cM({G_\bv}, {\bf{N}})_{\ff}^{\nu}]$ over $\bC[\ff]$, then the argument in the proof of Theorem~\ref{equivariant hikita from non-equivariant} shows that the equivariant Hikita conjecture follows from non-equivariant over generic point  $f \in \ff$.

\begin{cor} \label{homo-hikita for ADE quivers}
%Equivariant Hikita conjecture holds for ADE quivers
Let $Q$ be of type ADE, and $(\bv, \bw)$ is such that:
\begin{itemize}
\item If $Q$ is of type E$_7$, then $w_3 = w_4 = w_5 = 0$;
\item If $Q$ is of type E$_8$,  then $w_2 = w_3 = w_4 = w_5 = w_6 =w_7 = 0$.
\end{itemize}
Then the equivariant Hikita conjecture holds for $Q$ and $(\bv, \bw)$.
\end{cor}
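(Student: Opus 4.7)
The plan is to deduce this corollary directly from Theorem~\ref{equivariant hikita from non-equivariant} applied with the flavor torus $F = T_\bw$. According to that theorem, the desired equivariant statement follows once we verify the non-equivariant homological Hikita conjecture for each fixed-point gauge theory $(Z_{G_\bv}(t_\bv), {\bf{N}}^{(t_\bv, f)})$ with $(t_\bv, f) \in (\ft_\bv / S_\bv) \times \ft_\bw$. So the task reduces to inspecting these fixed-point theories and invoking the appropriate non-equivariant result.

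Since $Q$ is of ADE type (no loops, no multiple edges), Proposition~\ref{fixed points of quiver theory is quiver theory} applies and shows that each $(Z_{G_\bv}(t_\bv), {\bf{N}}^{(t_\bv, f)})$ is a product, over the eigenvalues $\la \in \bC^\times$ of $(t_\bv,f)$, of quiver gauge theories for the \emph{same} quiver $Q$, with dimension vectors $\bv^\la = (\dim V_i^\la)$ and framing vectors $\bw^\la = (\dim W_i^\la)$ satisfying $\sum_\la v_i^\la = v_i$ and $\sum_\la w_i^\la = w_i$. The crucial point is that the constraint on $\bw$ in the hypothesis (vanishing on certain vertices in types $E_7, E_8$) is inherited by every $\bw^\la$, since $w_i = 0$ forces $w_i^\la = 0$ for all $\la$. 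Moreover, the restriction of the stability character $\nu$ to each factor remains a suitable GIT character (for instance, product of determinants restricts to product of determinants).

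Next, I need to check that non-equivariant Hikita for a product theory decomposes into non-equivariant Hikita for each factor. This is immediate: $\wti \fM(G_1 \times G_2, N_1 \oplus N_2) \simeq \wti \fM(G_1, N_1) \times \wti \fM(G_2, N_2)$ so $H^*$ factorizes by Künneth; on the Coulomb side, $\cM(G_1 \times G_2, N_1 \oplus N_2) \simeq \cM(G_1, N_1) \times \cM(G_2, N_2)$ by the product property of BFN Coulomb branches \cite{BFN18}, and schematic $\nu$-fixed points of a product are the product of fixed points for $\nu_1, \nu_2$. Hence $\bC[\cM^\nu]$ also factorizes as a tensor product, and the two isomorphisms for the factors tensor together to an isomorphism for the product as algebras over $\bC[\ft_\bv/S_\bv]$.

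It then remains to cite the non-equivariant Hikita conjecture for single ADE-quiver factors under the given constraint on the framing, which is precisely Theorem~\ref{thm: hikita for generalized slices} (proved in the appendix, where the relevant Coulomb branches are identified with generalized slices in the affine Grassmannian). Combining this with the factorization above verifies the hypothesis of Theorem~\ref{equivariant hikita from non-equivariant} for all $(t_\bv, f)$, yielding the equivariant conclusion. The only genuinely hard input is Theorem~\ref{thm: hikita for generalized slices} itself; once that is in hand, the corollary is essentially combinatorial, the main subtlety being the propagation of the vanishing framing condition through the eigenspace decomposition, which as noted above is automatic.
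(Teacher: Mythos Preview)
Your proof is correct and follows essentially the same route as the paper: reduce to the non-equivariant case via Theorem~\ref{equivariant hikita from non-equivariant}, use Proposition~\ref{fixed points of quiver theory is quiver theory} to see that each fixed-point theory is a product of ADE quiver theories for the same $Q$ with framings still satisfying the hypothesis, and then invoke Theorem~\ref{thm: hikita for generalized slices} on each factor. The paper's write-up additionally records why the restrictions on $\bw$ arise (namely the range of validity of Zhu's isomorphism~\eqref{Zhu isomorphism for T-fixed points} and its extension in \cite{BH20}), but the logical skeleton is the same as yours.
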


\begin{proof}
In \cite[Theorem 8.1]{KTWWY19}, the non-equivariant Hikita conjecture for types ADE is proved for the case when the corresponding slice in affine Grassmannian is non-generalized (note that although not explicitly claimed in {\it loc. cit.}, the proof shows an isomorphism of algebras over $H_{G_\bv}(\pt)$, see Section 
 \ref{app: u(t) vs cohom} for the details). 
It is deduced in {\it loc. cit.} from a result of Zhu \cite{Zhu09}, which is an isomorphism:
\begin{equation} \label{Zhu isomorphism for T-fixed points}
\Gamma(\ol \Gr^\la, \cO(1)) \simeq \Gamma((\ol \Gr^\la)^T, \cO(1))
\end{equation}
(see \cite{Zhu09} for notations). This result is proved in \cite{Zhu09} in types A and D, in types $E_7$ and $E_8$ under the assumptions of this Corollary, and in type $E_6$ under assumption $w_4 = 0$ (see \cite[Proposition 2.2.17]{Zhu09}). This last assumption in type $E_6$ was removed in \cite[Theorem 5.1]{BH20}.

In Theorem \ref{thm: hikita for generalized slices} in Appendix, we prove non-equivariant Hikita conjecture  for generalized slices, hence completing the proof of non-equivariant Hikita conjecture in types ADE under these assumptions on $\bw$.

By Proposition \ref{fixed points of quiver theory is quiver theory}, for a theory $(G_\bv, {\bf{N}})$ as in assumptions of this Corollary, theories $(Z_{G_\bv}(t_\bv), {\bf{N}}^{(t_\bv, t_\bw)})$ are sums of theories of the same kind for any $(t_\bv, t_\bw)$. Hence, the result follows from Theorem \ref{equivariant hikita from non-equivariant}.
\end{proof}

\begin{rem}
It is claimed in \cite{KTWWY19} that non-equivariant Hikita conjecture is proved for quivers of types $ADE$ with no restrictions which we assumed in Corollary \ref{homo-hikita for ADE quivers}. As we point out in the proof above, it is not the case, since the Zhu isomorphism \eqref{Zhu isomorphism for T-fixed points} has not yet been proved in full generality, although it is definitely expected to be true.
\end{rem}

\section{Equivariant Riemann--Roch for Coulomb branches} \label{sec: Riemann-Roch}

The main result of this Section is Theorem~\ref{thm_iso_after_completions}. We first recall generalities on equivariant Riemann--Roch theorem.

\subsection{Equivariant Riemann--Roch isomorphism}
Let $X$ be a variety over complex numbers equipped with an algebraic action of a reductive group  $G$.

\subsubsection{Equivariant Chow groups}\label{subsec_equivar_chow}
In \cite{eg0} (see also \cite[Section 1.2]{eg1}) the notion of equivariant  %“codimension” $i$
Chow groups $A_{i}^G(X)$ is defined. Let us recall the definition. Choose an $\ell$-dimensional representation $V$ of $G$ that contains an open $G$-invariant subset $U \subset V$ such that the action $G \curvearrowright U$ is free and the complement $V \setminus U$ has codimension greater than $\operatorname{dim}X-i$. Then $A_i^{G}(X):=A_{i + \ell - g}(X_G)$, where $X_G = X \times^{G} U$, and $g=\operatorname{dim}G$.

%We will assume that $G$ is the product of general linear groups. 
\subsubsection{Equivariant Chow groups via equivariant Borel--Moore homology}\label{sub: ch vs bm} We assume that $X$ has an algebraic cell decomposition, which is invariant under the maximal torus $T \subset G$. 
This implies that the natural cycle map $A_*^{G}(X) \rightarrow H_*^G(X)$ is an isomorphism. Indeed, we have the natural identifications 
\begin{align}
A_*^G(X) &\simeq A_*^T(X)^W, & H_*^G(X) &\simeq H_*^T(X)^W
\end{align}
and now the claim follows from the fact that the cycle morphism $A_*^{T}(X) \rightarrow H_*^{T}(X)$ is an isomorphism (same proof as the one of \cite[Lemma 5.1.1]{CG97} reduces the claim to the case of affine space for which this is immediate).

\subsubsection{Equivariant Chern character} The equivariant Chern character (see, for example, \cite[Definition 3.1]{eg1}) is a homomorphism of algebras:  
\begin{equation*}
\operatorname{ch}^G\colon K_G(X) \rightarrow \prod_{i=0}^{\infty}H^i_G(X).
\end{equation*}
For $X=\operatorname{pt}$, the homomorphism $\operatorname{ch}^G$ is given by: 
\begin{equation*}
\mathbb{C}[T]=K_T(\on{pt}) \ni \chi \mapsto \on{exp}(d\chi) \in \mathbb{C}[\mathfrak{t}]^{\wedge 0} = H_*^T(\on{pt})^{\wedge 0}.
\end{equation*}

\subsubsection{The equivariant Riemann--Roch map $\tau^G$} \label{equivariant_RR_sec}
In \cite{eg1}, an isomorphism
\begin{equation}\label{eq: iso tau EG}
\tau^G\colon K^{G}(X)^{\wedge 1} \iso \prod_{i=\operatorname{dim}X}^{-\infty}A_i^{G}(X)
\end{equation}
is constructed. For our convenience, we will normalize the map $\tau^G$ considered in \cite{eg1} as follows: the map that we will consider is equal to their divided by $\tau^G([\mathcal{O}_{\operatorname{pt}}])$. I.e., for $X=\operatorname{pt}$, we want $\tau^G([\mathcal{O}_{\operatorname{pt}}])=[\mathcal{O}_{\operatorname{pt}}]$.

\begin{rem}
The fact that $K^G(X)^{\wedge 1}$ is isomorphic to the completion of $K^G(X)$ considered in \cite[Section 2]{eg1} follows from \cite[Theorem 6.1 (a)]{eg1}. 
\end{rem}

\begin{rem}
In \cite{eg1} the authors use ``codimension $i$'' equivariant Chow groups $CH^i_G$ (see \cite[Section 1.2]{eg1}) instead of ``dimension $i$'' equivariant Chow groups $A_i^{G}$. We prefer to work with $A_i^{G}$, all the definitions remain the same, the only difference is that $A_*^G(X)$ lives in degrees $\operatorname{dim}X,\operatorname{dim}X-1,\operatorname{dim}X-2,\ldots $, while $CH^i_G$ live in degrees $0,1,\ldots$.
\end{rem}

We assume that $X$ is as in Section (\ref{sub: ch vs bm}) above and identify: 
\begin{equation}\label{identi_A_vs_H}
A_i^{G}(X) \iso H_{i}^{G}(X).
\end{equation}

\begin{lem}
There are isomorphisms     
\begin{align}\label{prod_hom_via_coml}
\prod_{i=\operatorname{dim}X}^{-\infty} H_i^{G}(X) &\simeq H_*^G(X)^{\wedge 0}; & \prod_{i=0}^{\infty}H^i_G(X) &\simeq H^*_G(X)^{\wedge 0}.
\end{align}
\end{lem}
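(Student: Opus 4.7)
The plan is to compare two filtrations on $M := H^*_G(X)$ (and analogously on $H^G_*(X)$): the $\mathfrak{m}$-adic filtration $\{\mathfrak{m}^n M\}$ coming from the augmentation ideal $\mathfrak{m} \subset R := H^*_G(\mathrm{pt})$, and the grading filtration $\{M_{\geq k}\}$, where $M_{\geq k} = \bigoplus_{i \geq k} M^i$. Once I show these filtrations are cofinal as filtrations of $R$-modules, both isomorphisms in~\eqref{prod_hom_via_coml} follow at once, since
\begin{align*}
M^{\wedge 0} = \varprojlim_n M/\mathfrak{m}^n M = \varprojlim_k M/M_{\geq k} = \varprojlim_k \bigoplus_{i<k} M^i = \prod_i M^i.
\end{align*}

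The main input is finite generation. For this I would use the hypothesis that $X$ admits a $T$-invariant algebraic cell decomposition: by Section~\ref{sub: ch vs bm}, this identifies $H^*_G(X)$ and $H^G_*(X)$ with their equivariant Chow counterparts and exhibits finite free bases over $R$ via equivariant classes of cells. Writing $R = \mathbb{C}[y_1, \ldots, y_r]$ with generator degrees $2 \leq d_s \leq d_{\max}$, I need the elementary degree count $R^{\geq d_{\max} n} \subseteq \mathfrak{m}^n \subseteq R^{\geq 2n}$: the right inclusion is immediate; for the left, a monomial $y^{\alpha}$ with $\sum_s \alpha_s d_s \geq d_{\max} n$ satisfies $\sum_s \alpha_s \geq n$, so it is a product of at least $n$ generators of $\mathfrak{m}$.

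Letting $D$ bound the degrees of the free-basis generators of $M$, these inclusions propagate to $\mathfrak{m}^n M \subseteq M_{\geq 2n}$ and $M_{\geq d_{\max} n + D} \subseteq \mathfrak{m}^n M$; the second is checked by expanding a homogeneous element of degree $i \geq d_{\max} n + D$ in the free basis, whose coefficients then live in $R^{\geq i-D} \subseteq R^{\geq d_{\max} n} \subseteq \mathfrak{m}^n$. Cofinality is established and the right-hand isomorphism in~\eqref{prod_hom_via_coml} follows. The Borel--Moore case is entirely analogous, with $R$ acting by lowering homological degree: one uses the truncation filtration $M_{\leq k} = \bigoplus_{i \leq k} H^G_i(X)$, verifies the cofinality estimates $\mathfrak{m}^n M \subseteq M_{\leq \dim X - 2n}$ and $M_{\leq \dim X - d_{\max} n} \subseteq \mathfrak{m}^n M$ in the same way, and takes the limit as $k \to -\infty$ to obtain $H^G_*(X)^{\wedge 0} = \prod_{i \leq \dim X} H^G_i(X)$. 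I do not expect a serious obstacle here: the argument is essentially formal, and the only non-trivial ingredient is the finite-rank freeness of equivariant (co)homology supplied by the cell-decomposition hypothesis.
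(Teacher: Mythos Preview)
Your proof is correct and follows the same approach as the paper: both reduce to the general fact that for a finitely generated graded $H^*_G(\mathrm{pt})$-module bounded below (resp.\ above), the $\mathfrak{m}$-adic completion coincides with the graded product, which you prove by showing cofinality of the two filtrations. The paper merely states this fact and cites \cite[Proposition~2.1]{eg1}, whereas you spell out the degree estimates explicitly; note that your argument only needs finite generation (pick homogeneous generators, not a free basis), matching the paper's slightly more general formulation.
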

\begin{proof}
The claim follows from the following fact: if $M$ is a finitely generated graded module over $H^*_G(\operatorname{pt})$ such that $M_i = 0$ for $i \ll 0$, then $\prod_i M_i \simeq M^{\wedge 0}$ (compare with the proof of \cite[Proposition 2.1]{eg1}). Note that the action of $H^*_G(X)$ on $H_*^G(X)$ lowers the degree.   
\end{proof}

Combining the identifications (\ref{eq: iso tau EG}), (\ref{identi_A_vs_H}), (\ref{prod_hom_via_coml}) we obtain the identification: 
\begin{equation}\label{equiv_RR_via_compl_0_1}
\tau^G\colon K^G(X)^{\wedge 1} \iso H_*^{G}(X)^{\wedge 0}.
\end{equation}

The main properties of the map $\tau^G$ are the following (see \cite[Theorem 3.1]{eg1}):

\begin{itemize}
    \item For $\alpha \in K_G(X)$ and $x \in K^G(X)$, we have $\tau^{G}(\alpha \cdot x)=\operatorname{ch}^{G}(\alpha) \cdot \tau^G(x)$.
    \item If $f\colon X \rightarrow Y$ is a proper $G$-equivariant morphism, then $\tau^{G}$ commutes with $f_*$.
    \item If $f\colon X \rightarrow Y$ is a smooth and equivariantly quasi-projective $G$-equivariant morphism and $x \in K^G(X)$, then $\tau^G(f^*x)=\operatorname{Td}^G(T_f) \cdot f^*(\tau^G(x))$, where $T_f \in K^G(X)$ is the relative tangent element of the morphism $f$ and $\operatorname{Td}^G(-)$ is the equivariant Todd class (see \cite[Definition 3.1]{eg1}). 
%    \item If $X$ is a smooth variety, then $\tau(1)=$
\end{itemize}

If $X$ is nonsingular, then, after the identification $K^G(X) = K_G(X)$, we can  describe $\tau^G$ as follows. Applying the property above to $f\colon X \rightarrow \operatorname{pt}$ and using our normalization $\tau^G([\mathcal{O}_{\operatorname{pt}}])=[\mathcal{O}_{\operatorname{pt}}]$ we conclude that $\tau^G([\mathcal{O}_X])=\operatorname{Td}^G(T_X)$. It then follows from above that: 
\begin{equation*} 
\tau^G (-) = \operatorname{Td}^G(T_X) \operatorname{ch}^G(-) \cdot [X].
\end{equation*}
%In particular, for $X=\operatorname{pt}$, we  have:
%$\tau^G([\mathcal{O}_{\operatorname{pt}}])=[\operatorname{pt}]$, this will be important for the computations below. 

Assume now that $L \subset G$ is a subgroup and $L$ acts on $X$. Consider the natural identifications $K^L(X) \iso K^G(G \times^L X)$, $H_*^L(X) \iso H_*^G(G \times^L X)$ to be denoted $\operatorname{Ind}_H^G(-)$, $\operatorname{ind}_H^G(-)$ respectively.
The following is \cite[Proposition 3.2 (b)]{eg1} (note that we have an additional factor of $\operatorname{Td}^G(T_{G/L})$ because of our normalization):
\begin{equation}\label{tau_vs_ind}
\tau^G(\operatorname{Ind}_L^G(x)) = \operatorname{Td}^G(T_{G/L}) \cdot \operatorname{ind}_L^G(\tau^L(x)),~\forall x \in K^L(X).
\end{equation}

Assume now that $G$ acts on $X$, then we have restriction maps $K^G(X) \rightarrow K^L(X)$, $H_*^G(X) \rightarrow H_*^L(X)$ to be denoted $\operatorname{Res}_L^G$, $\operatorname{res}_L^G$ respectively. The following is \cite[Proposition 3.2 (c)]{eg1}:
\begin{equation}\label{tau_vs_resrtriction}
\tau^L(\operatorname{Res}_L^G(x))=\operatorname{res}^G_L(\tau^G(x)).
\end{equation}

%Finally, assume that $G \twoheadrightarrow L$ with unipotent kernel. Assume that $L$ acts on some $X$, then we have natural identifications $K^L(X) \iso K^G(X)$, $H^L_*(X) \iso H_*^G(X)$. We claim that they are compatible with maps $\tau^G$, $\tau^L$. To see that recall that (see \cite[Section 3.2]{eg1}) $\tau^G(-)$ is approximated by $\frac{\tau_{U \times^G X}(-)}{\operatorname{Td}(T_{U \times^G X})}$ and $\tau^L(-)$ is approximated by $\frac{\tau_{U \times^L X}(-)}{\operatorname{Td}(T_{U \times^L X})}$. Note now that the map $U \times^L X$

%How to pass from $G$ to $T$, add
%equivariant RR $\pi_*(\operatorname{Td(T_{G/B})})=1$, where $\pi\colon G/B \rightarrow {\operatorname{pt}}$

%\begin{lem}
%$K^{G_\bv \times F}(\cR_{G_\bv, {\bf{N}}})$ is free as a $K^{G_\bv \times F}(\pt)$-module.  $H^{G_\bv \times F}(\cR_{G_\bv, {\bf{N}}})$ is free as a $H^{G_\bv \times F}(\pt)$-module.
%\end{lem}
%\begin{proof}
%Same as \cite[Lemma 4.1]{BEF20}. 
%\end{proof}

\subsection{Coulomb branches}

In this section, we work with Coulomb branches in a more general setup then in the rest of the paper. Namely, we work with general Coulomb branches for the pair $(G, \bold {\bf{N}})$, not necessarily of quiver type. We first recall all definitions and constructions of \cite{BFN18}, needed for the proof of Theorem \ref{thm_iso_after_completions} below.

\subsubsection{Space of triples}

Let $G$ be a reductive group and let ${\bf{N}}$ be its finite dimensional representation. Assume also that the action  $G \curvearrowright {\bf{N}}$ extends to the action of some larger group $\widetilde{G} \curvearrowright N$ containing $G$ as a normal subgroup and such that $F:=\widetilde{G}/G$ is a torus. %So, we have the following short exact sequence:
%\begin{equation*}\label{ex_seq_flav}
%1 \rightarrow G \rightarrow \widetilde{G} \rightarrow F \rightarrow 1. 
%\end{equation*}

Let $\mathcal{R}$ be the space of triples for $(G,{\bf{N}})$. Group $\widetilde{G}_{\mathcal{O}}$ acts naturally on $\mathcal{R}$. We also have an action $\mathbb{C}^\times \curvearrowright \mathcal{R}$ by loop rotation.

We have a natural morphism $\mathcal{R} \rightarrow \operatorname{Gr}_{G}$. Recall that $\operatorname{Gr}_{G}=\bigsqcup_{\lambda \in \Lambda^+}\operatorname{Gr}_{G}^{\lambda}$ and $\overline{\operatorname{Gr}}_{G}^{\lambda}=\bigsqcup_{\mu \leqslant \lambda}\operatorname{Gr}_{G}^{\mu}$. Following \cite[2(ii)]{BFN18}, we denote by $\mathcal{R}_{\leqslant \lambda}$ the preimage of $\overline{\operatorname{Gr}}_{G}^{\lambda}$. Scheme $\mathcal{R}_{\leqslant \lambda}$ is the inverse limit of the system $\mathcal{R}_{\leqslant \la}^d \rightarrow \mathcal{R}^{e}_{\leqslant \la}$ for $d \geqslant e$ (see \cite[Section 2(i)]{BFN18}). 

The following lemma will be important as we want to apply the results of Section \ref{subsec_equivar_chow} to spaces $\cR^{d}_{\leqslant \lambda}$. 

\begin{lem} \label{lem: coulomb is free over cartan}
Space $\cR^d_{\leqslant \lambda}$ has a $\widetilde{T} \times \bC^\times$-invariant algebraic cell decomposition. In particular, $H^{\wti G \times \torus}(\cR)$ is free over $H^{\wti G \times \torus}(\pt)$, and $K^{\wti G \times \torus}(\cR)$ is free over $K^{\wti G \times \torus}(\pt)$.
\end{lem}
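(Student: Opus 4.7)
The plan is to construct an explicit $\widetilde{T} \times \bC^\times$-invariant cell decomposition of $\cR^d_{\leqslant \lambda}$ by pulling back the Iwahori decomposition of $\overline{\Gr}^\lambda_G$, and then deduce freeness of equivariant (co)homology from the standard long exact sequence argument.

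First, I would recall the Iwahori stratification of the affine Schubert variety. Let $I \subset G_\cO$ be an Iwahori subgroup containing $\widetilde{T}$ and preserved by loop rotation. For $w$ ranging over a finite subset of the extended affine Weyl group parametrizing $T$-fixed points in $\overline{\Gr}^\lambda_G$, the Iwahori orbits $I \cdot L_w$ are affine spaces and give an algebraic cell decomposition
\[
\overline{\Gr}^\lambda_G \;=\; \bigsqcup_w I \cdot L_w.
\]
This decomposition is $\widetilde{T}$-invariant (since $\widetilde{T} \subset I$) and $\bC^\times$-invariant (since the Iwahori is preserved by loop rotation).

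Next I would lift this decomposition to $\cR^d_{\leqslant \lambda}$. Over each Iwahori cell $I \cdot L_w$, the canonical map $\cR^d_{\leqslant \lambda} \to \overline{\Gr}^\lambda_G$ restricts to an $I$-equivariant vector bundle whose fiber over $L_w$ is the finite-dimensional vector space
\[
\bigl({\bf{N}}_\cO \cap w \cdot {\bf{N}}_\cO\bigr)\bigm/ \bigl(t^d{\bf{N}}_\cO \cap w \cdot {\bf{N}}_\cO\bigr)
\]
(a naturally $\widetilde{T} \times \bC^\times$-stable subspace of ${\bf{N}}_\cO/t^d{\bf{N}}_\cO$; the dimension of this space is constant along the Iwahori orbit because the unipotent radical of $I$ merely shifts elements by higher-order terms). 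Since the base $I \cdot L_w \simeq \bA^{\ell(w)}$ has trivial Picard group and the bundle is $I$-equivariant, the total space is isomorphic to an affine space. This produces the desired $\widetilde{T} \times \bC^\times$-invariant algebraic cell decomposition of $\cR^d_{\leqslant \lambda}$.

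Then I would deduce the freeness statements. Filtering $\cR^d_{\leqslant \lambda}$ by the closure order on cells and applying the long exact sequence for equivariant Borel--Moore homology associated to the open/closed decomposition at each step, one reduces to the fact that $H^{\widetilde{T} \times \bC^\times}(\bA^n)$ is free of rank one over $H^{\widetilde{T} \times \bC^\times}(\pt)$ for any linear action; hence $H^{\widetilde{T} \times \bC^\times}(\cR^d_{\leqslant \lambda})$ is free over $H^{\widetilde{T} \times \bC^\times}(\pt)$. The same argument applies verbatim in equivariant $K$-homology. Passing from $\widetilde{T}$-equivariance to $\widetilde{G}$-equivariance uses the identification $H^{\widetilde{G}}(X) \simeq H^{\widetilde{T}}(X)^W$ (and its $K$-theoretic analogue), combined with the fact that a $W$-stable basis can be assembled from the Iwahori cells grouped by their $G_\cO$-orbit; alternatively, one checks freeness directly from the coarser $G_\cO$-orbit stratification (each $G_\cO$-orbit fibers over the partial flag variety with vector-bundle fibers over each Iwahori cell, and $H^{\widetilde{G}}$ of affine bundles over $G/P$ is free). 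Taking inverse limits over $d$ and direct limits over $\lambda$ then gives freeness for the ind-scheme $\cR$ in the conventions of \cite{BFN18}.

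The main point requiring care is the vector bundle property of $\cR^d_{\leqslant \lambda} \to \overline{\Gr}^\lambda_G$ over each Iwahori cell, namely the constancy of fiber dimensions and $I$-equivariant trivializability; this is implicit in \cite[Section~2]{BFN18} but should be spelled out. A secondary subtlety is the passage from $\widetilde{T}$- to $\widetilde{G}$-equivariance, where one must be careful that the Weyl-invariants of a free module over $H^*_{\widetilde{T}}(\pt)$ are free over $H^*_{\widetilde{G}}(\pt)$; this is automatic here because the cell decomposition itself can be refined to be compatible with the $\widetilde{G}$-action via the $G_\cO$-orbit stratification.
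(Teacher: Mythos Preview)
Your proposal is correct and follows precisely the approach that the paper intends: the paper's proof is the single line ``Same as \cite[Lemma 4.1]{BEF20}'', and the argument in that reference is exactly the Iwahori-cell decomposition lifted along $\cR^d_{\leqslant\lambda}\to\overline{\Gr}^\lambda_G$ that you describe. Your version is in fact more detailed than either the paper or the cited lemma; the only point worth tightening is the passage to $\widetilde{G}$-equivariance, where the cleanest route is the one you mention second (work directly with the $G_\cO$-orbit stratification, each stratum being an affine bundle over $G/P_\mu$), rather than trying to extract a $W$-stable basis from the Iwahori cells.
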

\begin{proof}
Same as \cite[Lemma 4.1]{BEF20}.
\end{proof}

\subsubsection{Homological Coulomb branch}
Following \cite[Section 2(ii)]{BFN18} but slightly changing the grading convention we define: 
\begin{equation*}
\mathcal{A}_{\leqslant \lambda} = H_*^{\widetilde{G}_{\cO} \rtimes \bC^\times}(\mathcal{R}_{\leqslant \lambda}) = H^{*}_{\widetilde{G}_{\cO} \rtimes \bC^\times}(\mathcal{R}^d_{\leqslant \lambda}, \boldsymbol{\omega})[2\operatorname{dim}({\bf{N}}_{\mathcal{O}}/z^d{\bf{N}}_{\mathcal{O}})] ,
\end{equation*}
where $\bom$ is the dualizing sheaf of $\mathcal{R}^d_{\leqslant \lambda}$. Then $\mathcal{A} = H_*^{\widetilde{G}_{\cO} \rtimes \bC^\times}(\mathcal{R})$ is defined as the limit of $H_*^{\widetilde{G}_{\cO} \rtimes \bC^\times}(\mathcal{R}_{\leqslant \la})$ under pushforward maps. $\cA$ can be equipped with an algebra structure via convolution $*$ (see \cite[Section 3(iii)]{BFN18}).

Note that the algebra $H_*^{\widetilde{G}_{\cO} \rtimes \bC^\times}(\cR)$ is naturally graded, its $i$'th  degree  component is:
\begin{equation*}
\cA_i := \underset{\longrightarrow}{\operatorname{lim}}\, H^{i}_{\widetilde{G}_{\cO} \rtimes \bC^\times}(\cR^d_{\leqslant \lambda},\boldsymbol{\omega})[2\operatorname{dim}({\bf{N}}_{\cO}/z^d{\bf{N}}_{\cO})].
\end{equation*}

\subsubsection{K-theoretic Coulomb branch} %In \cite[Remark 3.9]{BFN18} authors also explain how to define the $\widetilde{G}_{\cO} \rtimes \bC^\times$-equivariant K-theory $K^{\widetilde{G}_{\cO} \rtimes \bC^\times}(\mathcal{R})$. 
As in the homological case, $K^{\widetilde{G}_{\cO} \rtimes \bC^\times}(\mathcal{R}_{\leqslant \lambda})$ is defined as $K^{\widetilde{G}_{\cO} \rtimes \bC^\times}(\mathcal{R}^d_{\leqslant \lambda})$ for $d$ large enough (the resulting K-groups identify canonically for all $d$'s using pullbacks under the flat morphisms $\mathcal{R}^d_{\leqslant \lambda} \rightarrow \mathcal{R}^e_{\leqslant \lambda}$). Using the closed embeddings $\mathcal{R}^d_{\leqslant \mu} \hookrightarrow \mathcal{R}^d_{\leqslant \la}$, $\cA^\times = K^{\widetilde{G}_{\cO} \rtimes \bC^\times}(\mathcal{R})$ is defined as the limit w.r.t. the pushforward maps.  
$K^{G_{\mathcal{O}} \rtimes \bC^\times}(\mathcal{R})$ is an algebra with the convolution $\star$ operation \cite[Remarks~3.9]{BFN18}.

We can also consider the classical limits of algebras $\cA$, $\cA^\times$ as above:
\begin{align*}
\mathbb{C}[\mathcal{M}_{F}^\times]&=K^{\widetilde{G}_{\cO}}(\mathcal{R}), & \mathbb{C}[\mathcal{M}_{\mathfrak{f}}]&=H_*^{\widetilde{G}_{\cO}}(\mathcal{R}).
\end{align*}
These are algebras of functions on the corresponding deformed Coulomb branches $\mathcal{M}_{\mathfrak{f}},\,\mathcal{M}_{F}^\times$.

\begin{rem}{}
More generally, one can consider parabolic versions of the algebras $\mathcal{A}$, $\mathcal{A}^\times$ as well as their classical analogs, see \cite[Definition 2.2]{KWWY24}. Results of this section should be valid for this more general situation.
\end{rem}

%For our purposes, we will need to define $A_{*}^G(\mathcal{R})$. As before, we first define 
%\begin{equation*}
%A_*^G(\mathcal{R}_{\leqslant \lambda})=A_{-*}^G(\mathcal{R}^d_{\leqslant \lambda})%[-2\operatorname{dim}({\bf{N}}_{\mathcal{O}}/z^d{\bf{N}}_{\mathcal{O}})].
%\end{equation*}

\subsubsection{Multiplication}
%As we already mentioned, in \cite[Remarks 3.9]{BFN18}, authors define the convolution product on $K^{G_{\mathcal{O}} \rtimes \bC^\times}(\mathcal{R})$. 
Let us describe the properties of convolution product that determine it uniquely. %(we only use these properties in our arguments). 

We start with the case $N=0$ and $G=T$. In this case, the algebra $K^{\widetilde{T} \times \bC^\times}(\mathcal{R})$ can be explicitly described as follows. Identify
$K^{\widetilde{T} \times \bC^\times}(\operatorname{pt})=\bC[\widetilde{T}][q^{\pm 1}]$. 
For a character $\chi \colon \widetilde{T} \rightarrow \bC^\times$ let $f_\chi$ be the corresponding element of $\bC[\widetilde{T}][q^{\pm 1}]=K^{\widetilde{T} \times \bC^\times}(\operatorname{pt})$.  
Then, $K^{\widetilde{T}}(\mathcal{R})$ is a free (left) module over $K^{\widetilde{T} \times \bC^\times}(\operatorname{pt})$ with generators $\{r_\lambda\,|\, \lambda \in X^*(T)\}$. Multiplication is uniquely determined by the following formulae:
\begin{align*}
r_\lambda \star r_\mu &= r_{\lambda+\mu},&  r_\lambda \star f_\chi &= q^{\pi_1(\chi)}f_\chi r_{\lambda},
\end{align*}
where $\pi_1(\chi)$ is the $\mathbb{Z}$-valued function given by $\pi_0(\mathcal{R})=\pi_1(T) \xrightarrow{\pi_1(\chi|_{T})} \pi_1(\mathbb{C}^\times)=\mathbb{Z}$.

Let's now describe the multiplication $\star$ on $K^{\widetilde{G}_{\cO} \rtimes \bC^\times}(\operatorname{Gr}_G)$ (i.e., we still assume $N=0$ but put no restrictions on $G$). We have the natural morphism $\iota\colon \operatorname{Gr}_T \hookrightarrow \operatorname{Gr}_G$, it induces an injective homomorphism of algebras over $K_{\widetilde{T}_{\cO} \rtimes \bC^\times}(\operatorname{pt})^{W} = K_{\widetilde{G}_{\cO} \rtimes \bC^\times}(\operatorname{pt})$
\begin{equation*}
\iota_*\colon K^{\widetilde{T}_{\cO} \rtimes \bC^\times}(\operatorname{Gr}_T)^{W} \hookrightarrow K^{\widetilde{G}_{\cO} \rtimes \bC^\times}(\operatorname{Gr}_{G}),
\end{equation*}
which becomes an isomorphism after localization %of the form
%$(q^{m}f_\alpha -1)$, where $\alpha$ is a root of $G$ and $m$ is an integer 
(see \cite[Remark~5.23]{BFN18}), %The map $\iota_*$ is a homomorphism of algebras that becomes an isomorphism after localization, 
hence the algebra structure on $K^{\widetilde{G}_{\cO} \rtimes \bC^\times}(\operatorname{Gr}_{G})$ is uniquely determined by the algebra structure on $K^{\widetilde{T}_{\cO} \rtimes \bC^\times}(\operatorname{Gr}_T)^{W} \hookrightarrow K^{\widetilde{T}_{\cO} \rtimes \bC^\times}(\operatorname{Gr}_T)$.

Finally, let's describe the algebra structure on $K^{\widetilde{G}_{\cO} \rtimes \bC^\times}(\cR)$ in general.  We have morphisms:
\begin{equation} \label{eq: R --> T --> Gr}
\cR \xrightarrow{i} \cT \xrightarrow{\pi} \operatorname{Gr}_G
\end{equation}
(see \cite[Section 2]{BFN18}). Morphism $\pi$ is an infinite rank vector bundle (with fibers being isomorphic to ${\bf{N}}_{\cO}$), so the pullback $\pi^*$ induces an isomorphism on both K-theory and homology. Morphism $i$ is a closed embedding.  The same argument as in \cite[Lemma~5.11]{BFN18} shows that composing $i_*$ and $(\pi^*)^{-1}$, we obtain an injective algebra homomorphism: 
\begin{equation} \label{eq: embedding from K(R) to K(Gr)}
(\pi^{*})^{-1} \circ i_*  \colon K^{\widetilde{G}_\mathcal{O} \rtimes \bC^\times}(\mathcal{R}) \hookrightarrow K^{\widetilde{G}_\mathcal{O} \rtimes \bC^\times}(\operatorname{Gr}_G).
\end{equation}
So, the convolution product on $K^{\widetilde{G}_\mathcal{O} \rtimes \bC^\times}(\mathcal{R})$ is uniquely determined by the convolution product on $K^{\widetilde{G}_\mathcal{O} \rtimes \bC^\times}(\operatorname{Gr}_G)$.

\subsection{Riemann--Roch isomorphism for Coulomb branches}

\subsubsection{Completions of Coulomb branch algebras}

Algebra $\mathcal{A}$ is a (left) module over 
\begin{equation}\label{cohom_point}
H^*_{\widetilde{G} \times \mathbb{C}^\times}(\operatorname{pt})=\mathbb{C}[\widetilde{\mathfrak{g}},\hbar]^{\widetilde{G}}=\mathbb{C}[(\widetilde{\mathfrak{t}} \oplus \mathbb{C})]^{W}
\end{equation}
and $\mathcal{A}^\times$ is a module over 
\begin{equation}\label{K_th_pt}
K_{\widetilde{G} \times \mathbb{C}^\times}(\operatorname{pt})=\mathbb{C}[\widetilde{G},q^{\pm 1}]^{\widetilde{G}}=\mathbb{C}[\widetilde{T} \times \mathbb{C}^\times]^{W}.
\end{equation}
Recall that 
\begin{align*}
\cA^{\times} &= \underset{\longrightarrow}{\operatorname{lim}}\, K^{\widetilde{G}_{\cO} \rtimes \bC^\times}(\cR_{\leqslant \lambda}), & \cA &= \underset{\longrightarrow}{\operatorname{lim}}\, H_*^{\widetilde{G}_{\cO} \rtimes \bC^\times}(\cR_{\leqslant \lambda}).
\end{align*}
Let $H_*^{\widetilde{G}_{\cO} \rtimes \bC^\times}(\cR_{\leqslant \lambda})^{\wedge {0}}$, $K^{\widetilde{G}_{\cO} \rtimes \bC^\times}(\cR_{\leqslant \lambda})^{\wedge 1}$ be the completions of  $H_*^{\widetilde{G}_{\cO} \rtimes \bC^\times}(\cR_{\leqslant \lambda})$, $K^{\widetilde{G}_{\cO} \rtimes \bC^\times}(\cR_{\leqslant \lambda})$ at the augmentation ideals of $H^*_{\widetilde{G} \times \bC^\times}(\operatorname{pt})$, $K_{\widetilde{G} \times \bC^\times}(\operatorname{pt})$ corresponding to $0 \in (\widetilde{\mathfrak{t}} \oplus \bC)/W$ and $1 \in (\widetilde{T} \times \bC^\times)/W$. 
Set
\begin{align} \label{eq: colimit of completions of coulombs}
\widehat{\mathcal{A}^\times} &:= \underset{\longrightarrow}{\operatorname{lim}}\, K^{\widetilde{G}_{\cO} \rtimes \bC^\times}(\cR_{\leqslant \lambda})^{\wedge 1}, & \widehat{\mathcal{A}} &:= \underset{\longrightarrow}{\operatorname{lim}}\,H_*^{\widetilde{G}_{\cO} \rtimes \bC^\times}(\cR_{\leqslant \lambda})^{\wedge 0} .
\end{align}

\begin{lem}{}
The convolution product on $\cA$, $\cA^\times$ induces the product on the corresponding completions     $\widehat{\mathcal{A}}$, $\widehat{\mathcal{A}^\times}$.
\end{lem}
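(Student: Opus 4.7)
The plan is to reduce the statement to the fact that convolution is bilinear over the equivariant (co)homology (or K-theory) of a point, and then propagate this through the completion and colimit operations.

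First, I would recall that the convolution product restricts, on each finite stratum, to $R$-bilinear maps
\[
\star : \cA_{\leqslant \lambda} \otimes_R \cA_{\leqslant \mu} \to \cA_{\leqslant \lambda + \mu}
\]
(and similarly in the K-theoretic case), where $R$ denotes the equivariant cohomology of a point \eqref{cohom_point}, respectively the equivariant K-theory of a point \eqref{K_th_pt}. This is standard and follows from the construction of $\star$ via proper pushforward along maps that respect the $R$-module structure; it is implicit in the setup of \cite[Section~3]{BFN18}.

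Second, I would invoke Lemma~\ref{lem: coulomb is free over cartan} and the existence of a $\widetilde{T} \times \bC^\times$-invariant algebraic cell decomposition on $\cR^d_{\leqslant \lambda}$ to conclude that each $\cA_{\leqslant \lambda}$, respectively $K^{\widetilde{G}_\cO \rtimes \bC^\times}(\cR_{\leqslant \lambda})$, is a free module of finite rank over $R$. Since $R$ is Noetherian and these modules are finitely generated, completion at the augmentation ideal $I \subset R$ (corresponding to $0$ in the cohomological case and $1$ in the K-theoretic case) is an exact functor and satisfies $M^{\wedge I} \simeq M \otimes_R \widehat{R}$, with $\widehat{R}$ flat over $R$. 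In particular,
\[
(\cA_{\leqslant \lambda} \otimes_R \cA_{\leqslant \mu})^{\wedge I} \simeq \cA_{\leqslant \lambda}^{\wedge I} \otimes_{\widehat{R}} \cA_{\leqslant \mu}^{\wedge I},
\]
and likewise for the K-theoretic completions.

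Third, completing the $R$-bilinear convolution gives $\widehat{R}$-bilinear maps
\[
\cA_{\leqslant \lambda}^{\wedge I} \otimes_{\widehat{R}} \cA_{\leqslant \mu}^{\wedge I} \to \cA_{\leqslant \lambda + \mu}^{\wedge I},
\]
compatible with the pushforward maps $\cR_{\leqslant \lambda} \hookrightarrow \cR_{\leqslant \lambda'}$ used to form the colimits \eqref{eq: colimit of completions of coulombs}. Since filtered colimits commute with tensor products, passing to the colimit over $(\lambda,\mu)$ produces the desired product on $\widehat{\cA} \otimes_{\widehat{R}} \widehat{\cA} \to \widehat{\cA}$, and identically for $\widehat{\cA^\times}$. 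Associativity and the existence of a unit are inherited from $\cA$ and $\cA^\times$ by functoriality of completion and colimit.

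The only technical point requiring care is the $R$-linearity of convolution on both sides; once this is recorded, everything else is a formal consequence of flatness of completion for finitely generated modules over a Noetherian ring, so I expect no serious obstacle.
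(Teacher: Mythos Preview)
Your argument hinges on the claim that convolution is $R$-bilinear, i.e.\ that multiplication factors through $\cA_{\leqslant\lambda}\otimes_R\cA_{\leqslant\mu}$. But $\cA$ and $\cA^\times$ here are the \emph{quantized} Coulomb branches (they carry the loop-rotation $\bC^\times$), and $R=H^*_{\widetilde G\times\bC^\times}(\pt)$ (resp.\ $K_{\widetilde G\times\bC^\times}(\pt)$) is \emph{not} central: already in the abelian case one has $r_\lambda\star f_\chi = q^{\langle\chi,\lambda\rangle}f_\chi\, r_\lambda$, so $(xa)\star b\neq a\star(xb)$ for $x\in R$. Associativity does give a factorization through the \emph{right--left} tensor $\cA\otimes_R\cA$, but the completions in \eqref{eq: colimit of completions of coulombs} are defined via the \emph{left} $R$-action; to make your ``complete then tensor'' step work you would need the left and right $\mathfrak m$-adic topologies to coincide, and that is precisely the non-trivial content you have skipped.

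The paper proves exactly this missing fact, namely $\cA^\times\mathfrak m_1^k\subset\mathfrak m_1^k\cA^\times$, by exploiting the almost-commutative structure: since $\cA^\times/(q-1)$ is commutative, every commutator $[a,x]$ with $x\in R$ is divisible by $q-1\in\mathfrak m_1$; an induction on $k$ using the Leibniz rule for $[a,x_1\cdots x_k]$ then shows $[a,\mathfrak m_1^k]\subset(q-1)\mathfrak m_1^{k-1}\cA^\times\subset\mathfrak m_1^k\cA^\times$. Once this is established, your formal argument (flatness of completion, passage to colimits) goes through; but without it the claimed $R$-bilinearity is simply false.
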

\begin{proof}
Let's prove the claim for $\widehat{\cA^\times}$, the argument for $\widehat{\cA}$ is similar. Let $\mathfrak{m}_1 \subset K^{\widetilde{G}_{\cO} \rtimes \bC^\times}(\operatorname{pt})$ be the augmentation ideal. It is enough to check that for every $a \in \cA^{\times}$, $a\mathfrak{m}_1^k \subset \mathfrak{m}_1^{k}\cA^{\times}$. This is equivalent to $[a,\mathfrak{m}_1^k] \subset \mathfrak{m}_1^{k}\cA^{\times}$. Note now that the quotient $\cA^\times/(q-1)$ is commutative. It follows that for any $x \in \cA^{\times}$ we have 
$[a,x] \in (q-1)\cA^{\times}$, so $\frac{[a,x]}{q-1}$ is a well-defined element of $\cA^{\times}$ (recall that $\cA^{\times}$ has no zero divisors, see \cite[Corollary 5.22]{BFN18}).
We now prove by the induction on $k$ that $a\mathfrak{m}_1^k \subset \mathfrak{m}_1^{k}\cA^{\times}$. For $k=0$ this is clear. Let's prove the induction step. 
Pick $x_1,\ldots,x_k \in \mathfrak{m}_1$. It is enough to check that $[a,x_1\ldots x_k] \in \mathfrak{m}_1^k\cA^{\times}$. Setting $a_i := \frac{[a,x_i]}{q-1}$ and using the Leibnitz rule we get:
\begin{multline*}
[a,x_1 \ldots x_k] = \\
 (q-1)(a_1x_2 \ldots x_k + x_1 a_2 x_3 \ldots x_k + \ldots + x_1\ldots x_{i-1}  a_i x_{i+1} \ldots x_k + \ldots x_1 \ldots x_{k-1}  a_k).
\end{multline*}
By the induction hypothesis, we conclude that $[a,x_1\ldots x_k] \in (q-1)\mathfrak{m}_1^{k-1}\cA^{\times} \subset \mathfrak{m}_1^k\cA^{\times}$.
\end{proof}

\begin{comment}
\begin{lem}
For any dominant $\lambda \in X^*_+(T)$, we have an isomorphism: 
\begin{equation*}
H_*^{\widetilde{G}_{\cO} \rtimes \bC^\times}(\cR_{\leqslant \lambda})^{\wedge 0} \simeq \prod_{i}H_{i}^{\widetilde{G}_{\cO} \rtimes \bC^\times}(\cR_{\leqslant \lambda}).
\end{equation*}
\end{lem}
\begin{proof}
...
\end{proof}
\end{comment}

%{{make a remark that Webster already considered these completions when he studies Coulomb branches as principal Galois orders, work out the precise reference; here the detail -- do we want to take a completion just as left module or as in \cite[Section 2.2]{W}, in commutative case this is not important; compare with \cite[Theorem 4.4]{W}}}  

%{{Vasya: I think we have two different things -- completion at $\mathfrak{m}_0$ as left module (universal GZ module) and completion at $\mathfrak{m}_0$ as bimodule -- this will be an algebra; one acts on the other; compare with [EG] how they complete, they embed into GL when complete -- careful here}}

\subsubsection{Isomorphism between completed Coulomb branch algebras}

%We denote by $H^*_{\widetilde{G} \times \mathbb{C}^\times}(\operatorname{pt})^{\wedge 0}$, $K_{\widetilde{G} \times \mathbb{C}^\times}(\operatorname{pt})^{\wedge 1}$ the corresponding completions of the algebras (\ref{cohom_point}), (\ref{K_th_pt}).
The equivariant Chern character gives an identification:
\begin{equation*}
\operatorname{ch}^{\widetilde{G} \times \mathbb{C}^\times}\colon K_{\widetilde{G} \times \mathbb{C}^\times}(\operatorname{pt})^{\wedge 1} \iso H^*_{\widetilde{G} \times \mathbb{C}^\times}(\operatorname{pt})^{\wedge 0}.
\end{equation*}

The main result of this section is the following theorem.

\begin{thm}\label{thm_iso_after_completions}
There exists an isomorphism of algebras:
\begin{equation*}
\Upsilon\colon \widehat{\mathcal{A}^\times} \iso \widehat{\mathcal{A}}
\end{equation*}
compatible with the actions of $K_{\widetilde{G} \times \mathbb{C}^\times}(\operatorname{pt})^{\wedge 1} \iso H^*_{\widetilde{G} \times \mathbb{C}^\times}(\operatorname{pt})^{\wedge 0}$. Same holds for classical %and parabolic 
Coulomb branch algebras.
\end{thm}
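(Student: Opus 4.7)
The plan is to build $\Upsilon$ level-by-level on the ind-pro approximations $\cR^d_{\leqslant \lambda}$, using the Edidin--Graham equivariant Riemann--Roch isomorphism $\tau^{\widetilde G \rtimes \bC^\times}$ from \eqref{equiv_RR_via_compl_0_1}, corrected by the Todd class of the fibers of $\cT \to \operatorname{Gr}_G$. Concretely, for fixed $\lambda \in \Lambda^+$ and $d$ sufficiently large, $\cR^d_{\leqslant \lambda}$ is a closed subscheme of the finite-rank $\widetilde G_\cO \rtimes \bC^\times$-equivariant vector bundle $\cT^d_{\leqslant \lambda} \to \overline{\operatorname{Gr}}{}_G^{\leqslant \lambda}$. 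Lemma \ref{lem: coulomb is free over cartan} ensures that the hypotheses of Section \ref{sub: ch vs bm} apply, so equivariant Chow and Borel--Moore homology coincide. I set
\[
\Upsilon^d_{\leqslant \lambda} := \operatorname{Td}^{\widetilde G \rtimes \bC^\times}\!\bigl(T_{\cT^d_{\leqslant \lambda}/\overline{\operatorname{Gr}}{}_G^{\leqslant \lambda}}\big|_{\cR^d_{\leqslant \lambda}}\bigr)\cdot \tau^{\widetilde G \rtimes \bC^\times}_{\cR^d_{\leqslant \lambda}},
\]
which is a $\bC$-linear isomorphism onto the target completion by \eqref{equiv_RR_via_compl_0_1}.

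Next I would check compatibility of $\Upsilon^d_{\leqslant \lambda}$ under the two transition systems: the closed embeddings $\cR^d_{\leqslant \mu} \hookrightarrow \cR^d_{\leqslant \lambda}$ for $\mu \leqslant \lambda$, and the smooth morphisms $\cR^d_{\leqslant \lambda} \to \cR^e_{\leqslant \lambda}$ for $d \geqslant e$. The first is the proper pushforward compatibility of $\tau$ listed in Section \ref{equivariant_RR_sec}. The second uses the smooth-pullback formula $\tau(f^* x) = \operatorname{Td}^G(T_f) f^*\tau(x)$: the extra Todd factor is precisely the change in $\operatorname{Td}^{\widetilde G \rtimes \bC^\times}(T_{\cT^d/\overline{\operatorname{Gr}}})$ as $d$ varies, so the two corrections cancel in the correct direction. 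Passing to the limits defining \eqref{eq: colimit of completions of coulombs} produces a $\bC$-linear isomorphism $\Upsilon\colon \widehat{\cA^\times} \iso \widehat{\cA}$. Compatibility with $\operatorname{ch}^{\widetilde G \times \bC^\times}\colon K_{\widetilde G \times \bC^\times}(\operatorname{pt})^{\wedge 1} \iso H^*_{\widetilde G \times \bC^\times}(\operatorname{pt})^{\wedge 0}$ follows from the module-homomorphism property $\tau(\alpha \cdot x) = \operatorname{ch}^{\widetilde G \rtimes \bC^\times}(\alpha) \tau(x)$.

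\emph{The main obstacle} is showing that $\Upsilon$ is multiplicative with respect to the two convolution products, since the convolution is defined via diagrams involving the infinite-dimensional spaces $\cT$, $G_\cK$. I would argue by abelianization, following \cite[Section~5]{BFN18}. First treat the torus-pure case $G=T$, ${\bf{N}}=0$: the convolution algebra on both sides has the explicit generator-relation presentation recalled in the excerpt ($r_\lambda \star r_\mu = r_{\lambda+\mu}$ and $r_\lambda \star f_\chi = q^{\pi_1(\chi)} f_\chi r_\lambda$ on the K-side, with the analogous homological relations), so multiplicativity of $\Upsilon$ reduces to an explicit check on $\operatorname{Gr}_T$. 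Next, keeping $G=T$ but allowing arbitrary ${\bf{N}}$, the closed embedding $i$ and the vector-bundle $\pi$ from \eqref{eq: R --> T --> Gr} let us transfer multiplicativity from the pure case using proper pushforward and smooth pullback for $\tau$; the Todd correction in the definition of $\Upsilon$ is exactly what absorbs the factor produced by $\pi^*$. Finally, for reductive $G$, the injective algebra homomorphism \eqref{eq: embedding from K(R) to K(Gr)} and its homological analog, together with the localization isomorphism $\iota_*$ of \cite[Remark~5.23]{BFN18}, embed both convolution algebras into their torus-equivariant counterparts; compatibility of $\Upsilon$ with these embeddings (once again a consequence of proper pushforward for $\tau$) reduces the general multiplicativity to the abelian case already handled.

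The flavor-deformed version over $F$ requires no new ideas since $\widetilde G_\cO$-equivariance was maintained throughout; the classical limits $\bC[\mathcal M^\times_F]$ and $\bC[\mathcal M_{\mathfrak f}]$ are obtained by setting $q=1$, $\hbar=0$, which is harmless because the completions in question are centered precisely at those points.
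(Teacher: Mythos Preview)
Your overall strategy matches the paper's: construct $\Upsilon$ on each $\cR^d_{\leqslant\lambda}$ as the Edidin--Graham map $\tau$ twisted by a Todd correction coming from $\cT^d_{\leqslant\lambda}\to\overline{\operatorname{Gr}}^{\lambda}$, pass to the limit, and prove multiplicativity by reducing to the pure abelian case via $(\pi^*)^{-1}i_*$ and $\iota_*$.

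There is, however, a concrete error in your Todd correction: it must be $\operatorname{Td}^{-1}$, not $\operatorname{Td}$. With your sign, neither of the two cancellations you assert actually occurs. For the $d'\geqslant d$ compatibility: since $\tau(f^*x)=\operatorname{Td}(T_f)\,f^*\tau(x)$ and $\operatorname{Td}(\cT^{d'})=(\tilde p^{d'}_d)^*\operatorname{Td}(\cT^d)\cdot\operatorname{Td}(T_{\tilde p^{d'}_d})$, one computes
\[
\operatorname{Td}(\cT^{d'})^{\epsilon}\cdot\tau\bigl((\tilde p^{d'}_d)^*x\bigr)
=(\tilde p^{d'}_d)^*\operatorname{Td}(\cT^{d})^{\epsilon}\cdot\operatorname{Td}(T_{\tilde p^{d'}_d})^{\epsilon+1}\cdot(\tilde p^{d'}_d)^*\tau(x),
\]
so the spurious factor $\operatorname{Td}(T_{\tilde p^{d'}_d})$ disappears only when $\epsilon=-1$. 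The same issue arises in your step adding matter: unwinding $\tau_{\operatorname{Gr}}\circ(\pi^*)^{-1}\circ i_*$ via the smooth-pullback formula for $\pi$ produces a factor $\operatorname{Td}(\cT)^{-1}$, so the commutativity of the square with $(\pi^*)^{-1}i_*$ on top and bottom forces $\Upsilon=(\pi^*\operatorname{Td}(\cT))^{-1}\cdot\tau$. This is exactly the paper's definition, and the explicit abelian check in Section~\ref{subsubsec: formulas for upsilon in abelian case} confirms $\Upsilon(r^\times_{\boldsymbol\lambda})=\operatorname{Td}(\boldsymbol\lambda)^{-1}r_{\boldsymbol\lambda}$.

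A smaller point: your three-stage reduction $(T,0)\to(T,N)\to(G,N)$ has a redundant middle step. In your final stage you invoke \eqref{eq: embedding from K(R) to K(Gr)} to embed $(G,N)$ into $(G,0)$ and then $\iota_*$ to reach $(T,0)$, which bypasses $(T,N)$ entirely. This is precisely the paper's route: Lemma~\ref{lemma_tau_homom_abelian} handles $(T,0)$, the next lemma passes to $(G,0)$ via $\iota_*$, and the final commutative square with $(\pi^*)^{-1}i_*$ handles arbitrary $N$.
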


Recall that by the definition we have 
\begin{align} \label{actual_def_coulomb}
\mathcal{A}^\times &= \underset{\longrightarrow}{\operatorname{lim}}\, K^{\widetilde{G} \rtimes \mathbb{C}^\times}(\mathcal{R}_{\leqslant \lambda}^d), & \mathcal{A} &= \underset{\longrightarrow}{\operatorname{lim}}\, H_*^{\widetilde{G} \times \mathbb{C}^\times}(\mathcal{R}_{\leqslant \lambda}^d),
\end{align}
where $\mathcal{R}_{\leqslant \lambda}^d$ are finite dimensional schemes. The  equivariant Riemann--Roch map (\ref{equiv_RR_via_compl_0_1}) provides an isomorphism:
\begin{equation*}
\tau^{\widetilde{G} \times \mathbb{C}^\times} \colon K^{\widetilde{G} \times \mathbb{C}^\times}(\mathcal{R}_{\leqslant \lambda}^d)^{\wedge 1} \iso H^{\widetilde{G} \times \mathbb{C}^\times}(\mathcal{R}_{\leqslant \lambda}^d)^{\wedge 0} 
\end{equation*}
over $K_{\widetilde{G} \times \mathbb{C}^\times}(\operatorname{pt})^{\wedge 1} \iso H^*_{\widetilde{G} \times \mathbb{C}^\times}(\operatorname{pt})^{\wedge 0}$. Morphism $\tau^{\widetilde{G} \times \mathbb{C}^\times}$ commutes with proper push forwards (\cite[Theorem 3.1]{eg1}, see also Section \ref{equivariant_RR_sec}), so passing to the limit we obtain an isomorphism:
\begin{equation*}
\tau^{\widetilde{G}_{\mathcal{O}} \rtimes \mathbb{C}^\times}\colon \widehat{\mathcal{A}^\times} \iso \widehat{\mathcal{A}}
\end{equation*}
of modules over $K_{\widetilde{G} \rtimes \mathbb{C}^\times}(\operatorname{pt})^{\wedge 1} \iso H^*_{\widetilde{G} \rtimes \mathbb{C}^\times}(\operatorname{pt})^{\wedge 0}$.
We will see that for $\bold N = 0$ this map is an isomorphism of algebras (i.e., is compatible with convolution). In general, we need to modify it as follows. 
Consider the closed embedding $\mathcal{R}_{\leqslant \la}^d \hookrightarrow \mathcal{T}_{\leqslant \la}^d$. Scheme $\mathcal{T}_{\leqslant \la}^d$ is a vector bundle over $\overline{\operatorname{Gr}}^\lambda_G$ with fibers being $\bold N_{\mathcal{O}}/z^d \bold N_{\mathcal{O}}$. 
Let $\operatorname{Td}^{\widetilde{G} \rtimes \mathbb{C}^\times}(\mathcal{T}_{\leqslant \la}^d) \in H^*_{G \rtimes \mathbb{C}^\times}(\overline{\operatorname{Gr}}^\lambda_G)^{\wedge 0}$ be the equivariant Todd class of $\mathcal{T}_{\leqslant \la}^d$ (see \cite[Definition~3.1]{eg1}). 
Pulling $\operatorname{Td}^{\widetilde{G} \times \mathbb{C}^\times}(\mathcal{T}_{\leqslant \la}^d)$ back under the map $\pi\colon \mathcal{R}^d_{\leqslant \la} \rightarrow \ol{\operatorname{Gr}}^\la_G$, we define:
\begin{equation*}
\Upsilon^d_{\leqslant \lambda} := (\pi^*\operatorname{Td}^{\widetilde{G} \times \mathbb{C}^\times}(\mathcal{T}_{\leqslant \la}^d)^{-1} \cap -) \circ \tau^{\widetilde{G} \times \mathbb{C}^\times}\colon K^{\widetilde{G} \times \mathbb{C}^\times}(\mathcal{R}_{\leqslant \lambda}^d)^{\wedge 1} \iso H^{\widetilde{G} \times \mathbb{C}^\times}(\mathcal{R}_{\leqslant \lambda}^d)^{\wedge 0}. 
\end{equation*}
For $d' \geqslant d$, the natural morphism $\tilde{p}^{d'}_d \colon \mathcal{R}^{d'}_{\leqslant \lambda} \rightarrow \mathcal{R}^{d}_{\leqslant \lambda}$ is a vector bundle with fibers being isomorphic to $z^d \bold N_{\mathcal{O}}/z^{d'} \bold N_{\mathcal{O}}$. 
It then follows from \cite[Theorem 3.1 (d)]{eg1} (see also Section~\ref{equivariant_RR_sec}) that morphisms $\Upsilon^{d}_{\leqslant \lambda}$, $\Upsilon^{d'}_{\leqslant \la}$ are compatible after the identifications induced by $(\tilde{p}^{d'}_d)^*$ (we use that the Todd class is multiplicative).

This allows us to take the limit of $\Upsilon^d_{\leqslant \lambda}$ and obtain the desired identification:
\begin{equation*}
\Upsilon\colon \widehat{\mathcal{A}^\times} \iso \widehat{\mathcal{A}}.
\end{equation*}
\begin{comment}
\begin{rem}\label{rem: descr upsilon via gr} {{fix}}
Let's give and alternative definition of $\Upsilon$. We have the embeddings:
\begin{equation*}
    \mathcal{A} \hookrightarrow H^{\widetilde{G}_{\mathcal{O}} \rtimes \bC^\times}(\operatorname{Gr}_G),\qquad \mathcal{A}^\times \hookrightarrow K^{\widetilde{G}_{\mathcal{O}} \rtimes \bC^\times}(\operatorname{Gr}_G)
\end{equation*}
given by $(\pi^*)^{-1}i_*$. We have already constructed the isomorphism: 
\begin{equation}
\Upsilon_{\on{Gr}}\colon K^{\widetilde{G}_{\mathcal{O}} \rtimes \bC^\times}(\on{Gr})^{\wedge_1} \iso H^{\widetilde{G}_{\mathcal{O}} \rtimes \bC^\times}(\on{Gr})^{\wedge_0}.
\end{equation}
The isomorphism $\Upsilon\colon \widehat{\mathcal{A}^\times} \iso \widehat{\mathcal{A}}$ is then equal to the restriction of the isomorphism $\Upsilon_{\on{Gr}}$ times certain particular correction. This correction can be described as follows. For every $\lambda$, the map $\Upsilon^d_{\leqslant \lambda}$ is equal to $\Upsilon_{\on{Gr},\leqslant \lambda}^d$ times $\on{Td}^{\widetilde{G} \times \bC^\times}(\mathcal{T}^d_{\leqslant \lambda}/\mathcal{R}^d_{\leqslant \lambda})^{-1}$ (we consider $\mathcal{R}^d_{\leqslant \lambda} \subset \mathcal{T}^d_{\leqslant \lambda}$ as inifinite dimensional vector bundles on $\operatorname{Gr}_G$ and $\mathcal{T}^d_{\leqslant \lambda}/\mathcal{R}^d_{\leqslant \lambda}$ is the finite dimensional quotient).
\end{rem}
\end{comment}
In what follows we check that $\Upsilon$ is indeed a homomorphism of algebras.
We start with:

\begin{lem}\label{lemma_tau_homom_abelian}
For $G=T$ a torus and $\bold N=0$, the map $\tau^{\widetilde{T}_{\mathcal{O}}\times \mathbb{C}^\times}$ is an isomorphism of algebras.
\end{lem}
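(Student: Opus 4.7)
The plan is to reduce the statement to a direct computation on explicit generators of both algebras. When ${\bf{N}} = 0$ and $G = T$, the space of triples $\cR$ coincides (as a reduced ind-scheme) with $\Gr_T \simeq \bigsqcup_{\lambda \in X_*(T)} \pt$, and each finite-dimensional slice $\cR^d_{\leqslant \lambda}$ is a finite disjoint union of points on which $\widetilde T_\cO \rtimes \bC^\times$ acts through its quotient $\widetilde T$ (loop rotation and the arc-group kernel act trivially). Moreover, the vector bundle $\cT^d_{\leqslant \lambda}$ has zero fibers, so the correction term $\pi^*\operatorname{Td}^{\widetilde{G}_\cO \rtimes \bC^\times}(\cT^d_{\leqslant \lambda})$ in the definition of $\Upsilon^d_{\leqslant \lambda}$ is trivial, and $\Upsilon$ coincides with $\tau^{\widetilde T_\cO \rtimes \bC^\times}$. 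On each connected component, equivariant Riemann--Roch reduces to the equivariant Chern character on a point.

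Concretely, I would record that $\tau(r_\lambda) = [\lambda]$ (the homological class of the corresponding point-component), together with $\tau(f_\chi) = \exp(d\chi)$ and $\tau(q) = \exp(\hbar)$ on the base ring. The K-theoretic product is already given in the paper by $r_\lambda \star r_\mu = r_{\lambda+\mu}$ and $r_\lambda \star f_\chi = q^{\langle \chi, \lambda\rangle} f_\chi\, r_\lambda$, with $\langle \chi, \lambda\rangle$ being the value of $\pi_1(\chi)$ on $\lambda \in \pi_1(T)$. I would then establish a parallel presentation on the homological side: $[\lambda] * [\mu] = [\lambda + \mu]$ together with the commutation relation $[\lambda] * \psi = (\psi + \hbar \langle \psi, \lambda\rangle) * [\lambda]$ for $\psi \in \widetilde{\ft}^*$, which comes from the same localization computation on $\Gr_T$, with the $\hbar$-shift recording the loop-rotation weight of the component.

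With these presentations in place, checking multiplicativity on generators is short. The identity $\tau(r_\lambda \star r_\mu) = [\lambda+\mu] = \tau(r_\lambda) * \tau(r_\mu)$ is immediate. Applying $\tau$ to the twisted relation yields
\[
[\lambda] * \exp(d\chi) \,=\, \exp\bigl(\hbar \langle \chi, \lambda\rangle + d\chi\bigr) * [\lambda],
\]
which follows by iterating $[\lambda] * \psi = (\psi + \hbar \langle \psi, \lambda\rangle) * [\lambda]$ and summing the exponential series. Compatibility with the base ring action is built into the Chern character. Since $\{r_\lambda\}$ together with the base ring generate $\widehat{\cA^\times}$ topologically, this suffices.

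The main obstacle is setting up and justifying the homological commutation relation with the correct coefficient $\hbar\langle\psi,\lambda\rangle$ (the classical analog of the $q$-shift). Once this is in place, the verification reduces to matching $q^{\langle \chi, \lambda\rangle} \mapsto \exp(\hbar \langle \chi, \lambda\rangle)$ and $f_\chi \mapsto \exp(d\chi)$ under the Chern character, which is automatic.
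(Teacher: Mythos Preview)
Your proposal is correct and follows essentially the same route as the paper: both arguments reduce to checking multiplicativity on the explicit generators $r_\lambda$ together with the base ring $K_{\widetilde T \times \bC^\times}(\pt)$, using the K-theoretic relation $r_\lambda \star f_\chi = q^{\langle\chi,\lambda\rangle} f_\chi\, r_\lambda$ and its homological analogue $[\lambda] * \psi = (\psi + \hbar\langle\psi,\lambda\rangle)[\lambda]$, which match under the Chern character exactly as you describe. The paper's proof is written in slightly different notation (using $c_1(\bC_\chi)$ and $\pi_1(\chi)$), but the content and structure are identical, including the verification that $\tau(r_\lambda \star r_\mu) = [\lambda+\mu] = \tau(r_\lambda)*\tau(r_\mu)$ and the exponentiated commutation relation.
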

\begin{proof}
For $\widetilde{G}=\widetilde{T}$, ${\bf{N}}=0$ the Coulomb branches $\mathcal{A}$, $\mathcal{A}^\times$ are generated over $H^*_{\widetilde{T}_{\mathcal{O}} \rtimes \BC^\times}(\operatorname{pt})$, $K_{\widetilde{T} \times \BC^\times}(\operatorname{pt})$  by $\iota_{\lambda,*}[z^{\lambda}]$, $[\iota_{\lambda,*}\mathcal{O}_{z^\lambda}]$ respectively, where $\la \in \operatorname{Hom}(\mathbb{C}^\times,T)$ and $\iota_\la\colon \{z^\lambda\} \hookrightarrow \operatorname{Gr}_T$ is the natural embedding. For a character $\chi\colon \widetilde{T} \rightarrow \mathbb{C}^\times$, let $\mathbb{C}_\chi$ be the corresponding one-dimensional representation of $\widetilde{T}_{\mathcal{O}} \rtimes \mathbb{C}^\times$ considered as a $\widetilde{T}_{\mathcal{O}} \rtimes \mathbb{C}^\times$-equivariant line bundle on a point. Then, we can consider $c_1(\mathbb{C}_\chi) \in H^*_{\widetilde{T}_{\mathcal{O}} \rtimes \mathbb{C}^\times}(\operatorname{pt})$ and we have: 
\begin{equation}\label{commut_coulomb_abel}
\bullet * c_1(\mathbb{C}_\chi)1 = (c_1(\mathbb{C}_\chi)+\hbar \pi_1(\chi)) \cdot \bullet,
\end{equation}
\begin{equation}\label{commut_mult_coulomb_abel}
     \bullet \star [\mathbb{C}_{\chi}] = q^{\pi_1(\chi)} [\mathbb{C}_\chi] \cdot \bullet.
\end{equation}
%where $\pi_1(\chi)$ is the $\mathbb{Z}$-valued function given by $\pi_0(\mathcal{R})=\pi_1(G) \rightarrow \pi_1(\mathbb{C}^\times)=\mathbb{Z}$. 
Note now that 
\begin{equation}\label{chern_of_product}
\operatorname{ch}(q^{\pi_1(\chi)}[\mathbb{C}_\chi])=e^{\hbar \pi_1(\chi)}e^{c_1(\mathbb{C}_\chi)}=e^{c_1(\mathbb{C}_\chi)+\hbar\pi_1(\chi)}. 
\end{equation}
It also follows from (\ref{commut_coulomb_abel}) that:
\begin{equation}\label{commut_exp_coulomb_abel}
\bullet * e^{c_1(\mathbb{C}_\chi)}1 = e^{c_1(\mathbb{C}_\chi) + \hbar \pi_1(\chi)}  \cdot \bullet.
\end{equation}
Combining (\ref{commut_mult_coulomb_abel}), (\ref{commut_exp_coulomb_abel}) and (\ref{chern_of_product}) we conclude that:
\begin{equation*}
\tau(\bullet * [{\mathbb{C}}_\chi]1) = \tau(q^{\pi_1(\chi)}[\mathbb{C}_\chi] \cdot \bullet) = \operatorname{ch}(q^{\pi_1(\chi)}[\mathbb{C}_\chi]) \cdot \tau(\bullet) =  e^{c_1(\mathbb{C}_\chi)+\hbar \pi_1(\chi)} \cdot \tau(\bullet) = \tau(\bullet) * e^{c_1(\mathbb{C}_\chi)}1
\end{equation*}
so $\tau$ is a homomorphism of bimodules.

It remains to note that:
\begin{multline*}
\tau(\iota_{\la,*}[\mathcal{O}_{z^\lambda}] \star \iota_{\eta,*}[\mathcal{O}_{z^\eta}]) = \tau(\iota_{\la+\eta,*}[\mathcal{O}_{z^{\lambda+\eta}}])=\\
\iota_{\la+\eta,*} \tau([\mathcal{O}_{z^{\lambda+\eta}}])=\iota_{\la+\eta,*}[z^{\la+\eta}]=\iota_{\lambda,*}[z^\lambda] * \iota_{\eta,*}[z^\eta]=\tau(\iota_{\lambda,*}\mathcal{O}_{\lambda}) * \tau(\iota_{\eta,*}\mathcal{O}_{\eta}).
\end{multline*}
\end{proof}

\begin{lem}
For $\bold N=0$, the map $\tau^{\widetilde{G}_{\mathcal{O}} \rtimes \mathbb{C}^\times}$ is an isomorphism of algebras.    
\end{lem}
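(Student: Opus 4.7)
The plan is to reduce the case of general reductive $G$ (with ${\bf{N}}=0$) to the toric case established in Lemma \ref{lemma_tau_homom_abelian}, by exploiting the abelianization embedding $\iota_*\colon K^{\widetilde{T}_{\cO} \rtimes \bC^\times}(\Gr_T)^W \hookrightarrow K^{\widetilde{G}_{\cO} \rtimes \bC^\times}(\Gr_G)$ together with its homology analog. Recall that both maps are algebra homomorphisms and become isomorphisms after inverting the root factors $\{q^m f_\alpha - 1\}$ on the $K$-side (resp.\ $\{m\hbar + \alpha\}$ on the homology side).

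First, since $\iota\colon \Gr_T \hookrightarrow \Gr_G$ is a closed embedding and thus proper on each Schubert variety, the functoriality of $\tau^{\widetilde{G}_i \rtimes \bC^\times}$ under proper pushforward (\cite[Theorem 3.1(b)]{eg1}) gives, after taking the limit, a commutative square whose horizontal arrows are the two $\iota_*$ maps and whose vertical arrows are $\tau_T$ and $\tau_G$; both vertical arrows are isomorphisms of modules over $K_{\widetilde G\times\bC^\times}(\pt)^{\wedge 1}\iso H^*_{\widetilde G\times\bC^\times}(\pt)^{\wedge 0}$. Next, the equivariant Chern character sends each root factor $q^m f_\alpha - 1$ to $e^{m\hbar+\alpha}-1 = (m\hbar+\alpha)\,u_{m,\alpha}$ for a unit $u_{m,\alpha}$ in the completion, so inverting the K-theoretic roots corresponds exactly to inverting the cohomological roots under the identification above. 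Finally, Lemma \ref{lem: coulomb is free over cartan} guarantees that both $\widehat{\cA^\times}$ and $\widehat{\cA}$ are torsion-free as modules over their respective coefficient rings, and therefore embed into their localizations at the image of the root elements.

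Putting these pieces together: after localizing both sides of the commutative square at the roots, the horizontal arrows become algebra isomorphisms (by abelianization and the exactness of localization), and consequently the localized $\tau_G$ equals the conjugate of the localized $\tau_T$ by an algebra isomorphism. Since $\tau_T$ is an algebra homomorphism by Lemma \ref{lemma_tau_homom_abelian}, so is the localization of $\tau_G$. By torsion-freeness, $\tau_G$ itself must then be a homomorphism of algebras.

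The main technical obstacle is reconciling the completions at $1$ (resp.\ $0$) with the localization at the root factors: these factors lie in the maximal ideal at the completion point, so localizing takes us out of the category of local rings. One needs to verify that the abelianization isomorphism --- originally stated uncompleted --- persists after first completing and then localizing. This reduces to the fact that the cokernel of the uncompleted $\iota_*$ is a torsion module for the multiplicative set of root factors, a property which, combined with the freeness in Lemma \ref{lem: coulomb is free over cartan} at each Schubert level, is preserved by the compatible directed system of completions defining $\widehat{\cA^\times}$ and $\widehat{\cA}$.
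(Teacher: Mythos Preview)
Your proposal is correct and follows essentially the same route as the paper: reduce to the toric case via the abelianization embedding $\iota_*$, use that $\tau$ commutes with proper pushforward to get a commutative square, and then exploit that $\iota_*$ becomes an isomorphism after inverting the root factors together with torsion-freeness to conclude. The paper's proof is terser (it simply says ``$\iota_*$ commutes with $\tau$ so our claim follows from Lemma~\ref{lemma_tau_homom_abelian}''), while you spell out more carefully the matching of root factors under $\operatorname{ch}$ and the completion-versus-localization bookkeeping, but the underlying argument is the same.
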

\begin{proof}
By \cite[Lemma 5.10]{BFN18}  pushforward along the natural embedding $\iota\colon \operatorname{Gr}_{T} \hookrightarrow \operatorname{Gr}_G$ defines homomorphisms of algebras:
\begin{align*}
H_*^{\widetilde{T}_{\mathcal{O}} \rtimes \mathbb{C}^\times}(\operatorname{Gr}_T)^{W} &\rightarrow H_*^{\widetilde{G}_{\mathcal{O}} \rtimes \mathbb{C}^\times}(\operatorname{Gr}_G), & K^{\widetilde{T}_{\mathcal{O}} \rtimes \mathbb{C}^\times}(\operatorname{Gr}_T)^{W} &\rightarrow K^{\widetilde{G}_{\mathcal{O}} \rtimes \mathbb{C}^\times}(\operatorname{Gr}_G).
\end{align*}
These homomorphisms are $H^*_{\widetilde{T} \times \mathbb{C}^\times}(\operatorname{pt})^{W}$ and $K_{\widetilde{T} \times \mathbb{C}^\times}(\operatorname{pt})^{W}$-linear.  Moreover, they become isomorphisms after appropriate localizations of $H^*_{\widetilde{T} \times \mathbb{C}^\times}(\operatorname{pt})^W$ (resp. $K_{\widetilde{T} \times \mathbb{C}^\times}(\operatorname{pt})$). Namely, we should localize $H^*_{\widetilde{T} \times \mathbb{C}^\times}(\operatorname{pt})^W$ at the elements $\alpha \in \Delta_G$ and we should localize $K_{\widetilde{T} \times \mathbb{C}^\times}(\operatorname{pt})$ at the elements $1-f_{-\alpha}$, $\alpha \in \Delta_G$. The image of  $\frac{1}{1-f_{-\alpha}}$ under the Chern character map is equal to $\frac{1}{1-e^{-\alpha}}=\frac{\alpha/(1-e^{-\alpha})}{\alpha}$ and the element $\alpha/(1-e^{-\alpha})$ already lives in $H^*_{\widetilde{G} \times \mathbb{C}^\times}(\operatorname{pt})^{\wedge 0}$.

Note now that $\iota_*$ commutes with $\tau^T$ 
so our claim follows from Lemma \ref{lemma_tau_homom_abelian} together with the equality (\ref{tau_vs_resrtriction}) (claiming that maps $\tau^G, \tau^T$ are compatible).
\end{proof}

Recall the maps $i, \pi$ in \eqref{eq: R --> T --> Gr}, embedding \eqref{eq: embedding from K(R) to K(Gr)} given by $(\pi^{*})^{-1} i_*$ and its homological version \cite[Lemma~5.11]{BFN18}.
%\begin{align*}
%i\colon \mathcal{R} &\hookrightarrow \mathcal{T}, & \pi\colon \mathcal{T} &\rightarrow \operatorname{Gr}_G.
%\end{align*}
%$\pi$ induces isomoprhisms
%\begin{align*}
%\pi^*\colon H_*^{\widetilde{G}_{\mathcal{O}} \rtimes \mathbb{C}^\times}(\operatorname{Gr}_G) &\iso H_*^{\widetilde{G}_{\mathcal{O}} \rtimes \mathbb{C}^\times}(\mathcal{T}), & K^{\widetilde{G}_{\mathcal{O}} \rtimes \mathbb{C}^\times}(\operatorname{Gr}_G) &\iso K^{\widetilde{G}_{\mathcal{O}} \rtimes \mathbb{C}^\times}(\mathcal{T}).
%\end{align*}

%Recall that the following result was proved in \cite[Lemma 5.11]{BFN18}, the proof in K-theoretic situation is the same. 

%\begin{lem}
%Maps
%\begin{align*}
%i_*(\pi^*)^{-1}\colon H^{\widetilde{G}_{\mathcal{O}} \rtimes \mathbb{C}^\times}_*(\mathcal{R}) &\hookrightarrow H^{\widetilde{G}_{\mathcal{O}} \rtimes \mathbb{C}^\times}_*(\operatorname{Gr}_G), & K^{\widetilde{G}_{\mathcal{O}} \rtimes \mathbb{C}^\times}(\mathcal{R}) &\hookrightarrow K^{\widetilde{G}_{\mathcal{O}} \rtimes \mathbb{C}^\times}(\operatorname{Gr}_G)
%\end{align*}
%are embeddings of algebras.
%\end{lem}

We are ready to finish the proof of Theorem \ref{thm_iso_after_completions}.

\begin{proof}[Proof of Theorem \ref{thm_iso_after_completions}]
Recall that $\wh \cA,\,\wh{\cA^\times}$ are defined as inductive limits of completions of finite $H_{\wti G \times \torus}(\pt)$ and $K^{\wti G \times \torus}(\pt)$-modules respectively (see \eqref{eq: colimit of completions of coulombs}). In the category of finite modules over a Noetherian ring, completion is an {{exact functor}}, so it remains to check that the following diagram is commutative:
% https://q.uiver.app/#q=WzAsNCxbMCwwLCJcXHdpZGVoYXR7XFxtYXRoY2Fse0F9XlxcdGltZXN9Il0sWzEsMCwiS157XFx3aWRldGlsZGV7R31fe1xcbWF0aGNhbHtPfX0gXFxydGltZXMgXFxtYXRoYmJ7Q31eXFx0aW1lc30oXFxvcGVyYXRvcm5hbWV7R3J9X0cpXntcXHdlZGdlIDF9Il0sWzAsMSwiXFx3aWRlaGF0e1xcbWF0aGNhbHtBfX0iXSxbMSwxLCJIXntcXHdpZGV0aWxkZXtHfV97XFxtYXRoY2Fse099fSBcXHJ0aW1lcyBcXG1hdGhiYntDfV5cXHRpbWVzfV8qKFxcb3BlcmF0b3JuYW1le0dyfV9HKV57XFx3ZWRnZSAwfSJdLFswLDIsIlxcVXBzaWxvbiJdLFswLDEsImlfKihcXHBpXiopXnstMX0iLDAseyJzdHlsZSI6eyJ0YWlsIjp7Im5hbWUiOiJob29rIiwic2lkZSI6InRvcCJ9fX1dLFsxLDMsIlxcdGF1XntcXHdpZGV0aWxkZXtHfV97XFxtYXRoY2Fse099fSBcXHJ0aW1lcyBcXG1hdGhiYntDfV5cXHRpbWVzfSJdLFsyLDMsImlfKihcXHBpXiopXnstMX0iLDAseyJzdHlsZSI6eyJ0YWlsIjp7Im5hbWUiOiJob29rIiwic2lkZSI6InRvcCJ9fX1dXQ==
\[\begin{tikzcd}[sep=large]
	{\widehat{\mathcal{A}^\times}} & {K^{\widetilde{G}_{\mathcal{O}} \rtimes \mathbb{C}^\times}(\operatorname{Gr}_G)^{\wedge 1}} \\
	{\widehat{\mathcal{A}}} & {H^{\widetilde{G}_{\mathcal{O}} \rtimes \mathbb{C}^\times}_*(\operatorname{Gr}_G)^{\wedge 0}}
	\arrow["{(\pi^{*})^{-1} i_*}", hook, from=1-1, to=1-2]
	\arrow["\Upsilon", from=1-1, to=2-1]
	\arrow["{\tau^{\widetilde{G}_{\mathcal{O}} \rtimes \mathbb{C}^\times}}", from=1-2, to=2-2]
	\arrow["(\pi^{*})^{-1} i_*", hook, from=2-1, to=2-2]
\end{tikzcd}\]
This follows from the definition of $\Upsilon$ together with \cite[Theorem 3.1 (b), (d)]{eg1}.
%the fact that $\tau^{\widetilde{G}_{\mathcal{O}} \rtimes \mathbb{C}^\times}$ commutes with push forwards under closed embeddings (\cite[Theorem 3.1 (b)]{eg2}).
\end{proof}

\subsection{Explicit description of $\Upsilon$ for quiver gauge theories and for $G=T$} \label{subsec: formulas for upsilon}

\subsubsection{Formulae for $\Upsilon$ on generators} \label{subsubsec: formulas for upsilon on monopoles}
%For simplicity, assume that the exact sequence (\ref{ex_seq_flav}) splits, so $\widetilde{G}=G \times F$. 
Let $\boldsymbol{\lambda}$ be a minuscule coweight for $G$. 
Then there is an isomorphism:
\begin{equation*}
\overline{\operatorname{Gr}^{\boldsymbol{\lambda}}_{G}} = \operatorname{Gr}^{\boldsymbol{\lambda}}_{G} \simeq G/P_{\boldsymbol{\la}},
\end{equation*}
where $P_{\boldsymbol{\lambda}} \subset G$ is the parabolic subgroup generated by $\mathfrak{g}_{\alpha}$ such that $\langle \al,{\boldsymbol{\lambda}}\rangle \leqslant 0$.

It follows that $\cR_{\leqslant \boldsymbol{\lambda}}=\cR_{ \boldsymbol{\lambda}}$ is a vector bundle over $\operatorname{Gr}^{\boldsymbol{\la}}_{G}$ so we obtain the identification: 
\begin{equation}\label{K_th_R_min}
K^{\widetilde{G}_{\cO} \rtimes \bC^\times}(\cR_{\leqslant \lambda}) \simeq K^{\widetilde{G}_{\cO} \rtimes \bC^\times}(\operatorname{Gr}^{\lambda}) \simeq K^{\widetilde{P}_{\boldsymbol{\lambda}} \times \bC^\times}(\{z^{\boldsymbol{\lambda}}\})=\bC[\widetilde{T}]^{W_{\boldsymbol{\lambda}}} \otimes \bC[q^{\pm 1}],
\end{equation}
where $W_{\boldsymbol{\lambda}} \subset W$ is the stabilizer of $\boldsymbol{\lambda}$ in $W$.

For every $p \in \bC[\widetilde{T}]^{W_{\boldsymbol{\la}}} \otimes \bC[q^{\pm 1}]$, we denote by $M^\times_{{\boldsymbol{\lambda}},p} \in \cA^\times$ the corresponding element. It is called {\it dressed minuscule monopole operator} (dressing refers to $p$). We similarly define the dressed minuscule monopole operators $M_{\boldsymbol{\la},p} \in \cA$ (here $p \in \bC[\widetilde{\mathfrak{t}}]^{W_{\boldsymbol{\la}}} \otimes \bC[\hbar]$). 

Let $t_k$, $k=1,\ldots,d=\operatorname{dim}\widetilde{T}$ be some coordinates on $\widetilde{T}$ and $x_k$ be the corresponding coordinates on $\widetilde{\mathfrak{t}}$. For $\chi \in X^*(\widetilde{T})$ we will denote by $\langle \boldsymbol{\lambda},\chi\rangle$ the pairing of $\boldsymbol{\lambda}$ with $\chi|_T$.

\begin{prop}\label{upsilon_on_minuscule_gen}
 We have:
 \begin{equation}\label{image_minuscule}
\Upsilon(M^\times_{\boldsymbol{\la},p(t_1,\ldots,t_d,q)}) = M_{\boldsymbol{\la},a \cdot p(e^{x_1},\ldots,e^{x_d},e^\hbar)},
 \end{equation}
 where $a:=\prod_{\chi \in X^*(\widetilde{T})}\prod_{k=\langle {\boldsymbol{\la}},\chi\rangle,\ldots,-1} \Big(\frac{1-e^{-\chi-k\hbar}}{\chi+k\hbar}\Big)^{\operatorname{dim}{\bf{N}}(\chi)} \cdot \prod_{\langle \alpha,{\boldsymbol{\lambda}} \rangle > 0} (\frac{\alpha}{1-e^{-\alpha}})$.
\end{prop}

\begin{proof}
For minuscule ${\boldsymbol{\la}}$, the isomorphism
\begin{equation*}
\Upsilon^d_{\leqslant \boldsymbol{\lambda}} \colon K^{\widetilde{G}_{\mathcal{O}} \rtimes \mathbb{C}^\times}(\mathcal{R}_{\boldsymbol{\lambda}}^d)^{\wedge 1} \iso H_*^{\widetilde{G}_{\mathcal{O}} \rtimes \mathbb{C}^\times}(\mathcal{R}_{\boldsymbol{\lambda}}^d)^{\wedge 0}
\end{equation*}
can be described explicitly. Recall that we have identifications (see (\ref{K_th_R_min})):
\begin{align*}
K^{\widetilde{G}_{\mathcal{O}} \rtimes \mathbb{C}^\times}(\mathcal{R}_{\la}^d)^{\wedge 1} &\simeq K^{\widetilde{P}_{\boldsymbol{\lambda}} \times \mathbb{C}^\times}(\{z^\la\})^{\wedge 1},& H_*^{\widetilde{G}_{\mathcal{O}} \rtimes \mathbb{C}^\times}(\mathcal{R}_{\la}^d)^{\wedge 0} &\simeq H_*^{\widetilde{P}_\lambda \times \mathbb{C}^\times}(\{z^\la\})^{\wedge 0}.
\end{align*}
Now, using (\ref{tau_vs_ind})  and \cite[Theorem 3.1 (d)]{eg1}  we see that up to the multiplication by a Todd class of a certain explicit vector bundle, morphism $\Upsilon$ is given by:
\begin{equation*}
\operatorname{Td}^{\widetilde{P}_\lambda \times \mathbb{C}^\times}(\mathfrak{g}/\mathfrak{p}_{\boldsymbol{\lambda}}) \cdot \tau^{\widetilde{P}_{\boldsymbol{\lambda}} \times \mathbb{C}^\times}(-)\colon K^{\widetilde{P}_\lambda \times \mathbb{C}^\times}(\{z^{\boldsymbol{\la}}\})^{\wedge 1} \iso  H_*^{\widetilde{P}_{\boldsymbol{\lambda}} \times \mathbb{C}^\times}(\{z^{\boldsymbol{\la}}\})^{\wedge 0},
\end{equation*}
which is the equivariant Chern character (given by $p(t_1,\ldots,t_d,q) \mapsto p(e^{x_1},\ldots,e^{x_d},e^\hbar)$) times $\operatorname{Td}^{\widetilde{P}_\lambda \times \mathbb{C}^\times}(\mathfrak{g}/\mathfrak{p}_{\boldsymbol{\lambda}})=\prod_{\langle \alpha,{\boldsymbol{\lambda}} \rangle > 0} (\frac{\alpha}{1-e^{-\alpha}})$.

So, it remains to compute the Todd class correction involved in the definition of $\Upsilon^d_{\leqslant {\boldsymbol{\la}}}$ (see \cite[Definition 3.1]{eg1} for the explicit formula for the Todd class). 
%This correction is equal to $\operatorname{Td}^{\widetilde{G} \times \bC^\times}(\mathcal{T}^d_{\leqslant {\boldsymbol{\la}}})^{-1}$,
Recall that  $\mathcal{T}^d_{\leqslant {\boldsymbol{\la}}}=\mathcal{T}^d_{{\boldsymbol{\la}}}$ is a vector bundle over $\operatorname{Gr}^{\boldsymbol{\lambda}}_{G}$. 
This bundle is 
$\widetilde{G}_{\cO} \rtimes \bC^\times$-equivariant with fiber over $z^{\boldsymbol{\la}}$ equal to ${\bf{N}}/z^d{\bf{N}}_{\cO}$. It has a vector subbundle $\cR^d_{\boldsymbol{\lambda}}$. Our correction comes from the Todd class of the vector bundles quotient $\mathcal{T}^d_{{\boldsymbol{\lambda}}}/\cR^d_{{\boldsymbol{\lambda}}}$. 
%(see Remark \ref{rem: descr upsilon via gr}). 
It follows from \cite[proof of Lemma 2.2]{BFN18} that its fiber over $z^{\boldsymbol{\la}}$ is equal to $z^{\boldsymbol{\la}}{\bf{N}}_{\cO}/({\bf{N}}_{\cO} \cap z^{\boldsymbol{\la}}{\bf{N}}_{\cO})$ (as a $\widetilde{P}_{\boldsymbol{\la}} \times \bC^\times$-module). So, Chern roots of this bundle are weights of $\widetilde{T} \times \bC^\times$ acting on the quotient $z^{\boldsymbol{\la}}{\bf{N}}_{\cO}/({\bf{N}}_{\cO} \cap z^{\boldsymbol{\la}}{\bf{N}}_{\cO})$.

Fix a weight $\chi \in X^*(\widetilde{T})$. Passing to the $\chi$-weight space of the quotient above, we obtain  
\begin{equation*}
z^{\langle {\boldsymbol{\la}},\chi\rangle} {\bf{N}}_{\cO}(\chi) / ({\bf{N}}_{\cO}(\chi) \cap z^{\langle\boldsymbol{\lambda},\chi\rangle}{\bf{N}}_{\cO}(\chi))
\end{equation*}
(considered as $\bC^\times$-module).
For $\langle {\boldsymbol{\la}},\chi \rangle < 0$ its weights are:
\begin{equation*}
\chi+\langle {\boldsymbol{\la}},\chi \rangle\hbar, \chi+(\langle {\boldsymbol{\la}},\chi \rangle+1)\hbar,\hdots,\chi-\hbar
\end{equation*}
with multiplicity $\operatorname{dim}{\bf{N}}(\chi)$, otherwise the quotient above is zero.
\end{proof}

Assume that $(G,{\bf{N}})$ comes from a quiver theory. We show in Proposition~\ref{monopoles generate K-Coulomb branch} that dressed minuscule monopole operators generate the Coulomb branch algebra (see \cite[Section~3.1]{Wee19} for description of minuscule weights and dressings in this case). Proposition  \ref{upsilon_on_minuscule_gen} thus gives an explicit formula for $\Upsilon$ on the generators.

\begin{rem}
Let us see that the formula \eqref{image_minuscule} is compatible with the Atiyah--Bott localization theorem. Computations for arbitrary ${\bf{N}}$ and ${\bf{N}}=0$ are the same so we assume that ${\bf{N}}=0$.
We can assume that $M^\times_{{\boldsymbol{\lambda},p}}$ is the class of some $\widetilde{G} \times \mathbb{C}^\times$-equivariant vector bundle $\mathcal{E}$ on $\operatorname{Gr}^{\boldsymbol{\lambda}}_{G}=G/P_{\boldsymbol{\lambda}}$. Then: 
\begin{equation*}\label{local_equality}
\Upsilon([\mathcal{E}])=\operatorname{Td}^{\widetilde{G} \times \mathbb{C}^\times}(T\operatorname{Gr}^{\boldsymbol{\lambda}}_G)\operatorname{ch}^{\widetilde{G} \times \mathbb{C}^\times}([\mathcal{E}]) \cdot [\operatorname{Gr}^{\boldsymbol{\lambda}}_G].
\end{equation*}
%Let's check that both sides of $\ref{local_equality}$ correspond to the same thing under localization.
On the other hand, by the localization theorem:
\begin{equation*}
[\mathcal{E}] = \sum_{c \in (\operatorname{Gr}^{\boldsymbol{\lambda}}_{G})^{T}} \frac{[\mathcal{E}|_c]}{\operatorname{eu}_{\widetilde{T} \times \mathbb{C}^\times}(T_c\operatorname{Gr}_G^{\boldsymbol{\lambda}})}.
\end{equation*}
The map $\Upsilon$ then sends it to:
\begin{equation*}
\sum_{c \in (\operatorname{Gr}^{\boldsymbol{\lambda}}_{G})^{T}} \frac{\operatorname{ch}^{\widetilde{T} \times \mathbb{C}^\times}([\mathcal{E}|_c]) \cdot [c]}{\operatorname{ch}^{\widetilde{T} \times \mathbb{C}^\times}( \operatorname{eu}_{\widetilde{T} \times \mathbb{C}^\times}(T_c\operatorname{Gr}_G^{\boldsymbol{\lambda}}))} = \sum_{c \in (\operatorname{Gr}^{\boldsymbol{\lambda}}_{G})^{T}} \frac{\operatorname{Td}^{\widetilde{T} \times \mathbb{C}^\times}(T_c \operatorname{Gr}^{\boldsymbol{\lambda}}_G)}{\operatorname{eu}_{\widetilde{\mathfrak{t}} \oplus \mathbb{C}}(T_c\operatorname{Gr}^{\boldsymbol{\lambda}}_G)}\operatorname{ch}^{\widetilde{T} \times \mathbb{C}^\times}([\mathcal{E}|_c]) \cdot [c], 
\end{equation*}
as desired.
%$\sum_{c \in (\operatorname{Gr}^{\boldsymbol{\lambda}}_{G})^{T}} \frac{\operatorname{ch}^{\widetilde{T} \times \mathbb{C}^\times}([\mathcal{E}|_c])}{\operatorname{eu}_{\widetilde{\mathfrak{t}} \oplus \mathbb{C}}(T_c\operatorname{Gr}^{\boldsymbol{\lambda}}_G)}$
%\begin{equation*}
%M^\times_{{\boldsymbol{\lambda}},...} = \sum_{w} \frac{1}{\operatorname{eu}T(G/P_{\boldsymbol{\lambda}})}
%\end{equation*}
%Now the map $\Upsilon$ send to $\operatorname{Td}(T_{G/P_{\boldsymbol{\lambda}}})$. We see that ... It remains to use that 
\end{rem}

\subsubsection{Abelian example} \label{subsubsec: formulas for upsilon in abelian case}

Assume that $G=T$, let's recall the explicit description of the algebras $\cA^\times$, $\cA$ in this case. The elements $r^{\times}_{\boldsymbol{\la}}$, $r_{\boldsymbol{\la}}$ (${\boldsymbol{\la}} \in X^*(T)$) form a basis of $\cA^\times$, $
\cA$ over $K_{\widetilde{T} \times \bC^\times}(\operatorname{pt})$, $H_{\widetilde{T} \times \bC^\times}(\operatorname{pt})$ respectively. Let us recall the relations. First of all recall that 
\begin{align*}
r_{\boldsymbol{\la}}^\times &= [\cO_{\cR_{\boldsymbol{\la}}}], & r_{\boldsymbol{\la}}=[\cR_{\boldsymbol{\la}}] &= [z^{\boldsymbol{\la}}{\bf{N}}_{\cO} \cap {\bf{N}}_{\cO}].
\end{align*}
We consider $\cR_{\boldsymbol{\lambda}}$ as a subspace of $\mathcal{T}_{\boldsymbol{\lambda}}=z^{\boldsymbol{\lambda}}{\bf{N}}_{\cO}$. Embeddings 
\begin{align*}
\cA^\times &\hookrightarrow K^{\widetilde{T} \times \bC^\times}(\operatorname{Gr}_T),& \cA &\hookrightarrow H^{\widetilde{T} \times \bC^\times}_*(\operatorname{Gr}_T)
\end{align*}
are given by 
\begin{align*}
r_{\boldsymbol{\lambda}}^\times &\mapsto \operatorname{eu}_{\widetilde{T} \times\bC^\times}(z^{\boldsymbol{\lambda}}{\bf{N}}_{\cO}/(z^{\boldsymbol{\la}}{\bf{N}}_{\cO} \cap {\bf{N}}_{\cO})) \cdot  [\cO_{z^{\boldsymbol{\la}}}],& r_{\boldsymbol{\lambda}} &\mapsto \operatorname{eu}_{\widetilde{\mathfrak{t}} \oplus \bC}(z^{\boldsymbol{\lambda}}{\bf{N}}_{\cO}/(z^{\boldsymbol{\la}}{\bf{N}}_{\cO} \cap {\bf{N}}_{\cO})) \cdot [{z^{\boldsymbol{\la}}}],
\end{align*}
where by $\operatorname{eu}_{\widetilde{T} \times \bC^\times}(?)$,  we denote the product of $1-f_{-\chi} \in K_{\widetilde{T} \times \bC^\times}(\on{pt})$, where $\chi$ runs through $\widetilde{T} \times \bC^\times$-weights of $?$ (compare with \cite[Lemma 5.4.9]{CG97}).
By 
$\operatorname{eu}_{\widetilde{\mathfrak{t}} \oplus \bC}(?)$  we denote the product of 
$\widetilde{\mathfrak{t}} \oplus \bC$-weights of $?$ considered as elements of $H^*_{\widetilde{T} \times \bC^\times}(\on{pt})$
(compare with \cite[Proposition 2.6.43]{CG97}).

Set:
\begin{equation*}
E(\boldsymbol{\la}) := \operatorname{eu}_{\widetilde{T} \times \bC^\times}(z^{\boldsymbol{\la}}{\bf{N}}_{\cO}/({\bf{N}}_{\cO} \cap z^{\boldsymbol{\la}}{\bf{N}}_{\cO})) \in K_{\widetilde{T} \times \bC^\times}(\operatorname{pt}), 
\end{equation*}
\begin{equation*}
e(\boldsymbol{\la}) := \operatorname{eu}_{\widetilde{\mathfrak{t}} \oplus \bC}(z^{\boldsymbol{\la}}{\bf{N}}_{\cO}/({\bf{N}}_{\cO} \cap z^{\boldsymbol{\la}}{\bf{N}}_{\cO})) \in H_{\widetilde{T} \times \bC^\times}(\operatorname{pt}).
\end{equation*}
Also, for ${\boldsymbol{\la}} \in X^*(\widetilde{T})$ let's introduce the ``shift'' automorphisms:
\begin{align*}
S_{\boldsymbol{\la}} &\curvearrowright K_{\widetilde{T} \times \bC^\times}(\on{pt}),& s_{\boldsymbol{\la}} &\curvearrowright H_{\widetilde{T} \times \bC^\times}(\on{pt}),
\end{align*}
uniquely determined by:
\begin{align*}
S_{\boldsymbol{\la}}(f_\chi) &= q^{\langle\chi,\boldsymbol{\la}\rangle}f_{\chi},& s_{\boldsymbol{\la}}(\chi) &= \chi+\hbar\langle\chi,\boldsymbol{\la}\rangle.
\end{align*}

It follows from definitions that for $f \in K_{\widetilde{T} \times \bC^\times}(\operatorname{pt})$, $x \in H^*_{\widetilde{T} \times \bC^\times}(\on{pt})$ we have:
\begin{align*}
r_{\boldsymbol{\la}}^\times \star f &= S_{\boldsymbol{\la}}(f)r_{\boldsymbol{\la}}^\times,& r_{\boldsymbol{\la}} * x &= s_{\boldsymbol{\la}}(x)r_{\boldsymbol{\la}}.
\end{align*}
In other words, automorphism $S_{\boldsymbol{\la}}$ is the conjugation by $r_{\boldsymbol{\la}}^\times$ and the automorphism $s_{\boldsymbol{\la}}$ is the conjugation by $r_{\boldsymbol{\la}}$.

We then conclude that: 
\begin{align}\label{product_coulomb_geom}
r_{\boldsymbol{\la}}^\times \star r_{\boldsymbol{\mu}}^\times &= 
\Big(\frac{E(\boldsymbol{\la}) S_{\boldsymbol{\la}}(E(\boldsymbol{\mu}))}{E(\boldsymbol{\lambda}+\boldsymbol{\mu})}\Big) \cdot r_{\boldsymbol{\la}+\boldsymbol{\mu}}^\times,& r_{\boldsymbol{\la}} * r_{\boldsymbol{\mu}} &= \Big(\frac{e(\boldsymbol{\la})s_{\boldsymbol{\la}}(e(\boldsymbol{\mu}))}{e(\boldsymbol{\la}+\boldsymbol{\mu})}\Big) \cdot r_{\boldsymbol{\la}+\boldsymbol{\mu}}.
\end{align}
%where  by $q^{\langle E(\boldsymbol{\mu}),\boldsymbol{\la}\rangle}$ we mean $E(\boldsymbol{\mu})$ in which every $f_\chi$  is replaced by $q^{\langle \chi,\boldsymbol{\la}\rangle}$ and similarly for $\langle e(\boldsymbol{\mu}),\boldsymbol{\la}\rangle$ (we replace every $\chi$ in $e(\boldsymbol{\mu})$ by $\langle \chi,\boldsymbol{\la}\rangle\hbar$). 

\begin{rem}
Explicitly, the relations for $\cA$ are given by (see \cite[Section 4(iii)]{BFN18}): 
\begin{equation*}
r_{\boldsymbol{\la}} * r_{\boldsymbol{\mu}} = \prod_{\langle \chi,\boldsymbol{\la}\rangle > 0 > \langle \chi,\boldsymbol{\mu}\rangle} \prod_{j=1}^{d(\langle \chi,{\boldsymbol{\la}}\rangle, \langle \chi,{\boldsymbol{\mu}}\rangle)} (\chi+(\langle \chi,{\boldsymbol{\la}}\rangle -j)\hbar) \prod_{\langle \chi,\boldsymbol{\la}\rangle < 0 < \langle \chi,\boldsymbol{\mu}\rangle}\prod_{j=0}^{d(\langle \chi,{\boldsymbol{\la}}\rangle, \langle \chi,{\boldsymbol{\mu}}\rangle)-1}(\chi+(\langle\chi,\boldsymbol{\la}\rangle+j)\hbar)r_{\boldsymbol{\la}+\boldsymbol{\mu}},
\end{equation*}
\begin{equation*}
r_{\boldsymbol{\la}} \chi = (\chi+\langle\chi,\boldsymbol{\la}\rangle\hbar)r_{\boldsymbol{\la}}=s_{\boldsymbol{\la}}(\chi)r_{\boldsymbol{\la}}.
\end{equation*}
Here the first and third products range over weights $\chi$ of ${\bf{N}}$, with multiplicity. These are weights for the action of $\widetilde{T}$. Also, $d\colon \bZ \times \bZ \rightarrow \bZ_{\geqslant 0}$ is the function defined by:
\begin{equation*}
d(a,b) = \begin{cases} 
 0, & \text{if}~a,b~\text{have the same sign;} \\
  \operatorname{min}(|a|,|b|), & \text{otherwise}.
\end{cases}
\end{equation*}

The relations for $\cA^\times$ are given by:
\begin{align*}
r_{\boldsymbol{\la}}^\times \star r_{\boldsymbol{\mu}}^\times = \\
=\prod_{\langle \chi,\boldsymbol{\la}\rangle > 0 > \langle \chi,\boldsymbol{\mu}\rangle} \prod_{j=1}^{d(\langle \chi,{\boldsymbol{\la}}\rangle, \langle \chi,{\boldsymbol{\mu}}\rangle)} (1-f_{-\chi} & \cdot q^{(\langle -\chi,{\boldsymbol{\la}}\rangle + j)}) \prod_{\langle \chi,\boldsymbol{\la}\rangle < 0 < \langle \chi,\boldsymbol{\mu}\rangle}\prod_{j=0}^{d(\langle \chi,{\boldsymbol{\la}}\rangle, \langle \chi,{\boldsymbol{\mu}}\rangle)-1}(1-f_{-\chi} \cdot q^{(\langle-\chi,\boldsymbol{\la}\rangle-j)})r_{\boldsymbol{\la}+\boldsymbol{\mu}}^\times, \\
r_{\boldsymbol{\la}}^\times \star f_\chi = f_\chi & \cdot q^{\langle\chi,\boldsymbol{\la}\rangle}r_{\boldsymbol{\la}}^\times=S_{\boldsymbol{\la}}(f_\chi)r_{\boldsymbol{\la}}^\times.
\end{align*}
\end{rem}

Let us now set:
\begin{equation*}
\operatorname{Td}({\boldsymbol{\la}})=\operatorname{Td}^{\widetilde{T} \times \bC^\times}(z^{\boldsymbol{\la}}{\bf{N}}_{\cO}/({\bf{N}}_{\cO} \cap z^{\boldsymbol{\la}}{\bf{N}}_{\cO})).
\end{equation*}

\begin{rem}
Explicitly, we have:
\begin{equation*}
\operatorname{Td}({\boldsymbol{\la}})^{-1} = \prod_{\langle \chi,{\boldsymbol{\la}}\rangle < 0} \prod_{j=1}^{-\langle\chi,\boldsymbol{\la}\rangle-1} \frac{\chi+(\langle \chi,\boldsymbol{\la}+j)\hbar}{1-e^{-\chi-(\langle \chi,\boldsymbol{\la}+j)\hbar}},~\operatorname{Td}({\boldsymbol{\la}})^{-1} = \prod_{\langle \chi,{\boldsymbol{\la}}\rangle < 0} \prod_{j=1}^{-\langle\chi,\boldsymbol{\la}\rangle-1} \frac{1-e^{-\chi-(\langle \chi,\boldsymbol{\la}+j)\hbar}}{\chi+(\langle \chi,\boldsymbol{\la}+j)\hbar}.
\end{equation*}
\end{rem}

It follows from definitions that the map $\Upsilon$ is given by 
\begin{equation*}
\Upsilon(r_{\boldsymbol{\la}}^\times) = \operatorname{Td}(\boldsymbol{\la})^{-1} \cdot r_{\boldsymbol{\la}}.
\end{equation*}

Let's check ``by hands'' that $\Upsilon$ is indeed a homomorphism of algebras.

By the very definition we have:
\begin{equation}\label{main_prop_Td}
\operatorname{ch}^{\widetilde{T} \times \bC^\times}(E(\boldsymbol{\la})) = \operatorname{Td}(\boldsymbol{\la})^{-1} \cdot e(\boldsymbol{\la}).
\end{equation}
We also have
\begin{equation}\label{comp_S_s}
\on{ch}^{\widetilde{T} \times \bC^\times}(S_{\boldsymbol{\la}}(f))=s_{\boldsymbol{\la}}(\on{ch}^{\widetilde{T} \times \bC^\times}(f)),~f \in K_{\widetilde{T} \times \bC^\times}(\operatorname{pt}).
\end{equation}
To prove (\ref{comp_S_s}), it's enough to assume that $f=f_\chi$ for some $\chi \in \widetilde{T}$ and then:
\begin{multline*}
\on{ch}^{\widetilde{T} \times \bC^\times}(S_{\boldsymbol{\la}}(f_\chi))=\on{ch}^{\widetilde{T} \times \bC^\times}(q^{\langle \chi,\boldsymbol{\la}\rangle}f_\chi)=\on{ch}^{\widetilde{T} \times \bC^\times}(q^{\langle \chi,\boldsymbol{\la}\rangle})\on{ch}^{\widetilde{T} \times \bC^\times}(f_\chi)=
\\
=e^{\langle \chi,\boldsymbol{\la}\rangle\hbar}e^{\chi}=e^{\chi+\langle \chi,\boldsymbol{\la}\rangle\hbar}=s_{\boldsymbol{\la}}(\operatorname{ch}^{\widetilde{T} \times \bC^\times}(\chi)).
\end{multline*}

\begin{rem}
Note that the equation (\ref{main_prop_Td}) determines $\operatorname{Td}({\boldsymbol{\la}})$ uniquely. 
%In general,  $\operatorname{Td}(\mathcal{V})$ is uniquely determined by the following property. 
For any $\widetilde{T}$-equivariant line      bundle $\mathcal{L}$ one has
\begin{equation*}
1-\operatorname{ch}^{\widetilde{T}}(\mathcal{L}^\vee) = \operatorname{Td}(\mathcal{L})^{-1}   \cdot c_1^{\widetilde{T}}(\mathcal{L}),
\end{equation*}
so $\operatorname{Td}(-)$ ``measures'' the difference between  $\operatorname{ch}^{\widetilde{T}}(-)$ and $c_1^{\widetilde{T}}(-)$.  
\end{rem}

Using (\ref{product_coulomb_geom}), (\ref{main_prop_Td}), and (\ref{comp_S_s}) we see that:
\begin{multline*}
\Upsilon(r_{\boldsymbol{\la}}^\times \star r_{\boldsymbol{\mu}}^{\times})=\Upsilon\Big(\frac{E(\boldsymbol{\la})S_{\boldsymbol{\la}}(E(\boldsymbol{\mu}))}{E(\boldsymbol{\la}+\boldsymbol{\mu})}\cdot r^{\times}_{\boldsymbol{\la}+\boldsymbol{\mu}}\Big)=\\
=\frac{\operatorname{ch}^{\widetilde{T} \times \bC^\times}(E(\boldsymbol{\la})) \operatorname{ch}^{\widetilde{T} \times \bC^\times}(S_{\boldsymbol{\la}}(E(\boldsymbol{\mu})))}{\operatorname{ch}^{\widetilde{T} \times \bC^\times}(E(\boldsymbol{\la}+\boldsymbol{\mu}))}\on{Td}(\boldsymbol{\la}+\boldsymbol{\mu})^{-1} \cdot r_{\boldsymbol{\la}+\boldsymbol{\mu}}= \\
= \frac{\operatorname{ch}^{\widetilde{T} \times \bC^\times}(E(\boldsymbol{\la})) \operatorname{ch}^{\widetilde{T} \times \bC^\times}(S_{\boldsymbol{\la}}(E(\boldsymbol{\mu})))}{e(\boldsymbol{\la}+\boldsymbol{\mu})}\cdot r_{\boldsymbol{\la}+\boldsymbol{\mu}},
\end{multline*}
\begin{multline*}
\Upsilon(r_{\boldsymbol{\la}}^\times) * \Upsilon(r_{\boldsymbol{\mu}}^\times)=(\on{Td}(\boldsymbol{\la})^{-1} \cdot r_{\boldsymbol{\la}}) * (\on{Td}(\boldsymbol{\mu})^{-1} \cdot r_{\boldsymbol{\mu}}) = \frac{\on{Td}(\boldsymbol{\la})^{-1}e(\boldsymbol{\la}) \cdot s_{\boldsymbol{\la}}( \on{Td}(\boldsymbol{\mu})^{-1}e(\boldsymbol{\mu}))}{e(\boldsymbol{\la}+\boldsymbol{\mu})} \cdot r_{\boldsymbol{\la}+\boldsymbol{\mu}}= \\
=\frac{\operatorname{ch}^{\widetilde{T} \times \bC^\times}(E(\boldsymbol{\la})) s_{\boldsymbol{\la}}(\operatorname{ch}^{\widetilde{T} \times \bC^\times}(E(\boldsymbol{\mu})))}{e(\boldsymbol{\la}+\boldsymbol{\mu})} \cdot r_{\boldsymbol{\la}+\boldsymbol{\mu}}=\frac{\operatorname{ch}^{\widetilde{T} \times \bC^\times}(E(\boldsymbol{\la})) \operatorname{ch}^{\widetilde{T} \times \bC^\times}(S_{\boldsymbol{\la}}(E(\boldsymbol{\mu})))}{e(\boldsymbol{\la}+\boldsymbol{\mu})}\cdot r_{\boldsymbol{\la}+\boldsymbol{\mu}}.\end{multline*}
We conclude that $\Upsilon(r_{\boldsymbol{\la}}^\times \star r_{\boldsymbol{\mu}}^{\times})$ is indeed equal to $\Upsilon(r_{\boldsymbol{\la}}^\times) * \Upsilon(r_{\boldsymbol{\mu}}^\times)$.

\subsubsection{Case of ADE quivers} \label{subsubseq: ade quivers and completions}
Recall that homological Coulomb branches for finite type ADE quivers are isomorphic to truncated shifted Yangians \cite[Appendix~B]{bfn_slices}, while K-theoretic ones are isomorphic to truncated shifted quantum affine groups \cite{FT19a, FT19b}.

Note that for the non-shifted case, Gautam and Toledano Laredo constructed an isomorphism between completions of Yangian and the quantum loop group \cite{GTL13}. 
It would be very interesting to investigate the relationship between our construction and theirs.
We expect that the isomorphism of \cite{GTL13} restricts to truncations and should be closely related to ours.
%it restricts to truncations, and coincides with the isomorphism of Theorem~\ref{thm_iso_after_completions}. 
%Our isomorphism is completely explicit, since Proposition \ref{upsilon_on_minuscule_gen} gives formulae on generators. %, and it would be interesting to verify if it coincides with ones in \cite{GTL13}. we do not discuss these questions in the present article.
%Note also that \cite[Section~5]{GTL13} verifies the uniqueness of an isomorphism (up to certain automorphisms).

{An analogous statement for the shifted case has not appeared in the literature to the best of our knowledge. We expect that the results of this section could help study this question.}

It would be also very interesting to investigate shifted analogs of  \cite{GTL17}.

Note that the comparison of categories $\cO$ for homological and K-theoretic Coulomb branches could be deduced without usage of completions, combining results of \cite{Web24} and \cite{VV25} (see also \cite[Appendix~B]{NW23}).

%, possibly with usage of Theorem~\ref{thm_iso_after_completions}.
%(note that existence of a bijection between simple modules over homological and K-theoretic Coulomb branches follows from \cite[Appendix~B]{NW23} without usage of Theorem~\ref{thm_iso_after_completions}).

Finally, let us note that it should be straightforward to extend Theorem \ref{thm_iso_after_completions} to Coulomb branches with symmetrizers \cite{NW23}. This would help to investigate analogous connection between Yangians and quantum loop groups for non-simply laced types.

%\subsection{Further questions}
%We believe that results of this section raise some further research directions. We quickly list some of them below (see also discussion in \ref{subsubseq: ade quivers and completions}).

%First, a lot of interesting algebras appear as (quantum) parabolic Coulomb branches . We expect that Theorem~\ref{thm_iso_after_completions} should hold for them without changes, but it has not been carefully checked.

\section{K-theoretic Hikita conjecture} \label{sec: K-hikita}

\subsection{Statement of the conjecture} \label{subsection: K-hikita}

We now suggest the multiplicative (K-theoretic, trigonometric) variant of the Hikita conjecture. Note that similar ideas to modify the Hikita conjecture in this way appeared in \cite{Zho23, LZ22}.
Roughly speaking, in Conjecture \eqref{hikita conjecture}, in the LHS one needs to replace the equivariant cohomology by the equivariant K-theory, and in the RHS, one needs to replace the Coulomb branch by the K-theoretic Coulomb branch.

We return to notations of Section~\ref{section: homo hikita} and work with $(G_\bv, {\bf{N}})$ of quiver type.
The K-theoretic Coulomb branch is defined as $\cM^\times(G_\bv, {\bf{N}}) = \Spec K^{G_\bv} (\cR_{G_\bv, {\bf{N}}})$. From physics perspective, it stands for the Coulomb branch of $4d$ $\cN = 2$ supersymmetric gauge theory (as opposed to homological variant, which stands for $3d$ $\cN = 4$ theory). It admits a Poisson deformation over a flavor torus $F$, defined as $\cM^\times(G_\bv, {\bf{N}})_{F} = \Spec K^{G_\bv \times F} (\cR_{G_\bv, {\bf{N}}})$.
It also admits the quantization $\cA^\times(G_\bv, {\bf{N}})_{F} = K^{G_\bv \times F \times \torus_\hbar} (\cR_{G_\bv, {\bf{N}}})$. Same as in the homological case, the torus $H \simeq (\torus)^{\pi_0(\cR_{G_\bv, {\bf{N}}})} \simeq (\torus)^{|Q_0|}$ acts on $\cA^\times(G_\bv, {\bf{N}})_{F}$. We keep having the chosen $H$-character~$\nu$, used to construct the resolved quiver variety $\wti \fM_Q$.

When no confusion arise, we denote $\cM^\times(G_\bv, {\bf{N}})$ by $\cM^\times_Q$ and similarly to other related varieties and algebras.

\begin{conj}[K-theoretic Hikita conjecture] \label{k-hikita conjecture}
There is an isomorphism of algebras over $\bC[(T_{\bf{v}} / S_\bv) \times F] \T \bC[q^{\pm 1}]$
\begin{equation} \label{quantized K-theoretic hikita conjecture}
K^{F \times \torus_\hbar} (\wti \fM_Q) \simeq B^\nu(\cA_{Q, F}^\times).
\end{equation} 
In particular, specializing at $q = 1$, there is an isomorphism of algebras over $\bC[(T_\bv / S_\bv) \times F]$
\begin{equation} \label{deformed K-theoretic hikita conjecture}
K^{F} (\wti \fM_Q) \simeq \bC[(\cM^\times_{Q, F})^{\nu}].
\end{equation}
Further specializing at $1 \in F$, there is an isomorphism of algebras over $\bC[T_\bv / S_\bv]$
\begin{equation} \label{nonquantized nondeformed K-theoretic hikita}
K(\wti \fM_Q) \simeq \bC[(\cM^\times_{Q})^{\nu}].
\end{equation}
\end{conj}

The variable $q$ in \eqref{quantized K-theoretic hikita conjecture} should be thought as the coordinate on $\torus_\hbar$.% = \Spec \bC[q^{\pm 1}]$.

%Note that in Conjecture \ref{hikita conjecture} one expects an isomorphism of graded algebras. In Conjecture~\ref{k-hikita conjecture}, we similarly expect the isomorphism of filtered algebras. {}

Similarly to the homological case, we refer to \eqref{quantized K-theoretic hikita conjecture} as to {\it quantized K-theoretic Hikita conjecture}, to \eqref{deformed K-theoretic hikita conjecture} as to {\it equivariant K-theoretic Hikita conjecture}, and to \eqref{nonquantized nondeformed K-theoretic hikita} as to {\it K-theoretic Hikita conjecture}.

The $K_{G_\bv \times F \times \torus_\hbar} (\pt) = \bC[(T_\bv / S_\bv) \times F] \T \bC[q^{\pm 1}]$-action, mentioned in Conjecture \ref{k-hikita conjecture}, appears on both sides similarly to the homological case, discussed in Section \ref{subsection: homological Hikita} in detail.

In the present paper, we deal with the equivariant (non-quantized) version of the conjecture, that is \eqref{deformed K-theoretic hikita conjecture}. We hope to return to the quantized case one day.

\subsection{Formal completions of K-theoretic Hikita conjecture} \label{subsec: completions of K-hikita}

%We now restrict our attention to the non-quantum case of K-theoretic Hikita conjecture \eqref{deformed K-theoretic hikita conjecture}.
%Both sides of \eqref{deformed K-theoretic hikita conjecture} are modules over $K_{G_\bv \times F}(\pt) = \bC[(T_\bv / S_\bv) \times F]$, similarly to the homological case, explained in Section \ref{subsection: homological Hikita} in detail. 
In this section, we investigate what are the formal completions of both sides of \eqref{deformed K-theoretic hikita conjecture} over a maximal ideal of $K_{G_\bv \times F}(\pt)$.

Recall that for a maximal ideal $\fm$ of an algebra $A$ and an $A$-module $M$, we denote by $M^{\wedge \fm}$ the completion of $M$ at $\fm$. By $M_{\fm}$ we denote the localization of $M$ at $\fm$.

We begin with the quiver variety side.
First we relate the completion of K-theory of a quiver variety to the completion of cohomology (an analogous statement for the Coulomb side is the main result of Section \ref{sec: Riemann-Roch}):

\begin{lem} \label{chern induces isomorphism in completion for quiver varieties}
The Chern character induces an isomorphism of algebras
\begin{equation*}
K^{F}(\wti \fM(G_\bv, {\bf{N}}))^{\wedge (1, 1)} \simeq H_{F}(\wti \fM(G_\bv, {\bf{N}}))^{\wedge (0, 0)}.
\end{equation*}
The completions are at $(1, 1) \in (T_\bv / S_\bv) \times F$ on the LHS and at $(0, 0) \in (\ft_\bv / S_\bv) \times \ff$ on the RHS.
\end{lem}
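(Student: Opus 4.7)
The plan is to reduce via a Nakayama-type argument to the classical Chern character isomorphism for varieties with an algebraic cell decomposition.

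Since $\wti\fM_Q$ is smooth and carries the contracting $\torus_\hbar$-action commuting with $F$, a generic one-parameter subgroup of $\torus_\hbar \times F$ gives $\wti\fM_Q$ an algebraic Białynicki--Birula decomposition into affine cells. From this we obtain three facts: $K^F(\wti\fM_Q)$ is free over $K_F(\pt) = \bC[F]$ of finite rank $N$; $H_F(\wti\fM_Q)$ is free over $H_F(\pt) = \bC[\ff]$ of the same rank $N = \dim_\bC H^*(\wti\fM_Q)$ (compare \cite[Theorem~7.3.5]{Nak01a}); and the non-equivariant Chern character $\operatorname{ch}\colon K(\wti\fM_Q) \to H^*(\wti\fM_Q)$ is an algebra isomorphism, since the class of each cell closure on the left maps to the fundamental class of its closure on the right modulo lower-dimensional strata.

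The equivariant Chern character defines a $K_{G_\bv \times F}(\pt)$-semilinear algebra homomorphism $\operatorname{ch}^F\colon K^F(\wti\fM_Q) \to H_F(\wti\fM_Q)^{\wedge 0}$ along the isomorphism of complete local rings
\[
\bC[(T_\bv/S_\bv) \times F]^{\wedge (1,1)} \xrightarrow{\sim} \bC[(\ft_\bv/S_\bv) \times \ff]^{\wedge (0,0)},\qquad \chi \mapsto e^{d\chi}.
\]
Since $K(\wti\fM_Q)$ is a finite-dimensional $\bC$-algebra with nilpotent augmentation ideal (standard for varieties with cell decomposition), the image of $\fm_1 \subset \bC[T_\bv/S_\bv]$ in the fiber $K(\wti\fM_Q) = K^F(\wti\fM_Q) / \fm_1^F K^F(\wti\fM_Q)$ is nilpotent. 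A Cayley--Hamilton argument applied to the induced action on the rank-$N$ free module $K^F(\wti\fM_Q) \otimes_{\bC[F]} \bC[F]^{\wedge 1}$ then shows that the $\fm_{(1,1)}$-adic and $\fm_1^F$-adic topologies on $K^F(\wti\fM_Q)$ coincide, yielding
\[
K^F(\wti\fM_Q)^{\wedge(1,1)} = K^F(\wti\fM_Q) \otimes_{\bC[F]} \bC[F]^{\wedge 1},
\]
and analogously $H_F(\wti\fM_Q)^{\wedge(0,0)} = H_F(\wti\fM_Q) \otimes_{\bC[\ff]} \bC[\ff]^{\wedge 0}$.

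Both sides of the desired isomorphism are thereby identified as finite free modules of rank $N$ over the common complete local ring $\bC[F]^{\wedge 1} \simeq \bC[\ff]^{\wedge 0}$, related by $\operatorname{ch}^F$; reducing modulo the maximal ideal gives back the non-equivariant Chern character, which is an isomorphism. By Nakayama's lemma, $\operatorname{ch}^F$ induces the claimed algebra isomorphism on completions. The main technical step will be the compatibility of completions in the second paragraph: upgrading the nilpotence of the augmentation ideal of $K(\wti\fM_Q)$ to the identification of the $\fm_{(1,1)}$-adic and $\fm_1^F$-adic topologies on the full equivariant K-theory. Everything else is a direct application of the equivariant Chern character formalism together with Nakayama's lemma.
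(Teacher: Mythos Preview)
Your argument is correct but takes a different route from the paper. The paper's proof is a two-line application of the Edidin--Graham equivariant Riemann--Roch map $\tau^{F}$ developed in Section~\ref{sec: Riemann-Roch}: by \cite{eg1} the map $\tau^{F}\colon K^{F}(\wti\fM_Q)^{\wedge(1,1)} \iso H^{F}_*(\wti\fM_Q)^{\wedge(0,0)}$ is always an isomorphism, and since $\wti\fM_Q$ is smooth, $\tau^{F}$ and $\operatorname{ch}^{F}$ differ only by cap product with the (invertible) equivariant Todd class of the tangent bundle; hence $\operatorname{ch}^{F}$ is also an isomorphism. This works for any smooth $G$-variety and uses no special features of quiver varieties.

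Your approach instead leverages the freeness of $K^{F}(\wti\fM_Q)$ and $H_F(\wti\fM_Q)$ over the respective base rings together with a Nakayama/Cayley--Hamilton comparison of the $\fm_{(1,1)}$- and $\fm_1^{F}$-adic topologies, reducing to the non-equivariant Chern character. This is more elementary in that it avoids the Edidin--Graham machinery, but it is longer and relies on input specific to quiver varieties. One caution: your justification via a Białynicki--Birula decomposition into \emph{affine cells} for a generic one-parameter subgroup of $\torus_\hbar \times F$ is not guaranteed when $F$ is too small to isolate the fixed points (e.g.\ $F$ trivial, $\wti\fM_Q = T^*\bP^1$). The conclusions you actually use---freeness of equivariant $K$-theory and cohomology over the point, equal ranks, and the non-equivariant Chern isomorphism---are nonetheless true and should be cited directly from \cite{Nak01a} rather than derived from an affine paving.
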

\begin{proof}
It follows from \cite{eg1} that there exists an isomorphism 
\begin{equation*}
\tau^{F}\colon K^{F}(\wti \fM(G_\bv, {\bf{N}}))^{\wedge (1, 1)} \iso H^{F}_*(\wti \fM(G_\bv, {\bf{N}}))^{\wedge (0, 0)}.
\end{equation*}
Note now that $\wti \fM(G_\bv, {\bf{N}})$ is smooth, so $\tau^{F}$ differs from $\operatorname{ch}^{F}$ by the multiplication by some invertible class (here we also use the identifications $H^{F}_*(\wti \fM(G_\bv, {\bf{N}}))^{\wedge (0, 0)} \simeq H_{F}^{*}(\wti \fM(G_\bv, {\bf{N}}))^{\wedge (0, 0)}$, $K^F(\wti \fM(G_\bv, {\bf{N}}))^{\wedge (1,1)}=K_F(\wti \fM(G_\bv, {\bf{N}}))^{\wedge (1,1)}$). The fact that $\tau^{F}$ is an isomorphism then implies that $\operatorname{ch}^{F}$ is an isomorphism.  
\end{proof}

\begin{prop} \label{completion of K-theory of quiver variety at any point}
For any $(t_\bv, f) \in (T_\bv / S_\bv) \times F$, there is an isomorphism of algebras
\begin{equation*}
K^{F} (\wti \fM(G_\bv, {\bf{N}}))^{\wedge (t_\bv, f)} \simeq H_{F} (\wti \fM({Z_{G_\bv}(t_\bv), {\bf{N}}^{(t_\bv, f)}}))^{\wedge (0, 0)}.
\end{equation*}
%$K^{F} (\fM(G_\bv, {\bf{N}}))$ over the point $(t_\bv, f) \in T_\bv / S_\bv \times F$ is isomorphic to the completion at $(0, 0)$ of $H^{F} (\fM({Z_{G_\bv}(t_\bv), {\bf{N}}^{(t_\bv, f)}}))$.
%\text{Do we need to assume that $Z_{G_\bv}(t_\bv)$ is connected, or that $G_\bv$ is a product of $GL$'s?}
\end{prop}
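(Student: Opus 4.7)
The plan is to mimic the proof of Proposition \ref{localization of quiver side of homological hikita} in K-theory, and then invoke Lemma \ref{chern induces isomorphism in completion for quiver varieties} to pass from K-theory to cohomology at the end.

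First, I would rewrite the left-hand side using that $\wti \fM(G_\bv, {\bf{N}})$ is the geometric quotient of the stable locus $\mu_{G_\bv, {\bf{N}}}^{-1}(0)^{s}$ by the free $G_\bv$-action, so $K^F(\wti \fM(G_\bv, {\bf{N}})) \simeq K^{F \times G_\bv}(\mu_{G_\bv, {\bf{N}}}^{-1}(0)^{s})$. This is a module over $K_{G_\bv \times F}(\mathrm{pt}) = \bC[(T_\bv / S_\bv) \times F]$, and I want to complete at the maximal ideal of $(t_\bv, f)$. By the concentration/localization theorem for equivariant K-theory (in the form of Edidin--Graham \cite{eg2}, or Thomason), completion at the semisimple point $(t_\bv, f)$ produces
\begin{equation*}
K^{F \times G_\bv}(\mu_{G_\bv, {\bf{N}}}^{-1}(0)^{s})^{\wedge (t_\bv, f)} \simeq K^{F \times Z_{G_\bv}(t_\bv)}\big((\mu_{G_\bv, {\bf{N}}}^{-1}(0)^{s})^{(t_\bv, f)}\big)^{\wedge (1, 1)}.
\end{equation*}

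Next, the identification of the $(t_\bv, f)$-fixed locus with the stable locus of the smaller theory is the same argument as in the proof of Proposition \ref{localization of quiver side of homological hikita}: the moment map commutes with restriction to $(t_\bv, f)$-invariants, and the combinatorial stability criterion of \cite[Definition~2.7]{Nak01b} yields \eqref{isomorphism of stable loci}, namely
\begin{equation*}
(\mu_{G_\bv, {\bf{N}}}^{-1}(0)^s)^{(t_\bv, f)} = \mu_{Z_{G_\bv}(t_\bv),\, {\bf{N}}^{(t_\bv, f)}}^{-1}(0)^s.
\end{equation*}
Combining with the previous display and again using that the $Z_{G_\bv}(t_\bv)$-action on the stable locus is free, we obtain
\begin{equation*}
K^{F}(\wti \fM(G_\bv, {\bf{N}}))^{\wedge (t_\bv, f)} \simeq K^{F}\big(\wti \fM(Z_{G_\bv}(t_\bv), {\bf{N}}^{(t_\bv, f)})\big)^{\wedge (1, 1)}.
\end{equation*}

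Finally, I apply Lemma \ref{chern induces isomorphism in completion for quiver varieties} to the quiver theory $(Z_{G_\bv}(t_\bv), {\bf{N}}^{(t_\bv, f)})$ (which is again a quiver theory by Proposition \ref{prop: fixed points fo quiver for arbitrary F}). The equivariant Chern character gives the required algebra isomorphism with $H_F(\wti \fM(Z_{G_\bv}(t_\bv), {\bf{N}}^{(t_\bv, f)}))^{\wedge (0, 0)}$, completing the chain.

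The main subtlety will be the first step, the K-theoretic localization: the space $\mu_{G_\bv, {\bf{N}}}^{-1}(0)^{s}$ is non-compact, so one cannot directly invoke Atiyah--Segal. However, because the $G_\bv$-action on the stable locus is free, the descent to the smooth quiver variety $\wti \fM(G_\bv, {\bf{N}})$ is well-behaved, and moreover $\wti \fM(G_\bv, {\bf{N}})$ admits an $F \times \torus_\hbar$-equivariant proper map to an affine variety with finitely many $F$-fixed components. In this setting, the concentration theorem \cite[Theorem~3.3]{eg2} (combined with the freeness of equivariant K-theory over $K_F(\mathrm{pt})$, analogous to \cite[Theorem~7.3.5]{Nak01a}) delivers the completed K-theoretic localization that we need. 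Once this is in hand, the rest of the argument is a direct translation of the cohomological case.
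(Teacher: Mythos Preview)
Your proposal is correct and follows essentially the same approach as the paper's proof: rewrite $K^{F}(\wti\fM)$ as $K^{G_\bv \times F}$ of the stable locus, apply the K-theoretic localization theorem to pass to the $(t_\bv,f)$-fixed locus with equivariance by $Z_{G_\bv}(t_\bv)\times F$ completed at $(1,1)$, invoke the computation \eqref{isomorphism of stable loci} from the proof of Proposition~\ref{localization of quiver side of homological hikita}, and finish with Lemma~\ref{chern induces isomorphism in completion for quiver varieties}. Your extra paragraph justifying the localization step is more explicit than the paper, which simply cites ``the localization theorem'' without further comment.
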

\begin{proof}
\begin{multline*} %\label{localization theorem for quiver variety}
K^{F} (\wti \fM(G_\bv, {\bf{N}}))^{\wedge (t_\bv, f)} = K^{G_\bv \times F} (\mu_{G_\bv, {\bf{N}}}^{-1}(0)^{s})^{\wedge (t_\bv, f)} = 
K^{Z_{G_\bv}(t_\bv) \times F} ((\mu_{G_\bv, {\bf{N}}}^{-1}(0)^{s})^{(t_\bv, f)})^{\wedge (1,1)} \simeq \\ 
K^{F} (\wti \fM({Z_{G_\bv}(t_\bv), {\bf{N}}^{(t_\bv, f)}}))^{\wedge (1, 1)} \simeq
H_{F} (\wti \fM({Z_{G_\bv}(t_\bv), {\bf{N}}^{(t_\bv, f)}}))^{\wedge (0, 0)}.
%H^{F \times Z_{G_\bv}(t_\bv)} ((\mu_{G_\bv, {\bf{N}}}^{-1}(0)^{s})^{(t_\bv, f)})^{\wedge (0,0)},
\end{multline*}
We used the localization theorem, computation of $(\mu_{G_\bv, {\bf{N}}}^{-1}(0)^{s})^{(t_\bv, f)}$ done during the proof of Proposition~\ref{localization of quiver side of homological hikita}, and Lemma \ref{chern induces isomorphism in completion for quiver varieties}. We also use the twisting by the central element as in \cite[Section 5.2]{eg2} (compare with Remark \ref{rem:twist iso fibers} above).
%, we can rewrite the last algebra as
%\[
%K^{F} (\wti \fM({Z_{G_\bv}(t_\bv), {\bf{N}}^{(t_\bv, f)}}))^{\wedge (1, 1)} \simeq
%H^{F} (\wti \fM({Z_{G_\bv}(t_\bv), {\bf{N}}^{(t_\bv, f)}}))^{\wedge (0, 0)},
%\]
%which finishes the proof.
\end{proof}

Now let us turn to the Coulomb branches side.

\begin{rem}
In what follows, we consider completions of Coulomb branch algebras. Coulomb branch algebras are defined as the inductive limit of homology or K-theory of schemes $\cR_{\leqslant \la}$. As in~\eqref{eq: colimit of completions of coulombs}, by a completion of a Coulomb algebra, we mean the colimit of completions (as oppose to completion of colimit). We omit this in our notations, and just write $\bC[\cM(G_\bv, {\bf{N}})^\times_{F}]^{\wedge (t_\bv, f)}$ and similar, but one should keep in mind this subtlety. 
\end{rem}

\begin{prop} \label{completion of K-coulomb branch}
%Formal fiber of $\cM^\times_{Q, F}$ over a point $(t_\bv, f) \in T_\bv / S_\bv \times F$ is isomorphic to 
For any $(t_\bv, f) \in (T_\bv / S_\bv) \times F$, there is an isomorphism of algebras over $K^{G_\bv \times F}(\pt)^{\wedge (t_\bv, f)} \simeq K^{Z_{G_\bv}(t_\bv) \times F}(\pt)^{\wedge (1, 1)}$:
\begin{equation*}
\bC[\cM(G_\bv, {\bf{N}})^\times_{F}]^{\wedge (t_\bv, f)} \simeq \bC[\cM(Z_{G_\bv}(t_\bv), {\bf{N}}^{(t_\bv, f)})_{F}]^{\wedge (0, 0)}.
\end{equation*}
%\begin{equation*}
%K^{G_\bv \times F}(\cR_{{G_\bv}, {\bf{N}}})^{\wedge (t_\bv, f)} \simeq H^{Z_{G_\bv}(t_\bv) \times F}(\cR_{Z_{G_\bv}(t_\bv), {\bf{N}}^{(t_\bv, f)}})^{\wedge (0, 0)}.
%\end{equation*}
\end{prop}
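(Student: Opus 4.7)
The plan is to factor the statement into two pieces: a K-theoretic localization statement that reduces the Coulomb branch for $(G_\bv, {\bf{N}})$ completed at $(t_\bv, f)$ to the K-theoretic Coulomb branch for the centralizer theory $(Z_{G_\bv}(t_\bv), {\bf{N}}^{(t_\bv, f)})$ completed at $(1,1)$, followed by an application of Theorem~\ref{thm_iso_after_completions} to pass from the K-theoretic to the homological completion for the centralizer theory. In outline, we will establish
\begin{equation*}
\bC[\cM(G_\bv, {\bf{N}})^{\times}_{F}]^{\wedge (t_\bv, f)}
\;\simeq\; \bC[\cM(Z_{G_\bv}(t_\bv), {\bf{N}}^{(t_\bv, f)})^{\times}_{F}]^{\wedge (1, 1)}
\;\simeq\; \bC[\cM(Z_{G_\bv}(t_\bv), {\bf{N}}^{(t_\bv, f)})_{F}]^{\wedge (0, 0)},
\end{equation*}
with the second isomorphism being Theorem~\ref{thm_iso_after_completions} applied to the centralizer gauge theory.

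For the first isomorphism, the strategy is to mirror the proof of Proposition~\ref{localization of homological coloumb branch}, replacing the localization theorem for equivariant Borel--Moore homology by Thomason's concentration theorem in equivariant K-theory. Concretely, for each dominant $\la$ and each $d$, the space $\cR^d_{G_\bv, {\bf{N}}, \leqslant \la}$ is of finite type and (by Lemma~\ref{lem: coulomb is free over cartan}) admits a $\widetilde{T} \times \bC^\times$-invariant algebraic cell decomposition, so Thomason's theorem yields an isomorphism
\begin{equation*}
K^{G_\bv \times F}(\cR^d_{G_\bv, {\bf{N}}, \leqslant \la})^{\wedge (t_\bv, f)}
\;\iso\; K^{Z_{G_\bv}(t_\bv) \times F}\bigl((\cR^d_{G_\bv, {\bf{N}}, \leqslant \la})^{(t_\bv, f)}\bigr)^{\wedge (1, 1)}.
\end{equation*}
By Lemma~\ref{fixed points on BFN space}, the fixed-point ind-scheme on the right is $(\cR^d_{Z_{G_\bv}(t_\bv), {\bf{N}}^{(t_\bv, f)}, \leqslant \la})_{\mathrm{red}}$, so we may identify its K-theory with that of $\cR^d_{Z_{G_\bv}(t_\bv), {\bf{N}}^{(t_\bv, f)}, \leqslant \la}$. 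Compatibility of Thomason's isomorphism with the closed embeddings $\cR^d_{\leqslant \mu} \hookrightarrow \cR^d_{\leqslant \la}$ (used for push-forwards in $\la$) and with the flat pullbacks $\cR^{d'}_{\leqslant \la} \to \cR^d_{\leqslant \la}$ (used to compare different $d$'s) then permits passing to the colimit/limit that defines the Coulomb branch, exactly as in the homological case; note that since completion of a finitely generated module over a Noetherian ring is exact, it commutes with the colimit defining the completed Coulomb branch algebra as in~\eqref{eq: colimit of completions of coulombs}.

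It remains to verify that this isomorphism is multiplicative with respect to the convolution product. Following \cite[Section~5]{BFN18}, one reduces compatibility with $\star$ to the abelian case $G=T$, ${\bf{N}}=0$ by factoring through the embeddings $K^{\widetilde{T}_{\cO} \rtimes \bC^\times}(\operatorname{Gr}_T)^{W} \hookrightarrow K^{\widetilde{G}_{\cO} \rtimes \bC^\times}(\operatorname{Gr}_{G})$ and the map \eqref{eq: embedding from K(R) to K(Gr)}, all of which are defined by push-forwards and pullbacks that are manifestly natural under Thomason localization. In the abelian case the convolution is described explicitly in terms of the basis $r^\times_{\boldsymbol{\la}}$ (see Section~\ref{subsubsec: formulas for upsilon in abelian case}) and the compatibility reduces to a direct check on these generators. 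Finally, applying Theorem~\ref{thm_iso_after_completions} to the gauge theory $(Z_{G_\bv}(t_\bv), {\bf{N}}^{(t_\bv, f)})$ produces the isomorphism with the homological Coulomb branch completed at $(0, 0)$.

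The main obstacle I anticipate is the interaction between Thomason's concentration theorem and the limits defining $\mathcal{R}_{G_\bv, {\bf{N}}}$: one must check that the isomorphism on each finite-type stratum is compatible with the connecting maps in a way that respects the convolution. The argument for the homological version is already nontrivial in \cite[Section~5]{BFN18}, and one needs to carefully verify that each ingredient there (in particular, compatibility of pushforwards/pullbacks with localization) has a direct K-theoretic analog when restricted to the ``dimension components'' of Coulomb branches at semisimple parameters.
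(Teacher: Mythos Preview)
Your proposal is correct and follows essentially the same approach as the paper: apply the K-theoretic localization theorem together with Lemma~\ref{fixed points on BFN space} to reduce to the centralizer theory at $(1,1)$, then invoke Theorem~\ref{thm_iso_after_completions} to pass to homology at $(0,0)$. The paper's proof is considerably more terse---it simply writes ``by the localization theorem'' and cites the two lemmas---whereas you carefully spell out the Thomason concentration at each finite-type stratum $\cR^d_{\leqslant\lambda}$, the compatibility with pushforwards and flat pullbacks, and the passage to the colimit; you also flag the multiplicativity question, which the paper implicitly defers to the same BFN18 argument used in the homological case (Proposition~\ref{localization of homological coloumb branch}).
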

Note that it is K-theoretic Coulomb branch on the LHS of the above claim, and homological one on the RHS.
\begin{proof}
%Recall that $\cM^\times_{Q, F} = \Spec K^{G_\bv(\cO) \times F} (\cR_{G_\bv, {\bf{N}}})$. Action of $T$ comes from the decomposition of $\cR_{G_\bv, {\bf{N}}}$ into connected components, so the $T$-action is fiberwise over $T_v / S_\bv \times T_w$. 
%So we first compute the fiber $\bC[\cM^\times_{Q, F}]^{\wedge (t_\bv, f)}$.  
By the localization theorem combined with the twisting \cite[Section 5.2]{eg2}, we have: 
\begin{equation*}
%\bC[\cM^\times_{Q, F}]^{\wedge (t_\bv, f)} = 
K^{G_\bv \times F} (\cR_{G_\bv, {\bf{N}}})^{\wedge (t_\bv, f)} = K^{Z_{G_\bv}(t_\bv) \times F} ((\cR_{G_\bv, {\bf{N}}})^{(t_\bv, f)})^{\wedge (1, 1)}.
%=  H^{G_\bv(\cO) \times F} ((\cR_{G_\bv, {\bf{N}}})^{(t_\bv, f)})^{\wedge (0, 0)}.
\end{equation*}
Now apply Lemma \ref{fixed points on BFN space} and Theorem~\ref{thm_iso_after_completions}.
\end{proof}

%One can describe what is the completion of $\bC[\cM(G_\bv, {\bf{N}})^\times_{F}]$ over any point of $F$ using Theorem \ref{fixed points on quotient}, similarly to the homological case, described in Proposition \ref{localization of homological coulomb branch over deformation}. We will not need it here.

\begin{cor} \label{completion of B-algebra at any point}
For any $(t_\bv, f) \in (T_\bv / S_\bv) \times F$ there is an isomorphism of algebras
\begin{equation*}
\bC[(\cM(G_\bv, {\bf{N}})^{\times}_{F})^\nu]^{\wedge (t_\bv, f)} \simeq \bC[(\cM(Z_{G_\bv}(t_\bv), {\bf{N}}^{(t_\bv, f)})^{\times}_{F})^\nu]^{\wedge (1, 1)}.
\end{equation*}
%The formal fiber of $\bC[(\cM^{\times}_{Q, F})^H]$ over a point $(t_\bv, f) \in T_\bv / S_\bv \times F$ is isomorphic to 
\end{cor}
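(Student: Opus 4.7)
The strategy parallels that of Corollary \ref{cor: localization of b-algebra} in the homological setting, with the additional ingredient that we must pass between K-theoretic and homological Coulomb branches. The central observation is that the $\nu$-action on (K-theoretic or homological) deformed Coulomb branches arises from the $\pi_0(\cR_{G_\bv, {\bf{N}}}) = \pi_1(G_\bv)$-grading of the underlying algebra and is therefore \emph{trivial} on the base rings $K^{G_\bv \times F}(\pt)$ and $H^{G_\bv \times F}(\pt)$. Consequently, the functor $A \mapsto B^\nu(A)$ (i.e.\ passing to schematic $\nu$-fixed points) commutes with localization and completion at any closed point of these bases. So it suffices to produce a $\nu$-equivariant isomorphism between the two uncompleted algebras after completion.

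First I would apply Proposition \ref{completion of K-coulomb branch} to obtain an isomorphism
\[
\bC[\cM(G_\bv, {\bf{N}})^\times_F]^{\wedge (t_\bv, f)} \simeq \bC[\cM(Z_{G_\bv}(t_\bv), {\bf{N}}^{(t_\bv, f)})_F]^{\wedge (0, 0)}.
\]
Next, I would invoke Theorem \ref{thm_iso_after_completions} (applied to the gauge theory $(Z_{G_\bv}(t_\bv), {\bf{N}}^{(t_\bv,f)})$ with flavor torus $F$) to identify the homological completion on the right with the K-theoretic one:
\[
\bC[\cM(Z_{G_\bv}(t_\bv), {\bf{N}}^{(t_\bv, f)})_F]^{\wedge (0, 0)} \simeq \bC[\cM(Z_{G_\bv}(t_\bv), {\bf{N}}^{(t_\bv, f)})^\times_F]^{\wedge (1, 1)}.
\]
Composing these two identifications produces the desired isomorphism of completed Coulomb branches before passing to $\nu$-fixed points.

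It remains to check that this composite isomorphism is compatible with the $\pi_0(\cR)$-grading (and therefore with the $\nu$-action). For Proposition \ref{completion of K-coulomb branch} this follows from the fact that the fixed-point computation in Lemma \ref{fixed points on BFN space} is strata-preserving: the decomposition of $\cR_{Z_{G_\bv}(t_\bv), {\bf{N}}^{(t_\bv,f)}}$ into connected components is induced from that of $\cR_{G_\bv, {\bf{N}}}$ via restriction of coweights to $Z_{G_\bv}(t_\bv)$, and the Hamiltonian character $\nu|_{Z_{G_\bv}(t_\bv)}$ is precisely the restriction of the original $\nu$. For Theorem \ref{thm_iso_after_completions}, the isomorphism $\Upsilon$ is built by taking the limit of maps $\Upsilon^d_{\leqslant\la}$ defined separately on each affine Schubert stratum $\cR^d_{\leqslant\la}$, so it automatically respects the $\pi_0(\cR)$-grading. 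Thus the composite isomorphism intertwines the two $\nu$-actions, and passing to $\nu$-fixed points on both sides (equivalently, quotienting by the ideal generated by graded components of nonzero $\nu$-weight) yields the statement.

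The main technical point is the grading compatibility of $\Upsilon$; this is really the only nontrivial input beyond the formalism of the two cited results, and it is immediate from the stratum-by-stratum construction of $\Upsilon$ in Section \ref{sec: Riemann-Roch}. All other steps are exactness of completion for finite modules over a Noetherian ring, together with the fact that the $\nu$-grading is concentrated on the ``internal'' $\pi_0(\cR)$-direction and thus orthogonal to the direction in which we complete.
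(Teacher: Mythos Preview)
Your overall strategy is the same as the paper's: establish a $\nu$-equivariant isomorphism of completed Coulomb branch algebras (via Proposition~\ref{completion of K-coulomb branch}) and then pass to $\nu$-fixed points. Your extra step through the homological side and back via $\Upsilon$ is harmless but redundant, since the localization step inside the proof of Proposition~\ref{completion of K-coulomb branch} already gives the K-theoretic identification at $(1,1)$ before $\Upsilon$ is applied.

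There is, however, one point where you are too quick. You write that ``$B^\nu$ commutes with completion'' because of ``exactness of completion for finite modules over a Noetherian ring.'' The Coulomb branch algebra $A=\bC[\cM(G_\bv,{\bf N})^\times_F]$ is \emph{not} a finite module over $K^{G_\bv\times F}(\pt)$---it is free of infinite rank---so this invocation does not literally apply. The paper handles this by writing the right exact sequence
\[
\bigoplus_i A \xrightarrow{\;(a_i)\;} A \longrightarrow A/J \longrightarrow 0
\]
and observing that (i) for the first two terms, which are free, the paper's definition of completion (as a colimit of completions of the finite-rank pieces $K^{G_\bv\times F}(\cR_{\le\lambda})$, cf.~\eqref{eq: colimit of completions of coulombs}) coincides with tensoring by the completed base ring; (ii) for the third term, which is finitely generated, ordinary completion again coincides with this tensor product; and (iii) tensor product is right exact. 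One must also note, as you do correctly, that the $\nu$-grading is concentrated away from the base, so the same generators $a_i$ still cut out the coinvariants ideal after completion. Once this is made explicit, your argument is complete and matches the paper's.
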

\begin{proof}
Unlike localization, the completion is not exact in general, so we need to be more careful than in the proof of Corollary \ref{cor: localization of b-algebra}. We show that taking B-algebra commutes with taking completion.

Let $J$ be the ideal of $\bC[\cM(G_\bv, {\bf{N}})^{\times}_{F}]$, generated by all elements of the form $(a - \nu(a))$, so that $\bC[\cM(G_\bv, {\bf{N}})^{\times}_{F}] / J \simeq \bC[(\cM(G_\bv, {\bf{N}})^{\times}_{F})^\nu]$ (coinvariants ideal). Let $\{a_i\}$ be the generators of this ideal (one can take $a_i$ to be dressed minuscule monopole operators, but we do not use it). Then we have a right exact sequence
\begin{equation} \label{eq: ses for taking b-algebra}
\bigoplus_i \bC[\cM(G_\bv, {\bf{N}})^{\times}_{F}] \rightarrow \bC[\cM(G_\bv, {\bf{N}})^{\times}_{F}] \rightarrow \bC[(\cM(G_\bv, {\bf{N}})^{\times}_{F})^\nu] \rightarrow 0
\end{equation}
Note that the first and the second terms in \eqref{eq: ses for taking b-algebra} are free over $K^{G_\bv \times F}(\pt)$ by Lemma~\ref{lem: coulomb is free over cartan}, while the third term is finitely generated over it. Hemce, for all three terms, completion at $(t_\bv, f)$ coincides with taking the tensor product $- \otimes_{K_{G_\bv \times T_\bw}(\pt)} K_{G_\bv \times T_\bw}(\pt)^{\wedge (t_\bv, f)}$. Since tensor product is right exact, we obtain that the completion of \eqref{eq: ses for taking b-algebra} is right exact.

Since we take completion over $K_{G_\bv \times F}(\pt)$, which is a subalgebra lying in $\nu$-weight 1, $\{a_i\}$ are also generators of the coinvariants ideal of $\bC[\cM(G_\bv, {\bf{N}})^{\times}_{F}]^{\wedge (t_\bv, f)}$. Hence, taking completion of~\eqref{eq: ses for taking b-algebra} yields that the completion of the B-algebra is the B-algebra of completion.

Now the result follows from Proposition \ref{completion of K-coulomb branch}.
\end{proof}

\subsection{Generators of K-theoretic Coulomb branches} \label{subsec: monopoles generate k-branches}
For the homological case, Weekes in \cite{Wee19} showed that the quantized Coulomb branch of a quiver gauge theory is generated by dressed minuscule monopole operators (this argument also appears in \cite[proof of Theorem 4.32]{FT19b}). We need generation in the K-theoretic case by a particular set of monopoles, which is implicit in \cite{Wee19}.
%In this section, we discuss the same question for quantum K-theoretic Coulomb branches.

%For a coweight $\la$ of $G_\bv$ we denote by $W_\la$ its stabilizer in the Weyl group of $G_\bv$. Denote by $\ol \Gr^\la$ the corresponding affine Schubert variety, and by $\cR^{\leq \la}_{G_\bv, {\bf{N}}}$ its pre-image in $\cR_{G_\bv, {\bf{N}}}$.

%For a minuscule $\la$ and $\mu \in P /{W_\la} \oplus \bZ$, one can consider a vector bundle on the $\ol \Gr^\la = \Gr^\la$, whose fiber at the point $t^\la \in \Gr$ is the irreducible representation of $Z_{G_\bv \times \torus}(t^\la)$ of highest weight $\mu$. The class of its preimage in $\cR_{G_\bv, {\bf{N}}}$ is an element of $K^{G_\bv \times F}(\cR_{G_v, {\bf{N}}}) = \bC[\cM^\times_{Q, F}]$. It is called the {\it dressed minuscule monopole operator} and denoted $M^\times_{\la, \mu}$. 

Recall from Section~\ref{subsubsec: formulas for upsilon on monopoles} the definition of dressed minuscule monopole operators $M^\times_{{\boldsymbol{\la}}, p} \in \cA_{Q, F}^\times$. %Recall also the description of minuscule coweights of $G_\bv$ in
We call a minuscule coweight ${\boldsymbol{\la}}$ of $G_\bv$ {\it positive} if only 0's and 1's appear in its standard expression; we call it {\it negative} if only 0's and $(-1)$'s appear in its standard expression (see \cite[Section~3.1]{Wee19}). Note that there are minuscule coweights which are neither positive nor negative.

%The following is an analog of Weekes' result for the homological case.
\begin{prop} \label{monopoles generate K-Coulomb branch}
The algebra $\cA^\times_{Q, F}$ 
is generated by $K^{G_\bv \times F \times \torus}(\pt)$ and dressed minuscule monopole operators $M^\times_{{\boldsymbol{\la}}, p}$ such that ${\boldsymbol{\la}}$ is positive or negative.
\end{prop}
\begin{proof}
The proof for homological case of \cite[Proposition 3.1]{Wee19} actually shows that positive and negative coweights are sufficient for generating. It works without changes for the case of K-theory, see \cite[proof of Theorem~4.32]{FT19b} and \cite[Theorem~4.2.6]{VV25}. Both these papers go under the running assumption that $Q$ is of finite type, but this particular argument works for arbitrary quiver.

%Consider the hyperplane arrangement of $\ft_\bv$, given by generalized roots hyperplanes (see \cite[5(i)]{BFN18}). For each chamber of this arrangement, take generators of the semigroup of its integral points, and take the corresponding monopole operators (formally, any lifting of it from the associated graded w.r.t. filtration, given in \cite[6(i)]{BFN18}). It is shown in the proof of \cite[Proposition 6.8]{BFN18} that in the homological case these elements generate the Coulomb branch. Furthermore, it is pointed out in \cite[Remark 3.14]{BFN18} that this proof works for K-theory without changes.

%Next, it is described in the proof of \cite[Proposition 3.1]{Wee19}, how one can thicken the hyperplane arrangements, so that the elements, constructed by the procedure above, will be positive or negative minuscule monopole operators. This combinatorial procedure works equally well for homology and K-theory.
\end{proof}

\begin{rem}
In \cite{BDG17}, the monopole operators $M_{{\boldsymbol{\la}}, p}$ in homological Coulomb branch are considered for all (not necessarily minuscule) weights ${\boldsymbol{\la}}$. Mathematically, they make sense as element of the associated graded to the Coulomb branch algebra (see \cite[Remark 6.5]{BFN18}), and it is unclear if one can canonically lift them to elements of $\bC[\cM_{Q}]$.

At the same time, for K-theoretic Coulomb branches, such elements were constructed in \cite{CW23} as classes of simple objects in the heart of the Koszul-perverse t-structure. They form a basis of $\bC[\cM^\times_{Q}]$.
\end{rem}

Now we propose an application to the K-theoretic Hikita conjecture. We formulate it in the non-quantum case, since that is the case of our interest in what follows.
\begin{cor} \label{corollary: surjections for K-hikita}
Both homomorphisms
% https://q.uiver.app/#q=WzAsMyxbMSwwLCJLXntHX3tcXGJvbGQgdn0gXFx0aW1lcyBUX3tcXGJvbGQgd319KFxcbWF0aHJte3B0fSkiXSxbMCwxLCJLXntUX3tcXGJvbGQgd319KFxcd2lkZXRpbGRle1xcbWF0aGZyYWsgTX0oR197XFxib2xkIHZ9LCBOKSkiXSxbMiwxLCJcXG1hdGhiYiBDWyhcXG1hdGhjYWwgTSh7R197XFxib2xkIHZ9fSwgTileXFx0aW1lc197VF97XFxib2xkIHd9fSlee1xccmhvfV0iXSxbMCwxLCJcXHBoaV8xIiwyXSxbMCwyLCJcXHBoaV8yIl1d
\begin{equation} \label{surjections from K-theory of point to both sides}
\begin{tikzcd}
	& {K^{G_{\bold v} \times F}(\mathrm{pt})} \\
	{K^{F}(\widetilde{\mathfrak M}(G_{\bold v}, {\bf{N}}))} && {\mathbb C[(\mathcal M({G_{\bold v}}, {\bf{N}})^\times_{F})^{\nu}]}
	\arrow["{\phi^\times_1}"', from=1-2, to=2-1]
	\arrow["{\phi^\times_2}", from=1-2, to=2-3]
\end{tikzcd}
\end{equation}
are surjective.

In particular, the K-theoretic Hikita conjecture \eqref{deformed K-theoretic hikita conjecture} is equivalent to the claim $\ker \phi_1^\times = \ker \phi_2^\times$.
\end{cor}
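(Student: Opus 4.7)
The plan is to handle the two surjectivity claims separately, then deduce the equivalence formally.

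For $\phi_2^\times$, I would argue directly using Proposition~\ref{monopoles generate K-Coulomb branch}: $\mathbb{C}[\mathcal{M}^\times_{Q,F}] = \mathcal{A}^\times_{Q,F}$ is generated over $\mathbb{C}$ by $K^{G_{\bf v}\times F}(\pt)$ together with the dressed minuscule monopoles $M^\times_{\lambda,p}$. Each $M^\times_{\lambda,p}$ sits in the component of $\mathcal{R}$ labelled by $\lambda\in\pi_1(G_{\bf v})$, and hence carries $\nu$-weight $\langle\nu,\lambda\rangle\neq 0$ (for the canonical product-of-determinants choice of $\nu$, which is nonzero on every nonzero minuscule $\lambda$). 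Since $\mathcal{A}^\times_{Q,F}$ is commutative, $B^\nu(\mathcal{A}^\times_{Q,F}) = \mathcal{A}^\times_{Q,F}/((\mathcal{A}^\times)_{>0} + (\mathcal{A}^\times)_{<0})$. Any monomial in the generators of $\nu$-weight zero must involve at least one pair of monopoles of opposite nonzero weights, hence lies in $(\mathcal{A}^\times)_{>0}\cdot(\mathcal{A}^\times)_{<0}$. Therefore $B^\nu(\mathcal{A}^\times_{Q,F})$ is exactly the image of $K^{G_{\bf v}\times F}(\pt)$, giving surjectivity of $\phi_2^\times$.

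For $\phi_1^\times$ (the K-theoretic Kirwan surjectivity), I would reduce to the homological Kirwan surjectivity of \cite{MN18} via completions. Set $R := K^{G_{\bf v}\times F}(\pt) = \mathbb{C}[(T_{\bf v}/S_{\bf v})\times F]$, which is a finitely generated $\mathbb{C}$-algebra and hence Noetherian, and note that $K^F(\widetilde{\mathfrak{M}}(G_{\bf v},{\bf N}))$ is a finitely generated $R$-module (K-theoretic analog of \cite[Theorem~7.3.5]{Nak01a}). Then a map of finitely generated $R$-modules into it is surjective iff its completion at every closed point $(t_{\bf v},f)\in\Spec R$ is surjective. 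By Proposition~\ref{completion of K-theory of quiver variety at any point} together with the naturality of the equivariant Chern character with respect to pullbacks of tautological bundles, the completion of $\phi_1^\times$ at $(t_{\bf v},f)$ is identified with the completion at $(0,0)$ of the homological Kirwan map
\begin{equation*}
H_F(\pt)\otimes H_{Z_{G_{\bf v}}(t_{\bf v})}(\pt)\twoheadrightarrow H_F(\widetilde{\mathfrak{M}}(Z_{G_{\bf v}}(t_{\bf v}),{\bf N}^{(t_{\bf v},f)}))
\end{equation*}
for the centralizer theory, which is a quiver theory by Proposition~\ref{prop: fixed points fo quiver for arbitrary F} and hence covered by \cite{MN18}. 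This yields the surjectivity.

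The final assertion is formal: both sides of~\eqref{deformed K-theoretic hikita conjecture} are quotients of the same algebra $R$ via $\phi_1^\times$ and $\phi_2^\times$, so an $R$-algebra isomorphism between them (automatically unique and compatible with both surjections) exists iff $\ker\phi_1^\times=\ker\phi_2^\times$.

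The main obstacle I expect is the second step: carefully verifying that under the identifications of Proposition~\ref{completion of K-theory of quiver variety at any point} (localization plus Chern character) the K-theoretic Kirwan map truly matches the homological Kirwan map for the centralizer theory on completions. This amounts to the standard compatibility between $\operatorname{ch}$ of tautological K-classes and the Chern classes of the corresponding tautological bundles on $\widetilde{\mathfrak{M}}(Z_{G_{\bf v}}(t_{\bf v}),{\bf N}^{(t_{\bf v},f)})$, which is natural but must be spelled out to see that it intertwines the $R$-actions on both sides.
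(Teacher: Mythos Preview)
Your approach to $\phi_2^\times$ is essentially the paper's: both invoke Proposition~\ref{monopoles generate K-Coulomb branch} and then run the argument of \cite[Proposition~8.7]{KS25}. One caution: your assertion that the product-of-determinants $\nu$ pairs nontrivially with the $\pi_1(G_\bv)$-class of \emph{every} nonzero minuscule coweight is not literally correct once $|Q_0|\geq 2$. For instance, a tuple like $(\varpi_{1,1}^+,\varpi_{2,1}^-,0,\dots)$ is minuscule for $G_\bv$ and has image $(1,-1,0,\dots)\in\pi_1(G_\bv)$, which pairs to $0$ with $(1,\dots,1)$. The argument still goes through, but one must either observe that such multi-node monopoles factor (up to units in $K_{G_\bv\times F}(\pt)$) as products of single-node minuscule monopoles---each of which does have nonzero $\nu$-weight provided all $\nu_i\neq 0$---or restrict to the specific generating set coming from chamber monoids. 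Your write-up elides this step; the paper defers it to \cite{KS25}.

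For $\phi_1^\times$ the paper takes a much shorter route: it simply cites \cite{MN18}, whose Kirwan surjectivity for quiver varieties already covers $K$-theory directly. Your reduction to the homological Kirwan map via Proposition~\ref{completion of K-theory of quiver variety at any point} and completion at every closed point is a valid alternative strategy, and has the merit of being self-contained within the paper's toolkit, but it requires the extra verification you flag (that the Chern-character identifications intertwine the two Kirwan maps) and is unnecessary given that \cite{MN18} handles the $K$-theoretic case outright.

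The final formal deduction (equality of kernels $\Leftrightarrow$ isomorphism of quotients) is the same in both.
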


\begin{proof}
For $\phi^\times_1$, this is the K-theoretic Kirwan surjectivity, proved for quiver varieties in~\cite{MN18}. 

For $\phi_2^\times$, the proof is the same as for homological case in \cite[Proposition~8.7]{KS25}, using generating property of monopole operators, Proposition~\ref{monopoles generate K-Coulomb branch}.
%To prove the surjectivity of $\phi^\times_2$, recall that due to Proposition~\ref{monopoles generate K-Coulomb branch}, $\mathbb C[\mathcal M({G_{\bold v}}, {\bf{N}})^\times_{F}]$ is generated by $K^{G_{\bold v} \times F}(\mathrm{pt})$ and the dressed minuscule monopole operators. But each minuscule cell is in the non-unit connected component of $\cR_{G_\bv, {\bf{N}}}$, hence it has nonzero $\nu$-weight, which means that its image in the schematic fixed points algebra $\mathbb C[(\mathcal M({G_{\bold v}}, {\bf{N}})^\times_{F})^{\nu}]$ is zero. Thus, $K^{G_{\bold v} \times F}(\mathrm{pt})$ generates this algebra, and $\phi^\times_2$ is surjective.
\end{proof}

\subsection{K-theoretic Hikita conjecture from homological}
The main result of this section is Theorem~\ref{k-hikita from homo-hikita}, which explains how to deduce the K-theoretic Hikita conjecture from homological for a larger set of gauge theories. However, first we go in the opposite direction and show that for a fixed quiver gauge theory our K-theoretic Hikita conjecture is actually a stronger statement than the homological one.

\begin{prop} \label{prop: k-hikita implies homo-hikita}
K-theoretic Hikita conjecture implies homological Hikita conjecture. Namely, if for a particular quiver theory $(G_\bv, {\bf{N}})$ one has an isomorphism $K_{G_\bv \times F}(\pt)$-algebras
\begin{equation*}
K^{F} (\wti \fM(G_\bv, {\bf{N}})) \simeq \bC[(\cM(G_\bv, {\bf{N}})^\times_{F})^{\nu}],
\end{equation*}
then one also has an isomorphism of $H_{G_\bv \times F}(\pt)$-algebras
\begin{equation*}
H_{F} (\wti \fM(G_\bv, {\bf{N}})) \simeq \bC[\cM(G_\bv, {\bf{N}})_{\ff}^{\nu}].
\end{equation*}
\end{prop}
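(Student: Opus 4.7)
The plan is to complete the given K-theoretic isomorphism at the point $(1,1) \in (T_\bv/S_\bv) \times F$, transport it to the homological setting via the equivariant Chern character and the Riemann--Roch isomorphism $\Upsilon$ from Theorem~\ref{thm_iso_after_completions}, and descend from the completion using that graded ideals in a positively graded polynomial ring are detected by their completions at the graded maximal ideal.

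The K-theoretic hypothesis says precisely that, under the two natural surjections from $K_{G_\bv \times F}(\pt)$ of Corollary~\ref{corollary: surjections for K-hikita}, one has $\ker \phi_1^\times = \ker \phi_2^\times$. This equality is preserved after completing at $(1,1)$. I would now match the resulting completed K-theoretic surjections with completed homological ones. On the quiver side, Lemma~\ref{chern induces isomorphism in completion for quiver varieties} gives $K^F(\wti \fM)^{\wedge(1,1)} \simeq H_F(\wti \fM)^{\wedge(0,0)}$ via the equivariant Chern character, compatibly with $K_{G_\bv \times F}(\pt)^{\wedge(1,1)} \simeq H_{G_\bv \times F}(\pt)^{\wedge(0,0)}$ on the base, so functoriality of the Chern character identifies the completions of $\phi_1^\times$ and $\phi_1$.

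On the Coulomb side, Proposition~\ref{completion of K-coulomb branch} specialized to $(t_\bv,f)=(1,1)$ gives $\bC[\cM^\times_F]^{\wedge(1,1)} \simeq \bC[\cM_\ff]^{\wedge(0,0)}$, covered by $\Upsilon$ on the base; running the argument from the proof of Corollary~\ref{completion of B-algebra at any point} on each side shows that the $B^\nu$-quotient commutes with completion, producing $\bC[(\cM^\times_F)^\nu]^{\wedge(1,1)} \simeq \bC[\cM^\nu_\ff]^{\wedge(0,0)}$. Since $\Upsilon$ is an algebra homomorphism sending the unit to the unit, it intertwines the ``unit-class'' surjections $\phi_2^\times$ and $\phi_2$ after completion. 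Combining all of the above, $(\ker \phi_1)^{\wedge(0,0)} = (\ker \phi_2)^{\wedge(0,0)}$ inside $H_{G_\bv \times F}(\pt)^{\wedge(0,0)}$.

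Both $\ker \phi_1$ and $\ker \phi_2$ are homogeneous ideals for the cohomological grading on the polynomial ring $H_{G_\bv \times F}(\pt) = \bC[(\ft_\bv/S_\bv) \times \ff]$, which is non-negatively graded with $\bC$ in degree zero. For such ideals the canonical map into the completion at the graded maximal ideal is injective, so equality of completions forces $\ker \phi_1 = \ker \phi_2$. This yields the desired isomorphism $H_F(\wti \fM) \simeq \bC[\cM^\nu_\ff]$ of graded $H_{G_\bv \times F}(\pt)$-algebras. The most delicate point in the argument is the Coulomb-side compatibility: verifying that $\Upsilon$ intertwines the natural maps $K_{G_\bv \times F}(\pt) \to \bC[\cM^\times_F]$ and $H_{G_\bv \times F}(\pt) \to \bC[\cM_\ff]$ (and hence $\phi_2^\times$ and $\phi_2$) after completion. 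This is implicit in the algebra-homomorphism statement of Theorem~\ref{thm_iso_after_completions}, but extracting it cleanly requires tracing through the construction of $\Upsilon$ in Section~\ref{sec: Riemann-Roch}.
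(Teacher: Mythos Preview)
Your proof is correct and follows essentially the same approach as the paper's: complete the K-theoretic surjections at $(1,1)$, identify with the completed homological surjections at $(0,0)$ via Lemma~\ref{chern induces isomorphism in completion for quiver varieties} on the quiver side and Theorem~\ref{thm_iso_after_completions} on the Coulomb side, then descend using that $\phi_1,\phi_2$ are graded. The paper's version is more compressed---it cites Theorem~\ref{thm_iso_after_completions} directly rather than going through Proposition~\ref{completion of K-coulomb branch} and Corollary~\ref{completion of B-algebra at any point}---but the argument is the same.
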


\begin{proof}
As explained in Section \ref{subsection: homological Hikita}, both morphisms
\begin{equation} \label{surjective morphisms for homo-hikita}
\begin{tikzcd}[column sep=tiny]
	{} & H_{G_\bv \times F}(\pt) \\ 
	H_{F}(\widetilde{\mathfrak M}({G_{\bold v}}, {\bf{N}})) && {\mathbb C[\mathcal M({G_{\bold v}}, {\bf{N}})_{{\mathfrak f}}^{\nu}]}
	\arrow["{\phi_1}"', from=1-2, to=2-1]
	\arrow["{\phi_2}", from=1-2, to=2-3]
\end{tikzcd}
\end{equation}
are surjective, and we need to prove that $\ker \phi_1 = \ker \phi_2$. 
Note that $\phi_1$ and $\phi_2$ are graded homomorphisms, and hence it is sufficient to show that they coincide after completion at $(0, 0) \in (\ft_\bv / S_\bv) \times \ff$ (the completion w.r.t. the grading).
Taking the completion at $(1, 1) \in (T_\bv / S_\bv) \times F$ of \eqref{surjections from K-theory of point to both sides}, yields precisely the completed at $(0, 0)$ version of \eqref{surjective morphisms for homo-hikita} due to Corollary~\ref{completion of B-algebra at any point} and Lemma~\ref{chern induces isomorphism in completion for quiver varieties}, hence the result.
\end{proof}

For what follows, we first need a few lemmata from commutative algebra.

\begin{lem} \label{interstion of completion with ring is ideal}
Let $(R, \fm)$ be a Noetherian local ring without zero divisors and let $I \subset R$ be an ideal. Then 
\begin{equation*}
I^{\wedge \fm} \cap R = I,
\end{equation*}
where the intersection is taken in $R^{\wedge \fm}$.
\end{lem}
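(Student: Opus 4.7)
The plan is to use the flatness of $R^{\wedge \fm}$ over $R$ together with the Krull intersection theorem. First I would observe that, since $R$ is Noetherian, the ideal $I$ is finitely generated, so that completion of $I$ as an $R$-module agrees with $I \otimes_R R^{\wedge \fm}$. Flatness of the completion map $R \to R^{\wedge \fm}$ then ensures that applying $- \otimes_R R^{\wedge \fm}$ to the short exact sequence
\[
0 \to I \to R \to R/I \to 0
\]
yields a short exact sequence
\[
0 \to I^{\wedge \fm} \to R^{\wedge \fm} \to (R/I)^{\wedge \fm} \to 0,
\]
where I used that $(R/I) \otimes_R R^{\wedge\fm} = (R/I)^{\wedge \fm}$, again by finite generation.

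Next, I would consider the composition $R \to R^{\wedge \fm} \to (R/I)^{\wedge \fm}$. By the above exact sequence, its kernel is precisely $I^{\wedge \fm} \cap R$ (the intersection is well-defined because $R \hookrightarrow R^{\wedge \fm}$ by Krull intersection applied to the local ring $(R,\fm)$). On the other hand, this composition manifestly factors as $R \twoheadrightarrow R/I \to (R/I)^{\wedge \fm}$.

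It remains to show that the second arrow $R/I \to (R/I)^{\wedge \fm}$ is injective, after which the kernel of the composition is exactly $I$, finishing the proof. If $I \not\subset \fm$, then $I$ contains a unit and the statement is trivial. Otherwise $\fm/I$ is the maximal ideal of the Noetherian local ring $R/I$, and injectivity follows from another application of the Krull intersection theorem: $\bigcap_n (\fm/I)^n = 0$. The only slightly subtle point, and hence the main thing to state carefully, is the identification $I \otimes_R R^{\wedge \fm} \cong I^{\wedge \fm}$ (as a subobject of $R^{\wedge \fm}$), which ultimately reduces to the standard fact that completion commutes with tensor products of finitely generated modules over a Noetherian ring; everything else is essentially formal.
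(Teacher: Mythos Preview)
Your proof is correct and ultimately rests on the same idea as the paper's: both arguments reduce to the Krull intersection theorem applied to the local ring $R/I$. The paper is slightly more elementary, working directly with elements---it observes that $a \in I^{\wedge \fm}$ forces $a \in I + \fm^k$ for every $k$, hence the image of $a$ in $R/I$ lies in $\bigcap_k (\fm/I)^k = 0$---whereas you invoke flatness of $R^{\wedge \fm}$ over $R$ to identify $I^{\wedge \fm}$ with the kernel of $R^{\wedge \fm} \to (R/I)^{\wedge \fm}$ and then apply Krull intersection to the second factor of $R \twoheadrightarrow R/I \hookrightarrow (R/I)^{\wedge \fm}$. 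Your route is cleaner structurally but uses a bit more machinery; the paper's is more hands-on but avoids appealing to flatness.
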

\begin{proof}
The inclusion $I^{\wedge \fm} \cap R \supset I$ is evident. We show the inclusion in the other direction.

Take $a \in I^{\wedge \fm} \cap R$. Denote $\phi_k: R \rightarrow R/{\fm^k}$ the natural surjection. Since $a \in I^{\wedge \fm}$, we have $\phi_k(a) \in \phi_k(I)$ for any $k$, hence, $a \in (I + \fm^k)$ for any $k$. Let $[a]$ be the image of $a$ in the local ring $(R/I, [\fm])$. We get $[a] \in [\fm]^k$ for any $k$. By the Krull intersection theorem, we have $\bigcap_{k \geq 1} [\fm]^k = 0$. It follows that $[a] = 0$, hence $a \in I$, as required.
\end{proof}

\begin{lem} \label{if ideals coincide at each completion, they coincide}
Let $A$ be a Noetherian integral domain, and $I, J \subset A$ be two ideals. %For a maximal ideal $\fm \subset A$ denote by $I^{\wedge \fm}, J^{\wedge \fm} \subset A^{\wedge \fm}$ the completions
Suppose for any maximal $\fm \subset A$, the ideals $I^{\wedge \fm}$ and $ J^{\wedge \fm}$ coincide as ideals of $A^{\wedge \fm}$. Then $I$ and $J$ coincide as ideals of $A$. 
\end{lem}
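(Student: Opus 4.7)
The plan is to reduce the equality of completed ideals to an equality of ideals after localization at each maximal ideal, and then invoke the standard local-to-global principle for ideals in Noetherian rings.

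First, I would observe that for a maximal ideal $\fm \subset A$ the natural map $A/\fm^n \to A_\fm/(\fm A_\fm)^n$ is an isomorphism for every $n \geq 1$, so $A^{\wedge\fm}$ coincides with the $\fm A_\fm$-adic completion $(A_\fm)^{\wedge \fm A_\fm}$ of the Noetherian local ring $A_\fm$. Under this identification, the extended ideal $I^{\wedge\fm} \subset A^{\wedge\fm}$ is exactly the $\fm A_\fm$-adic completion of the ideal $I A_\fm \subset A_\fm$ (and similarly for $J$). These identifications are standard and should be recorded explicitly but do not present any real difficulty.

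Next, the plan is to apply the preceding Lemma \ref{interstion of completion with ring is ideal} to the Noetherian local ring $(A_\fm, \fm A_\fm)$ and the ideal $I A_\fm$ to obtain
\begin{equation*}
I^{\wedge\fm} \cap A_\fm = I A_\fm,
\end{equation*}
and analogously $J^{\wedge\fm} \cap A_\fm = J A_\fm$. Intersecting the hypothesized equality $I^{\wedge\fm} = J^{\wedge\fm}$ with the subring $A_\fm \subset A^{\wedge\fm}$ then yields $I A_\fm = J A_\fm$ for every maximal ideal $\fm$ of $A$.

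Finally, to upgrade this to the global equality $I = J$, I would consider the finitely generated $A$-module $M := (I+J)/I$ (finitely generated since $A$ is Noetherian). For each maximal $\fm$, the inclusion $J A_\fm \subset I A_\fm$ forces $M_\fm = 0$, and hence $M = 0$ by the standard fact that a finitely generated module over a Noetherian ring is zero if and only if all its localizations at maximal ideals vanish. This gives $J \subset I$; the symmetric argument yields $I \subset J$, completing the proof. The integral domain hypothesis does not appear to enter the argument, and the only step that warrants careful bookkeeping is the identification of $A^{\wedge\fm}$ with $(A_\fm)^{\wedge \fm A_\fm}$ together with the matching description of the extended ideals, which is not really an obstacle.
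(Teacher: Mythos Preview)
Your proof is correct and follows essentially the same two-step strategy as the paper: first apply Lemma~\ref{interstion of completion with ring is ideal} to the local ring $A_\fm$ to deduce $I A_\fm = J A_\fm$ for every maximal ideal, then pass from local to global.

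The only difference lies in the second step. The paper uses the integral domain hypothesis explicitly: it embeds all the $A_\fm$ into the fraction field of $A$, invokes $\bigcap_\fm A_\fm = A$, and then observes $\bigcap_\fm I_\fm = I$ (and likewise for $J$). Your argument via the finitely generated module $M = (I+J)/I$ and the vanishing criterion $M_\fm = 0$ for all $\fm \Rightarrow M = 0$ is cleaner and, as you correctly noted, does not require $A$ to be a domain at all. So your route is slightly more general; the paper's route explains why the domain hypothesis is present in the statement, even though it is not strictly needed.
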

\begin{proof}
Let $A_\fm \subset A^{\wedge \fm}$ be the localization of $A$ at $\fm$ and $I_{\fm}, J_\fm \subset A_\fm$ be localizations of $I,J$. Applying Lemma~\ref{interstion of completion with ring is ideal} to the local ring $A_\fm$, we get:
\begin{equation*}
I_\fm = I^{\wedge \fm} \cap A_{\fm} = J^{\wedge \fm} \cap A_\fm = J_\fm,
\end{equation*}
where the intersections are taken in $A^{\wedge \fm}$.

It is well-known that $\bigcap_{\fm} A_{\fm} = A$, where $\fm$ runs over all maximal ideals, and the intersection is taken in the fraction field of $A$. One easily sees that $\bigcap_\fm I_\fm = I$ and $\bigcap_\fm J_\fm = J$ inside these intersections. The result follows.
\end{proof}

We know turn to the main result of this section.

\begin{thm} \label{k-hikita from homo-hikita}
%Let $(G_\bv, {\bf{N}})$ be a quiver gauge theory. %of quiver with no cycles. 

Suppose for any $(t_\bv, f) \in (T_\bv / S_\bv) \times F$, there is an isomorphism of $H_{Z_{G_\bv}(t_\bv) \times F}(\pt)$-algebras
\begin{equation*}
H_{F} (\wti \fM(Z_{G_\bv}(t_\bv), {\bf{N}}^{(t_\bv, f)})) \simeq \bC[\cM(Z_{G_\bv}(t_\bv), {\bf{N}}^{(t_\bv, f)})_{\ff}^{\nu}].
\end{equation*}
(homological equivariant Hikita conjecture for $(Z_{G_\bv}(t_\bv), {\bf{N}}^{(t_\bv, f)})$). 

Then there is an isomorphism of $K_{{G_\bv} \times F}(\pt)$-algebras
\begin{equation*}
K^{F} (\wti \fM({G_\bv}, {\bf{N}})) \simeq \bC[(\cM({G_\bv}, {\bf{N}})^\times_{F})^{\nu}]
\end{equation*}
(K-theoretic equivariant Hikita conjecture for $({G_\bv}, {\bf{N}})$).

%\item 
%Suppose for any $t_\bv \in T_\bv / S_\bv$, there is an isomorphism of $H^{Z_{G_\bv}(t_\bv)}(\pt)$-algebras
%\begin{equation*}
%H^* (\wti \fM(Z_{G_\bv}(t_\bv), {\bf{N}}^{(t_\bv, t_\bw)})) \simeq \bC[\cM(Z_{G_\bv}(t_\bv), {\bf{N}}^{(t_\bv, t_\bw)})^{\nu}].
%\end{equation*}
%(homological Hikita conjecture for $(Z_{G_\bv}(t_\bv), {\bf{N}}^{(t_\bv, t_\bw)})$). 

%Then there is an isomorphism of $K^{{G_\bv}}(\pt)$-algebras
%\begin{equation*}
%K (\wti \fM({G_\bv}, {\bf{N}})) \simeq \bC[(\cM({G_\bv}, {\bf{N}})^\times)^{\nu}]
%\end{equation*}
%(K-theoretic Hikita conjecture for $({G_\bv}, {\bf{N}})$). 

\end{thm}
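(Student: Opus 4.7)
The strategy is to derive the K-theoretic conjecture from the homological one by comparing the kernels of the two canonical surjections from $K^{G_\bv \times F}(\pt)$ after completing at every closed point. By Corollary~\ref{corollary: surjections for K-hikita}, both sides of the desired isomorphism are presented as quotients of $R := K^{G_\bv \times F}(\pt) = \bC[(T_\bv/S_\bv) \times F]$ via the Kirwan map $\phi_1^\times$ and the Cartan map $\phi_2^\times$; setting $I_j := \ker \phi_j^\times$, the conclusion is equivalent to $I_1 = I_2$. Since $R$ is a Noetherian integral domain (the coordinate ring of a smooth, irreducible affine variety), Lemma~\ref{if ideals coincide at each completion, they coincide} reduces this to verifying $I_1^{\wedge (t_\bv, f)} = I_2^{\wedge (t_\bv, f)}$ inside $R^{\wedge (t_\bv, f)}$ for every maximal ideal indexed by $(t_\bv, f) \in (T_\bv/S_\bv) \times F$.

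Fixing such a point, I would invoke Proposition~\ref{completion of K-theory of quiver variety at any point} to identify the completed quiver side with $H_F(\wti\fM(Z_{G_\bv}(t_\bv), {\bf{N}}^{(t_\bv, f)}))^{\wedge(0,0)}$, and Corollary~\ref{completion of B-algebra at any point} combined with Theorem~\ref{thm_iso_after_completions} to identify the completed Coulomb side with $\bC[(\cM(Z_{G_\bv}(t_\bv), {\bf{N}}^{(t_\bv, f)})_\ff)^\nu]^{\wedge(0,0)}$. The fact that $\Upsilon$ preserves the $\pi_0(\cR)$-grading, and hence commutes with the $\nu$-action, is what lets Theorem~\ref{thm_iso_after_completions} descend to B-algebras. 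After both identifications, and using the equivariant Chern character to identify $R^{\wedge(t_\bv, f)} \simeq H^{Z_{G_\bv}(t_\bv) \times F}(\pt)^{\wedge(0,0)}$, the completed surjections $\widehat{\phi_j^\times}$ become exactly the completions at $(0,0)$ of the two surjections featured in the equivariant homological Hikita conjecture for the centralizer theory $(Z_{G_\bv}(t_\bv), {\bf{N}}^{(t_\bv, f)})$.

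The hypothesis of the theorem is precisely that this equivariant homological Hikita conjecture holds, so its two kernels agree inside $H^{Z_{G_\bv}(t_\bv) \times F}(\pt)$; since completion at $(0,0)$ is exact on finitely generated modules, the completed kernels agree as well. Transporting back through the two identifications gives $I_1^{\wedge(t_\bv, f)} = I_2^{\wedge(t_\bv, f)}$, and Lemma~\ref{if ideals coincide at each completion, they coincide} upgrades this to the desired global equality $I_1 = I_2$. Note the parallel with the proof of Theorem~\ref{equivariant hikita from non-equivariant}: there we compared the two sides over $\bC[\ff]$ fiber by fiber, whereas here we compare them over all of $R$ completion by completion, with the Riemann--Roch isomorphism $\Upsilon$ providing the bridge between the K-theoretic and homological worlds.

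The main technical obstacle will be checking that the two identifications described above intertwine $\phi_1^\times, \phi_2^\times$ with their homological analogs for the centralizer theory. On the quiver side this requires compatibility of equivariant K-theoretic Kirwan surjectivity \cite{MN18} with the localization theorem and with the equivariant Chern character; on the Coulomb side it requires compatibility of $\Upsilon$ with the embedding of $K_{G_\bv \times F}(\pt)$ as the Cartan subalgebra of \cite[3(vi)]{BFN18} and with its homological counterpart, together with functoriality of $\Upsilon$ under the fixed-point embedding $\cR_{Z_{G_\bv}(t_\bv), {\bf{N}}^{(t_\bv, f)}} \hookrightarrow \cR_{G_\bv, {\bf{N}}}$ from Lemma~\ref{fixed points on BFN space}. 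Both ultimately reduce to naturality properties built into the constructions of Section~\ref{sec: Riemann-Roch} and of \cite{MN18, BFN18}.
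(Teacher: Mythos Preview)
Your proposal is correct and follows essentially the same route as the paper: reduce to equality of kernels via Corollary~\ref{corollary: surjections for K-hikita}, check this after completion at every maximal ideal via Lemma~\ref{if ideals coincide at each completion, they coincide}, and then use Proposition~\ref{completion of K-theory of quiver variety at any point} and Corollary~\ref{completion of B-algebra at any point} to identify the completed diagram with the completed homological Hikita diagram for the centralizer theory, where the hypothesis applies. Your extra care in tracking how the identifications intertwine $\phi_1^\times,\phi_2^\times$ with their homological counterparts is a point the paper leaves implicit.
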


\begin{proof}

Due to Corollary \ref{corollary: surjections for K-hikita}, our task is to show that $\ker \phi^\times_1 = \ker \phi^\times_2$ for homomorphisms~\eqref{surjections from K-theory of point to both sides}. By Lemma \ref{if ideals coincide at each completion, they coincide}, it is sufficient to show that $\ker \phi^\times_1$ and $\ker \phi^\times_2$ coincide after completing  at every maximal ideal. That is what we show.

Pick $(t_\bv, f) \in (T_\bv / S_\bv) \times F$. Take completion at this point of \eqref{surjections from K-theory of point to both sides}. By Proposition \ref{completion of K-theory of quiver variety at any point} and Corollary \ref{completion of B-algebra at any point}, it is nothing else than
% https://q.uiver.app/#q=WzAsNCxbMSwwLCJIXntaX3tHX3tcXGJvbGQgdn19KHRfe1xcYm9sZCB2fSkgXFx0aW1lcyBUX3tcXGJvbGQgd319KFxcbWF0aHJte3B0fSlee1xcd2VkZ2UgKDAsIDApfSJdLFswLDEsIkhee1Rfe1xcYm9sZCB3fX0oXFx3aWRldGlsZGV7XFxtYXRoZnJhayBNfShaX3tHX3tcXGJvbGQgdn19KHRfe1xcYm9sZCB2fSksIE5eeyh0X3tcXGJvbGQgdn0sIHRfe1xcYm9sZCB3fSl9KSlee1xcd2VkZ2UgKDAsIDApfSJdLFsyLDEsIlxcbWF0aGJiIENbXFxtYXRoY2FsIE0oWl97R197XFxib2xkIHZ9fSh0X3tcXGJvbGQgdn0pLCBOXnsodF97XFxib2xkIHZ9LCB0X3tcXGJvbGQgd30pfSlfe3tcXG1hdGhmcmFrIHR9X3tcXGJvbGQgd319XntcXHJob31dXntcXHdlZGdlICgwLCAwKX0iXSxbMCwwXSxbMCwxLCJcXHBoaV8xXntcXHdlZGdlICh0X3tcXGJvbGQgdn0sIHRfe1xcYm9sZCB3fSl9IiwyXSxbMCwyLCJcXHBoaV8yXntcXHdlZGdlICh0X3tcXGJvbGQgdn0sIHRfe1xcYm9sZCB3fSl9Il1d
\[\begin{tikzcd}[column sep=tiny]
	{} & {H_{Z_{G_{\bold v}}(t_{\bold v}) \times F}(\mathrm{pt})^{\wedge (0, 0)}} \\
	{H_{F}(\widetilde{\mathfrak M}(Z_{G_{\bold v}}(t_{\bold v}), {\bf{N}}^{(t_{\bold v}, t_{f})}))^{\wedge (0, 0)}} && {\mathbb C[\mathcal M(Z_{G_{\bold v}}(t_{\bold v}), {\bf{N}}^{(t_{\bold v}, t_{f})})_{{\mathfrak f}}^{\nu}]^{\wedge (0, 0)}}
	\arrow["{\phi_1^{\wedge (t_{\bold v}, t_{f})}}"', from=1-2, to=2-1]
	\arrow["{\phi_2^{\wedge (t_{\bold v}, t_{f})}}", from=1-2, to=2-3].
\end{tikzcd}\]
Now the equality $\ker \phi_1^{\wedge (t_\bv, f)} = \ker \phi_2^{\wedge
(t_\bv, f)}$ is nothing else but the (completed) homological Hikita conjecture for $(Z_{G_\bv}(t_\bv), {\bf{N}}^{(t_\bv, f)})$, which holds by assumptions. The result follows.
%The proof of part 2) is the same, using surjections from $K^{G_\bv}(\pt)$ instead of $K^{G_\bv \times F}(\pt)$.
\end{proof}

\begin{cor} \label{cor: k-hikita for ade}
Equivariant K-theoretic Hikita conjecture holds for quiver of type ADE under same assumptions as in Corollary \ref{homo-hikita for ADE quivers}.
\end{cor}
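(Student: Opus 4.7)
The plan is to combine Theorem~\ref{k-hikita from homo-hikita} (the general reduction from K-theoretic to homological equivariant Hikita) with the ADE case of the homological equivariant Hikita conjecture already established in Corollary~\ref{homo-hikita for ADE quivers}. First, I would apply Theorem~\ref{k-hikita from homo-hikita} to the quiver gauge theory $(G_\bv,{\bf{N}})$ with flavor torus $F = T_\bw$. This reduces the K-theoretic equivariant Hikita conjecture for $(G_\bv,{\bf{N}})$ to verifying the homological equivariant Hikita conjecture for every centralizer theory $(Z_{G_\bv}(t_\bv), {\bf{N}}^{(t_\bv,f)})$ with $(t_\bv, f) \in (T_\bv/S_\bv) \times T_\bw$.

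Next, since $Q$ is of ADE type it has no loops and no multi-edges, so Proposition~\ref{fixed points of quiver theory is quiver theory} applies: for each $(t_\bv, f)$, the centralizer theory $(Z_{G_\bv}(t_\bv), {\bf{N}}^{(t_\bv,f)})$ decomposes as a (finite) product of quiver gauge theories for the \emph{same} quiver $Q$, with dimension vectors $\bv^\la = \{\dim V_i^\la\}$ and framing vectors $\bw^\la = \{\dim W_i^\la\}$ satisfying $\sum_\la v_i^\la = v_i$ and $\sum_\la w_i^\la = w_i$ for each $i \in Q_0$. The framing restrictions imposed in the hypothesis of Corollary~\ref{homo-hikita for ADE quivers} (vanishing of certain $w_i$ in types $E_7$ and $E_8$) are inherited by each factor: if $w_i = 0$ in the original theory, then $\dim W_i^\la = 0$ for every weight $\la$.

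Consequently, each factor of each centralizer theory satisfies the assumptions of Corollary~\ref{homo-hikita for ADE quivers}, which provides the equivariant homological Hikita conjecture for it. Both sides of the equivariant Hikita conjecture are multiplicative with respect to products of gauge theories: the smooth quiver variety for a product is a product of quiver varieties (so equivariant cohomology is a tensor product), and the Coulomb branch for a product of theories is the product of Coulomb branches (compatibly with deformations and with taking schematic $\nu$-fixed points). Combining the factorwise isomorphisms yields the equivariant homological Hikita conjecture for the full centralizer theory, verifying the hypothesis of Theorem~\ref{k-hikita from homo-hikita} and finishing the proof. The only nontrivial point to check is the inheritance of the framing restrictions under the decomposition, which is essentially immediate; the rest is a direct application of the previously established theorems.
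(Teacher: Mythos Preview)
Your proposal is correct and follows essentially the same approach as the paper: apply Theorem~\ref{k-hikita from homo-hikita}, use Proposition~\ref{fixed points of quiver theory is quiver theory} to see that every centralizer theory is a product of ADE quiver theories inheriting the framing restrictions, and invoke Corollary~\ref{homo-hikita for ADE quivers} on each factor. The paper's proof is more terse but the logic is identical; your added remarks on inheritance of the $w_i=0$ conditions and on multiplicativity of both sides simply make explicit what the paper leaves implicit.
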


\begin{proof}
By Proposition \ref{fixed points of quiver theory is quiver theory} for $(G_\bv, {\bf{N}})$ of ADE type quiver and any $(t_\bv, f) \in T_\bv \times F$, the theory $(Z_{G_\bv}(t_\bv), {\bf{N}}^{(t_\bv, f)})$ is the sum of quiver theories of the same type. Thus Theorem \ref{k-hikita from homo-hikita} and Corollary \ref{homo-hikita for ADE quivers} imply the result.
\end{proof}

\begin{cor}[Weak form of the equivariant K-theoretic Hikita conjecture for Jordan quiver] \label{cor: k-hikita jordan quiver}
Let $Q$ be the Jordan quiver, take any dimension number $v$ and framing number $w$. Take the flavor torus $T_w = (\torus)^w \subset GL_w$. The K-theoretic Hikita conjecture holds for this data.
\end{cor}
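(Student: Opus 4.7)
The strategy is to apply Theorem~\ref{k-hikita from homo-hikita}, which reduces the equivariant K-theoretic Hikita conjecture for $(G_v, {\bf{N}})$ with flavor torus $F$ to the equivariant homological Hikita conjecture for all centralizer theories $(Z_{G_v}(t_v), {\bf{N}}^{(t_v, f)})$ with flavor $F$. So first I would identify what these centralizer theories look like for the Jordan quiver with $F = T_w$: by Proposition~\ref{fixed points of quiver theory is quiver theory} (applied to $Q$ being the Jordan quiver), for any $(t_v, t_w) \in T_v \times T_w$, the theory $(Z_{G_v}(t_v), {\bf{N}}^{(t_v, t_w)})$ is a finite product of Jordan quiver gauge theories
\[
\prod_{\lambda \in \mathbb{C}^\times} \bigl(GL_{\dim V^\lambda},\; \End(V^\lambda) \oplus \Hom(V^\lambda, W^\lambda)\bigr),
\]
with $T_w$ acting as the product of tori $T_{\dim W^\lambda}$ in the standard way. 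Crucially, because we did not include loop rotation in $F$, the centralizer theories remain Jordan-quiver theories (had we included $\mathbb{C}^\times_{\mathrm{loop}}$, the centralizers would have been cyclic or type $A$ quivers and we would need Hikita there too).

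The next step is to supply equivariant homological Hikita for each such Jordan-quiver factor with flavor $T_{w'}$. This will come from the main result of \cite{KS25}, which establishes the equivariant homological Hikita conjecture for the Jordan quiver with the full flavor torus $T_{w'} \times \mathbb{C}^\times_{\mathrm{loop}}$. I would deduce the version with flavor $T_{w'}$ by base change along the inclusion $\mathfrak{t}_{w'} \hookrightarrow \mathfrak{t}_{w'} \oplus \Lie \mathbb{C}^\times_{\mathrm{loop}}$: on the quiver side the relevant equivariant cohomology is free of finite rank over $H_{T_{w'} \times \mathbb{C}^\times_{\mathrm{loop}}}(\mathrm{pt})$ by \cite[Theorem~7.3.5]{Nak01a}, and on the Coulomb side flatness follows from the isomorphism with the quiver side. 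Thus the fiber of the Hikita isomorphism at $0 \in \Lie \mathbb{C}^\times_{\mathrm{loop}}$ yields the desired statement with flavor $T_{w'}$. Taking products of these gives homological Hikita for all centralizer theories.

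Combining the previous two paragraphs, all hypotheses of Theorem~\ref{k-hikita from homo-hikita} are satisfied, so the equivariant K-theoretic Hikita conjecture holds for the Jordan quiver with $F = T_w$. The proof is essentially a straightforward combination of Theorem~\ref{k-hikita from homo-hikita}, Proposition~\ref{fixed points of quiver theory is quiver theory}, and the results of \cite{KS25}; the only delicate point is the base-change compatibility when restricting from the maximal flavor torus $T_w \times \mathbb{C}^\times_{\mathrm{loop}}$ to $T_w$, which I expect to be the main technical obstacle but is ultimately controlled by the freeness of the quiver side over the equivariant parameters.
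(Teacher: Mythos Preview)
Your proposal is correct and follows the same approach as the paper: identify the centralizer theories via Proposition~\ref{fixed points of quiver theory is quiver theory} as products of Jordan-quiver theories, then invoke Theorem~\ref{k-hikita from homo-hikita} together with the homological result of \cite{KS25}. You are more explicit than the paper about the base-change step from the maximal flavor torus $T_{w'} \times \mathbb{C}^\times_{\mathrm{loop}}$ of \cite{KS25} down to $T_{w'}$; this is a detail the paper leaves implicit, and your justification via freeness (equivariant formality) and right-exactness of specialization is correct.
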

\begin{proof}
By Proposition \ref{fixed points of quiver theory is quiver theory}, only quivers of the same type appear as the fixed points set. So, using Theorem \ref{k-hikita from homo-hikita}, we deduce the result from the homological case, which is the main result of \cite{KS25}.
\end{proof}

%\begin{rem}
We call it the weak form of the conjecture because we do not include the additional one-dimensional torus $\torus_{\mathrm{loop}}$ in the flavor group (see proof of Corollary \ref{cor: hiktia for A-quivers}). If we include this torus, then affine type A quivers appear as the fixed points, see \cite[Proposition~6]{KS22}. If one proves the homological Hikita for affine type A, then one gets the strong form of K-theoretic Hikita. Conversely, if one proves K-theoretic Hikita conjecture for the full flavor torus for the Jordan quiver, then one gets both the homological and K-theoretic Hikita conjecture for affine type A, using a K-theoretic version of Proposition \ref{prop: hikita from one theory to its centralizer} and Proposition \ref{prop: k-hikita implies homo-hikita}.

\subsection{Application: torus fixed points on Coulomb branches} \label{subsec: corollaries of K-Hikita}
K-theoretic Hikita conjecture connects geometry of quiver varieties and K-theoretic Coulomb branches in a non-trivial way. While K-theory of quiver varieties is a well-studied object, relatively little is known about geometry of K-theoretic Coulomb branches. We expect our conjecture to give new information about it. Below we list some immediate applications to the description of torus fixed points on deformed Coulomb branches. We fix $Q, \bv, \bw$, and denote the corresponding quiver variety $\fM$, and deformations of Coulomb branches $\cM_{\ff}, \cM^\times_{F}$. 
From now on and until the end of this section we assume both homological and K-theoretic Hikita conjecture to hold for $(Q, \bv, \bw)$ (for example, $Q$ may be of type ADE, and $\bw$ subject to assumptions of Corollary~\ref{homo-hikita for ADE quivers}). We also assume that $\nu$ is the character of $G_{\bf{v}}$ given by the product of determinants and that ${\bf{w}} \neq 0$.

\subsubsection{Fixed points: non-deformed case}

First, recall a general result of Crawley-Boevey \cite[Section~1]{CB}: 
%\begin{prop}{\cite[Section 1]{CB}}\label{prop: nak connected}
the quiver variety $\mathfrak{M}$ is always connected. %when it is nonempty.
%\end{prop}

For $Q$ without edge loops we recall a representation-theoretic characterization of ${\bf{v}}, {\bf{w}}$ such that the corresponding quiver variety $\mathfrak{M}$ is nonempty.

Let $\mathfrak{g}_Q$ be the symmetric Kac-Moody Lie algebra corresponding to $Q$. Let $\alpha_i$, $\omega_i$ ($i \in Q_0$) be simple roots and fundamental weights for $\mathfrak{g}_Q$. Set:
\begin{equation*}
\lambda = \sum_{i \in Q_0}w_i\omega_i, \qquad \mu = \lambda-\sum_{i \in Q_0} v_i\alpha_i.
\end{equation*}
Let $V(\lambda)$ be the integrable highest weight representation of $\mathfrak{g}_Q$ with highest weight $\lambda$. Let $V(\lambda)_\mu$ be the weight $\mu$ subspace of $V(\lambda)$. It follows from the main results of \cite{Nak98}, as well as \cite[Theorem 2.15]{nak branching} (see also \cite[Remark~3.5]{HeLi14}) that for 
%\begin{prop}\label{prop: nak nonempty}
$Q$ with no edge loops, the variety $\mathfrak{M}$ is nonempty iff $V(\lambda)_\mu \neq 0$.\footnote{Actually, for an \underline{arbitrary} quiver $Q$, it is known when the variety  $\mathfrak{M}$ is nonempty (see \cite[Theorem~1.3]{BS}). We are grateful to Gwyn Bellamy and Pavel Shlykov for explaining this to us and providing the reference.}  
%\end{prop}

Combining facts above with (non-equivariant) K-theoretic Hikita conjecrture we obtain the following corollary (compare with \cite[Conjecture~3.25(1)]{bfn_slices}). 
\begin{cor}\label{cor: fixed coulomb}
\begin{enumerate}[(a)]
    \item \label{cor: fixed coulomb a}  We have $\mathcal{M}^\nu(\bC) =(\mathcal{M}^\times)^\nu(\bC)
$ is a single point if $\mathfrak{M} \neq \varnothing$ and is empty otherwise.

\item If $Q$ has no edge loops, then 
$
\mathcal{M}^\nu(\bC)=(\mathcal{M}^\times)^\nu(\bC)
$ is a single point if $V(\lambda)_\mu \neq 0$ and is empty otherwise.
\end{enumerate}
\end{cor}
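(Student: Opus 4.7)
The plan is a direct reduction via the hypothesized Hikita isomorphisms in their non-equivariant forms: by assumption we have
\[
\bC[\mathcal{M}^\nu]\simeq H^*(\widetilde{\mathfrak{M}},\bC),\qquad \bC[(\mathcal{M}^\times)^\nu]\simeq K(\widetilde{\mathfrak{M}})
\]
as $\bC$-algebras, so the question about $\bC$-points of $\nu$-fixed loci on both Coulomb branches translates into a question about the spectrum of cohomology and K-theory of $\widetilde{\mathfrak{M}}$.

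First I would dispose of the empty case. If $\widetilde{\mathfrak{M}}=\varnothing$, then both $H^*(\widetilde{\mathfrak{M}},\bC)$ and $K(\widetilde{\mathfrak{M}})$ are zero, hence $\mathcal{M}^\nu(\bC)=(\mathcal{M}^\times)^\nu(\bC)=\varnothing$. For the product-of-determinants character $\nu$, $\widetilde{\mathfrak{M}}=\varnothing$ is equivalent to $\mathfrak{M}=\varnothing$: the projective morphism $\widetilde{\mathfrak{M}}\to \mathfrak{M}$ is surjective whenever its source is nonempty (and the generic stability assumption together with ${\bf{w}}\neq 0$ guarantees the image is all of $\mathfrak{M}$). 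In the complementary case $\widetilde{\mathfrak{M}}\neq\varnothing$, the Crawley-Boevey connectedness theorem cited in the preamble says $\widetilde{\mathfrak{M}}$ is a connected smooth quasi-projective variety of some finite dimension $d$. I would then argue that both algebras $H^*(\widetilde{\mathfrak{M}},\bC)$ and $K(\widetilde{\mathfrak{M}})_\bC$ are local Artinian $\bC$-algebras with residue field $\bC$, which immediately yields that their spectra consist of a single $\bC$-point.

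For $H^*(\widetilde{\mathfrak{M}},\bC)$ the locality is standard: $H^{>2d}=0$, so any element of $H^{>0}$ is nilpotent, and connectedness gives $H^0=\bC$, so $H^{>0}$ is the unique maximal ideal. For $K(\widetilde{\mathfrak{M}})_\bC$ I would use the codimension-of-support filtration $F^{\bullet}$: since $\widetilde{\mathfrak{M}}$ is smooth of dimension $d$, $F^{d+1}=0$, hence the augmentation ideal $F^1=\ker(\operatorname{rk}\colon K(\widetilde{\mathfrak{M}})\to\bZ)$ is nilpotent; connectedness again gives $K(\widetilde{\mathfrak{M}})/F^1=\bZ$. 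Thus $F^1\otimes\bC$ is the unique maximal ideal. Alternatively, one may transport locality of $H^*$ to $K$ via the ring-level Chern character, which on Nakajima quiver varieties (smooth, admitting an equivariant affine paving) gives an isomorphism $K(\widetilde{\mathfrak{M}})_\bC\simeq H^{\mathrm{even}}(\widetilde{\mathfrak{M}},\bC)$ of commutative $\bC$-algebras. This finishes part (a).

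For part (b), under the no-edge-loops hypothesis, the cited results of Nakajima \cite{Nak98} together with \cite{nak branching} and \cite{HeLi14} give the equivalence $\widetilde{\mathfrak{M}}\neq \varnothing \iff V(\lambda)_\mu\neq 0$; combined with (a), this gives the stated representation-theoretic characterization. The main technical point is the locality of $K(\widetilde{\mathfrak{M}})_\bC$, which is handled by the codimension filtration argument above; a minor auxiliary check is the identification of the emptiness conditions for $\mathfrak{M}$ and $\widetilde{\mathfrak{M}}$ at the product-of-determinants stability, which is where the hypothesis ${\bf{w}}\neq 0$ enters.
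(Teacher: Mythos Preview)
Your proposal is correct and follows essentially the same route as the paper: reduce via the assumed Hikita isomorphisms to a statement about the reduced spectrum of $H^*(\mathfrak{M})$ and $K(\mathfrak{M})_\bC$, then use Crawley-Boevey connectedness and nilpotence of positive-degree classes. The paper phrases the last step slightly differently (it identifies the two algebras directly via the Chern character and then passes to the quotient by the radical, which is $H^0(\mathfrak{M})$), while you argue locality of $K(\mathfrak{M})_\bC$ via the codimension filtration; these are equivalent and your version is perfectly fine. One minor remark: in this subsection the paper's $\mathfrak{M}$ already denotes the smooth quiver variety, so your detour through the comparison $\widetilde{\mathfrak{M}}=\varnothing \Leftrightarrow \mathfrak{M}=\varnothing$ is not needed (though harmless).
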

\begin{proof}
We have identifications:
\begin{equation}\label{eq: hikita hom k repeat}
\mathbb{C}[\mathcal{M}^\nu] \simeq H^*(\mathfrak{M}), \qquad \mathbb{C}[(\mathcal{M}^\times)^\nu] \simeq K(\mathfrak{M}). 
\end{equation}
Note now that the algebras $H^*(\mathfrak{M})$, $K(\mathfrak{M})$ are isomorphic via the Chern character, so we obtain the identification of algebras $\mathbb{C}[\mathcal{M}^\nu] \simeq \mathbb{C}[(\mathcal{M}^\times)^\nu]$ that implies the identification $\mathcal{M}^\nu(\bC)=(\mathcal{M}^\times)^\nu(\bC)$. Note now that $\mathcal{M}^\nu(\bC)$ is nothing else but the spectrum of the quotient of $\mathbb{C}[\mathcal{M}^\nu]$ by the radical. So, applying~\eqref{eq: hikita hom k repeat}, we conclude that $\mathcal{M}^\nu(\bC)$ is isomorphic to the spectrum of $H^*(\mathfrak{M})$ modulo the radical. Clearly, this quotient is isomorphic to $H^0(\mathfrak{M})$. Now, the claims of \cite{CB}, \cite{Nak98} cited above imply the claim. 
\end{proof}

\begin{rem} \label{rem: iso of fixed point algebras}
Let's  point out that the proof of Corollary~\ref{cor: fixed coulomb} also implies that the algebras $\mathbb{C}[\mathcal{M}^\nu]$, $\mathbb{C}[(\mathcal{M}^\times)^\nu]$ are isomorphic. Assume for a second that $\mathfrak{M} \neq \varnothing$. Then, the isomorphisms~\eqref{eq: hikita hom k repeat} suggest that the unique $\nu$-fixed points of $\mathcal{M}^\times$, $\mathcal{M}$ are nonsingular iff $H^*(\mathfrak{M})=\mathbb{C}$ (for example, when $\mathfrak{M}$ is a point or, more generally, is $T^*\mathbb{A}^k$).

In fact, this isomorphism of algebras can be proved without assuming the Hikita conjecture, using Theorem~\ref{thm_iso_after_completions} and an unpublished result of Kamnitzer--Weekes\footnote{We thank Kifung Chan for asking us this question.}.
\end{rem}

\subsubsection{Fixed points: deformed case}

Let's now describe the fixed points of deformed K-theoretic and homological Coulomb branches. Using the same computation as in the proof of Corollary~\ref{localization of quiver side of homological hikita over T_w}, this  reduces to the nondeformed case above (but for different quiver gauge theory).

Fix an element $f^\times \in F$ and let $\mathcal{M}^\times_{f^\times}$ be the fiber of $\mathcal{M}^\times_F$ over $f^\times$.
%Set $F_{f^\times} := \overline{\{(f^\times)^n\,|\, n \in \mathbb{Z}\}}$. 
For an element $t_{\bf{v}}^\times \in T_{\bf{v}}/W$,
%For a cocharacter $\gamma\colon F_{f^\times} \rightarrow G_{\bf{v}}$,
%$t_{\bf{v}} \in T_{\bf{v}}/S_{\bf{v}}$, 
let $Q_{t_{\bf{v}}^\times,f^\times}$ be the quiver as in the proof of Proposition \ref{prop: fixed points fo quiver for arbitrary F}, namely the one that corresponds to $(Z_{G_{\bf{v}}}(t_{\bf{v}}^{\times}),N^{(t^{\times}_{\bf{v}},f^\times)})$.
Let $\mathfrak{M}_{(t_{\bf{v}}^{\times},f^\times)}$ be the corresponding quiver variety. 
We say that $t_{\bf{v}}^\times$ is {\emph{relevant}} to $f^\times$ if the corresponding quiver variety $\mathfrak{M}_{(t_{\bf{v}}^{\times},f^\times)}$ is nonempty.  Similarly, for $f \in \mathfrak{f}$, we say that $t_{\bf{v}} \in \mathfrak{t}_{\bf{v}}/S_\bv$ is relevant to $f$ if the quiver variety corresponding to $(Z_{G_{\bf{v}}}(t_{\bf{v}}),N^{(t_{\bf{v}},f)})$ is nonempty.  For a variety $X$, let $\on{Comp}(X)$ denote the set of its connected components.

\begin{cor}
(a) There are canonical bijections:
\begin{equation*}
(\mathcal{M}^\times_{f^\times})^\nu(\mathbb{C}) \leftrightarrow  \on{Comp}(\mathfrak{M}^{f^\times}) \leftrightarrow \{t_{\bf{v}}^{\times} \in T_{\bf{v}}\,|\, t_{\bf{v}}^{\times}~\text{is relevant for}~f^\times\}/S_\bv,
\end{equation*}
\begin{equation*}
(\mathcal{M}_{f})^\nu(\mathbb{C})  \leftrightarrow \on{Comp}(\mathfrak{M}^f) \leftrightarrow \{t_{\bf{v}} \in \mathfrak{t}_{\bf{v}}\,|\, t_{\bf{v}}~\text{is relevant for}~f\}/S_\bv. 
\end{equation*}

(b) If $f^\times$ is generic in some $A \subset F$ and $f$ is generic in $\operatorname{Lie}A$, there is canonical bijection: 
\begin{equation*}
(\mathcal{M}^\times_{f^\times})^\nu(\mathbb{C}) \leftrightarrow (\mathcal{M}_{f})^\nu(\mathbb{C}).
\end{equation*}
\end{cor}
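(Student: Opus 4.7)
The plan is to reduce both (a) and (b) to Corollary~\ref{cor: fixed coulomb} by combining the equivariant Hikita conjectures (which hold for our $(Q,\bv,\bw)$ by assumption) with the localization statements from Sections~\ref{subsec: localization of homo} and~\ref{subsec: completions of K-hikita}, and with Proposition~\ref{prop: fixed points fo quiver for arbitrary F} identifying centralizer theories as quiver theories.

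For the K-theoretic half of (a), I would first apply the equivariant K-theoretic Hikita conjecture~\eqref{deformed K-theoretic hikita conjecture} and base-change to the fiber at $f^\times$ to obtain
\begin{equation*}
\mathbb{C}[(\mathcal{M}^\times_{f^\times})^\nu] \simeq K^F(\widetilde{\mathfrak{M}}) \otimes_{K_F(\mathrm{pt})} \mathbb{C}_{f^\times}.
\end{equation*}
Then a K-theoretic analog of Corollary~\ref{localization of quiver side of homological hikita over T_w}, obtained from Proposition~\ref{completion of K-theory of quiver variety at any point} by the same finite-rank argument as in the homological case, rewrites this as $\bigoplus_{[t^\times_\bv]} K(\widetilde{\mathfrak{M}}_{(t^\times_\bv,f^\times)})$, with only finitely many nonzero summands. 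By Proposition~\ref{prop: fixed points fo quiver for arbitrary F}, each summand is the K-theory of a Nakajima quiver variety for $Q_{t^\times_\bv,f^\times}$, hence either vanishes or is that of a connected variety by Crawley--Boevey's theorem (cited in the proof of Corollary~\ref{cor: fixed coulomb}). Passing to the reduced spectrum therefore contributes exactly one closed point of $(\mathcal{M}^\times_{f^\times})^\nu$ for each relevant $S_\bv$-orbit of $t^\times_\bv$, which simultaneously yields both bijections of (a): the middle set $\on{Comp}(\mathfrak{M}^{f^\times})$ is identified with the relevant orbits via the decomposition $\widetilde{\mathfrak{M}}^{f^\times} = \bigsqcup_{[t^\times_\bv]}\widetilde{\mathfrak{M}}_{(t^\times_\bv,f^\times)}$, using also that $\widetilde{\mathfrak{M}}\to\mathfrak{M}$ has connected fibers. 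The homological half of (a) proceeds verbatim, replacing~\eqref{deformed K-theoretic hikita conjecture} by~\eqref{deformed hikita conjecture} and invoking Corollary~\ref{localization of quiver side of homological hikita over T_w} directly; the quivers governing the two decompositions are a priori different, but this is irrelevant, since in both cases one only needs each fixed-point component to be a single connected (or empty) quiver variety.

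For part (b), I would use that $f^\times$ generic in $H$ forces the Zariski closure of $\langle f^\times\rangle$ to equal $H$, so $\mathfrak{M}^{f^\times}=\mathfrak{M}^H$, and symmetrically $\mathfrak{M}^f=\mathfrak{M}^{\operatorname{Lie}H}=\mathfrak{M}^H$. Part~(a) then identifies both $(\mathcal{M}^\times_{f^\times})^\nu(\mathbb{C})$ and $(\mathcal{M}_{f})^\nu(\mathbb{C})$ canonically with $\on{Comp}(\mathfrak{M}^H)$, yielding the asserted canonical bijection. The main technical step I expect to need to verify carefully is the K-theoretic finite-rank localization invoked above; this should follow from Proposition~\ref{completion of K-theory of quiver variety at any point} together with finite generation of $K^F(\widetilde{\mathfrak{M}})$ over $K_F(\mathrm{pt})$, which in turn holds by the standard $F$-invariant algebraic cell decomposition of Nakajima quiver varieties.
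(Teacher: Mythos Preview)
Your proposal is correct and follows essentially the same route as the paper: apply the equivariant (K-theoretic, resp.\ homological) Hikita isomorphism, specialize to the fiber over $f^\times$ (resp.\ $f$), decompose via the localization argument of Corollary~\ref{localization of quiver side of homological hikita over T_w} into a direct sum indexed by $[t_\bv]$, and then invoke Crawley--Boevey connectedness exactly as in Corollary~\ref{cor: fixed coulomb}; for (b) you use $\mathfrak{M}^{f^\times}=\mathfrak{M}^H=\mathfrak{M}^f$, which is precisely the paper's one-line justification. One small caveat: your appeal to an ``$F$-invariant algebraic cell decomposition'' to get finite generation of $K^F(\widetilde{\mathfrak{M}})$ over $K_F(\mathrm{pt})$ is not available for arbitrary quiver varieties; the correct input is the finite-rank/freeness result analogous to \cite[Theorem~7.3.5]{Nak01a}, but this does not affect the argument.
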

\begin{proof}
Let's prove part (a) for $\mathcal{M}^\times_{f^\times}$, the argument for $\mathcal{M}_f$ is identical. 
The argument from the proof of Corollary~\ref{localization of quiver side of homological hikita over T_w} combined with equivariant K-theoretic Hikita shows that the algebra $\bC[(\mathcal{M}^\times_{f^\times})^\nu]$ is isomorphic to $K(\mathfrak{M}^{f^\times})$ which is isomorphic to the direct sum of K-theories of quiver varieties $\mathfrak{M}_{(t_{\bf{v}}^\times,f^\times)}$ for relevant $t_{\bf{v}}^\times$. The same argument as in the proof of Corollary~\ref{cor: fixed coulomb} finishes the proof.  Part (b) follows from part (a) together with $\mathfrak{M}^f=\mathfrak{M}^{f^\times}$. In fact, part (b) is true in general (without assuming Hikita conjecture), compare with Remark~\ref{rem: iso of fixed point algebras}. %when $f^\times$ is generic in $\operatorname{exp}^{\mathrm{alg}}(f)$.
\end{proof}

\begin{rem}
Note that that every element of the set $\on{Comp}(\mathfrak{M}^{f^\times})$ is an algebraic variety. In particular, we have the dimension function $\on{Comp}(\mathfrak{M}^{f^\times}) \rightarrow \mathbb{Z}_{\geqslant 0}$. %associating to $X \in \on{Comp}(\mathfrak{M}^{f^\times})$ its dimension.
It would be interesting to describe this function via the Coulomb branch perspective.
When $Q$ is the Jordan quiver, similar objects were considered and studied in~\cite{Pae25}.
%and we believe that the results of \textit{loc. cit.} are related to our. For example, whenever $f^\times_0, f^\times \in F$ are such that $f_0^\times \in F_{f^\times}:=\overline{\{(f^\times)^n\,|\, n \in \bZ\}}$, we have an embedding $\mathfrak{M}^{f^\times} \subset \mathfrak{M}^{f^\times_0}$, inducing the equivalence relation on $\on{Comp}(\mathfrak{M}^{f^\times})$ -- $X \sim Y$ iff they lie in the same connected component of $\mathfrak{M}^{f_0^\times}$. In the Coulomb terms, this equivalence relation can be described as follows -- consider the family $(\mathcal{M}^\times_{F_{f^\times}})^\nu \rightarrow F_{f^\times}$.   Compare  with \cite[Proposition b]{Pae25}.
\end{rem}

\subsubsection{Quantization}
Finally, omitting the details, let us mention that assuming the quantized equivariant K-theoretic Hikita conjecture \eqref{quantized hikita conjecture} holds for $(Q,{\bf{v}},{\bf{w}})$, one would obtain the bijection: 
\begin{equation*}
\on{Irr}(\mathcal{O}_\nu(\mathcal{A}^\times_{q,f^\times}({\bf{v}},{\bf{w}})))    \longleftrightarrow \on{Comp}(\mathfrak{M}({\bf{v}},{\bf{w}})^{(q,f^\times)})
\end{equation*}
where $\on{Irr}(\mathcal{O}_\nu(\mathcal{A}^\times_{q,f^\times}))$ is the set of irreducible objects in the category $\mathcal{O}$ for the algebra
\begin{equation*}
\mathcal{A}^\times_{q,f^\times}({\bf{v}},{\bf{w}}) := (K^{G_{\bf{v}} \times F \times \bC^\times}(\mathcal{R}_{G_{\bf{v}},{\bf{N}}}))|_{(q,f^\times)}.
\end{equation*}
In particular, fixing ${\bf{w}}$ but allowing ${\bf{v}}$ to vary, we obtain the bijection:
\begin{equation}\label{eq: bij irr comp K theory}
\bigsqcup_{\bf{v}}\on{Irr}(\mathcal{O}_\nu(\mathcal{A}^\times_{q,f^\times}({\bf{v}},{\bf{w}})))    \longleftrightarrow \bigsqcup_{\bf{v}}\on{Comp}(\mathfrak{M}({\bf{v}},{\bf{w}})^{(q,f^\times)}).
\end{equation}

The homological version of the bijection (\ref{eq: bij irr comp K theory}) in case of ADE quivers is explained in \cite{KTWWY19, KTWWY19b}. Both sides are equipped with a structure of $\mathfrak{g}_Q$-crystal called monomial crystal (on the quiver variety side this is done by Nakajima in \cite{Nak01b} and on the Coulomb side this is one of the main results of the aforementioned papers). It is proved that the bijection induces an isomorphism of crystals. It would be very interesting to extend this to the K-theoretic setting as in homological setting monomial crystals proved to be a very useful tool to study category $\mathcal{O}$ for (truncated) shifted Yangians.

\appendix

\section{On homological Hikita conjecture in types ADE} \label{appendix: ade}
The Appendix concerns homological Hikita conjecture for type ADE quivers.
According to \cite{bfn_slices}, the Coulomb branch in this case is isomorphic to a generalized affine Grassmannian slice $\cW^\la_\mu$.
The conjecture thus establishes a relation between $\cW_\mu^\la$  and  the quiver variety $\wti \fM(\la - \mu, \la)$. Note that for the case when $\mu$ is dominant (that is, $\cW^\la_\mu$ is an honest ``non-generalized'' slice), the conjecture was proved in \cite[Theorem~8.1]{KTWWY19}.

In the main body of this paper, we prove two statements about ADE theories: equivariant version of homological conjecture (Corollary \ref{homo-hikita for ADE quivers}) and K-theoretic equivariant conjecture (Corollary \ref{cor: k-hikita for ade}). Both of the proofs use inductive argument, and even if one wants to prove the final result for the case when $\mu$ is dominant, our proofs use the result for smaller~$\la, \mu$, when $\mu$ is not necessarily dominant.

The goal of this Appendix is twofold. First, we prove the non-equivariant homological conjecture for the case when $\mu$ is not necessarily dominant (Theorem~\ref{thm: hikita for generalized slices}). Second, we  provide a direct geometric argument to give a different proof of Corollary \ref{homo-hikita for ADE quivers}, as we believe it is of independent interest (Theorem~\ref{Equivariant Hikita conjecture for ADE quivers}). Both of these arguments heavily rely on the proof of \cite[Theorem~8.1]{KTWWY19}.

In Section~\ref{sec: appendix slices and repellers} we study generalized slices in affine Grassmannian and prove required for us facts about repeller subschemes in them. In Section~\ref{sec: appendix non-eqvi hikita} we prove the non-equivariant Hikita conjecture.
%(it is used in the proof of Corollary~\ref{homo-hikita for ADE quivers}). 
In Section~\ref{sec: appendix equi-hikita} we prove the equivariant version by a direct geometric argument (reproving Corollary~\ref{homo-hikita for ADE quivers}).

%we proposed a general method to deduce the equivariant (homological) Hikita conjecture from the non-equivariant (Theorem \ref{equivariant hikita from non-equivariant}), and thus obtained the equivariant version for ADE quivers (Corollary \ref{homo-hikita for ADE quivers}), using the non-equivariant version proved in \cite{KTWWY19}. 

\subsection{Generalized slices and repeller subschemes} \label{sec: appendix slices and repellers}
For this subsection, let $G$ be an arbitrary reductive group with Cartan torus $T$, opposite Borel subgroups $B, B_-$ and their unipotent radicals $U, U_-$. The affine Grassmannian $\Gr_G = G((z)) / G[[t]]$ is an ind-scheme, parametrizing pairs $(\cP, \sigma)$, where $\cP$ is a $G$-bundle on $\bP^1$, and $\sigma\colon \cP^{\mathrm{triv}}_{\bP^1 \setminus \{0\}} \iso \cP|_{\bP^1 \setminus \{0\}}$ is a trivialization of $\cP$ away of $0 \in \bP^1$.

The thick affine Grassmannian for $G$ is a scheme (of infinite type) defined as $\operatorname{Gr}^{\mathrm{thick}}_G = G((z^{-1}))/G[z]$. 
It  is the moduli space of pairs $(\cP, \sigma_{D_\infty})$, where $\cP$ is a $G$-bundle on $\bP^1$, and $\sigma_{D_\infty} \colon \mathcal{P}_{D_\infty}^{\mathrm{triv}} \iso \mathcal{P}|_{D_\infty}$ is a trivialization; here $D_\infty = \operatorname{Spec}\bC[[z^{-1}]]$. Recall that the Beilinson--Drinfeld Grassmannian $\operatorname{Gr}_{G,\bA^N}$ parametrizes triples $(\underline{z},\mathcal{P},\sigma)$, where $\underline{z}=(z_1,\ldots,z_N) \in \bA^N$ is a collection of points, $\mathcal{P}$ is a $G$-bundle and $\sigma\colon \mathcal{P}^{\mathrm{triv}}_{\bP^1 \setminus \{z_1,\ldots,z_N\}} \iso \mathcal{P}|_{\bP^1 \setminus \{z_1,\ldots,z_N\}}$ is a trivialization. Starting with a collection $\underline{\lambda}=(\la_1,\ldots,\la_N)$ we define $\ol{\on{Gr}}^{\ul{\la}}_{G,\BA^N} \subset \operatorname{Gr}_{G,\bA^N}$ to be a reduced subvariety which $\mathbb{C}$-points are  triples $(\underline{z},\mathcal{P},\sigma)$ such that the pole of $\sigma$ at $z_i$ is $\leqslant \lambda_i$.
Denote also by $'\operatorname{Bun}_G(\bP^1)$ the moduli stack of $G$-bundles on $\bP^1$ with a $B$-structure at $\infty$. Let $\operatorname{Bun}^{w_0 \mu}_B(\bP^1)$ be the  the moduli stack of degree $w_0\mu$ $B$-bundles on $\bP^1$.

For fixed dominant coweight $\la$ and arbitrary coweight $\mu$, generalized slices in affine Grassmannian are defined as:
\begin{equation*}
\cW^{\lambda}_\mu := \ol{\on{Gr}}^{\la}_{G} \times_{'\operatorname{Bun}_G(\bP^1)} \operatorname{Bun}_{B}^{w_0\mu}(\mathbb{P}^1). 
\end{equation*}
For collection of dominant coweights $\ula$ and arbitrary $\mu$, the deformation of the generalized slice in the affine Grassmannians is defined as (see \cite[Section~2.]{bfn_slices}):
\begin{equation*}
\cW^{\ul{\la}}_{\mu,\bA^N} := \ol{\on{Gr}}^{\ul{\la}}_{G,\BA^N} \times_{'\operatorname{Bun}_G(\bP^1)} \operatorname{Bun}_{B}^{w_0\mu}(\mathbb{P}^1).
\end{equation*}

We have a map
\begin{equation*}
\operatorname{Gr}_{G,\bA^N} \rightarrow \operatorname{Gr}^{\mathrm{thick}}_G \times \bA^N,~(\mathcal{P},\sigma) \mapsto (\mathcal{P},\sigma|_{D_\infty},\underline{z}), 
\end{equation*}
it restricts to the closed embedding of schemes: 
\begin{equation*}
\ol{\on{Gr}}^{\ul{\la}}_{G,\BA^N} \hookrightarrow \operatorname{Gr}^{\mathrm{thick}}_G \times \bA^N.
\end{equation*}

We have a natural projection $G((z^{-1})) \times \bA^N \rightarrow \operatorname{Gr}_G^{\mathrm{thick}} \times \bA^N$, let $(\overline{G[z]z^\lambda G[z]})_{\mathbb{A}^N}$ be the preimage of $\ol{\on{Gr}}^{\ul{\la}}_{G,\BA^N}$.

It follows from \cite[Section 2(xi)]{bfn_slices} that 
\begin{equation*}
\mathcal{W}^{\underline{\lambda}}_{\mu,\mathbb{A}^N} = (\overline{G[z]z^\lambda G[z]})_{\mathbb{A}^N} \cap   
(U[[z^{-1}]]_1 z^\mu T[[z^{-1}]]_1 U_{-,1}[[z^{-1}]]_1 \times \mathbb{A}^N),
\end{equation*}
where by intersection we mean the fiber product over $G((z^{-1})) \times \bA^N$.

Recall the following definition (see \cite[Definition 1.8.3]{dg}). Let $Z$ be a space equipped with an action of $\mathbb{C}^\times$.  We set: 
\begin{equation*}
X^- := {\bf{Maps}}^{\mathbb{C}^\times}(\mathbb{A}^1_-,X),
\end{equation*}
where $\bA^1_-$ is $\bA^1$ with an action of $\bC^\times$ given by $t \cdot x = t^{-1}x$.

We start with couple well-known results.
Recall the cocharacter $2\rho\colon \bC^\times \rightarrow T$, it induces the action $\bC^\times \curvearrowright \operatorname{Gr}_{G,\bA^N}$.
\begin{lem}
The natural morphism $\operatorname{Gr}_{B_-,\bA^N} \rightarrow \operatorname{Gr}_{G,\bA^N}$ induces an isomorphism:
\begin{equation*}
\operatorname{Gr}_{B_-,\bA^N} \iso \operatorname{Gr}_{G,\bA^N}^{-}.
\end{equation*}
\end{lem}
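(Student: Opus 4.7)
The plan is to reduce this to the one-point case and then invoke the classical identification of the repeller of $\operatorname{Gr}_G$ under the $2\rho$-action with the semi-infinite orbits. The single-point case $\operatorname{Gr}_{B_-} \iso \operatorname{Gr}_G^-$ is a standard fact (see e.g. \cite[Section~5.3]{dg} or the discussion around the Mirkovi\'c--Vilonen cycles): the $2\rho$-fixed locus of $\operatorname{Gr}_G$ is exactly the discrete set $\{z^\mu \mid \mu \in X_*(T)\}$, the attractor of $z^\mu$ is the semi-infinite orbit $S^+_\mu = U((z))\cdot z^\mu$, and the repeller of $z^\mu$ is $S^-_\mu = U_-((z))\cdot z^\mu$; summing over $\mu$ and using $B_- = TU_-$ together with the identification of $\operatorname{Gr}_{B_-}$ as the disjoint union $\bigsqcup_\mu S^-_\mu$ yields the claim.

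To globalize over $\bA^N$, I would use the factorization structure of the Beilinson--Drinfeld Grassmannian. For a disjoint partition $\underline{z} = \underline{z}' \sqcup \underline{z}''$, factorization gives an isomorphism of the fiber of $\operatorname{Gr}_{G,\bA^N}$ at $\underline{z}$ with $\operatorname{Gr}_{G,\bA^{N'}}|_{\underline{z}'} \times \operatorname{Gr}_{G,\bA^{N''}}|_{\underline{z}''}$; the same holds for $\operatorname{Gr}_{B_-,\bA^N}$, and the $2\rho$-action is diagonal, so the repeller functor is compatible with this product decomposition. In particular, on the open locus of pairwise distinct $\underline{z}$, both sides of the desired isomorphism are products of single-point Grassmannians, where the result is known; this settles the statement over the open dense stratum.

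Next I would extend to all of $\bA^N$. The key point is that $\operatorname{Gr}_{B_-,\bA^N}$ is ind-flat over $\bA^N$ (essentially by the Plücker description, where each coordinate degenerates from a product of $N$ factors to a single factor as points collide), and the formation of the repeller commutes with base change along closed embeddings $\{\underline{z}\} \hookrightarrow \bA^N$ because $2\rho$ acts trivially on $\bA^N$. Combining with the description
\begin{equation*}
\operatorname{Gr}_{G,\bA^N}^- = {\bf Maps}^{\bC^\times}(\bA^1_-, \operatorname{Gr}_{G,\bA^N}) = {\bf Maps}^{\bC^\times}(\bA^1_-, \operatorname{Gr}_{G,\bA^N})
\end{equation*}
and the fact that a $\bC^\times$-equivariant map from $\bA^1_-$ into $\operatorname{Gr}_{G,\bA^N}$ is uniquely determined by its generic fiber together with a $\bC^\times$-equivariant limiting behavior (Drinfeld--Gaitsgory \cite[Section~1.4]{dg}), the morphism $\operatorname{Gr}_{B_-,\bA^N} \to \operatorname{Gr}_{G,\bA^N}^-$ is an ind-proper monomorphism whose restriction to the generic locus is an isomorphism, hence an isomorphism everywhere.

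The main obstacle will be the scheme-theoretic statement at non-generic points of $\bA^N$, where the factorization argument degenerates: one must show that the closed subscheme structures on both sides agree, not just the underlying sets. I would handle this by passing to the functor of points: a $T$-valued point of $\operatorname{Gr}_{G,\bA^N}^-$ for an affine test scheme $T$ is by definition a $\bC^\times$-equivariant $T$-family of maps $\bA^1_- \to \operatorname{Gr}_{G,\bA^N}$, and one checks, using the $2\rho$-grading on the local coordinates of $G((z^{-1}))$ around $z^\mu$, that such a family factors uniquely through $\operatorname{Gr}_{B_-,\bA^N}$. The bookkeeping is parallel to \cite[Proposition~1.4.20]{dg}; what is specific here is the interplay between the $\bC^\times$-action via $2\rho$ and the BD factorization parameter, and the observation that these two actions commute completes the argument.
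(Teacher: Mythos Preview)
The paper's proof is a one-line citation: for $\Gr_G$ this is \cite[Proposition~3.4]{HR21}, for twisted BD Grassmannians it is \cite[Proposition~5.6,iii)]{HR21}, and the same argument works here. Haines--Richarz argue directly at the level of functors of points, using the Iwasawa decomposition for the loop group over an arbitrary base; no factorization or specialization argument is involved.

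Your route via factorization is a genuine detour, and the extension step has a gap. The factorization argument correctly identifies the two sides over the open locus of pairwise distinct points, but your mechanism for extending across the diagonal is not valid as stated. You assert that $\Gr_{B_-,\bA^N} \to \Gr_{G,\bA^N}^{-}$ is an ind-proper monomorphism; the monomorphism part is fine (it factors a monomorphism through a subfunctor), but ind-properness is unjustified --- the semi-infinite orbits in $\Gr_G$ are not closed, and there is no reason the map to the repeller should be proper. Even granting a closed embedding, ``isomorphism on a dense open plus closed embedding implies isomorphism'' fails without reducedness or flatness on the \emph{target} side, and you have no a priori control over the scheme structure of $\Gr_{G,\bA^N}^{-}$. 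Your appeal to ind-flatness of $\Gr_{B_-,\bA^N}$ over $\bA^N$ does not help, since it is the repeller whose fibres you need to constrain.

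Your final paragraph is actually the correct approach and renders the rest unnecessary: a $\bC^\times$-equivariant map $\bA^1_- \to \Gr_{G,\bA^N}$ over an arbitrary test scheme can be analyzed directly using the $2\rho$-grading on the loop group, and one checks it factors through $\Gr_{B_-,\bA^N}$. This is exactly what \cite{HR21} does, uniformly over the base, without ever invoking factorization or passage from generic to special fibre. So the fix is to drop the first two thirds of your argument and carry out the functor-of-points computation you sketch at the end.
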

\begin{proof}
For $\on{Gr}_G$, this is \cite[Proposition~3.4]{HR21}. For a twisted version of BD Grassmannian, this is \cite[Proposition~5.6,iii)]{HR21}. For the usual BD Grassmannian $\on{Gr}_{G, \bA^N}$ the proof is the same.
\end{proof}
From now on, we use the identification $\operatorname{Gr}_{G,\bA^N}^{-} \simeq \operatorname{Gr}_{B_-,\bA^N}$.
Let us also recall the description of $\operatorname{Gr}_{B_-}^{\mathrm{thick}}=B_-((z^{-1}))/B_-[z]$. Denote $\Lambda=\operatorname{Hom}(\bC^\times,T)$.
\begin{lem}\label{descr of Gr thick B}
Connected components of the scheme $\operatorname{Gr}_{B_-}^{\mathrm{thick}}$ are labeled by $\Lambda$. The connected component  $\operatorname{Gr}_{B_-,\mu}^{\mathrm{thick}}$ corresponding to $\mu \in \Lambda$ is isomorphic to $z^\mu T[[z^{-1}]]_1U[[z^{-1}]]_1$ via the map $z^\mu g \mapsto [z^\mu g]$.
\end{lem}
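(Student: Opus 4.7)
The plan is to establish a Birkhoff-type factorization for $B_-((z^{-1}))$ and then read off the quotient by $B_-[z]$; I take the ``$U[[z^{-1}]]_1$'' in the statement to be $U_-[[z^{-1}]]_1$, i.e., the pro-unipotent subgroup of $B_{-,1}[[z^{-1}]]$, since this is what sits inside $B_-((z^{-1}))$ (compare with the Iwasawa factorization $B_{-,1}[[z^{-1}]] = T[[z^{-1}]]_1 \ltimes U_-[[z^{-1}]]_1$).

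First I would handle the torus case.  Any $t \in T((z^{-1}))$ factors uniquely as $t = t_\infty \cdot z^\mu \cdot t_0$ with $t_\infty \in T[[z^{-1}]]_1$, $\mu \in \Lambda$, and $t_0 \in T[z]$; this reduces to the rank-one identity that a nonzero Laurent series in $z^{-1}$ has a unique expression as an element of $1+z^{-1}\bC[[z^{-1}]]$ times a power of $z$ times a nonzero constant.  Next, for each negative root $\alpha$ the subgroup $U_\alpha \simeq \bG_a$ gives $\bC((z^{-1})) = z^{-1}\bC[[z^{-1}]] \oplus \bC[z]$; combined with the lower-central-series filtration on $U_-$ and a fixed ordering of the negative roots, this yields a unique factorization $U_-((z^{-1})) = U_-[[z^{-1}]]_1 \cdot U_-[z]$.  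Writing $b = tu \in B_-((z^{-1}))$ with $t \in T((z^{-1}))$ and $u \in U_-((z^{-1}))$, applying the torus decomposition to $t$, absorbing the $T[z]$ factor into the $U_-$ part via conjugation (the torus normalizes $U_-$), and using the semidirect structure $B_{-,1}[[z^{-1}]] = T[[z^{-1}]]_1 \ltimes U_-[[z^{-1}]]_1$, then produces the desired normal form $b = z^\mu \cdot t_\infty \cdot u_\infty \cdot b_{\mathrm{poly}}$ with $b_{\mathrm{poly}} \in B_-[z]$.

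Second I would verify uniqueness.  The key observation is that $B_{-,1}[[z^{-1}]] \cap B_-[z] = \{e\}$ in $B_-((z^{-1}))$: any polynomial matrix reducing to the identity at $z = \infty$ is the identity.  Uniqueness of $\mu$ follows from the composed map $B_-((z^{-1}))/B_-[z] \to T((z^{-1}))/T[z] \simeq \Lambda \times T[[z^{-1}]]_1$ induced by the projection $B_- \twoheadrightarrow T$.  Combining these, the morphism $z^\mu T[[z^{-1}]]_1 U_-[[z^{-1}]]_1 \to \operatorname{Gr}_{B_-,\mu}^{\mathrm{thick}}$, $z^\mu g \mapsto [z^\mu g]$, is bijective on $\bC$-points; the scheme-theoretic isomorphism follows because both the semidirect-product multiplication $T[[z^{-1}]]_1 \times U_-[[z^{-1}]]_1 \to B_{-,1}[[z^{-1}]]$ and left translation by $z^\mu$ are isomorphisms of ind-schemes.

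Finally, the connected-components statement: each of $T[[z^{-1}]]_1$ and $U_-[[z^{-1}]]_1$ is a pro-affine space (the inverse limit of the finite-dimensional affine spaces obtained by truncating at $z^{-N}$), hence connected, so each $\operatorname{Gr}_{B_-,\mu}^{\mathrm{thick}}$ is connected; they are pairwise disjoint by the uniqueness above and exhaust $\operatorname{Gr}_{B_-}^{\mathrm{thick}}$, giving $\pi_0(\operatorname{Gr}_{B_-}^{\mathrm{thick}}) = \Lambda$.  The only mildly-technical step is the factorization of $U_-((z^{-1}))$, where a fixed normal ordering on the negative roots and the nilpotence filtration on $U_-$ are needed to ensure that commutators land in higher-filtration pieces and do not obstruct the factorization; this is classical.
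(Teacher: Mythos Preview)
Your overall strategy---a Birkhoff-type factorization, handling the torus and unipotent pieces separately and combining via the semidirect product---is exactly the paper's approach, and your correction $U \leadsto U_-$ is right. The unipotent factorization $U_-((z^{-1})) = U_-[[z^{-1}]]_1 \cdot U_-[z]$ works scheme-theoretically because the underlying additive decomposition $R((z^{-1})) = z^{-1}R[[z^{-1}]] \oplus R[z]$ holds for any $R$, and the combining step by conjugation is fine once one observes that $T[z]$ normalizes $U_-[z]$ (so the polynomial unipotent piece can be moved past $t_0$).

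The genuine gap is in the torus step. Your ``rank-one identity'' that a nonzero Laurent series factors as an element of $1+z^{-1}\bC[[z^{-1}]]$ times $z^\mu$ times a \emph{nonzero constant} is only valid over a field: for a local ring $R$ with nilpotents, $R[z]^\times$ contains polynomials with nilpotent higher-degree coefficients (e.g.\ $1+\epsilon z$ for $\epsilon^2=0$), and the ``leading term'' of an element of $R((z^{-1}))^\times$ need not have invertible coefficient. Thus your argument only gives a bijection on $\bC$-points, and the sentence ``the scheme-theoretic isomorphism follows because both the semidirect-product multiplication \ldots\ and left translation by $z^\mu$ are isomorphisms of ind-schemes'' does not bridge the gap: those facts describe the \emph{source}, not the map to the quotient. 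The paper fixes this by working over an arbitrary local test algebra $R$, reducing to $T=\bC^\times$, and then using $\log$/$\exp$: after normalizing so that $a_0=1$, one writes $\log(\sum a_i z^i) = c_+ + c_-$ with $c_+ \in R[z]$ (finitely many nilpotent positive-degree terms) and $c_- \in z^{-1}R[[z^{-1}]]$, and recovers the factors as $\exp(c_+)\in R[z]^\times$ and $\exp(c_-)\in 1+z^{-1}R[[z^{-1}]]$. This is the missing ingredient; once inserted, your argument is complete.
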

\begin{proof}
It is enough to check that the natural multiplication morphism
\begin{equation*}
\bigsqcup_{\mu \in \Lambda} \left( z^\mu T[[z^{-1}]]_1U[[z^{-1}]]_1 \right) \times T[z]U[z] \rightarrow T((z^{-1}))U((z^{-1}))
\end{equation*}
is an isomorphism. 
%Follows from \cite[Lemma 4.6]{k}.

This is equivalent to showing that  morphisms:
\begin{align*}
\bigsqcup_{\mu \in \Lambda}(z^\mu T[[z^{-1}]]_1) \times T[z] &\rightarrow T((z^{-1})),&  U[[z^{-1}]]_1 \times U[z] &\rightarrow U((z^{-1}))
\end{align*}
are isomorphisms. The second one is an isomorphism by \cite[Lemma 4.6]{k}. To prove the claim for the first one it is enough to assume that $T=\bC^\times$. So, our goal is to show that for any test local $\bC$-algebra $R$, the natural morphism 
\begin{equation*}
\bigsqcup_{k \in \bZ}(z^k+z^{k-1}R[[z^{-1}]]) \times R[z]^\times \rightarrow R((z^{-1}))^{\times}
\end{equation*}
is an isomorphism.

An element of $R((z^{-1}))$ is invertible iff it is of the form $\sum_{i=k}^{-\infty} a_iz^i$ such that for some $l \leqslant k$, the elements $a_k, a_{k-1}, \ldots, a_{l+1}$ are nilpotent and the element $a_l \in R$ is invertible. An element of $R[z]$ is invertible iff it is of the form $\sum_{i=0}^{p}b_iz^i$ with $b_0$ invertible and $b_i, i>0$ nilpotent.

Pick an element $\sum_{i=k}^{-\infty} a_iz^i \in R((z^{-1}))^{\times}$, we want to prove that it can be uniquely presented as:
\begin{equation}\label{pres as product}
\sum_{i=k}^{-\infty} a_iz^i  = z^m \Big(\sum_{i=0}^p b_iz^i\Big)(1+z^{-1} \cdot {\text{lower terms}}).
\end{equation}
First of all, note that the first non-nilpotent term of the RHS of (\ref{pres as product}) is in front of $z^m$ and is equal to $b_0$ plus some linear combination of $b_i$ ($i>0$). We conclude that $m=l$. Dividing by $z^m$, we can assume that $m=l=0$, $k \geqslant 0$. Dividing by $a_0$, we can assume that $a_0=1$. Now, we can consider the logarithm 
\begin{equation*}
\operatorname{ln}\Big(\sum_{i=k}^{-\infty} a_iz^i \Big) = \operatorname{ln}\Big(1 + \sum_{i \neq 0} a_iz^i \Big) \in R((z^{-1})),
\end{equation*}
which is clearly well-defined. 
We can uniquely decompose this logarithm as $c_+ + c_-$, where $c_+ \in R[z]$, $c_- \in z^{-1}R[[z^{-1}]]$. 

Note now that $c_+$ is of the form $\sum_{i=0}^r c_iz^i$, with $c_0 \in R^\times$ and $c_i, i>0$ are nilpotent. We conclude that $\operatorname{exp}(c_+) \in R[z]^{\times}$. So, we see that 
\begin{equation*}
\sum_{i=k}^{-\infty} a_iz^i = \operatorname{exp}(c_+)\operatorname{exp}(c_-),
\end{equation*}
where $\operatorname{exp}(c_+) \in R[z]^{\times}$, $\operatorname{exp}(c_-) \in 1+ z^{-1}R[[z^{-1}]]$ and moreover it is clear that this decomposition is unique (because the decomposition of $\operatorname{ln}\Big(\sum_{i=k}^{-\infty} a_iz^i \Big)$ into the sum $c_+ + c_-$ is unique).
%Looking at the degrees of LHS and RHS of (\ref{pres as product}) we conclude that $k=p$. It then follows that $b_k=a_k$.
\end{proof}

For $\mu \in \Lambda$, set $\operatorname{Gr}_{B_-,\mu,\bA^N} := \operatorname{Gr}_{B_-,\bA^N} \cap 
\operatorname{Gr}^{\mathrm{thick}}_{B_-,\mu}$.

\begin{lem}\label{closed rep W to B-}
The natural morphism:     
\begin{equation*}
\mathcal{W}^{\underline{\lambda},-}_{\mu,\bA^N} \rightarrow \operatorname{Gr}_{B_-,\mu,\bA^N} \cap \ol{\on{Gr}}^{\ul{\la}}_{G,\BA^N} 
\end{equation*}
is a closed embedding.
\end{lem}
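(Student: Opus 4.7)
My plan is to compute the repeller $\mathcal{W}^{\underline{\lambda},-}_{\mu,\bA^N}$ explicitly via the open cell decomposition, and then recognize the natural morphism as the inclusion of a closed subscheme inside the target. The first step is to verify that the $\mathbb{C}^\times$-action on $\operatorname{Gr}_{G,\bA^N}$ induced by $2\rho$ preserves $\mathcal{W}^{\underline{\lambda}}_{\mu,\bA^N}$: on $\operatorname{Gr}^{\mathrm{thick}}_G=G((z^{-1}))/G[z]$, left multiplication by $2\rho(t)$ coincides with conjugation (since $2\rho(t)\in T\subset G[z]$), and conjugation by $T$ preserves the triple product $U[[z^{-1}]]_1\cdot z^\mu T[[z^{-1}]]_1\cdot U_{-,1}[[z^{-1}]]_1$, with $z^\mu$ itself $T$-fixed; the subvariety $(\overline{G[z]z^\lambda G[z]})_{\mathbb{A}^N}$ is $G[z]$-invariant, hence preserved.

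Next I would determine $\mathcal{W}^{\underline{\lambda},-}_{\mu,\bA^N}$ by a weight-space analysis. The cocharacter $2\rho$ acts with strictly positive weights on $\mathfrak{u}$, zero weight on $\mathfrak{t}$, and strictly negative weights on $\mathfrak{u}_-$, while the variable $z$ carries trivial $2\rho$-weight under conjugation. Running the analysis on the finite-type approximations $\mathcal{R}^d_{\leqslant\la}$ and passing to the limit (taking $(-)^-$ is compatible with projective limits of affine $\mathbb{C}^\times$-equivariant morphisms), the repeller of the pro-affine factor $U[[z^{-1}]]_1\cdot z^\mu T[[z^{-1}]]_1\cdot U_{-,1}[[z^{-1}]]_1$ is obtained by setting the $U[[z^{-1}]]_1$-coordinate equal to $1$. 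This yields
$$\mathcal{W}^{\underline{\lambda},-}_{\mu,\bA^N}=(\overline{G[z]z^\lambda G[z]})_{\mathbb{A}^N}\cap\bigl(z^\mu T[[z^{-1}]]_1 U_{-,1}[[z^{-1}]]_1\times\mathbb{A}^N\bigr).$$

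Finally I would combine this formula with Lemma~\ref{descr of Gr thick B} (applied to $B_-$), which identifies $z^\mu T[[z^{-1}]]_1 U_-[[z^{-1}]]_1$ with the connected component $\operatorname{Gr}^{\mathrm{thick}}_{B_-,\mu}$ under the projection $G((z^{-1}))\to\operatorname{Gr}^{\mathrm{thick}}_G$. Since $U_{-,1}[[z^{-1}]]_1\subset U_-[[z^{-1}]]_1$ is a closed subgroup, the displayed formula presents $\mathcal{W}^{\underline{\lambda},-}_{\mu,\bA^N}$ as a closed subscheme of $(\operatorname{Gr}^{\mathrm{thick}}_{B_-,\mu}\times\mathbb{A}^N)\cap\ol{\on{Gr}}^{\ul{\la}}_{G,\BA^N}$. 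The target $\operatorname{Gr}_{B_-,\mu,\bA^N}\cap\ol{\on{Gr}}^{\ul{\la}}_{G,\BA^N}$ sits inside the same ambient space as a closed subscheme (by the definition $\operatorname{Gr}_{B_-,\mu,\bA^N}=\operatorname{Gr}_{B_-,\bA^N}\cap\operatorname{Gr}^{\mathrm{thick}}_{B_-,\mu}$), and it contains the image of $\mathcal{W}^{\underline{\lambda},-}_{\mu,\bA^N}$ because $z^\mu T[[z^{-1}]]_1 U_{-,1}[[z^{-1}]]_1$ maps into $B_-((z^{-1}))/B_-[z]$. This realizes the natural morphism as a closed embedding. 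The main technical obstacle will be the weight-space argument of the second paragraph, where one must carefully justify the passage between the ind-scheme $\operatorname{Gr}_{G,\bA^N}$ and its finite-dimensional approximations in order to make sense of $(-)^-$ on the relevant infinite-dimensional pro-subschemes; once that is in hand, the remaining verifications are bookkeeping between the ordinary, the thick, and the Beilinson--Drinfeld incarnations of the affine Grassmannian.
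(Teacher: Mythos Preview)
Your proposal is correct and follows essentially the same route as the paper: both compute the repeller explicitly as $(\overline{G[z]z^\lambda G[z]})_{\mathbb{A}^N}\cap\bigl(z^\mu T[[z^{-1}]]_1 U_{-}[[z^{-1}]]_1\times\mathbb{A}^N\bigr)$ via the weight decomposition of the big cell, and then appeal to Lemma~\ref{descr of Gr thick B} to conclude. The only cosmetic difference is in the final step: the paper factors the morphism as $\mathcal{W}^{\underline{\lambda},-}_{\mu,\bA^N}\to(\text{target})\to\operatorname{Gr}^{\mathrm{thick}}_{B_-,\mu}$ and uses that if $g\circ f$ is a closed embedding with $g$ separated then $f$ is a closed embedding, whereas you embed both source and target into the common ambient $\operatorname{Gr}^{\mathrm{thick}}_{B_-,\mu}\times\bA^N$---these are two phrasings of the same fact.
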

\begin{proof}
Note that as $T$-spaces:
\begin{equation*}
U[[z^{-1}]]_1  z^\mu T[[z^{-1}]]_1  U_{-,1}[[z^{-1}]]_1 \simeq U[[z^{-1}]]_1 \times T[[z^{-1}]]_1  \times U_{-,1}[[z^{-1}]]_1. 
\end{equation*}
It follows that 
\begin{equation*}
(U[[z^{-1}]]_1 z^\mu T[[z^{-1}]]_1  U_{-,1}[[z^{-1}]]_1)^- = z^\mu T[[z^{-1}]]_1 U_{-,1}[[z^{-1}]].
\end{equation*}

So, we have
\begin{equation}\label{rep W matrix}
\mathcal{W}^{\underline{\la},-}_{\mu,\bA^N}=(\overline{G[z]z^\lambda G[z]})_{\mathbb{A}^N} \cap z^\mu T[[z^{-1}]]_1 U_{-,1}[[z^{-1}]].
\end{equation}

It now follows from Lemma \ref{descr of Gr thick B} that the composition 
\begin{equation*}
\mathcal{W}^{\underline{\lambda},-}_{\mu,\bA^N} \rightarrow \operatorname{Gr}_{B_-,\mu,\bA^N} \cap \ol{\on{Gr}}^{\ul{\la}}_{G,\BA^N} \rightarrow \operatorname{Gr}_{B_-,\mu}^{\mathrm{thick}}
\end{equation*}
is a closed embedding. Using that the morphism $\operatorname{Gr}_{B_-,\mu,\bA^N} \cap \ol{\on{Gr}}^{\ul{\la}}_{G,\BA^N}  \rightarrow \operatorname{Gr}_{B_-,\mu}^{\mathrm{thick}}$ is a closed embedding, hence, separated, we conclude that the morphism $\mathcal{W}^{\underline{\lambda},-}_{\mu,\bA^N} \rightarrow \operatorname{Gr}_{B_-,\mu,\bA^N}$ is also a closed embedding. 
\end{proof}

We are now ready to prove the main result of this section. We use the identification $\operatorname{Gr}_{G,\bA^N}^{-} \simeq \operatorname{Gr}_{B_-,\bA^N}  = \bigsqcup_{\mu \in \Lambda}\operatorname{Gr}_{B_-,\mu,\bA^N}$ discussed above.
\begin{prop}\label{rep are equal}
The natural morphism:
\begin{equation*}
(\mathcal{W}^{\underline{\lambda}}_{\mu,\bA^N})^{-} \rightarrow \operatorname{Gr}_{G,\bA^N}^{-}
\end{equation*}
is an isomorphism onto $\ol{\on{Gr}}^{\ul{\la}}_{G,\BA^N}  \cap  \operatorname{Gr}_{B_-,\mu,\bA^N}$.
\end{prop}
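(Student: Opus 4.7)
The plan is to upgrade the closed embedding of Lemma \ref{closed rep W to B-} to an isomorphism by constructing the inverse morphism directly from the matrix descriptions. Since Lemma \ref{closed rep W to B-} already provides a closed embedding $\mathcal{W}^{\underline{\lambda},-}_{\mu,\bA^N} \hookrightarrow \Gr_{B_-,\mu,\bA^N} \cap \ol{\Gr}^{\ul{\lambda}}_{G,\bA^N}$, it will suffice to produce a morphism in the reverse direction that composes to the identity on both sides.

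First, I would invoke (the $B_-$-version of) Lemma \ref{descr of Gr thick B}, whose proof applies verbatim with $U$ replaced by $U_-$ throughout, to obtain a canonical section
\begin{equation*}
\iota_\mu\colon \Gr^{\mathrm{thick}}_{B_-,\mu} \iso z^\mu T[[z^{-1}]]_1 U_{-,1}[[z^{-1}]] \hookrightarrow G((z^{-1}))
\end{equation*}
of the natural quotient. Composing $\iota_\mu$ with the forgetful morphism $\Gr_{B_-,\mu,\bA^N} \to \Gr^{\mathrm{thick}}_{B_-,\mu}$ and taking the product with $\mathrm{id}_{\bA^N}$ yields
\begin{equation*}
\psi \times \mathrm{id}_{\bA^N}\colon \Gr_{B_-,\mu,\bA^N} \to z^\mu T[[z^{-1}]]_1 U_{-,1}[[z^{-1}]] \times \bA^N \hookrightarrow G((z^{-1})) \times \bA^N.
\end{equation*}
By the very definition of $(\overline{G[z]z^\lambda G[z]})_{\mathbb{A}^N}$ as the preimage of $\ol{\Gr}^{\ul{\lambda}}_{G,\bA^N}$ under $G((z^{-1})) \times \bA^N \to \Gr^{\mathrm{thick}}_G \times \bA^N$, the restriction of $\psi \times \mathrm{id}_{\bA^N}$ to $\Gr_{B_-,\mu,\bA^N} \cap \ol{\Gr}^{\ul{\lambda}}_{G,\bA^N}$ factors through
\begin{equation*}
(\overline{G[z]z^\lambda G[z]})_{\mathbb{A}^N} \cap \bigl(z^\mu T[[z^{-1}]]_1 U_{-,1}[[z^{-1}]] \times \bA^N\bigr),
\end{equation*}
which by \eqref{rep W matrix} is precisely $\mathcal{W}^{\underline{\lambda},-}_{\mu,\bA^N}$.

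This constructs the required inverse: post-composing with $\mathcal{W}^{\underline{\lambda},-}_{\mu,\bA^N} \to \Gr_{G,\bA^N}$ recovers the identity on $\Gr_{B_-,\mu,\bA^N} \cap \ol{\Gr}^{\ul{\lambda}}_{G,\bA^N}$ (since $\iota_\mu$ is a section of the quotient), while the other composition is the identity on $\mathcal{W}^{\underline{\lambda},-}_{\mu,\bA^N}$ by uniqueness of the normal form supplied by $\iota_\mu$. There is no serious obstacle: the only technical point is the $B_-$-variant of Lemma \ref{descr of Gr thick B}, which is established by the identical argument, and the compatibility between the subscheme description \eqref{rep W matrix} and the canonical normal form on $\Gr^{\mathrm{thick}}_{B_-,\mu}$, which is immediate from the matrix form used to derive \eqref{rep W matrix}.
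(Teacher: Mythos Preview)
Your proposal is correct and follows essentially the same route as the paper: use Lemma~\ref{closed rep W to B-} for the closed embedding and then construct a section via the canonical normal form in $\Gr^{\mathrm{thick}}_{B_-,\mu}$, which lands in the matrix description~\eqref{rep W matrix}. This is exactly the argument the paper indicates (referring to \cite[Section~4.10]{k}); you have simply spelled out the details more explicitly, including the observation that Lemma~\ref{descr of Gr thick B} is already the $B_-$-statement modulo the evident typo $U \leadsto U_-$.
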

\begin{proof}
It follows from Lemma \ref{closed rep W to B-} that the morphism $(\mathcal{W}^{\underline{\lambda}}_{\mu,\bA^N})^{-} \hookrightarrow \ol{\on{Gr}}^{\ul{\la}}_{G,\BA^N}  \cap  \operatorname{Gr}_{B_-,\mu,\bA^N}$ is the closed embedding. It remains to construct a section of this morphism. This is done in completely same way as in \cite[Section 4.10]{k}.
\end{proof}

Denote $\nu = 2\rho$. 

\begin{cor} \label{cor: fixed points of deformed slices are same as of repellers}
The natural morphism of fixed points subschemes
\begin{equation*}
\bigsqcup_{\mu \leqslant \lambda}(\mathcal{W}^{\underline{\lambda}}_{\mu,\bA^N})^\nu \iso (\ol{\on{Gr}}^{\ul{\la}}_{G,\BA^N})^\nu
\end{equation*}
is an isomorphism. 
\end{cor}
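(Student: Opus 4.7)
The plan is to reduce the statement directly to Proposition~\ref{rep are equal} by taking $\nu$-fixed points on both sides, using the elementary fact that for any $\bC^\times$-scheme $X$ the fixed point subscheme $X^\nu$ is contained in the repeller $X^-$, so that $X^\nu = (X^-)^\nu$. This is the only nontrivial input beyond what is already established in this subsection.

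More concretely, I would argue as follows. By Proposition~\ref{rep are equal}, we have an identification of subschemes of $\operatorname{Gr}_{G,\bA^N}^-$:
\begin{equation*}
(\mathcal{W}^{\underline{\lambda}}_{\mu,\bA^N})^{-} \iso \ol{\on{Gr}}^{\ul{\la}}_{G,\BA^N} \cap \operatorname{Gr}_{B_-,\mu,\bA^N}.
\end{equation*}
Applying $(-)^\nu$, and using $(\mathcal{W}^{\underline{\lambda}}_{\mu,\bA^N})^{\nu} = ((\mathcal{W}^{\underline{\lambda}}_{\mu,\bA^N})^{-})^\nu$, we obtain
\begin{equation*}
(\mathcal{W}^{\underline{\lambda}}_{\mu,\bA^N})^\nu \iso (\ol{\on{Gr}}^{\ul{\la}}_{G,\BA^N} \cap \operatorname{Gr}_{B_-,\mu,\bA^N})^\nu.
\end{equation*}
Taking the disjoint union over $\mu$, and using the decomposition $\operatorname{Gr}^{\mathrm{thick}}_{B_-} = \bigsqcup_{\mu \in \Lambda}\operatorname{Gr}^{\mathrm{thick}}_{B_-,\mu}$ from Lemma~\ref{descr of Gr thick B} (which intersects with $\operatorname{Gr}_{B_-,\bA^N}$ to give $\operatorname{Gr}_{B_-,\bA^N} = \bigsqcup_\mu \operatorname{Gr}_{B_-,\mu,\bA^N}$), the right-hand side becomes
\begin{equation*}
\bigsqcup_{\mu} \big(\ol{\on{Gr}}^{\ul{\la}}_{G,\BA^N} \cap \operatorname{Gr}_{B_-,\mu,\bA^N}\big)^\nu = \big(\ol{\on{Gr}}^{\ul{\la}}_{G,\BA^N} \cap \operatorname{Gr}_{B_-,\bA^N}\big)^\nu = \big((\ol{\on{Gr}}^{\ul{\la}}_{G,\BA^N})^{-}\big)^\nu = (\ol{\on{Gr}}^{\ul{\la}}_{G,\BA^N})^\nu,
\end{equation*}
where in the second equality we used $\operatorname{Gr}_{G,\bA^N}^{-} \simeq \operatorname{Gr}_{B_-,\bA^N}$ and in the last equality again the containment of fixed points in the repeller.

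Finally, to reconcile the index set with the corollary's $\mu \leqslant \lambda$: for $\mu$ not bounded by $\underline{\la}$ (i.e., when $\mathcal{W}^{\underline{\lambda}}_{\mu,\bA^N}$ is empty, equivalently when $\ol{\on{Gr}}^{\ul{\la}}_{G,\BA^N} \cap \operatorname{Gr}_{B_-,\mu,\bA^N} = \varnothing$), the corresponding term contributes nothing, so the disjoint union over all $\mu \in \Lambda$ agrees with the disjoint union over $\mu \leqslant \lambda$. Since every step above is an isomorphism of schemes and all identifications are canonical (coming from the embeddings into $\operatorname{Gr}_{G,\bA^N}$), this proves the corollary. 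The main ``obstacle'' is really just keeping track of the scheme-theoretic (as opposed to set-theoretic) identifications, but this is guaranteed by the closed embedding statement in Proposition~\ref{rep are equal}; no further work is needed.
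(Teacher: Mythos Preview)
Your proof is correct and follows essentially the same approach as the paper's own proof, which simply says the claim follows from Proposition~\ref{rep are equal} together with the general fact that $Z^{\bC^\times} = (Z^-)^{\bC^\times}$ for any scheme $Z$ with a $\bC^\times$-action. You have spelled out in detail the steps that the paper leaves implicit.
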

\begin{proof}
The claim follows from Proposition \ref{rep are equal} using that for a scheme $Z$ with a $\bC^\times$-action we have $Z^{\bC^\times} = (Z^-)^{\bC^\times}$.
\end{proof}

%COnsider now the morphism
%Note now that the image of $(\overline{G[z]z^\lambda G[z]})_{\mathbb{A}^N} $ in $G((z^{-1}))$ lives in $G[z^{\pm 1}]_{\bA^N}$, so the intersection (\ref{rep W matrix}) is equal to $(\overline{G[z]z^\lambda G[z]})_{\mathbb{A}^N} \cap T[z^{-1}]_{1,\bA^N}z^\mu U_{-}[z^{-1}]_{1,\bA^N}$.

\begin{rem}
Of course, we also get the non-deformed version of Proposition~\ref{rep are equal} and hence Corollary~\ref{cor: fixed points of deformed slices are same as of repellers}:
\begin{equation} \label{eq: fixed points of slices are same as of repellers}
\bigsqcup_{\mu} (\cW^\la_\mu)^\nu \simeq (\ol \Gr_G^\la)^\nu.
\end{equation}
At the level of $\bC$-points, this was already checked in \cite[Theorem 3.1(1)]{k}.
\end{rem}

\subsection{Hikita conjecture for generalized slices} \label{sec: appendix non-eqvi hikita}
We now turn to the following setting.

Let $G$ be the adjoint group with simple Lie algebra $\g$ with Dynkin diagram $Q$ of type ADE, let $\la$ be its dominant coweight, and $\mu$  an arbitrary coweight.
%Let $\la \geq \mu$ be coweights of $G$, they give rise to the points $t^\la, t^\mu \in G[t, t^{-1}]$. Consider the thick affine Grassmannian $\Gr_G = G((t^{-1})) / G[t]$. 
%Let $\Gr^\la = G[t] t^\la$, $\Gr_\mu = G_1[[t^{-1}]] t^{w_0 \mu}$ be the %opposite Schubert cells, where $G_1[[t^{-1}]]$ is the kernel of the projection $G_1[[t^{-1}]] \rightarrow G, t^{-1} \mapsto 0$. Let $\cW^{{\la}}_\mu = \ov \Gr^\la \cap \Gr_\mu$ be the affine Grassmannian transverse slice. 
%Let $\nu$ be a generic cocharacter of the maximal torus of $G$.
Let $\la = \sum_{i \in Q_0} w_i \om_i$ be the decomposition into the sum of fundamental weights (here and throughout of this section, we identify weights and coweights using that $\mathfrak{g}$ is simply-laced), and $\al := \la - \mu = \sum_{i \in I} v_i \al_i$ be the decomposition to sum of simple roots.

The Coulomb branch, associated with quiver $Q$, dimension vector~$(v_i)_{i \in Q_0}$ and framing vector~$(w_i)_{i \in Q_0}$, is isomorphic to the generalized slice $\cW^\la_\mu$, see \cite{bfn_slices}.

We also have the quiver variety $\wti \fM (\al, \la) := \wti \fM_Q (\bv, \bw)$, associated to this data. Let $\wti \fM(\la) = \bigsqcup_{\mu \in Q^+} \wti \fM(\la - \mu, \la)$.
%Let $\wti \fM(\al, \la)$ be the smooth quiver variety for the Dynkin quiver $I$, with dimension vector $\bv = (v_i)_{i \in I}$ and the framing vector $f = (w_i)_{i \in I}$. 
%Denote $G_\bw = \prod_{i \in I} GL_{w_i}$, let $T_\bw \subset G_\bw$ be the natural maximal torus of this group, and let $\ft_\bw$ be its Lie algebra.

Throughout the Appendix, we assume the same conditions on $w_i$ as in Corollary~\ref{homo-hikita for ADE quivers}.%\footnote{We impose this assumption only to be able to use the main result of \cite{Zhu09}. If one proves the main result of \textit{loc. cit.} without these restrictions (which we believe should be true), then all our arguments hold true in this wider generality.}.

For the case when $\mu$ is dominant, the Hikita conjecture was proved for this pair of dual symplectic singularities in  \cite[Theorem 8.1]{KTWWY19}:%\footnote{It is claimed in \cite{KTWWY19} that this result is proved in any simply-laced type. However, Theorem~8.4 \textit{loc. cit.} uses the results of \cite{Zhu09}, which are proved for types A and D only (and for minuscule and quasi-minuscule weights in type E). Results of \cite{Zhu09} are expected to hold in type E as well, but at the moment they are proved for $\la$ being a sum of these particular fundamental weights only, see Proposition~2.2.17 \textit{loc.~cit.} }:
\begin{equation} \label{nonequivariant hikita for ADE}
H^*(\wti\fM(\la - \mu, \la)) \simeq \bC[(\cW^{\la}_\mu)^{\nu}].
\end{equation}
%Here $(\Gr^{\ov \la}_\mu)^{\nu}$ means the schematic fixed points of $\nu$ action.
%(here $\bC^\times$-action on $\Gr^{\ov \la}_\mu$ is by a generic cocharacter of the the maximal torus in $G$).

%In what follows we extend methods of \cite{KTWWY19} to prove the stronger, equivariant version of the Hikita conjecture. 
%\subsection{Hikita conjecture for generalized slices}
In this section we prove the following result.
\begin{thm} \label{thm: hikita for generalized slices}
Let $\la$ be subject to conditions of Corollary~\ref{homo-hikita for ADE quivers}, and  $\mu$ be arbitrary (not necessarily dominant). There is an isomorphism of algebras over $H^*_{G_\bv}(\pt)$:
\begin{equation*} %\label{nonequivariant hikita for ADE}
H^*(\wti\fM(\la - \mu, \la)) \simeq \bC[(\cW^{\la}_\mu)^{\nu}].
\end{equation*}
\end{thm}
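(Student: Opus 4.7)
The plan is to extend the proof of \cite[Theorem~8.1]{KTWWY19} from dominant $\mu$ to arbitrary $\mu \leqslant \la$, the key new input being the decomposition $(\ol\Gr^\la_G)^T \simeq \bigsqcup_{\mu} (\cW^\la_\mu)^\nu$ established in Section~\ref{sec: appendix slices and repellers}.

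The strategy of \cite[Theorem~8.1]{KTWWY19} for dominant $\mu$ proceeds via three identifications. First, Nakajima's geometric realization of $H^*(\wti\fM(\la-\mu, \la))$ as the weight-$\mu$ subspace of the irreducible $\g^\vee$-module $V(\la)$. Second, geometric Satake identifies $V(\la)$ with $\Gamma(\ol\Gr^\la_G, \cO(1))$, matching the weight-$\mu$ subspace on the algebraic side with the weight-$\mu$ part under the natural $T$-action on the geometric side. Third, Zhu's isomorphism $\Gamma(\ol\Gr^\la_G, \cO(1)) \simeq \Gamma((\ol\Gr^\la_G)^T, \cO(1))$ from \cite{Zhu09, BH20} (valid under the assumptions on $\bw$ in Corollary~\ref{homo-hikita for ADE quivers}). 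For dominant $\mu$, the final identification of the weight-$\mu$ piece with $\bC[(\cW^\la_\mu)^\nu]$ then uses that $\cW^\la_\mu$ is literally a transversal slice to $\Gr^\mu \subset \ol\Gr^\la_G$, so its $\nu$-fixed locus coincides with the corresponding $T$-fixed piece of $\ol\Gr^\la_G$.

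These three identifications carry through uniformly for all $\mu \leqslant \la$; the dominance hypothesis enters only in the last matching of the weight-$\mu$ piece with the schematic fixed locus of the slice. This gap is closed by the non-deformed version of Corollary~\ref{cor: fixed points of deformed slices are same as of repellers}, namely equation~\eqref{eq: fixed points of slices are same as of repellers}: the scheme isomorphism $\bigsqcup_\mu (\cW^\la_\mu)^\nu \simeq (\ol\Gr^\la_G)^\nu = (\ol\Gr^\la_G)^T$ (the last equality coming from regularity of $\nu = 2\rho$) yields an algebra decomposition
\[
\bC[(\ol\Gr^\la_G)^T] \simeq \bigoplus_\mu \bC[(\cW^\la_\mu)^\nu]
\]
in which the weight-$\mu$ eigenspace of the $T$-action on the left corresponds to the summand on the right. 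Combining with the three identifications above and matching weight-$\mu$ components gives the desired vector-space isomorphism $H^*(\wti\fM(\la-\mu, \la)) \simeq \bC[(\cW^\la_\mu)^\nu]$ for each $\mu$.

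The hard part will be upgrading this to an isomorphism of $\bC[\ft_\bv/S_\bv]$-algebras, and this is where non-dominance of $\mu$ genuinely complicates matters: for dominant $\mu$, \cite{KTWWY19} exploits the transversal-slice description, whereas for general $\mu$ one must work with the generalized slice defined via $\operatorname{Bun}_{B_-}$. I would use the closed embedding of Lemma~\ref{closed rep W to B-} together with Proposition~\ref{rep are equal}, which identifies $(\cW^\la_\mu)^\nu$ with the weight-$\mu$ piece of $\ol\Gr^\la_G \cap \operatorname{Gr}_{B_-,\mu}$, to transport the algebra structure from $(\ol\Gr^\la_G)^T$ onto $(\cW^\la_\mu)^\nu$. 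Compatibility with the Kirwan surjection~\eqref{surjection of homology of point to quiver} on the quiver side and the projection~\eqref{surjection from homology of point to b-algebra} on the Coulomb side then reduces to checks carried out on the ambient $(\ol\Gr^\la_G)^T$, which do not distinguish dominant from non-dominant $\mu$.
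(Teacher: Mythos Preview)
Your proposal is correct and follows the same route as the paper: the \cite[Theorem~8.1]{KTWWY19} argument already yields, for \emph{any} $\mu$, an isomorphism $H^*(\wti\fM(\la-\mu,\la)) \simeq \bC[(\ol\Gr^\la)^\nu_\mu]$ of $H^*_{G_\bv}(\pt)$-algebras (the $\ft[z]$-action carries the algebra structure, cf.\ Section~\ref{app: u(t) vs cohom}), and the only new input is the scheme isomorphism $(\ol\Gr^\la)^\nu_\mu \simeq (\cW^\la_\mu)^\nu$ of~\eqref{eq: fixed points of slices are same as of repellers}. The step you flag as ``hard'' is in fact immediate: that isomorphism comes from the $T[[z]]$-equivariant morphism of Proposition~\ref{rep are equal}, so the algebra structure and the $H^*_{G_\bv}(\pt)$-action transfer automatically---no separate transport via Lemma~\ref{closed rep W to B-} or a Kirwan-compatibility argument is needed.
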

\begin{proof}
%The prove reduces the general case to the case of dominant $\mu$ \eqref{nonequivariant hikita for ADE}. Namely, let $\mu'$ be the dominant weight in the Weyl group orbit of $\mu$. Then we show
%\begin{equation} \label{eq: cohomology of quiver for dominant and non-dominant}
%H^*(\wti\fM(\la - \mu, \la)) \simeq H^*(\wti\fM(\la - \mu', \la))
%\end{equation}
%and
%\begin{equation}
%\bC[(\cW^{\la}_\mu)^{\nu}] \simeq \bC[(\cW^{\la}_{\mu'})^{\nu}],
%\end{equation}
%which imply the theorem.

%As explained in \cite[Theorem~8.5, Proposition~8.7]{KTWWY19}, $H^*(\wti\fM(\la - \mu, \la))$ can be identified with the $\mu$-weight space $(W(\la)^*)_\mu$ of the dual local Weyl module $W(\la)^*$ over the current algebra $\g[z]$, and the algebra structure comes from the quotients $U(\ft[[t]]) \twoheadrightarrow H^*_{G_\bv}(\pt) \twoheadrightarrow H^*(\wti\fM(\la - \mu, \la))$. Since $\mu$ and $\mu'$ are in the same Weyl group orbit, $(W(\la)^*)_\mu \simeq (W(\la)^*)_{\mu'}$ as $\ft[z]$-modules, hence \eqref{eq: cohomology of quiver for dominant and non-dominant} follows.

%For any (not necessarily dominant) $\mu$ denote by $\cW^\la_{\mu,  \textrm{naive}}$ the slice to $\Gr^\mu$ in $\ol \Gr^\la$, $\cW^\la_{\mu,  \textrm{naive}} = \ol \Gr^\la \cap G_1[[t^{-1}]] t^{\mu}$. In particular, if $\mu$ is dominant, we have $\cW^\la_\mu = \cW^\la_{\mu, \mathrm{naive}}$, and when $\mu, \mu'$ are in one Weyl group orbit, there is an isomorphism of varieties $\cW^\la_{\mu, \mathrm{naive}} \simeq \cW^\la_{\mu', \mathrm{naive}}$.

Let $(\ol \Gr^\la)^\nu_\mu$ be the connected component of $(\ol \Gr^\la)^\nu$, corresponding to $\mu$. The argument in the proof of \cite[Theorem~8.1]{KTWWY19} actually shows that for any $\mu$ (not necessarily dominant) there is an isomorphism of  $H^*_{G_\bv}(\pt)$-algebras:
\begin{equation*}
H^*(\wti\fM(\la - \mu, \la)) \simeq \bC[(\ol \Gr^\la)^\nu_\mu].
\end{equation*}
We also have $\bC[ (\ol \Gr^\la)^\nu_\mu] \simeq \bC[(\cW^{\la}_{\mu})^{\nu}]$ from \eqref{eq: fixed points of slices are same as of repellers}. Note also that $H^*_{G_\bv}(\pt)$-action on these algebras comes from the $\ft[[z]]$-action (see Section \ref{app: u(t) vs cohom} below).
%see proof of \cite[Theorem~8..1]{KTWWY19}; $\ft = \Lie T$ here).
Isomorphism~\eqref{eq: fixed points of slices are same as of repellers} comes from the (non-deformed version of) morphism in Proposition~\ref{rep are equal}, thus it is $T[[z]]$-equivariant. The claim follows.
\end{proof}

\subsection{Equivariant Hikita conjecture} \label{sec: appendix equi-hikita}
Let now $\ula = (\la_i)$ be a tuple of fundamental coweights, $\sum_i \la_i = \la$.
There are Beilinson--Drinfeld deformations of $\ol \Gr_G^\la = \ov \Gr^\la$ and $\cW^{\la }_\mu$ over the affine space, which we can identify with $\ft_\bw$ in notations of precious section.
They are acted by the product of symmetric groups $S_\bw = \prod_{i \in I} S_{w_i}$, and we denote by $\ov \Gr^{\la}_{\ft_\bw / S_\bw}$ and $\cW^{\la }_{\mu, {\ft_\bw / S_\bw}}$ the quotients by this action (see Section~\ref{sec: appendix slices and repellers} for definitions). %Note that $(\cW^{\la }_\mu)_{\ft_\bw / S_\bw}$ is the universal Poisson deformation of $\cW^{\la }_\mu$ over $\ft_\bw / S_\bw$.

The main theorem of this section is the following statement.

\begin{thm}[Equivariant Hikita conjecture for ADE quivers] \label{Equivariant Hikita conjecture for ADE quivers}
There is an isomorphism of $\bC[(\ft_\bv / S_\bv) \times (\ft_\bw / S_\bw)]$-algebras:
\begin{equation*}
H^*_{G_\bw}(\wti \fM(\la - \mu, \la)) \simeq \bC[(  \cW^{\la}_{\mu,  \ft_\bw / S_\bw })^{\nu}].
\end{equation*}
\end{thm}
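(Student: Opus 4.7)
The plan is to give a direct geometric proof paralleling the argument of \cite[Theorem~8.1]{KTWWY19}, but now carried out for the Beilinson--Drinfeld deformations on both sides. First I would use Corollary~\ref{cor: fixed points of deformed slices are same as of repellers} (applied with $\ul{\la}$ the collection of fundamentals summing to $\la$ and descending to the quotient by $S_\bw$) to identify
\[
\bigsqcup_{\mu'\leqslant \la} (\cW^{\la}_{\mu',\ft_\bw/S_\bw})^{\nu} \;\iso\; (\ov{\Gr}{}^{\la}_{\ft_\bw/S_\bw})^{\nu}
\]
as closed subschemes. Since this identification is $T[[z]]$-equivariant (it is induced by the morphism of Proposition~\ref{rep are equal}, which is manifestly so), it is in particular $\bC[(\ft_\bv/S_\bv)\times(\ft_\bw/S_\bw)]$-equivariant. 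Thus it suffices to identify $H^{*}_{G_\bw}(\wti\fM(\la-\mu,\la))$ with the $\mu$-weight component of $\bC[(\ov{\Gr}{}^{\la}_{\ft_\bw/S_\bw})^{\nu}]$ as $\bC[(\ft_\bv/S_\bv)\times(\ft_\bw/S_\bw)]$-algebras.

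Second, I would prove a BD-deformed version of Zhu's result \eqref{Zhu isomorphism for T-fixed points}: namely, that the restriction map
\[
\Gamma(\ov{\Gr}{}^{\la}_{\ft_\bw/S_\bw},\cO(1)) \;\iso\; \Gamma((\ov{\Gr}{}^{\la}_{\ft_\bw/S_\bw})^{\nu},\cO(1))
\]
is an isomorphism, under the assumptions on $\bw$ inherited from Corollary~\ref{homo-hikita for ADE quivers}. Over the open stratum where the BD family is a product of simple affine Schubert varieties, this follows from the ordinary Zhu theorem; specialising the generic statement by flatness of $\ov{\Gr}{}^{\la}_{\ft_\bw/S_\bw}\to \ft_\bw/S_\bw$ (and of its $\nu$-fixed locus, which one verifies by noting that connected components of fixed points are themselves BD-type degenerations) gives the required isomorphism on each fibre, hence identifies the two sides. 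The global sections of $\cO(1)$ on $\ov{\Gr}{}^{\la}_{\ft_\bw/S_\bw}$ are by definition the global Weyl module $\bW(\la)$, and its $\mu$-weight component yields $\bC[(\cW^{\la}_{\mu,\ft_\bw/S_\bw})^{\nu}]$ as an algebra over $\bC[\ft_\bw/S_\bw]$.

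Third, on the quiver side I would identify $H^{*}_{G_\bw}(\wti\fM(\la-\mu,\la))$ with the $\mu$-weight component of a global Demazure module $\mathbb{D}(\la)$ over the Feigin--Loktev algebra $\ft_\bw/S_\bw$. The argument is a direct equivariant extension of the construction in \cite{KTWWY19}: the tautological bundles over $\wti\fM(\la-\mu,\la)$ produce, by the equivariant Kirwan surjection~\eqref{surjection of homology of point to quiver}, a presentation of $H^{*}_{G_\bw}$ as a quotient of $H_{G_\bv\times G_\bw}(\pt)=\bC[(\ft_\bv/S_\bv)\times(\ft_\bw/S_\bw)]$; the relations are then matched, weight by weight in $\mu$, with the defining relations of $\mathbb{D}(\la)_\mu$ exactly as in \cite{KTWWY19}, with the central characters of local Weyl modules replaced by the BD parameter $\ft_\bw/S_\bw$.

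Finally, I would invoke the isomorphism of global Demazure and global Weyl modules $\mathbb{D}(\la)\simeq \bW(\la)$ in simply laced types (Feigin--Loktev / Chari--Pressley), together with the fact that this isomorphism is compatible with weight decomposition and with the natural $\bC[\ft_\bw/S_\bw]$-module structures. Combining the three identifications yields the equivariant isomorphism claimed in the theorem; the non-equivariant limit recovers Theorem~\ref{thm: hikita for generalized slices}, providing a consistency check. The main technical obstacle is the BD-deformed Zhu isomorphism in the second step: away from the generic locus the scheme $\ov{\Gr}{}^{\la}_{\ft_\bw/S_\bw}$ acquires singularities, and one must show that the residual terms in the restriction map vanish at special fibres. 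The restriction on $\bw$ in the statement is precisely what is needed to guarantee this, matching the hypothesis of Corollary~\ref{homo-hikita for ADE quivers}.
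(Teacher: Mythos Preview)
Your outline follows the paper's own strategy (Proposition~\ref{global isomorphism of cohomology and sections} + Lemma~\ref{lem: restriction to t-fixed points is an isomorphsim for BD} + Proposition~\ref{prop: O(1) trivializes on fixed points} + Corollary~\ref{cor: fixed points of deformed slices are same as of repellers}), but two of the four steps contain genuine gaps.

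\textbf{Missing trivialization of $\cO(1)$.} After your BD-Zhu isomorphism you pass directly from $\Gamma((\ov{\Gr}{}^{\la}_{\ft_\bw/S_\bw})^{\nu},\cO(1))$ to $\bC[(\cW^{\la}_{\mu,\ft_\bw/S_\bw})^{\nu}]$. These are not the same object: the former is sections of a line bundle, the latter is functions. One must first prove that $\cO(1)$ is \emph{trivial} on $(\ov{\Gr}{}^{\la}_{\ft_\bw/S_\bw})^{T}$. This is the content of Proposition~\ref{prop: O(1) trivializes on fixed points} in the paper and is not automatic: the fixed subscheme is non-reduced, and the paper trivializes $\cO(1)$ by first trivializing on the reduced part (irreducible components $\simeq \ft_\bw/S_\bw$) and then using the flat connection coming from the $\ft_1[z^{-1}]_{\ft_\bw/S_\bw}$-equivariance to extend. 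Without this step you cannot compare with an algebra of functions, and in particular the algebra structure you need at the end is inaccessible.

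\textbf{Module vs.\ dual, and the origin of the $\g[z]$-action.} You write that $\Gamma(\ov{\Gr}{}^{\la}_{\ft_\bw/S_\bw},\cO(1))=\bW(\la)$ and $H^{*}_{G_\bw}(\wti\fM(\la-\mu,\la))=\mathbb{D}(\la)_\mu$. The paper (and \cite{DFF21}, \cite{KN12}) prove the \emph{dual} identifications: sections are $\bD(1,\la)^{\vee}$ and equivariant cohomology of the quiver variety is $\bW(\la)^{\svee}$ (Propositions~\ref{global isomorphism of cohomology and sections} and \ref{cohomology of quiver is global weyl}). Moreover, your proposed mechanism for step~3 --- ``match Kirwan relations with defining relations of $\mathbb{D}(\la)_\mu$'' --- is not how \cite{KTWWY19} or the paper proceed. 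The $U(\g[z])$-module structure on $H^*_{G_\bw}(\wti\fM(\la))$ comes from the Varagnolo/Nakajima Yangian action, and one shows that the fundamental class of $\fL(0,\la)$ generates a quotient of $\bW(\la)$ of the correct rank; Kirwan surjectivity is only invoked at the very last step, after the $U(\ft[z])$-module identifications are in place, to upgrade the bimodule isomorphism to an algebra isomorphism. As written, your step~3 has no substance.

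A smaller point: your BD-Zhu argument (``generic fibre plus flatness $\Rightarrow$ all fibres'') goes in the opposite direction from the paper's Lemma~\ref{lem: restriction to t-fixed points is an isomorphsim for BD}, which simply checks each fibre directly (each fibre is a product $\prod_i \ov{\Gr}^{\nu_i}$, to which ordinary Zhu applies). Your direction would require a separate flatness statement for sections of $\cO(1)$ on the fixed locus over $\ft_\bw/S_\bw$, which you have not supplied.
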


\begin{rem}
We show an isomorphism of algebras over $\bC[\ft_\bw / S_\bw]$; there is also a version over $\bC[\ft_\bw]$, which we deal with in the main body of the text:
\begin{equation*}
H^*_{T_\bw}(\wti \fM(\la - \mu, \la)) \simeq \bC[(  \cW^{\la}_\mu)^{\nu}_{\ft_\bw}].
\end{equation*}
In fact, they are equivalent. In one direction, one should take the $S_\bw$-invariants; in the opposite direction, one should base change from $\ft_\bw / S_\bw$ to $\ft_\bw$; see \cite[Theorem~6.1.22]{CG97}.
\end{rem}

In the proof of \eqref{nonequivariant hikita for ADE} in \cite{KTWWY19}, the step which connects the ``quiver side'' with the ``Coulomb side'', is the isomorphism of $\g[z]$-modules (\cite[Theorem 8.5]{KTWWY19}):
\begin{equation} \label{local isomorphism of g[t]-modules}
H^*(\wti \fM(\la)) \simeq \Gamma(\ov \Gr^\la, \cO(1)).
\end{equation}
Here the LHS is isomorphic to the dual local Weyl module by \cite[Proposition 4.4]{KN12}, the RHS is isomorphic to the dual affine Demazure module by \cite[Theorem 8.2.2]{Kum02}, and their isomorphism is \cite[Theorem A]{FL07}, or can be deduced from~\cite{Kas05}. 

Thus, our first step towards the proof of Theorem \ref{Equivariant Hikita conjecture for ADE quivers} is the following global analog of~\eqref{local isomorphism of g[t]-modules}:

\begin{prop} \label{global isomorphism of cohomology and sections}
There is an isomorphism of $U(\g[z])$-$\bC[\ft_\bw / S_\bw]$-bimodules
\begin{equation} \label{eq:global isomorphism of cohomology and sections}
H^*_{G_\bw}(\wti \fM(\la)) \simeq \Gamma((\ov \Gr^{\la })_{\ft_{\bw} / S_\bw}, \cO(1)).
\end{equation}
\end{prop}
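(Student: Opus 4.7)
The plan is to identify both sides of \eqref{eq:global isomorphism of cohomology and sections} with a common explicit family of $U(\g[z])$-modules over the base $\bC[\ft_\bw/S_\bw]$, namely the dual global Weyl module (respectively the dual global Demazure module) of highest weight $\la$, and then to invoke the global/family version of the Fourier--Littelmann isomorphism, which is available in simply laced types.

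First, I would argue that the left hand side $H^*_{G_\bw}(\wti \fM(\la))$, viewed as a bimodule over $U(\g[z])$ and $H^*_{G_\bw}(\pt) = \bC[\ft_\bw/S_\bw]$, is canonically isomorphic to the dual global Weyl module $\mathbb{W}(\la)^*$. This is the equivariant enhancement of the non-equivariant identification $H^*(\wti \fM(\la))\simeq W(\la)^*$ due to \cite{KN12}, used already in the proof of \cite[Theorem~8.5]{KTWWY19}. Concretely, one checks that Nakajima's geometric $\g[z]$-action on $H^*(\wti\fM(\la))$ lifts to the equivariant setting, and that the resulting $\bC[\ft_\bw/S_\bw]$-linear structure (induced by the Kirwan surjection $\bC[\ft_\bw/S_\bw] \twoheadrightarrow H^*_{G_\bw}(\wti\fM(\la))$) is compatible with the canonical $\bC[\ft_\bw/S_\bw]$-algebra structure on $\mathbb{W}(\la)^*$ coming from the highest weight vector. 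This is a natural equivariant upgrade of the Chari--Fourier--Khandai construction of global Weyl modules.

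Second, I would identify the right hand side $\Gamma((\ov\Gr^{\la})_{\ft_\bw/S_\bw}, \cO(1))$ with the dual global Demazure module. By construction, $(\ov\Gr^{\la})_{\ft_\bw/S_\bw}$ is the $S_\bw$-quotient of the Beilinson--Drinfeld Schubert variety $\ov{\Gr}^{\underline\la}_{G,\bA^N}$ over $\bA^N \simeq \ft_\bw$, and $\cO(1)$ descends to an ample line bundle on the quotient. By the flat family argument (using that each fiber over the closed points recovers $\Gamma(\ov \Gr^\la, \cO(1))$, which by \cite[Theorem~8.2.2]{Kum02} is the dual affine Demazure module), together with the standard factorization property of BD Grassmannians, one obtains precisely the dual global Demazure module in the sense of the deformation over $\bA^N/S_\bw$; the $U(\g[z])$-action comes from the action of global sections of $\mathfrak{g}$ on $\bP^1$ which fix $\infty$, as in \cite[Section~8]{KTWWY19}.

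Third, it remains to check that in types ADE the dual global Demazure and dual global Weyl modules are isomorphic as $U(\g[z])$-$\bC[\ft_\bw/S_\bw]$-bimodules. This is the family version of \cite[Theorem A]{FL07} (see also \cite{Kas05}), and it is the key ingredient alluded to in the Introduction (``an isomorphism of global Demazure and global Weyl modules in types ADE''). It follows from a universal property characterization: both sides are generated over $U(\g[z]) \otimes \bC[\ft_\bw/S_\bw]$ by a highest weight vector satisfying the same $U(\g[z])$-relations parametrized by $\bC[\ft_\bw/S_\bw]$, and both are flat over $\bC[\ft_\bw/S_\bw]$ with the same fiber at $0$ (namely the common dual local Demazure/Weyl module). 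Combining the three identifications yields \eqref{eq:global isomorphism of cohomology and sections}.

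The main obstacle will be matching the two $\bC[\ft_\bw/S_\bw]$-structures: on the quiver side this structure is defined through the equivariant Kirwan map, while on the affine Grassmannian side it comes from the base of the BD deformation. Pointwise, the fibers are identified by the Fourier--Littelmann/Kodera--Naoi isomorphism, but promoting these pointwise identifications to a genuine bimodule isomorphism requires a compatibility along the deformation; the cleanest way to achieve this is, as above, via a cyclicity/universal-property argument, which reduces the matching of $\bC[\ft_\bw/S_\bw]$-actions to checking a single relation on the generating vector, i.e., to the classical \eqref{local isomorphism of g[t]-modules} specialized at $0$.
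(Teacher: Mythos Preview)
Your proposal follows essentially the same three-step route as the paper: the left-hand side is identified with the dual global Weyl module (the paper does this in Proposition~\ref{cohomology of quiver is global weyl} via an equivariant upgrade of \cite{KN12}), the right-hand side with the dual global Demazure module (the paper cites \cite[Theorem~4.5]{DFF21}), and these coincide in types ADE (the paper cites \cite{DF23}). One small slip: the map $\bC[\ft_\bw/S_\bw] \to H^*_{G_\bw}(\wti\fM(\la))$ is not a surjection---the Kirwan surjection is from $H^*_{G_\bv \times G_\bw}(\pt)$---so the $\bC[\ft_\bw/S_\bw]$-structure simply comes from the equivariant parameters, not from a quotient map.
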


The RHS of this Proposition is described in \cite{DFF21}. We are to describe the LHS and prove Proposition \ref{global isomorphism of cohomology and sections} after that.

Recall the action of the Yangian on the Borel--Moore homology $H_*^{G_\bw \times \bC^\times} (\wti \fM(\la))$, constructed in \cite{Var00}. Forgetting the contracting action, we get the $U(\g[z])$-action on $H_*^{G_\bw} (\wti \fM(\la))$. 
There is also an obvious commuting $H^*_{G_\bw} (\pt) = \bC[\ft_\bw / S_\bw]$-action. Recall the notion of global Weyl module $\bW(\la)$ over $\g[z]$. It admits the commuting action of the \textit{highest weight algebra}, isomorphic to $\bC[\ft_\bw / S_\bw]$ (see, e.g., \cite{CFK10}). $\bW(\la)$ is free over $\bC[\ft_\bw / S_\bw]$, as follows from~\cite[Corollary~B]{FL07}, or can be deduced from \cite{Kas02, BN04}, see introduction of~\cite{FL07}.

\begin{prop} \label{cohomology of quiver is global weyl}
$H^*_{G_\bw}(\fM(\la))$ is isomorphic to the $\bC[\ft_\bw / S_\bw]$-dual global Weyl module $\bW(\la)^\svee$ as a $U(\g[z])$-$\bC[\ft_\bw / S_\bw]$-bimodule.
\end{prop}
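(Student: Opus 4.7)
The plan is to construct a $U(\g[z])$-equivariant morphism $\bW(\la)\to H^{G_\bw}_*(\fM(\la))$ from the global Weyl module using its universal property, upgrade it to a bimodule map over $\bC[\ft_\bw/S_\bw]$, and then verify that it is an isomorphism by reducing to the non-equivariant statement of \cite[Proposition~4.4]{KN12} via graded Nakayama. Dualizing over $\bC[\ft_\bw/S_\bw]$ will yield the claim as stated.

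\textbf{Highest weight vector.} The connected component $\wti\fM(0,\la)$ is a single point, so its equivariant fundamental class is a canonical element $v_\la \in H^{G_\bw}_*(\wti\fM(0,\la)) = \bC[\ft_\bw/S_\bw] \subset H^{G_\bw}_*(\fM(\la))$. Under the Varagnolo--Nakajima $U(\g[z])$-action, $v_\la$ has $\ft$-weight $\la$ (by Nakajima's weight decomposition) and is annihilated by $\fn^+[z]$, since each operator $e_i\otimes z^k$ lowers the dimension vector and lands in $H^{G_\bw}_*(\wti\fM(-\al_i,\la))=0$. The $\fsl_2$-integrability $(f_i\otimes 1)^{\la(\al_i^\svee)+1}v_\la=0$ follows from $\g$-integrability of the whole module $H^{G_\bw}_*(\fM(\la))$, which is built into Nakajima's construction.

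\textbf{Universal property and bimodule structure.} These relations match the defining relations of the cyclic generator of $\bW(\la)$, producing a $U(\g[z])$-equivariant map $\Phi\colon \bW(\la) \to H^{G_\bw}_*(\fM(\la))$ sending the cyclic generator to $v_\la$. The highest weight algebra of $\bW(\la)$ is, by definition, the image of $U(\ft[z])$ acting on the cyclic generator; to see that $\Phi$ is $\bC[\ft_\bw/S_\bw]$-linear one computes the action of $\ft\otimes z^k$ on the fundamental class of $\wti\fM(0,\la)=\pt$ using equivariant localization in the Varagnolo--Nakajima formulae (cf.\ \cite[Section~8]{Nak01a}). This identifies the highest weight algebra of $\bW(\la)$ with $\bC[\ft_\bw/S_\bw]=H^*_{G_\bw}(\pt)$ compatibly with $\Phi$.

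\textbf{Isomorphism.} Both $\bW(\la)$ and $H^{G_\bw}_*(\fM(\la))$ are free of finite rank over $\bC[\ft_\bw/S_\bw]$ \textemdash{} the former by \cite[Corollary~B]{FL07} and the latter by \cite[Theorem~7.3.5]{Nak01a} \textemdash{} and both carry natural non-negative gradings under which $\bC[\ft_\bw/S_\bw]$ acts in strictly positive degree. Specializing $\Phi$ at $0\in\ft_\bw/S_\bw$ gives the isomorphism $W(\la)\iso H_*(\fM(\la))$ of \cite[Proposition~4.4]{KN12}. Graded Nakayama then implies $\Phi$ is an isomorphism of $U(\g[z])$-$\bC[\ft_\bw/S_\bw]$-bimodules; dualizing over $\bC[\ft_\bw/S_\bw]$ yields the claimed identification of $H^*_{G_\bw}(\fM(\la))$ with $\bW(\la)^\svee$.

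\textbf{Main obstacle.} The subtle point is the middle step: matching the two a priori different $\bC[\ft_\bw/S_\bw]$-actions (the highest weight algebra action on $v_\la$ versus the equivariant parameter action on the fundamental class of a point). One route is the direct localization computation sketched above; an alternative is to verify $\Phi$ is an isomorphism after base change to $\Frac\bC[\ft_\bw/S_\bw]$, where both sides become a tensor product of fundamental representations (the global Weyl module decomposes accordingly by \cite{CFK10}, and the equivariant cohomology does so by the localization theorem as in Corollary~\ref{localization of quiver side of homological hikita over T_w}), and then invoke freeness together with the fiber-at-zero comparison.
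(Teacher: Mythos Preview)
Your argument has a genuine gap at the ``Isomorphism'' step. The module $H^{G_\bw}_*(\wti\fM(\la))$ is the \emph{costandard} (dual Weyl) module, not the standard one; this is precisely what the statement of the Proposition asserts once you pass through Poincar\'e duality. Consequently the map $\Phi\colon \bW(\la)\to H^{G_\bw}_*(\wti\fM(\la))$ that you build from the universal property is, at the fibre over $0\in\ft_\bw/S_\bw$, the canonical map $W(\la)\to W(\la)^\vee$ from the local Weyl module to its dual, whose image is only the simple head. In particular $\Phi|_0$ is \emph{not} the isomorphism of \cite[Proposition~4.4]{KN12}; that reference identifies $H^*(\wti\fM(\la))$ with the \emph{dual} local Weyl module, not with $W(\la)$. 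Your alternative route via the generic point has the same defect: over $\Frac\bC[\ft_\bw/S_\bw]$ the local Weyl modules are simple, so $\Phi$ is generically an isomorphism and hence injective between free modules of equal rank, but this does not give surjectivity at the special fibre, and indeed the cokernel of $\Phi$ is supported on the non-generic locus.

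The paper's proof avoids this by working with the Lagrangian $\fL(\la)\subset\wti\fM(\la)$ instead. The point is that $H^{G_\bw}_*(\fL(\la))$ \emph{is} cyclic over $U(\g[z])$ from its component at $\al=0$ (this is what \cite[Proposition~13.3.1]{Nak01a} gives), so the universal map $\bW(\la)\twoheadrightarrow H^{G_\bw}_*(\fL(\la))$ is surjective; freeness and rank comparison then make it an isomorphism. The passage to $H^*_{G_\bw}(\wti\fM(\la))$ goes through the nondegenerate $\bC[\ft_\bw/S_\bw]$-pairing between $H^{G_\bw}_*(\fL(\la))$ and $H^{G_\bw}_*(\wti\fM(\la))$ \cite[Theorem~7.3.5]{Nak01a}, followed by Poincar\'e duality. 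Your idea of using the universal property of $\bW(\la)$ is exactly right; it just has to be applied to $H^{G_\bw}_*(\fL(\la))$ rather than to $H^{G_\bw}_*(\wti\fM(\la))$.
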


\begin{proof}
The proof is identical to the (non-equivariant) case of local Weyl module in \cite[Proposition 4.4]{KN12} and goes back to \cite{Nak01a}.

Namely, consider the Lagrangian subvariety $\fL(\al, \la) \subset \wti \fM(\al, \la)$ (pre-image of 0 under the resolution $\wti \fM(\al, \la) \rightarrow \fM(\al, \la)$), denote $\fL(\la) = \bigsqcup_\al \fL(\al, \la)$.
As in \cite[Proposition~13.3.1]{Nak01a}, one sees that $H_*^{G_\bw}(\fL(\la))$ is generated over $U(\g[z])$ by $H_*^{G_\bw}(\fL(0, \la))$, and that vectors from $H_*^{G_\bw}(\fL(0, \la))$ satisfy the defining relations of the global Weyl module. 
Hence, there is a surjection $\bW(\la) \twoheadrightarrow H_*^{G_\bw}(\fL(\la))$. Note that both $\bW(\la)$ and $H_*^{G_\bw}(\fL(\la))$ are free as modules over $\bC[\ft_\bw / S_\bw]$ (for the latter, see \cite[Theorem~7.5.3]{Nak01a}). Their ranks coincide by \cite[Proposition 4.4]{KN12}. Any surjective homomorphism between free modules of same rank is an isomorphism. Hence, $H_*^{G_\bw}(\fL(\la)) \simeq \bW(\la)$.  %The claim is proved.

By \cite[Theorem~7.3.5]{Nak01a}, there is a non-degenerate $\bC[\ft_\bw / S_\bw]$-linear pairing between $H_*^{G_\bw}(\fL(\la))$ and $H_*^{G_\bw}(\fM(\la))$.
Now the claim follows from the Poincar\'e duality for $\fM(\al, \la)$.
\end{proof}

We are now ready to prove Proposition \ref{global isomorphism of cohomology and sections}.

\begin{proof}[Proof of Proposition \ref{global isomorphism of cohomology and sections}]

By \cite[Theorem 4.5]{DFF21}, the RHS of \eqref{eq:global isomorphism of cohomology and sections} is isomorphic to $\bC[\ft_\bw / S_\bw]$-dual global Demazure module of level 1, $\bD(1, \la)^\vee$.

By Proposition \ref{cohomology of quiver is global weyl}, the LHS of \eqref{eq:global isomorphism of cohomology and sections} is isomorphic to $\bW(\la)^\vee$.

The isomorphism of $\bD(1, \la)$ and $\bW(\la)$ in types ADE is evident from the definition of $\bD(1, \la)$  and the isomorphism of corresponding local modules, see \cite[Section 3]{DF23}.
\end{proof}

We proceed by restricting to the schematic $T$-fixed points. %Denote $\ft = \Lie T$.
\begin{lem} \label{lem: restriction to t-fixed points is an isomorphsim for BD}
Restriction to $T$-fixed points yields an isomorphism of $U(\ft[z])$-$\bC[\ft_\bw / S_\bw]$-bimodules:
\[
\Gamma(\ov \Gr^{\la }_{\ft_\bw / S_\bw}, \cO(1)) \simeq \Gamma((\ov \Gr^{\la }_{\ft_\bw / S_\bw})^T, \cO(1)).
\]
\end{lem}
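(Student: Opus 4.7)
The plan is to reduce the statement fiberwise to Zhu's isomorphism \eqref{Zhu isomorphism for T-fixed points} (and its extension under the hypothesis on $\bw$ from Corollary \ref{homo-hikita for ADE quivers}) via a base-change argument over $\ft_\bw/S_\bw$.

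First, I would show that $A := \Gamma(\ov \Gr^{\la}_{\ft_\bw/S_\bw}, \cO(1))$ and $B := \Gamma((\ov \Gr^{\la}_{\ft_\bw/S_\bw})^T, \cO(1))$ are finitely generated free modules over $R := \bC[\ft_\bw/S_\bw]$ and that formation of each commutes with arbitrary base change. For $A$, the map $\ov\Gr^\la_{\ft_\bw/S_\bw} \to \Spec R$ is flat and projective, and on each fiber the higher cohomology $H^{>0}(\ov\Gr^\la_{\ul z}, \cO(1))$ vanishes (by Frobenius splitting of convolution and fusion Grassmannians, in the spirit of \cite{Kum02}). Hence $A$ is locally free of constant rank and cohomology-and-base-change applies. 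For $B$, the $T$-action on the total space is trivial on the base $\Spec R$, so the schematic fixed-point subscheme is again flat and projective over $R$ with analogous vanishing, giving $B$ locally free and base-change compatible.

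Second, compatibility with base change reduces the claim to checking that $\rho \otimes_R \bC_{\ul z}\colon A \otimes_R \bC_{\ul z} \to B \otimes_R \bC_{\ul z}$ is an isomorphism for every closed point $\ul z \in \ft_\bw/S_\bw$. Over any such $\ul z$, the fiber of the BD Grassmannian factorizes as a product $\prod_k \ov\Gr^{|\ula_k|}$, where the subtuples $\ula_k$ of $\ula$ index the fundamental coweights clustering at the distinct coordinates of $\ul z$. Sections of $\cO(1)$ then split as a tensor product over $k$, as does the $T$-fixed-point subscheme. Each factor $\ov\Gr^{|\ula_k|}$ corresponds to a framing vector $\bw_k = (w_{k,i})_i$ with $\sum_k \bw_k = \bw$; in particular, if $\bw$ satisfies the vanishing conditions of Corollary \ref{homo-hikita for ADE quivers}, then so does every $\bw_k$. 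Zhu's isomorphism \eqref{Zhu isomorphism for T-fixed points} thus applies to each factor, and taking the tensor product of the resulting isomorphisms gives $\rho \otimes_R \bC_{\ul z}$.

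The main obstacle is the first step, specifically the flatness of the fixed-point subscheme $(\ov \Gr^{\la}_{\ft_\bw/S_\bw})^T$ over $\Spec R$ and the vanishing of its higher cohomology. I expect this to follow by carrying out the Frobenius splitting construction for BD Grassmannians in a $T$-equivariant way, so that the splitting restricts compatibly to the fixed-point subscheme, but some care will be needed at the most degenerate fibers (in particular, over the origin). The $U(\ft[z])$-equivariance of $\rho$ is automatic: the $\ft[[z]] \subset \g[[z]]$-action on the BD Grassmannian preserves both $\cO(1)$ and the $T$-fixed-point subscheme, and commutes with restriction on global sections.
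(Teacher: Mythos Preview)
Your approach is essentially the same as the paper's: both reduce to a fiberwise check over $\ft_\bw/S_\bw$, use factorization of the fiber into a product of ordinary affine Schubert varieties, and then apply Zhu's isomorphism \eqref{Zhu isomorphism for T-fixed points} to each factor. The paper's proof is terser---it simply asserts that checking on fibers suffices and does not spell out the flatness/base-change justification you isolate as the main obstacle---so your more explicit treatment of that reduction (freeness of both sides, cohomology-and-base-change, and the concern about the fixed-point subscheme) is a genuine addition of detail rather than a different argument.
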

\begin{proof}
$T$ acts on $\ov \Gr^{\la }_{\ft_\bw / S_\bw}$ fiberwise over $\ft_\bw / S_\bw$, and we have a restriction morphism of $U(\ft[z])$-$\bC[\ft_\bw / S_\bw]$-bimodules $\Gamma(\ov \Gr^{\la }_{\ft_\bw / S_\bw}, \cO(1)) \rightarrow \Gamma((\ov \Gr^{\la}_{\ft_\bw / S_\bw})^T, \cO(1))$. It is sufficient to check that it induces an isomorphism on each fiber over  $\ft_\bw / S_\bw$.

Indeed, a fiber of $\ov \Gr^{\la }_{\ft_\bw / S_\bw}$ over any point $p \in \ft_\bw / S_\bw$ is isomorphic to the product of affine Schubert varieties $\ov \Gr^{\nu_i}$ (depending on which of coordinates of $p$ are equal). For any $\ov \Gr^{\nu_i}$, restriction to $T$-fixed points yields an isomorphism of sections of $\cO(1)$ due to the main result of \cite{Zhu09}. The lemma follows.
\end{proof}

Our next goal is to prove the following
\begin{prop} \label{prop: O(1) trivializes on fixed points}
Line bundle $\cO(1)$ on $(\ol \Gr_{\ft_\bw / S_\bw}^\la)^T$ is %$\ft[z]$-equivariantly 
trivial. One has an isomorphism %In particular, there is an isomorphism of $U(\ft[z])$-$\bC[\ft_\bw / S_\bw]$-bimodules:
\[
\Gamma((\ov \Gr_{\ft_\bw / S_\bw}^{\la })^T, \cO(1)) \simeq \Gamma((\ov \Gr_{\ft_\bw / S_\bw}^{\la })^T, \cO).
\]
\end{prop}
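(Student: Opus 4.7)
My plan is to decompose $(\ov \Gr^\la_{\ft_\bw/S_\bw})^T$ by $T$-weight and show each connected component is an affine space, whose Picard group automatically vanishes. Write $(\ov \Gr^\la_{\ft_\bw/S_\bw})^T = \bigsqcup_\mu X_\mu$ by $T$-weight. Since $T$ acts trivially on each $X_\mu$, the restriction $\cO(1)|_{X_\mu}$ is a $T$-equivariant line bundle of constant weight $\mu$; equivalently, it is a twist $\bC_\mu \otimes \cL_\mu$ where $\cL_\mu$ is an ordinary line bundle. Thus proving triviality of $\cO(1)$ on the fixed point locus reduces to proving triviality of each $\cL_\mu$ as a non-equivariant line bundle on $X_\mu$.

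Next, I would describe each $X_\mu$ via the factorization structure of the BD Grassmannian. Writing $\ula = (\om_{i_1}, \ldots, \om_{i_N})$ for the sequence with $w_i$ copies of $\om_i$ (so $N = \sum_i w_i$), the scheme $(\ov \Gr^\ula_{G, \bA^N})^T$ decomposes over $\bA^N$ into pieces indexed by choices of a $T$-fixed point in each $\ol \Gr^{\om_{i_j}}$ summing to $\mu$. Grouping by multiplicity and passing to the $S_\bw$-quotient, each resulting component should be isomorphic to a product of symmetric powers $\prod \mathrm{Sym}^{n}(\bA^1)$, and hence to an affine space. As a toy illustration, for $\g = \fsl_2$, $\la = 2\om_1$, $\mu = 0$, the weight-zero $T$-fixed subscheme inside $\ol \Gr^\la_{\ft_\bw}$ is two copies of $\bA^2$ glued along the diagonal, and the $S_2$-quotient is a single copy of $\bA^2$, whose Picard group is trivial.

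Once $X_\mu$ is identified as an affine space (or a disjoint union thereof), triviality of $\cL_\mu$ is immediate from $\Pic(\bA^N) = 0$, and the isomorphism $\Gamma(\cO(1)) \simeq \Gamma(\cO)$ follows. The main obstacle is the second step: rigorously identifying $X_\mu$ with the claimed factorization-and-quotient. This is relatively clean when all $\om_i$ appearing in $\ula$ are minuscule, but requires additional care for non-minuscule fundamentals (which occur in types $D$ and $E_6$ under the assumptions of Corollary \ref{homo-hikita for ADE quivers}): then $\ov \Gr^{\om_i}$ is singular and has inner $T$-fixed strata, and one must invoke \cite{Zhu09} (and \cite{BH20} in type $E_6$) to control the scheme structure of the fixed points through the BD degeneration before concluding that the $S_\bw$-quotient component remains an affine space.
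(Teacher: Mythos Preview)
Your approach has the right starting intuition---irreducible components are affine spaces with trivial Picard---but there are two genuine gaps.

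\textbf{The connected components are not affine spaces in general.} Take $\g=\fsl_3$, $\la=\om_1+\om_2$, $\mu=0$. Here $\bw=(1,1)$, so $S_\bw$ is trivial and there is no symmetric-power collapsing to help you. Over generic $(z_1,z_2)$ the weight-$0$ fixed locus consists of three points (the three pairs $(\nu,-\nu)$ with $\nu\in W\om_1$), giving three irreducible components each $\simeq\bA^2$; over the diagonal they all specialize to $z^0\in\ol\Gr^{\om_1+\om_2}$ and hence are glued along a line. Three planes through a common line is not an affine space. Your $\fsl_2$ toy example happens to work only because $S_\bw=S_2$ identifies the two components; once $S_\bw$ does not act transitively on the irreducible components of a given connected piece, your description fails. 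The paper does not attempt to show $X_\mu$ is an affine space; instead it trivializes $\cO(1)$ separately on each irreducible component (each $\simeq\ft_\bw/S_\bw$) and then shows the trivializations can be rescaled to agree, using that all components meet over $0\in\ft_\bw/S_\bw$.

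\textbf{The non-reduced structure is not a side issue confined to non-minuscule fundamentals.} The BD Grassmannian for $T$ is already non-reduced wherever base points collide: two distinct $T$-fixed points in a generic fiber that merge over the diagonal produce a fat point in the limit. This happens even when every $\om_i$ is minuscule (indeed already in your $\fsl_2$ example the central-fiber fixed point $z^0\in\ol\Gr^{2\om_1}$ is a singular, non-reduced schematic fixed point). So ``$\Pic(\bA^N)=0$'' on the reduced part does not finish the argument. The paper handles this by observing that $\cO(1)$ is equivariant for the Lie ring $\ft_1[z^{-1}]_{\ft_\bw/S_\bw}$, which is the tangent sheaf of $\Gr_{T,\ft_\bw/S_\bw}$; this endows $\cO(1)$ with a flat connection along the nilpotent directions, and parallel transport extends the trivialization from the reduced subscheme to the whole thickened scheme. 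Your proposal contains no mechanism to cross from the reduced to the non-reduced scheme; invoking \cite{Zhu09} tells you about $\Gamma$ of $\cO(1)$ on fibers, not about the global triviality of the line bundle on the non-reduced family.
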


Consider the group schemes $T(\cK)_{\ft_\bw / S_\bw}$ and $T(\cO)_{\ft_\bw / S_\bw}$ over $\ft_\bw / S_\bw$ (for definitions, see \cite[(3.1.5),~(3.1.8)]{Zhu16}). The quotients of their (abelian) Lie algebras (or more formally, Lie rings) is naturally a Lie ring over $\bC[\ft_\bw / S_\bw]$, and we denote it by 
\[
\left. \ft_1[z^{-1}]_{\ft_{\bw} / S_\bw} = \Lie T(\cK)_{\ft_\bw / S_\bw} \right/ \Lie T(\cO)_{\ft_\bw / S_\bw}.
\]

%\textcolor{red}{do we need to prove a lemma here that it factorizes (\`a la Zastava), or is it automatical..?}

\begin{proof}[Proof of Proposition \ref{prop: O(1) trivializes on fixed points}]
Recall that due to \cite[Proposition~3.4]{HR21}, one has $(\Gr_G)^T \simeq \Gr_T$, and similarly for Beilinson--Drinfeld Grassmannians $(\Gr_{G, \ft_\bw / S_\bw})^T \simeq \Gr_{T, \ft_\bw / S_\bw}$.

First, we prove that $\cO(1)$ is trivial on $(\Gr_{T, \ft_\bw / S_\bw})_\red$. Note that the irreducible components of $(\Gr_{T, \ft_\bw / S_\bw})_\red$ are parametrized by tuples of $T$-coweights $(\la_1, \hdots, \la_{|\bw|})$.
Each irreducible component is isomorphic to $\ft_\bw / S_\bw$, see \cite[Section~3.2.2]{Zhu09}\footnote{Note that in \cite[3.2.2]{Zhu09} it is claimed that these are the connected components of $((\Gr_T)_{\ft_\bw / S_\bw})_\red$. We assume this is a typo, and irreducible components are meant. Indeed, two such irreducible components have a common point over $0 \in {\ft_\bw / S_\bw}$ if and only if sums of coweights in corresponding tuples are equal.}.
Note that the irreducible components, for which the sum of coweights in the corresponding tuples are equal, have intersection over the locus of $\ft_\bw / S_\bw$, where some of coordinates are equal (such loci are unions of affine spaces of smaller dimensions, necessarily intersecting at $0 \in \ft / S_\bw$). 
Recall that any line bundle on an affine space is trivial. Moreover, the space of trivializations of a line bundle on an affine space can be identified with nonzero scalars. 
Hence, we can independently pick a trivialization of $\cO(1)$ on each irreducible component of $(\Gr_{T, \ft_\bw / S_\bw})_\red$, and then scaling these trivializations, make them agree over $0 \in \ft_\bw / S_\bw$. Since this determines a trivialization, they agree at the whole union of all irreducible components.
%\textcolor{red}{need to do that equivariantly?}

Next, we deal with the whole (non-reduced) $\ft_\bw / S_\bw$-scheme $\Gr_{T, \ft_\bw / S_\bw}$. Pick its connected component corresponding to a coweight $\mu$, denoted $(\Gr_{T, {\ft_\bw / S_\bw}})_\mu$. We can identify its tangent sheaf with the Lie ring $\ft_1[z^{-1}]_{\ft_\bw / S_\bw}$, see \cite[Proposition~3.1.9]{Zhu16} (informally, this means that $\ft_1[z^{-1}]_{\ft_\bw / S_\bw}$ acts freely and transitively on $(\Gr_{T, {\ft_\bw / S_\bw}})_\mu$). 
The line bundle $\cO(1)$ is naturally equivariant with respect to this Lie ring action, and hence it has a natural connection, flat over $\ft_\bw / S_\bw$. Giving a trivialization on the reduced part, this connection trivializes this line bundle on the whole scheme.

We showed that the line bundle $\cO(1)$ is trivial on $(\Gr_T)_{\ft_\bw / S_\bw} = (\Gr_{G, \ft_\bw / S_\bw})^T$. Restricting to the closed subscheme  $(\ol \Gr^\la_{\ft_\bw / S_\bw})^T$, we get the claim.
\end{proof}

Note that in \cite{KTWWY19} a local statement, similar to Proposition~\ref{prop: O(1) trivializes on fixed points} is proved by utilizing the free transitive $T_1[t^{-1}]$-action on each of the connected components of the fixed points. 
We identified the Lie ring of the global variant of $T_1[z^{-1}]$ with the tangent sheaf of our ind-scheme to trivialize the bundle in our case.
Due to the factorization property of $(\Gr_T)_{\ft_\bw / S_\bw}$, its tangent sheaf also factorizes, and hence the trivialization we perform coincides with (the product of ones)  in \cite{KTWWY19}, when restricted to any fiber over $\ft_\bw / S_\bw$.
%In our case the situation is somewhat more delicate, and we use a global variant of {\it Lie algebra} of $T_1[z^{-1}]$ to trivialize the line bundle. It will be clear from the proof, however, that the trivialization we perform coincides with (the product of ones)  in \cite{KTWWY19}, when restricted to any fiber over $\ft_\bw / S_\bw$. %certain $\bC[\ft_\bw / S_\bw]$-Lie algebra action 

Finally, we are ready to prove the main result of this subsection.
\begin{proof}[Proof of Theorem \ref{Equivariant Hikita conjecture for ADE quivers}]
Restricting an isomorphism of $U(\g[z])$-$\bC[\ft_\bw / S_\bw]$-bimodules of Proposition \ref{global isomorphism of cohomology and sections} to the weight $\mu$ subspace, we get an isomorphism of $U(\ft[z])$-$\bC[\ft_\bw / S_\bw]$-bimodules
\[
H^*_{G_\bw}(\wti \fM(\la - \mu, \la)) \simeq \Gamma((\ol \Gr^\la)_{\ft_\bw / S_\bw}, \cO(1))_\mu.
\]
Let us rewrite the right-hand side as
\begin{multline} \label{eq: chain of isomorphism for sections on BD}
\Gamma((\ol \Gr^\la)_{\ft_\bw / S_\bw}, \cO(1))_\mu \simeq \Gamma((\ol \Gr^\la)^T_{\ft_\bw / S_\bw}, \cO(1))_\mu \simeq \\
\Gamma((\ol \Gr^\la_{\ft_\bw / S_\bw})^T, \cO)_\mu \simeq \Gamma((\ol \Gr_{\ft_\bw / S_\bw}^\la)^\nu, \cO)_\mu \simeq \bC[(\cW^\la_{\mu, \ft_\bw / S_\bw})^\nu].
\end{multline}
The first isomorphism here is by Lemma \ref{lem: restriction to t-fixed points is an isomorphsim for BD}, the second one is due to  Proposition \ref{prop: O(1) trivializes on fixed points}, and the last one is due to Corollary \ref{cor: fixed points of deformed slices are same as of repellers}. We also used here an isomorphism $(\ol \Gr^\la_{\ft_\bw / S_\bw})^T \simeq (\ol \Gr^\la_{\ft_\bw / S_\bw})^\nu$: one of this schemes has evident closed embedding to the other, and an isomorphism can be checked fiberwise, which was done in \cite{KTWWY19}.

We claim that all isomorphisms in \eqref{eq: chain of isomorphism for sections on BD} are $\ft[z]$-equivariant, with respect to the natural action of this Lie algebra. Indeed, this claim can be checked fiberwise, over all points $p \in \ft_\bw / S_\bw$. For any such point, all isomorphisms of \eqref{eq: chain of isomorphism for sections on BD}, restricted to the fiber over $p$ are precisely the (product of) isomorphisms of \cite[Proposition~8.3, Theorem~8.4, Lemma~8.8]{KTWWY19}, which are proved to be $\ft[z]$-equivariant.

Up to this point, we proved the isomorphism of LHS and RHS of Theorem~\ref{Equivariant Hikita conjecture for ADE quivers} as $U(\ft[z])$-$ \bC[\ft_\bw / S_\bw]$-bimodules. It remains to deduce the isomorphism of algebras.
Recall that the action of $U(\ft[z]) \T \bC[\ft_\bw / S_\bw]$ on $H^*_{G_\bw}(\wti \fM(\la - \mu, \la))$ comes from the surjective Kirwan map of \cite{MN18}, $U(\ft[z]) \T \bC[\ft_\bw / S_\bw] \twoheadrightarrow H^*_{G_\bw}(\wti \fM(\la - \mu, \la))$. So we know that $\bC[(\cW^\la_{\mu, {\ft_\bw / S_\bw}})^\nu]$ is quotient of $U(\ft[z]) \T \bC[\ft_\bw / S_\bw]$ by the same subspace. These quotients respect the algebra structure, hence we indeed have the required isomorphism of algebras (compare with the usage of \cite[Lemma~8.8]{KTWWY19} in the proof of Theorem~8.1 \textit{loc. cit.}). The claim that the isomorphisms are $H^*_{G_{\bf{v}} \times G_{\bf{w}}}(\operatorname{pt})$-equivariant follows from Section \ref{app: u(t) vs cohom} below. 
\end{proof}

\subsection{Comparison of $U(\ft[z]) \T \bC[\bA^N]$ and $H_{G_\bv \times T_\bw}^*(\pt)$-actions}\label{app: u(t) vs cohom}

In the proof of Theorem~\ref{Equivariant Hikita conjecture for ADE quivers}, we identified the required algebras as quotients of $U(\ft[z]) \T \bC[\ft_\bw / S_\bw]$. In order to compare with the Coulomb realization (used, in particular, in Corollary~\ref{homo-hikita for ADE quivers}), we need to identify natural $U(\ft[z]) \T \bC[\ft_\bw / S_\bw]$ and $H_{G_\bv \times T_\bw}^*(\pt)$-actions. That is what we do in this section.

Let us recall the description of the integrable system on the (deformed) Coulomb branch after the identification with $\cW^{\underline{\lambda}}_{\mu,\bA^N}$. We have a natural projection: 
\begin{equation*}
\cW^{\underline{\la}}_{\mu,\bA^n} \rightarrow (T[[z^{-1}]]_1z^\mu) \times \mathbb{A}^N.
\end{equation*}
It induces the map 
\begin{equation}\label{map from funct torus to slice}
\bC[T[[z^{-1}]]_1z^\mu] \otimes \bC[\bA^N] \rightarrow \bC[\cW^{\underline{\la}}_{\mu,\bA^n}].
\end{equation}

Recall that $N=\sum_{i \in Q_0}w_i$. For $i \in Q_0$ and $s=1,\ldots,w_i$ let $c_{i,s} \in \mathfrak{t}_{w_i}^*$ be the weights of $T_{w_i}$ acting on $W_i$. We have the natural identification $\mathbb{C}[\bA^N]=\bC[c_{i,s} \ | \ i \in Q_0, 1\leq s \leq w_i]$. 

We set 
\begin{equation*}
p_i(u) = \prod_{s=1,\ldots,w_i}(u-c_{i,s}),
\end{equation*}
where $u$ is a formal variable. It follows from the definitions that $\operatorname{deg}p_i = w_i=\langle \lambda,\alpha_i\rangle$.

Now, we define a collection of elements $a_i^{(r)}, r \geqslant 0$ in $\bC[T[[z^{-1}]]_1z^\mu ][c_{i,s}]$. For $s \geqslant -\langle\mu,\alpha_i\rangle$ let $h_i^{(s)} \in \bC[T[[z^{-1}]]_1z^\mu]$ be the function sending $t \in T[[z^{-1}]]_1z^\mu$ to the  coefficient of $\alpha_i(t)$ in front of $z^{-s}$. Note that $h_i^{-\langle\mu,\alpha_i\rangle}=1$. We set 
\begin{equation*}
h_i(u) := \sum_{r \geqslant -\langle \mu,\alpha_i\rangle}h_i^{(r)}u^{-r} = u^{\langle \mu,\alpha_i\rangle} + \sum_{r>-\langle \mu,\alpha_i\rangle} h_i^{(r)}u^{-r}.
\end{equation*}
Then, $a_i^{(r)}$ are uniquely determined by requiring that the
following identity of formal series holds:
\begin{equation*}
h_i(u) = p_i(u) \cdot u^{-\langle \alpha_i,\lambda-\mu\rangle} \cdot \frac{\prod_{i \sim j}a_j(u)}{a_i(u)^2} = p_i(u) \cdot \frac{\prod_{j \sim i}u^{v_j}}{u^{2v_i}} \cdot \frac{\prod_{i \sim j}a_j(u)}{a_i(u)^2},
\end{equation*}
where $a_i(u)=\sum_{r \geqslant 0}a_i^{(r)}u^{-r}$ and for $i,j \in Q_0$ we write $i \sim j$ iff they are adjacent in $Q_1$.

\begin{rem}
Note that $\mathbb{C}[T[[z^{-1}]]_1z^\mu] \otimes \bC[c_{i,s}]$ has a  quantization $Y_\mu[c_{i,s}]$ over $\bC[\hbar]$ (algebra $Y_\mu$ was introduced in \cite[Sections 3.6, 3.7]{KWWY14}, see also \cite[Appendix B]{bfn_slices} and is called shifted Yangian). The elements $a_i^{(r)}$ are $\hbar=0$ specializations  of the elements $A_i^{(r)}$ (see, for example, \cite[(B.14)]{bfn_slices} for the definition of $A_i^{(r)}$).
%:
%\begin{equation*}
%H_i(u) = p_i(u) \cdot \frac{\prod_{i \sim j} (u-\frac{\hbar}{2})^{v_j}}{u^{v_i}(u-\hbar)^{v_i}} \cdot \frac{\prod_{i \sim j} A_j(u-\frac{\hbar}{2})}{A_i(z)A_i(u-\hbar)},
%\end{equation*}
%where $H_i(u)$ are the standard generators of $Y_\mu$.
\end{rem}

The map (\ref{map from funct torus to slice}) sends $a_i^{(r)}$ for $r > v_i$ to zero. We also have a (surjective) map 
\begin{equation*}
\BC[z^\mu T[[z^{-1}]]_1] \otimes \bC[\BA^N] \twoheadrightarrow H^*_{G_{\bf{v}} \times T_{\bf{w}}}(\on{pt}) 
\end{equation*}
given by $a_i^{(r)} \mapsto c_i(\cV_r)$. It follows from \cite[Theorems B.15, B.18 and Corollary B.28]{bfn_slices} that this map is compatible with the integrable system $H^*_{G_{\bf{v}} \times T_{\bf{w}}} \rightarrow \bC[\cW^{\underline{\lambda}}_\mu]$. In other words, functions $c_r(\mathcal{V}_i) \in \bC[\cW^{\underline{\lambda}}_\mu]$ are nothing else but the images of $a_i^{(r)}$ under (\ref{map from funct torus to slice}).

Let's now recall the action of $U(\mathfrak{t}[z]) \subset U(\mathfrak{g}[z])$ on $\mathbb{C}[(\cW^{\underline{\lambda}}_{\mu,\bA^N})^T]$.  We use the map $(\cW^{\underline{\lambda}}_{\mu,\bA^N})^T \rightarrow T[[z^{-1}]]_1z^\mu$.

%The algebra $\bC[T[[z^{-1}]]_1z^\mu]$ has two families of generators. One family $\{h_{i}^{(r)},\, r \geqslant -\langle \alpha_i,\mu \rangle\}$ sends $tz^\mu$ to the $z^{-r}$-coefficient of $\alpha_i(tz^\mu)$. Another family $\{a_i^{(r)}\,|\, r \geqslant 0\}$ sends $tz^\mu$ to the $z^{-r}$-coefficient of $\omega_i(t)$. 

We denote by $\langle\,,\,\rangle$ the residue pairing between $S^\bullet(\mathfrak{t}[z])$ and $\bC[z^{-1}\mathfrak{t}[[z^{-1}]]]$ that is given by:
\begin{equation*}
\langle x \otimes z^r, y \otimes z^k\rangle = r\delta_{r+k+1,0}(x,y),
\end{equation*}
where $(\,,\,)$ is the normalized invariant form on $\mathfrak{g}$. 

For $i \in Q_0$ and $r \geqslant 0$ set ${\bf{h}}_{i,r} := h_i \otimes z^r$, ${\bf{h}}_{i}(u):=\sum_{r \geqslant 0}{\bf{h}}_{i,r}u^{-r-1}$.
Now, we have the identification: 
\begin{equation}\label{CC symbol}
S^\bullet(\mathfrak{t}[z]) \iso \bC[T[[z^{-1}]]_1z^\mu], \qquad {\bf{h}}_i(u) \mapsto \operatorname{ln}(u^{-\langle \mu,\alpha_i\rangle}h_i(u))^{\prime}
%S^\bullet(\mathfrak{t}[z]) \xrightarrow{x \mapsto \langle x,-\rangle} \bC[z^{-1}\mathfrak{t}[[z^{-1}]]] \xrightarrow{\on{ln}^*} \bC[T[[z^{-1}]]_1] \xrightarrow{(\cdot z^{-\mu})^*}\bC[T[[z^{-1}]]_1z^\mu]
\end{equation}
to be denoted $\eta$.
%so the function $h \otimes z^k$ first translate to functions on $z^{-1}\mathfrak{t}[[z^{-1}]]$ sending $f \in z^{-1}\mathfrak{t}[[z^{-1}]]$ to the coefficient in front of $z^{-r-1}$ paired with $h$ and then the resulting function on $T[[z^{-1}]]_1z^\mu$ sends $t$ to the $z^{-r-1}$-coefficient of $(h,\on{ln}(t))$. 
It follows from \cite[Lemma 8.8]{KTWWY19} together with the explicit formula for the Contou-Carr\'ere symbol (see, for example, \cite[Section 3.2.1]{Zhu09}) that the action of $S^\bullet(\mathfrak{t}[z])$ on $\mathbb{C}[(\cW^{\underline{\lambda}}_{\mu,\bA^N})^T]$ is induced by $\eta$ composed with the natural map $\mathbb{C}[T[[z^{-1}]]_1 z^\mu] \rightarrow \mathbb{C}[(\cW^{\underline{\lambda}}_{\mu,\bA^N})^T]$.

Let's now recall the action of $U(\mathfrak{t}[z]) \subset U(\mathfrak{g}[z])$ on $H^*_{T_{\bf{w}}}(\widetilde{\mathfrak{M}}(\lambda-\mu,\mu))$. We follow~\cite{Var00}. The action of $U(\mathfrak{t}[z])$ is obtained as $\hbar=0$ specialization of the Cartan subalgebra $H \subset Y(\mathfrak{g})$ acting on $H^*_{T_{\bf{w}} \times \BC^\times}(\widetilde{\mathfrak{M}}(\lambda-\mu,\mu))$. Algebra $H$ has generators ${\bf{H}}_{i,r}$, $i \in Q_0$, $r \geqslant 0$ (denoted by ${\bf{h}}_{i,r}$ in~\cite{Var00}) that specialize to ${\bf{h}}_{i,r} := h_i \otimes t^{r}$ when $\hbar=0$.

Let us recall more notations from \cite{Var00}. It is denoted by  $q$ the trivial bundle on $\widetilde{\mathfrak{M}}(\lambda-\mu,\mu)$ with the degree one action of $\bC^\times$. We have $\hbar = c_1(q^2)$.
\begin{equation*}
\CF_i({\bf{v}},{\bf{w}}) = q^{-2}\CW_i - (1+q^{-2})\CV_i + q^{-1}\sum_{j}\CV_j
\end{equation*}
is the virtual bundle on $\widetilde{\mathfrak{M}}(\lambda-\mu,\mu)$. By $\lambda_u(-)$ we denote the  equivariant Chern polynomial polynomial (for a line bundle $\mathcal{L}$, $\lambda_u(\mathcal{L})=1+c_1(\mathcal{L})u$). 
Directly from the definitions we have
\begin{equation*}
\lambda_{-1/u}(\CW_i) = u^{-w_i}p_i(u).
\end{equation*}
Set $a:=u^{-1}$ and denote $u^{-w_i}p_i(u)$ by $q(a)$.

We set $\tilde{A}_i(u) := \lambda_{-1/u}(\CV_i)$ and denote by $\tilde{a}_i(u)$ the specialization of $\tilde{A}_i(u)$ to $\hbar=0$. We form the generating function:
\begin{equation*}
{\bf{H}}_i(u) := \sum_{r \geqslant 0}{\bf{H}}_{i,r}u^{-r-1}.
\end{equation*}
 It follows from \cite{Var00} that the action of $\hbar {\bf{H}}_i(u)$ on $H^*_{T_{\bf{w}} \times \bC^\times}(\widetilde{\mathfrak{M}}(\lambda-\mu,\mu))$ is given via the multiplication by
\begin{multline*}
-1+\frac{\lambda_{-1/u}(\CF_i({\bf{v}},{\bf{w}}))}{\lambda_{-1/u}(q^2\CF_i({\bf{v}},{\bf{w}}))} = \\
= -1 + \frac{\la_{-1/u}(q^{-2}\CW_i) \la_{-1/u}((1+q^2)\CV_i) \prod_{j}\la_{-1/u}(q^{-1}\CV_j)}{\la_{-1/u}(\CW_i)\la_{-1/u}((1+q^{-2})\CV_i) \prod_{j}\la_{-1/u}(q\CV_j)} \\
= -1 +  \frac{q_i(a-\hbar) \cdot \tilde{A}_i(a+\hbar) \cdot \prod_{i \sim j}\tilde{A}_j(a-\frac{\hbar}{2})}{q_i(a) \cdot \tilde{A}_i(a-\hbar) \cdot \prod_{i \sim j}\tilde{A}_j(a+\frac{\hbar}{2})}
\end{multline*}

Recall that ${\bf{h}}_i(u)=(\frac{\hbar{\bf{H}}_i(u)}{\hbar})_{\hbar=0}$. Hence, the action of ${\bf{h}}_i(u)=\sum_{r \geqslant 0}{\bf{h}}_{i,r}u^{-r-1}$ is given by:
\begin{equation*}
\Big((\on{ln}q_i(u)) - 2\on{ln}(\tilde{a}_i(u)) +\sum_{i \sim j} \on{ln}(\tilde{a}_j(u)) \Big)' 
=\Big(\on{ln}\Big(\frac{q_i(u) \prod_{i \sim j}\tilde{a}_j(u)}{\tilde{a}_i(u)^2}\Big)\Big)',
\end{equation*}
where the derivative is taken w.r.t. the variable $u$.

%We start with the following lemma.
%\begin{lem}\label{lem CC h gen}
%The isomorphism $\eta\colon S^\bullet(\mathfrak{t}[z]) \iso \bC[T[[z^{-1}]]_1z^\mu]$ is given by the following formula:
%\begin{equation*}
%\eta({\bf{h}}_i(u))=(\operatorname{ln}(u^{-\langle \mu,\alpha_i\rangle}h_i(u)))^{'}
%\end{equation*}
%\end{lem}
%\begin{proof}
%After the identification $\mathbb{C}[T[[z^{-1}]]z^\mu] \simeq \bC[T[[z^{-1}]]]$ function $h_i(u)$ is mapped to the function that sends $t \in T[[z^{-1}]]_1$ to $u^{-\langle \mu,\alpha_i\rangle}(h_i,t)_{z=u}$. The corresponding function on $\bC[z^{-1}\mathfrak{t}[[z^{-1}]]]$ sends $x \in z^{-1}\mathfrak{t}[[z^{-1}]]$ to $u^{-\langle \mu,\alpha_i\rangle}(h_i,\operatorname{exp}(x))_{z=u}$...
%\end{proof}

\begin{prop}
The identification of $U(\mathfrak{t}[z])$-$\mathbb{C}[\bA^N]$-modules 
\begin{equation*}
\bC[(\cW^{\underline{\lambda}}_{\mu,\bA^N})^T] \simeq H^*_{T_{\bf{w}}}(\widetilde{\mathfrak{M}}(\lambda-\mu,\mu))
\end{equation*}
intertwines the $H^*_{G_{\bf{v}} \times T_{\bf{w}}}(\operatorname{pt})$-actions. 
\end{prop}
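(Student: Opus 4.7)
The isomorphism is already $U(\mathfrak{t}[z])$-$\mathbb{C}[\mathbb{A}^N]$-linear, so it intertwines multiplication by the framing Chern roots $c_{i,s}$, which generate the subalgebra $\mathbb{C}[\mathbb{A}^N] = H^*_{T_{\bf w}}(\on{pt})$. The remaining generators of $H^*_{G_{\bf v} \times T_{\bf w}}(\on{pt})$ are the equivariant Chern classes $c_r(\mathcal{V}_i)$, $i \in Q_0$, $0 \leqslant r \leqslant v_i$; on the Coulomb side these correspond, via the integrable system, to the elements $a_i^{(r)} \in \mathbb{C}[z^\mu T[[z^{-1}]]_1]$. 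Hence the claim reduces to showing that under the given isomorphism the generating series $a_i(u) = \sum_{r \geqslant 0} a_i^{(r)} u^{-r}$ acts on $H^*_{T_{\bf w}}(\widetilde{\mathfrak{M}}(\lambda-\mu,\lambda))$ by (a sign-twist of) $\tilde a_i(u) := \lambda_{-1/u}(\mathcal{V}_i)$, for every $i \in Q_0$.

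The plan is to extract this matching of generators from the already-known compatibility of ${\bf h}_{i,r}$-actions. On the Coulomb side, the defining relation between $h_i(u)$ and the $a_j(u)$, combined with the identification $\eta$ of (\ref{CC symbol}), shows that ${\bf h}_i(u)$ acts by multiplication by the logarithmic derivative of $q_i(u) \cdot \prod_{j \sim i} a_j(u)/a_i(u)^2$. On the quiver side, a direct $\hbar \to 0$ expansion of Varagnolo's formula --- multiplicativity of $\lambda_{-1/u}$ on the virtual bundle $\mathcal{F}_i$ together with a computation of $\partial_\hbar|_{\hbar=0}$ of the ratio $\lambda_{-1/u}(\mathcal{F}_i)/\lambda_{-1/u}(q^2\mathcal{F}_i)$ --- produces, up to an overall sign fixed by the conventions of $\lambda_{-1/u}$ and $\eta$, the logarithmic derivative of the analogous expression $q_i(u) \cdot \prod_{j \sim i} \tilde a_j(u)/\tilde a_i(u)^2$. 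Since the $q_i(u)$-contribution comes from $\mathbb{C}[\mathbb{A}^N]$ on both sides and is already matched, we obtain, for every $i \in Q_0$, the operator identity
\begin{equation*}
\Bigl(2 \log a_i(u) - \sum_{j \sim i} \log a_j(u)\Bigr)' \ \longleftrightarrow \ \Bigl(2 \log \tilde a_i(u) - \sum_{j \sim i} \log \tilde a_j(u)\Bigr)'
\end{equation*}
under the module isomorphism.

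Reading these relations coefficient-by-coefficient in $u^{-r-1}$ produces, for each fixed $r$, a linear system in the Laurent coefficients of $(\log a_i(u))'$ versus $(\log \tilde a_i(u))'$ whose coefficient matrix is precisely the ADE Cartan matrix $C$. Since $C$ is invertible over $\mathbb{Q}$, the system can be solved, giving that $(\log a_i(u))'$ corresponds to $(\log \tilde a_i(u))'$ for every $i \in Q_0$. Because both $a_i(u)$ and $\tilde a_i(u)$ lie in $1 + u^{-1}\mathbb{C}[[u^{-1}]]$, they are uniquely recovered from their logarithmic derivatives by termwise integration and exponentiation; hence $a_i(u) \leftrightarrow \tilde a_i(u)$ coefficient by coefficient, which identifies $a_i^{(r)}$ with (a sign twist of) $c_r(\mathcal{V}_i)$ and gives the desired compatibility of $H^*_{G_{\bf v} \times T_{\bf w}}(\on{pt})$-actions. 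The main technical step is the $\hbar \to 0$ expansion of Varagnolo's formula together with careful bookkeeping of the sign conventions entering $\lambda_{-1/u}$ and $\eta$; the conceptual heart is the invertibility of the ADE Cartan matrix.
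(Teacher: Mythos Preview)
Your proposal is correct and follows essentially the same route as the paper: match the two expressions for ${\bf h}_i(u)$ as logarithmic derivatives (coming from $\eta$ on the Coulomb side and from Varagnolo's formula at $\hbar=0$ on the quiver side) and then invert the ADE Cartan matrix to conclude $a_i(u)\leftrightarrow\tilde a_i(u)$. The paper carries out the $\hbar\to 0$ expansion explicitly in the paragraphs preceding the proposition and leaves the Cartan-inversion step implicit (``which implies $\eta(\tilde a_i(u))=a_i(u)$''), whereas you spell that step out more fully; otherwise the arguments coincide, and your hedges about sign conventions are harmless since the relevant computations are already recorded in the paper.
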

\begin{proof}
Our goal{\footnote{Note that when $\la$ is the sum of minuscule coweights, this can also be deduced from \cite[Section~8.3]{KTWWY19} combined with \cite{Nak01b}, using that both $\tilde{a}_i(u), a_i(u)$ diagonalize (after the base change to $\operatorname{Frac}H^*_{T_{\bf{w}}}(\operatorname{pt})$) in the same basis with the same eigenvalues.}} is to check that  $\eta(\tilde{a}_i(u))=a_i(u)$.
Recall that we have:
\begin{equation*}
{\bf{h}}_i(u) =     \Big(\on{ln}\Big(\frac{q_i(u) \prod_{i \sim j}\tilde{a}_j(u)}{\tilde{a}_i(u)^2}\Big)\Big)', \qquad u^{-\langle \alpha_i,\mu\rangle} \cdot h_i(u)=  \frac{q_i(u)\prod_{i \sim j} a_j(u)}{a_i(u)^2}.
\end{equation*}
Using  (\ref{CC symbol}) we conclude that: 
\begin{equation*}
\Big(\on{ln}\Big(\frac{q_i(u) \prod_{i \sim j}\eta(\tilde{a}_j(u))}{\tilde{a}_i(u)^2}\Big)\Big)'=\eta({\bf{h}}_i(u))=\Big({\on{ln}}\Big(\frac{q_i(u)\prod_{i \sim j}a_j(u)}{a_i(u)^2}\Big)\Big)'
\end{equation*}
so we have 
\begin{equation*}
\frac{\prod_{i \sim j}\eta(\tilde{a}_j(u))}{\tilde{a}_i(u)^2} = \frac{\prod_{i \sim j}a_j(u)}{a_i(u)^2},
\end{equation*}
which implies $\eta(\tilde{a}_i(u))=a_i(u)$, as desired. 
\end{proof}

\begin{comment}
Add:

proof for Springer resolution

expectation for elliptic cohomology

\end{comment}

\end{document}